\documentclass{article}

\usepackage{microtype}
\usepackage{graphicx}
\usepackage{subcaption}
\usepackage{booktabs} 

\usepackage{hyperref}

\usepackage[preprint]{icml2026}

\usepackage{amsmath}
\usepackage{amssymb}
\usepackage{mathtools}
\usepackage{amsthm}
\usepackage{aliascnt}

\usepackage[capitalize,noabbrev]{cleveref}

\theoremstyle{plain}
\newtheorem{theorem}{Theorem}[section]

\newaliascnt{proposition}{theorem}
\newtheorem{proposition}[proposition]{Proposition}
\aliascntresetthe{proposition}

\newaliascnt{lemma}{theorem}
\newtheorem{lemma}[lemma]{Lemma}
\aliascntresetthe{lemma}

\newaliascnt{corollary}{theorem}
\newtheorem{corollary}[corollary]{Corollary}
\aliascntresetthe{corollary}

\theoremstyle{definition}
\newaliascnt{definition}{theorem}
\newtheorem{definition}[definition]{Definition}
\aliascntresetthe{definition}

\newaliascnt{assumption}{theorem}
\newtheorem{assumption}[assumption]{Assumption}
\aliascntresetthe{assumption}

\theoremstyle{remark}
\newaliascnt{remark}{theorem}
\newtheorem{remark}[remark]{Remark}
\aliascntresetthe{remark}

\crefname{theorem}{Theorem}{Theorems}
\Crefname{theorem}{Theorem}{Theorems}

\crefname{lemma}{Lemma}{Lemmas}
\Crefname{lemma}{Lemma}{Lemmas}

\crefname{proposition}{Proposition}{Propositions}
\Crefname{proposition}{Proposition}{Propositions}

\crefname{corollary}{Corollary}{Corollaries}
\Crefname{corollary}{Corollary}{Corollaries}

\crefname{definition}{Definition}{Definitions}
\Crefname{definition}{Definition}{Definitions}

\crefname{assumption}{Assumption}{Assumptions}
\Crefname{assumption}{Assumption}{Assumptions}

\crefname{remark}{Remark}{Remarks}
\Crefname{remark}{Remark}{Remarks}

\usepackage[disable,textsize=tiny]{todonotes}

\usepackage[utf8]{inputenc} 
\usepackage[T1]{fontenc}    
\usepackage{url}            
\usepackage{amsfonts}       
\usepackage{nicefrac}       
\usepackage{xcolor}         
\usepackage{mathdots}

\newcommand{\interior}[1]{%
  {\kern0pt#1}^{\mathrm{o}}%
}

\newcommand{\bbR}{\mathbb{R}}

\newcommand{\bbE}{\mathbb{E}}

\newcommand{\hs}{\hat s}

\newcommand\ignore[1]{}

\newcommand{\p}{p}
\newcommand{\s}{s}
\newcommand{\X}{X}
\newcommand{\T}{T}
\newcommand{\matA}{M}

\newcommand\D[1]{D_{f}\left(#1, x^*\right)}
\newcommand\Dp[2]{D_{f_{#1}}\left(#2_{#1}, x_{#1}^*\right)}
\newcommand\Dphi{D_{\phi}}

\newcommand\gradf[1]{\nabla f\left(#1\right)}
\newcommand\gradfp[2]{\nabla f_{#1}\left(#2_{#1}\right)}

\newcommand{\xe}{\xi}
\newcommand{\xtau}{\tau}
\newcommand{\xga}{\gamma}
\newcommand{\xgt}{\alpha}
\newcommand{\xh}{\chi}

\newcommand{\xconv}{\Pi}

\newcommand{\yc}{\xi}
\newcommand{\yeta}{\tau}
\newcommand{\yq}{\gamma}

\newcommand{\yconv}{\Pi}

\icmltitlerunning{Directly Accelerated Algorithms for Bilinear Saddle Point Problems}

\begin{document}

\twocolumn[
  \icmltitle{Direct Spectral Acceleration of First-Order Methods for Saddle Point Problems with Bilinear Coupling
  }



  \icmlsetsymbol{equal}{*}

\begin{icmlauthorlist}
  \icmlauthor{Meng Li}{berkeley}
  \icmlauthor{Paul Grigas}{berkeley}
\end{icmlauthorlist}

\icmlaffiliation{berkeley}{Industrial Engineering and Operations Research Department, University of California, Berkeley, California, USA}

\icmlcorrespondingauthor{Meng Li}{meng\_li@berkeley.edu}
\icmlcorrespondingauthor{Paul Grigas}{pgrigas@berkeley.edu}

\icmlkeywords{Convex optimization, saddle point problems, first-order methods, block-coordinate methods.} 


  \vskip 0.3in
]



\printAffiliationsAndNotice{}  
\begin{abstract}
We study convex--concave saddle point problems with bilinear coupling, covering linearly constrained convex optimization and more general nonsmooth or constrained models via a proximable term in the dual objective. In linearly convergent regimes, we characterize how spectral properties of the coupling matrix and objective conditioning jointly determine the attainable linear rates.
We propose direct spectral acceleration for first-order primal--dual methods for a class of bilinear-coupled saddle point problems, including affinely constrained smooth strongly convex optimization and extensions with proximable dual terms.
The resulting algorithms distinguish objective-dominated and coupling matrix-dominated regimes and attain optimal linear convergence without Chebyshev inner loops or double-loop designs.
We further develop stochastic block-coordinate extensions in the affinely constrained case with separable objectives; we also establish optimal linear rates matching the block-coordinate lower bound. For both deterministic and stochastic methods, we provide matching worst-case lower bounds via explicit finite-dimensional hard instances.
\end{abstract}

\section{Introduction}
\label{sec:intro}
In this paper, we consider two variants of saddle point problems with bilinear coupling. The first is the following general problem:
\begin{equation}\label{eq:upper bound prob}
    \min_{x\in\bbR^m}\max_{y\in\bbR^n}\quad F(x,y)=f(x)+y^{\top} \matA x-b^{\top}y-\phi(y),
\end{equation}
where \(f:\bbR^m\to\bbR\) is a smooth convex function, \(b\in\bbR^n\), the proximal term \(\phi:\bbR^n\to\bar \bbR := \bbR \cup \{+\infty\}\) is a proper convex and closed function, and \(\matA\in\bbR^{n\times m}\) is the coupling matrix. Another problem we consider is its block-wise extension in the equality-constrained case \((\phi\equiv 0)\), i.e.,
\begin{equation}\label{eq:upper bound prob sto}
\begin{aligned}
\min_{x_i\in\bbR^{m_i},i=1,\ldots,N}
&\max_{y\in\bbR^n}\quad
F(x,y)\ =\ f(x) + y^{\top}\matA x - b^{\top}y \\
&= \sum_{i=1}^N f_i(x_i)
+ y^{\top}\sum_{i=1}^N \matA_i x_i- b^{\top}y,
\end{aligned}
\end{equation}
where the coupling matrix \(\matA = \left(\matA_1,\ldots,\matA_N\right)\in\bbR^{n\times m}\) with \(m=\sum_{i=1}^Nm_i\), and the primal objective \(f(x)=\sum_{i=1}^N f_i(x_i)\) is separable. Problems of these forms arise in many contexts, including convex games \citep{ibrahim2020linear,azizian2020accelerating}, robust/adversarial training problems \citep{mkadry2017towards,bai2020provable}, and reinforcement learning problems \citep{du2017stochastic,dai2018sbeed}. The proximal term $\phi$ allows one to model constraints as well as other nonsmooth problems. For example, a common case of \eqref{eq:upper bound prob} is convex linearly constrained optimization: when $\phi(y)=I_{\mathbb{R}^n_+}(y)$ (i.e., $\phi(y)=0$ if $y\ge0$ and $+\infty$ otherwise), problem \eqref{eq:upper bound prob} is equivalent to
\begin{equation}\label{eq:examplelinear}
\min_x \ f(x) \quad \mathrm{s.t.} \quad \matA x\le b.
\end{equation}
Linearly constrained problems arise in many application domains, including predictive control \citep{borrelli2017predictive}, portfolio optimization \citep{markowitz2000mean}, convex regression \citep{seijo2011nonparametric,lim2012consistency}, and sparse regression \citep{atamturk2018strong,atamturk2019rank,han20232}.

In this paper, our main results focus on the linearly convergent regime where \(f\) is \(\mu\)-strongly convex and \(\matA\) has full row rank.
In the deterministic setting, we allow a general proximable dual term \(\phi\); for the block-coordinate results, we restrict to the equality-constrained case \(\phi\equiv 0\) to obtain optimal linear-rate guarantees.

\noindent\textbf{Literature Review.} Many algorithms have been proposed to solve problems of the form \eqref{eq:upper bound prob}, including PDHG (a.k.a. Chambolle-Pock) for proximable primal terms \citep{chambolle2016ergodic,chambolle2016introduction}, gradient-based methods for smooth objectives \citep{drori2015simple}, and more general primal--dual splitting schemes such as Condat--V\`u and PDDY/PD3O \citep{condat2013primal,vu2013splitting,salim2022dualize}. 
Their stochastic variants have also been studied for the same problem class, including randomized block-coordinate updates \citep{chambolle2018stochastic,latafat2019new,fercoq2019coordinate,gao2019randomized} and randomized proximal mappings, stochastic gradient surrogates, or randomized extrapolation steps \citep{condat2022randprox,salim2022dualize,alacaoglu2020random}.
Closely related special cases in convex linearly constrained optimization are also studied in \citep{tatarenko2018smooth,lan2013iteration}. \ignore{\citep{tatarenko2018smooth,lan2013iteration,li2022new}}

Among recent analyses of primal--dual first-order methods, \citet{salim2022optimal,kovalev2022accelerated,kovalev2024linear} study convergence guarantees for problems of the form \eqref{eq:upper bound prob} in regimes where strong convexity/concavity is absent in one or both of the primal and dual objectives. In such settings, linear (or accelerated) rates typically depend on the spectral properties of $\matA$, in particular $\frac{s_{\max}}{s_{\min}}$, where $s_{\max}$ and $s_{\min}$ denote the largest and smallest singular values of $\matA$. In particular, \citet{salim2022optimal} consider \eqref{eq:upper bound prob} in the case where $f$ is smooth and strongly convex and $\phi\equiv 0$, which corresponds to a strongly convex optimization problem with linear equality constraints. They establish a complexity bound of
$O\left(\frac{s_{\max}}{s_{\min}}\sqrt{\frac{L}{\mu}}\log\left(\frac{1}{\epsilon}\right)\right)$
via a double-loop scheme that applies Chebyshev acceleration in the inner loop. \citet{kovalev2022accelerated} analyze spectral acceleration in a more general framework and recover the same complexity for strongly convex optimization with linear equality constraints using Chebyshev acceleration. While randomized block-coordinate primal--dual methods have been studied in, e.g.,
\citep{chambolle2018stochastic,latafat2019new,fercoq2019coordinate,gao2019randomized}, establishing linear convergence rates that simultaneously exploit the spectral properties of the coupling matrix and the strong convexity on the primal side is comparatively less explored, despite its theoretical and practical importance.

\noindent\textbf{Contributions.} For deterministic primal--dual methods, we propose new algorithms for \eqref{eq:upper bound prob} in the regime where $f(x)$ is smooth and strongly convex, \(\matA\) has full row rank, and $\phi(y)$ is proximable. We distinguish two regimes depending on whether the dominant difficulty comes from the conditioning of the objective, controlled by $L/\mu$, or from the conditioning of the coupling matrix, controlled by $s_{\max}^2/s_{\min}^2$. In the two regimes, we design direct acceleration schemes that act on the primal side and the dual side, respectively.
These accelerations yield the following advantages:  (i) the algorithms have complexities matching the lower bound without requiring inner-loop Chebyshev acceleration, (ii) they exhibit better numerical performance compared with existing algorithms, which need to perform a fixed number of Chebyshev inner iterations (depending on the condition numbers). As far as we are aware, our algorithms are the first that do not require a double-loop structure in this setting.
Furthermore, our framework accommodates a proximable dual term $\phi$, enabling applications to more general problems, including inequality-constrained optimization.

A second contribution is a stochastic extension of our direct acceleration schemes to the block-coordinate setting \eqref{eq:upper bound prob sto}.  By combining primal-side and dual-side stochastic accelerations with techniques for decorrelation and variance reduction, we obtain optimal linear convergence rates for our block-coordinate directly accelerated algorithms when $f$ is separable, matching the corresponding lower bound we derive for the same class of first-order methods. Theoretically, these results sharpen the understanding of how stochastic block updates interact with coupling spectra to yield optimal linear rates; practically, they reduce per-iteration cost and lead to improved empirical performance on large-scale instances. In addition, in the regime where objective conditioning dominates, we extend the accelerated scheme to nonseparable objectives while preserving the optimal linear rate. Table~\ref{tab:alg-summary-4} in \Cref{appensec:summary} summarizes our algorithms and their regimes and complexities.

Finally, we complement our upper bounds with matching worst-case lower bounds that make the dependence on the coupling spectrum explicit. Although broader unified frameworks exist (e.g., \citep{kovalev2024linear}) and related bounds for equality-constrained strongly convex optimization can be inferred from \citep{salim2022optimal,scaman2017optimal}, we focus on a simple, transparent hard-instance construction. Using the classical hard-instance approach \citep{ouyang2021lower,zhang2022lower}, we obtain finite-dimensional certificates with fully specified instances and dimensions, and this explicit viewpoint extends naturally from the deterministic setting to a block-coordinate lower bound. We also give an explicit smooth convex--concave lower bound under a separate assumption set.

\section{{Directly Accelerated Primal--Dual Algorithms}}
\label{sec:upper}
In this section, we propose and analyze our directly accelerated primal--dual algorithms on \eqref{eq:upper bound prob}. 
We start with preliminaries, including formal assumptions in Assumption~\ref{assum:Assumption Linear New}. Proposition~\ref{prop:saddle_cond_2} provides the existence and uniqueness of the saddle point. Then, we introduce \Cref{alg:x-accelerate-prox} and \Cref{alg:y-accelerate-prox}, with special direct acceleration structures based on the condition numbers. Finally, we state the (optimal) convergence rates of our algorithms. Proofs and additional remarks are deferred to \Cref{appensec:proof x alg,appensec:proof y alg}. Throughout this section, Assumption~\ref{assum:Assumption Linear New} is made concerning problem \eqref{eq:upper bound prob}. 
\begin{assumption}\label{assum:Assumption Linear New}
Suppose problem \eqref{eq:upper bound prob} satisfies:\\
(i) \(x\in\bbR^m\), \(y\in\bbR^n\), \(m\ge n\), \(\matA\in\mathbb{R}^{n\times m}\) has full row rank, with maximal singular value no larger than \(s_{\max}\), and minimal singular value \(s_{\min}>0\);\\
(ii) $f: \bbR^m\to\bbR$ is globally $\mu$-strongly convex for some $\mu > 0$, and globally $L$-smooth for some $L \ge \mu>0$;\\
(iii) the proximal term \(\phi:\bbR^n\to\bar \bbR\) is a proper convex and closed function, where \(\bar \bbR\) denotes the extended real values.
\end{assumption}

Note that \eqref{eq:upper bound prob} is an extension of problem (1) of  \citet{salim2022optimal}. A special and useful case is \(\phi(y)=I_{\bbR^n_+}(y)\), which corresponds to convex linearly constrained optimization \eqref{eq:examplelinear}.

\begin{proposition}\label{prop:saddle_cond_2}
For problem \eqref{eq:upper bound prob} under Assumption~\ref{assum:Assumption Linear New}, there exists a unique {saddle point} \((x^*,y^*)\). Furthermore, \((x^*,y^*)\) is a saddle point if and only if
\begin{equation}\label{eq:saddle_cond_2}
\gradf{x^*}+\matA^{\top} y^*=0;\quad \matA x^* -b \in\partial \phi\left(y^*\right).
\end{equation}
\end{proposition}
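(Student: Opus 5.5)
The plan is to prove the characterization first and then existence and uniqueness. Since $F(\cdot,y)$ is convex and differentiable, and $F(x,\cdot)$ is concave (it is $-\phi$ plus an affine term), $(x^*,y^*)$ is a saddle point exactly when $x^*$ minimizes $F(\cdot,y^*)$ and $y^*$ maximizes $F(x^*,\cdot)$.

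For the first condition, $x^*$ minimizes $F(\cdot,y^*)$ if and only if $\nabla f(x^*)+\matA^\top y^*=0$. This is just the first-order optimality condition for a smooth convex function.

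For the second condition, $y^*$ maximizes $y\mapsto y^\top(\matA x^*-b)-\phi(y)$ if and only if $0\in \matA x^*-b-\partial\phi(y^*)$, that is, $\matA x^*-b\in\partial\phi(y^*)$. This is Fermat's rule for the proper closed convex function $\phi(y)-y^\top(\matA x^*-b)$, using the subdifferential sum rule with a linear term. Together these give \eqref{eq:saddle_cond_2}.

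For existence I would pass to the dual function. Let $f^*$ be the convex conjugate of $f$. Because $f$ is $\mu$-strongly convex and $L$-smooth, $f^*$ is finite everywhere, $\frac1L$-strongly convex and $\frac1\mu$-smooth. Define
\[
g(y)=-\min_x F(x,y)=f^*(-\matA^\top y)+b^\top y+\phi(y).
\]
The map $y\mapsto f^*(-\matA^\top y)$ is $\frac{s_{\min}^2}{L}$-strongly convex, since $\|\matA^\top y\|\ge s_{\min}\|y\|$ by full row rank. Hence $g$ is proper, closed and strongly convex, so it has a unique minimizer $y^*$.

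Next set $x^*=\nabla f^*(-\matA^\top y^*)$, the unique minimizer of $F(\cdot,y^*)$. It satisfies $\nabla f(x^*)=-\matA^\top y^*$, which is the first condition. The optimality condition for $y^*$ reads $0\in -\matA\nabla f^*(-\matA^\top y^*)+b+\partial\phi(y^*)$. The sum rule applies here because the first term is finite and differentiable everywhere. This is exactly $\matA x^*-b\in\partial\phi(y^*)$, so by the characterization $(x^*,y^*)$ is a saddle point.

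For uniqueness, take any saddle point $(\bar x,\bar y)$. The first condition forces $\bar x=\nabla f^*(-\matA^\top\bar y)$. Substituting into the second condition shows $0\in\partial g(\bar y)$, so $\bar y=y^*$ by strong convexity of $g$, and then $\bar x=x^*$.

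As an alternative route to uniqueness, take two saddle points and subtract their conditions. Monotonicity of $\partial\phi$ and strong monotonicity of $\nabla f$ give $\mu\|x_1-x_2\|^2\le 0$, so $x_1=x_2$. Then $\matA^\top(y_1-y_2)=0$, and full row rank gives $y_1=y_2$.

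The main obstacle is the existence step. It requires justifying the subdifferential sum rule and the strong convexity of $f^*\circ(-\matA^\top)$, both of which rest on $f^*$ being finite and smooth and on $\matA^\top$ being injective.
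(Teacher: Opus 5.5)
Your proof is correct, and its existence step is the same as the paper's. Both arguments pass to the dual function $\Phi_y(y)=\inf_x F(x,y)=-b^\top y-\phi(y)-f^*(-M^\top y)$; your $g$ is $-\Phi_y$. Both use that $f^*$ is $\frac1L$-strongly convex and $\|M^\top y\|_2\ge s_{\min}\|y\|_2$ to get a unique dual optimum $y^*$, then take $x^*$ to be the minimizer of $F(\cdot,y^*)$. Your identity $\nabla_y f^*(-M^\top y)=-M\nabla f^*(-M^\top y)=-Mx^*$ at $y^*$ is exactly what the paper obtains from Danskin's theorem.

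The real difference is the notion of saddle point. The paper (Definition~\ref{def:optsol}) calls a pair a saddle point when $x^*\in\arg\min\Phi_x$ and $y^*\in\arg\max\Phi_y$. It therefore invokes Danskin a second time to show $0\in\partial\Phi_x(x^*)$, where $\Phi_x(x)=f(x)+\sup_y[y^\top(Mx-b)-\phi(y)]$ may be extended-valued. It then argues that any pair satisfying \eqref{eq:saddle_cond_2} is primal and dual optimal. You use the pointwise definition instead, so \eqref{eq:saddle_cond_2} falls out of first-order optimality in each variable separately. Uniqueness then follows either from strong convexity of $g$ or from your monotonicity argument $\mu\|x_1-x_2\|_2^2\le-\langle y_1-y_2,M(x_1-x_2)\rangle\le 0$. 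That argument needs only strong monotonicity of $\nabla f$, monotonicity of $\partial\phi$, and injectivity of $M^\top$, and never touches $\Phi_x$.

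What your write-up does not state is the link to the paper's definition, and you should add one line for it. Weak duality gives $\Phi_y(y)\le F(x,y)\le\Phi_x(x)$. Combined with your pointwise saddle point, this yields $\min\Phi_x=\Phi_x(x^*)=F(x^*,y^*)=\Phi_y(y^*)=\max\Phi_y$. With zero duality gap, every pair of primal and dual optimizers is a pointwise saddle point and conversely, so the two notions coincide.

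In short, your route is more elementary and avoids subdifferential calculus for the possibly extended-valued $\Phi_x$. The paper's route delivers primal--dual optimality and zero duality gap directly, which matches how its appendix defines saddle points.
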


We introduce the following new notations that will be useful in the following sections:  \(D_f\left(x^1, x^2\right)=f(x^1)-f(x^2)-\left\langle\gradf{x^2},x^1-x^2\right\rangle\) is the Bregman distance associated with \(f\), and \(\left\|y\right\|_{G}^2=y^{\top}Gy\) or \(\left\|x\right\|_G^2=x^{\top}Gx\) when \(G\in\bbR^{n\times n}\) (or \(G\in\bbR^{m\times m}\)) is positive semidefinite. Similarly, since \(\matA x^* - b \in \partial \phi (y^*)\), let \({g^*} = \matA x^* -b\) and \(\Dphi\left(y,y^*\right)=\phi(y)-\phi(y^*)-\left\langle g^*, y-y^*\right\rangle \geq 0\). We let \(U(\cdot)\) denote the uniform distribution on a finite set. We let \(\langle \cdot, \cdot \rangle\) denote inner products.

\subsection{Accelerated Primal--Dual Algorithm \texorpdfstring{($x$-Side)}{(x-Side)}}
\label{subsec:upperx}
Our algorithm is motivated by Algorithm 3 of \citet{salim2022optimal}, which is an intermediate algorithm to solve strongly convex problems with linear equality constraints. In Proposition 1 of \citet{salim2022optimal}, the authors show that their intermediate algorithm solves strongly convex problems with linear equality constraints with complexity \(O\left(\frac{s_{\max}}{s_{\min}}\sqrt{\frac{L}{\mu}}\log(\frac{1}{\epsilon})\right)\) when \(\frac{s_{\max}}{s_{\min}}\sqrt{\frac{\mu}{L}} =O(1)\). As an extension, we propose \Cref{alg:x-accelerate-prox}, which now incorporates the proximal term and also has a complexity \(O\left(\frac{s_{\max}}{s_{\min}}\sqrt{\frac{L}{\mu}}\log(\frac{1}{\epsilon})\right)\) when \(\frac{s_{\max}}{s_{\min}}\sqrt{\frac{\mu}{L}} =O(1)\).

\begin{algorithm}[t]
\caption{$x$-side accelerated primal--dual algorithm}
\label{alg:x-accelerate-prox}
\begin{algorithmic}[1]
\REQUIRE Parameters $t, s, \hat s, \xh, \xga > 0, \xe > 1$
\STATE Initialize $x^{0}=z^{0} \in \bbR^{m}$, $y^{0} \in \bbR^{n}$
\REPEAT
  \STATE $\hat x^{k} = \xe z^{k} - (\xe - 1) x^{k}$
\STATE $y^{k+1} = \mathrm{prox}_{\xh s \phi}\left(y^{k} + \xh s(\matA \hat x^{k} - b)\right.$
\STATE \hspace{1.5em}$\left. - \hat s \matA(\matA^{\top} y^{k} + \nabla f(z^{k}))\right)$
  \STATE $x^{k+1} = z^{k} - t\left(\nabla f(z^{k}) + \matA^{\top} y^{k+1}\right)$
  \STATE $z^{k+1} = (1+\xga) x^{k+1} - \xga x^{k}$
\UNTIL{convergence}
\end{algorithmic}
\end{algorithm}

\begin{theorem}\label{thm:x-accelerate-prox-short}
Consider applying Algorithm~\ref{alg:x-accelerate-prox} to solve \eqref{eq:upper bound prob}. Let
\begin{equation}\label{eq:x-accelerate-prox para short}
\begin{aligned}
\xgt
&= \min\left(
\frac{1}{5},
\frac{s_{\max}}{s_{\min}}\sqrt{\frac{\mu}{8L}}
\right), \\
\xconv
&= \max\left(
\frac{s_{\max}^2}{s_{\min}^2} \cdot \frac{1}{2\xgt},
3\sqrt{\frac{L}{\mu}} + \frac{L}{\mu} \cdot 4\xgt
\right).
\end{aligned}
\end{equation}
For certain settings of ${\hat s}=\frac{1}{s_{\max}^2}, t, s, \xe, \xtau, \xga, \xh,
\Xi_f = 1,
\Xi_y > 0,
\Xi_v > 0$
(detailed in \Cref{appensec:proof x alg}), let
\begin{equation*}
\begin{aligned}
\Psi^k
&= \Xi_y \left\|y^k-y^*\right\|^2_{\left(I-(1-2\xgt)\hat s\matA\matA^{\top}\right)} \\
&
+ \Xi_f \D{x^k}
+ \Xi_v \left\|v^k-x^*\right\|_2^2.
\end{aligned}
\end{equation*}
where $v^k = (1+\xtau)z^k - \xtau x^k$, we have
\begin{equation*}
\Psi^{k+1} \le (1-1/\xconv)\Psi^k.
\end{equation*}
\end{theorem}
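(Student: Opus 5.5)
The plan is a Lyapunov-function argument in the style of the analysis of Algorithm 3 of \citet{salim2022optimal}, extended to handle the proximal step. First I rewrite the iteration in terms of errors relative to the saddle point $(x^*,y^*)$ of \Cref{prop:saddle_cond_2}, using $\nabla f(x^*)=-\matA^\top y^*$ and $g^*=\matA x^*-b\in\partial\phi(y^*)$. The $x$-update is a gradient step from $z^k$ on the Lagrangian. Together with the extrapolations $z^{k+1}=(1+\xga)x^{k+1}-\xga x^k$ and $v^k=(1+\xtau)z^k-\xtau x^k$, this is Nesterov's accelerated scheme written in ``estimate-sequence'' form. So I first derive the one-step identity $v^{k+1}=(1-\xgt)v^k+\xgt z^k-\eta(\nabla f(z^k)+\matA^\top y^{k+1})$ for suitable $\eta$. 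The parameters $\xga,\xtau,\xe$ are chosen so that this identity holds and $\hat x^k$ is a fixed affine combination of $z^k$ and $v^k$.

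Next, for the $x$-part I use the standard accelerated-gradient inequalities. The $L$-smoothness descent lemma applied at $z^k$ with step $t\le 1/L$ bounds $\D{x^{k+1}}$. Strong convexity $f(x^*)\ge f(z^k)+\langle\nabla f(z^k),x^*-z^k\rangle+\frac{\mu}{2}\|z^k-x^*\|^2$ and convexity $f(x^k)\ge f(z^k)+\langle \nabla f(z^k),x^k-z^k\rangle$ are then combined with weights $\xgt$ and $1-\xgt$. Expanding $\|v^{k+1}-x^*\|^2$ gives
\[
\Xi_f\D{x^{k+1}}+\Xi_v\|v^{k+1}-x^*\|^2\le(1-\xgt)\Xi_f\D{x^k}+(1-\xgt\mu\eta/\ldots)\Xi_v\|v^k-x^*\|^2+\mathcal{C}_k,
\]
where $\mathcal{C}_k$ collects the cross terms involving $\matA^\top(y^{k+1}-y^*)$, specifically $\langle \matA^\top(y^{k+1}-y^*),\,\cdot\,\rangle$ paired with $x^{k+1}-x^*$ or $v^{k+1}-x^*$.

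For the $y$-part, the prox step is equivalent to
\[
\tfrac{1}{\xh s}\big(y^k-y^{k+1}\big)+(\matA\hat x^k-b)-\tfrac{\hat s}{\xh s}\matA(\matA^\top y^k+\nabla f(z^k))\in\partial\phi(y^{k+1}).
\]
I test this inclusion against $y^*-y^{k+1}$ and subtract the optimality relation $g^*\in\partial\phi(y^*)$. Monotonicity of $\partial\phi$ (or $\Dphi\ge0$) then yields a three-point inequality. This gives $\|y^{k+1}-y^*\|^2$ in terms of $\|y^k-y^*\|^2$, $-\|y^{k+1}-y^k\|^2$, and the cross term $\langle \matA(\hat x^k-x^*),y^{k+1}-y^*\rangle$. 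The correction $-\hat s\matA(\matA^\top y^k+\nabla f(z^k))$ is what produces the metric $I-(1-2\xgt)\hat s\matA\matA^\top$. I write $\matA^\top y^k+\nabla f(z^k)=\matA^\top(y^k-y^*)+(\nabla f(z^k)-\nabla f(x^*))$. The first piece contributes the negative term $-\hat s\|\matA^\top(y^{k}-y^*)\|^2$, and the second is controlled by cocoercivity, $\|\nabla f(z^k)-\nabla f(x^*)\|^2\le 2L\,D_f(\cdot)$. The $(1-2\xgt)$ coefficient leaves a $2\xgt\hat s\,\matA\matA^\top\succeq 2\xgt\hat s s_{\min}^2 I$ portion. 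This portion gives the contraction on $y$, and it is positive definite since $\hat s=1/s_{\max}^2$.

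The main step, and the expected obstacle, is making the cross terms from the $x$- and $y$-analyses cancel. The extrapolation $\hat x^k=\xe z^k-(\xe-1)x^k$ must be chosen so that the $y$-inequality's term $\langle\matA(\hat x^k-x^*),y^{k+1}-y^*\rangle$ exactly offsets $\mathcal{C}_k$, up to a residual involving $x^{k+1}-z^k=-t(\ldots)$. That residual is absorbed by the negative $\|y^{k+1}-y^k\|^2$ term and by the descent-lemma slack via Young's inequality. This fixes $\Xi_y/\Xi_v$ and $\xh s$ in terms of $\eta$. With these choices, the contraction factors are $1-\xgt$ for $D_f$, $1-\Theta(\mu\eta)$ for $v$, and $1-\Theta(\xgt s_{\min}^2/s_{\max}^2)$ for $y$. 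Taking the worst of these with $\xgt=\min(1/5,\frac{s_{\max}}{s_{\min}}\sqrt{\mu/8L})$ gives exactly the two branches of $\xconv$: $\frac{s_{\max}^2}{s_{\min}^2}\frac{1}{2\xgt}$ from the dual side, and $3\sqrt{L/\mu}+4\xgt L/\mu$ from the primal side, where the $4\xgt L/\mu$ term reflects the step-size restriction forced by the coupling. I would first fix all parameters symbolically and verify the cancellation identities, then check the finitely many scalar inequalities (step bounds $t\le1/L$ and $\hat s\le 1/s_{\max}^2$, nonnegativity of the leftover quadratic forms) separately in the cases $\xgt=1/5$ and $\xgt=\frac{s_{\max}}{s_{\min}}\sqrt{\mu/8L}$.
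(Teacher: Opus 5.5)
\textbf{There is a genuine gap.} Your architecture matches the paper's: separate inequalities for $\D{x^{k+1}}$, for $\|v^{k+1}-x^*\|^2$, and for the prox step, glued by cancelling cross terms. But the mechanism you give for the dual contraction is wrong, and the step meant to deliver it fails.

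\emph{The $y$-update does not produce the surplus.} The $\hat s$-correction does not by itself create the surplus $2\xgt\hat s\matA\matA^\top$. Writing $\matA^\top y^k=\matA^\top y^{k+1}-\matA^\top(y^{k+1}-y^k)$, it yields only the metric $I-\hat s\matA\matA^\top$ on both sides, plus the cross term $-2\hat s\langle \matA^\top(y^{k+1}-y^*),\gradf{z^k}+\matA^\top y^{k+1}\rangle$. In your splitting around $y^k$, you appear to gain $I+\hat s\matA\matA^\top$ on the left versus $I-\hat s\matA\matA^\top$ on the right. That gain is consumed by the term $+t\|\matA^\top(y^{k+1}-y^*)\|^2$ hidden in the primal residual $t\langle \matA^\top(y^{k+1}-y^*),\gradf{z^k}+\matA^\top y^{k+1}\rangle$.

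\emph{The residual cannot be absorbed as you propose.} Neither $-\|y^{k+1}-y^k\|^2$ nor the descent slack controls $\|\matA^\top(y^{k+1}-y^*)\|^2$. Bounding the $\langle \matA^\top(y^{k+1}-y^*),\gradf{z^k}-\gradf{x^*}\rangle$ pieces by Young and cocoercivity does not rescue this. With an $O(1)$ constant it inflates $\D{z^k}$ by $\Theta(Lt)$, which forces $\xconv\gtrsim L/\mu$ and loses acceleration. With a larger constant it creates a $\|\matA^\top(y^{k+1}-y^*)\|^2$ term you have no budget for. What is needed is exact cancellation: the weight $\Xi_y=t/(2\hat s)$, i.e.\ $s=\hat s/t$, makes the correction-term cross term cancel the descent-lemma cross term identically. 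That cancellation is the real job of the $\hat s$-correction.

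\emph{The missing idea: the $y$-contraction comes from the primal step.} In the descent inequality, the paper adds the valid inequality
\[
0\le 2\xgt t\|\gradf{z^k}-\gradf{x^*}\|^2+2\xgt t\|\gradf{z^k}+\matA^\top y^{k+1}\|^2-\xgt t\|\matA^\top(y^{k+1}-y^*)\|^2,
\]
which follows from $\|a\|^2\le 2\|b\|^2+2\|a+b\|^2$. Cocoercivity then bounds the first term by $4L\xgt t\,\D{z^k}$. The resulting term $-\xgt t\|\matA^\top(y^{k+1}-y^*)\|^2=-2\xgt\hat s\,\Xi_y\|\matA^\top(y^{k+1}-y^*)\|^2$, moved to the left, turns $I-\hat s\matA\matA^\top$ into $I-(1-2\xgt)\hat s\matA\matA^\top$. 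The price of this step explains the parameter choices:
\begin{itemize}
\item the factor $1+4L\xgt t$ on $\D{z^k}$ enters $\xe$ and $\Xi_v$;
\item the extra $2\xgt t\|\gradf{z^k}+\matA^\top y^{k+1}\|^2$, together with the inflated $\Xi_v$, fixes $t=\frac{1-4\xgt}{L(1+4\xgt)}$;
\item the requirement $\xconv\xe\mu t\ge 1$ then produces the branch $\sqrt{1/(\mu t)}+4\xgt L/\mu$.
\end{itemize}

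\emph{$\xgt$ is not the Nesterov mixing weight.} Your identity $v^{k+1}=(1-\xgt)v^k+\xgt z^k-\dots$ and your ``contraction $1-\xgt$ for $D_f$'' treat it as one. In the paper the mixing weight is $1/\xconv$ (from $\xe-1=(1-1/\xconv)\xtau$ and Jensen), and the coefficient of $\D{x^k}$ is $1-1/\xconv$. In the regime $\xgt<1/5$ one has $\xgt\asymp\frac{s_{\max}}{s_{\min}}\sqrt{\mu/L}$, whereas $1/\xconv\asymp\frac{s_{\min}}{s_{\max}}\sqrt{\mu/L}$. Using $\xgt=1/5$ as a momentum weight would break the strong-convexity step whenever $L/\mu$ is large. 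So your list of contraction factors cannot reproduce the formula for $\xconv$.
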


\subsection{Accelerated Primal--Dual Algorithm \texorpdfstring{($y$-side)}{(y-Side)}}
\label{subsec:uppery}
\citet{salim2022optimal} (and similarly \citet{kovalev2022accelerated}) insert the Chebyshev acceleration into the intermediate algorithm to achieve the complexity \(O\left(\frac{s_{\max}}{s_{\min}}\sqrt{\frac{L}{\mu}}\log\left(\frac{1}{\epsilon}\right)\right)\) when \(\frac{s_{\max}}{s_{\min}}\sqrt{\frac{\mu}{L}} =\Omega(1)\). Here, we propose the following direct acceleration Algorithm~\ref{alg:y-accelerate-prox}, which also has a complexity \(O\left(\frac{s_{\max}}{s_{\min}}\sqrt{\frac{L}{\mu}}\log(\frac{1}{\epsilon})\right)\) when \(\frac{s_{\max}}{s_{\min}}\sqrt{\frac{\mu}{L}} =\Omega(1)\). It has multiple advantages compared with the Chebyshev accelerated algorithm in \citet{salim2022optimal}: it has a simpler structure compared with a 2-loop algorithm; it could easily incorporate the proximal terms, which enables more applications of \eqref{eq:upper bound prob}; it has better performance in numerical experiments.

\begin{algorithm}[t]
\caption{$y$-side accelerated primal--dual algorithm}
\label{alg:y-accelerate-prox}
\begin{algorithmic}[1]
\REQUIRE Parameters $\tilde t, s, \hat s, \yq \ge 0, \yeta \ge 0, \yc \ge 1$
\STATE Initialize $x^{0} \in \bbR^{m}$, $y^{0}=w^{0}=u^{0} \in \bbR^{n}$
\REPEAT
\STATE $y^{k+1} = \mathrm{prox}_{s\phi}\left(w^{k} + s(\matA x^{k} - b)\right.$
\STATE \hspace{1.5em}$\left. - \hat s \matA(\matA^{\top} w^{k} + \gradf{x^{k}})\right)$
  \STATE $w^{k+1} = (1+\yq) y^{k+1} - \yq y^{k}$
  \STATE $u^{k+1} = (1+\yeta) w^{k+1} - \yeta y^{k+1}$
  \STATE $x^{k+1} = x^{k} - \tilde t(\gradf{x^{k}} + \matA^{\top} u^{k+1})$
\UNTIL{convergence}
\end{algorithmic}
\end{algorithm}

\begin{theorem}\label{thm:y-accelerate-prox-short}
Consider applying Algorithm~\ref{alg:y-accelerate-prox} to solve \eqref{eq:upper bound prob}. Let
\begin{equation}\label{eq:y-accelerate-prox para-short}
\begin{aligned}
\yc
&= \max\left(
1,
\frac{1}{\sqrt{2}}\frac{s_{\max}}{s_{\min}}\sqrt{\frac{\mu}{L}}
\right), \\
\yconv
&= \max\left(
\frac{2}{\yc}\frac{s_{\max}^2}{s_{\min}^2},
4\yc\frac{L}{\mu}
\right).
\end{aligned}
\end{equation}
For certain settings of ${\hat s}=\frac{1}{s_{\max}^2}, t=\frac{1}{2L},
\tilde t=\frac{t}{2\yc},
s,
\yeta,
\yq,
\Xi_h = 1,
\Xi_u > 0,
\Xi_x > 0$
(detailed in \Cref{appensec:proof y alg}), let
\begin{equation*}
\begin{aligned}
\Psi^k
&= \Xi_x \cdot \left[
\left\|x^k - x^*\right\|_2^2
- 2\left(t-\tilde t\right)\D{x^k}
\right] \\
&\quad + \Xi_h \cdot \frac{1}{2}\left\|\matA^{\top}\left(y^k-y^*\right)\right\|_2^2
+ \frac{1}{t}\Dphi(y^k, y^*) \\
&\quad + \Xi_u \left\|u^k - y^*\right\|^2_{\left(I - \frac{\hat s}{2}\matA\matA^{\top}\right)},
\end{aligned}
\end{equation*}
we have
\begin{equation*}
\Psi^{k+1} \le (1-1/\yconv)\Psi^k.
\end{equation*}
\end{theorem}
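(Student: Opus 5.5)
We argue by a one-step Lyapunov analysis in error coordinates $\Delta x^k = x^k - x^*$, $\Delta y^k = y^k-y^*$, $\Delta w^k = w^k - y^*$, $\Delta u^k = u^k-y^*$. We use the optimality conditions of Proposition~\ref{prop:saddle_cond_2}: $\nabla f(x^*) = -\matA^\top y^*$ and $g^*=\matA x^*-b\in\partial\phi(y^*)$.

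\textbf{Step 1: the $y$-update as a preconditioned proximal step.} Write $B = \frac{1}{s}I - \frac{\hat s}{s}\matA\matA^\top$. Since $\hat s = 1/s_{\max}^2$, we have $\matA\matA^\top \preceq s_{\max}^2 I$, so $B\succeq 0$. The prox step in line~3 of Algorithm~\ref{alg:y-accelerate-prox} gives
\[
\frac{1}{s}\bigl(w^k - y^{k+1}\bigr) + (\matA x^k - b) - \frac{\hat s}{s}\matA\bigl(\matA^\top w^k + \nabla f(x^k)\bigr) \in \partial\phi(y^{k+1}).
\]
Subtract $g^*$, take the inner product with $\Delta y^{k+1}$, and use monotonicity of $\partial \phi$ in the form of the three-point inequality (keeping $\Dphi(y^{k+1},y^*)$ rather than only $\ge 0$). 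This produces terms of the form
\[
\tfrac{1}{2}\|\Delta w^k\|_{B}^2 - \tfrac{1}{2}\|\Delta y^{k+1}\|^2_{B}\ (\text{in an appropriate metric}) + \langle \matA\Delta x^k - \hat s\matA(\nabla f(x^k)-\nabla f(x^*)),\Delta y^{k+1}\rangle,
\]
and it is also the source of the term $\frac1t \Dphi(y^k, y^*)$ appearing in $\Psi$.

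\textbf{Step 2: the $x$-update.} We expand
\[
\|\Delta x^{k+1}\|^2 = \|\Delta x^k - \tilde t(\nabla f(x^k)-\nabla f(x^*)) - \tilde t\matA^\top \Delta u^{k+1}\|^2.
\]
We then use $L$-smoothness and $\mu$-strong convexity through the standard inequality
\[
\langle \nabla f(x)-\nabla f(x^*), x-x^*\rangle \ge D_f(x,x^*) + \tfrac{1}{2L}\|\nabla f(x)-\nabla f(x^*)\|^2 + \ldots
\]
The corrected quantity $\|\Delta x\|^2 - 2(t-\tilde t)D_f(x,x^*)$ with $t = 1/(2L)$ is the natural one: it stays positive because $2(t-\tilde t)D_f \le (t-\tilde t)L\|\Delta x\|^2 < \frac12\|\Delta x\|^2$, and it absorbs the $\|\nabla f\|^2$ terms produced by the gradient step. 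Two cross terms of opposite sign must cancel: $-2\tilde t\langle\matA\Delta x^k,\Delta u^{k+1}\rangle$ from this step and $\langle\matA\Delta x^k,\Delta y^{k+1}\rangle$ from Step~1. This cancellation fixes the ratio $\Xi_x \tilde t \approx 1/s$ up to the momentum mismatch between $u^{k+1}$ and $y^{k+1}$. The extra term $-\hat s\langle\matA(\nabla f(x^k)-\nabla f(x^*)),\Delta y^{k+1}\rangle$ combines with $\frac12\|\matA^\top\Delta y\|^2$ (weight $\Xi_h$) and the gradient-norm surplus by Young's inequality.

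\textbf{Step 3: dual momentum as an accelerated (Nesterov-type) gradient step on the dual.} With $w^{k+1} = y^{k+1}+\gamma(y^{k+1}-y^k)$ and $u^{k+1} = w^{k+1}+\tau(w^{k+1}-y^{k+1})$, the pair $(y,u)$ plays the role of the $(x,v)$ pair in an estimate-sequence proof for the dual function. The dual function is $s_{\min}^2/L$-strongly concave and $s_{\max}^2/\mu$-smooth, but it is only accessed through the preconditioned metric. We choose $\tau$ and $\gamma$ so that
\[
\Xi_u\|\Delta u^{k+1}\|^2_{(I-\frac{\hat s}{2}\matA\matA^\top)} \le (1-\tfrac{1}{\Pi})\,\Xi_u\|\Delta u^k\|^2_{(\cdot)} + (\text{terms dominated by the decrease in Steps 1--2}).
\]
The contraction factor is $\Pi = \max\bigl(\frac{2}{\xi}\frac{s_{\max}^2}{s_{\min}^2},\,4\xi\frac{L}{\mu}\bigr)$. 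Here $\xi$ balances the two regimes: the primal step $\tilde t = t/(2\xi)$ is slowed down by $\xi$, which costs a factor $\xi L/\mu$ on the primal side, while the dual side gains a factor $\xi$. When $\frac{s_{\max}}{s_{\min}}\sqrt{\mu/L}\ge\sqrt2$, the choice of $\xi$ equalizes the two terms, giving $\Pi \asymp \frac{s_{\max}}{s_{\min}}\sqrt{L/\mu}$.

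\textbf{Step 4: sum and choose constants.} Summing the inequalities, we set $\Xi_h=1$ and solve for $s,\gamma,\tau,\Xi_u,\Xi_x$ so that every leftover cross term has a nonpositive coefficient. For each block of $\Psi^{k+1}$ we then verify a decrease of at least a $1/\Pi$ fraction: the $x$ block via $\mu$-strong convexity with rate $\tilde t\mu \ge 1/\Pi$, and the $\matA^\top \Delta y$ and $u$ blocks via $\|\matA^\top \Delta y\|^2\ge s_{\min}^2\|\Delta y\|^2$. We also check that $\Dphi$ only decreases.

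\textbf{Main obstacle.} The hardest part is Step~3 together with the final constant-matching. The two momentum parameters create three-point cross terms in $y^{k+1}$, $y^k$, $u^{k+1}$, and these must cancel exactly against the $\matA\Delta x$ coupling while the $\phi$ term is only monotone, not smooth. My first approach would be to write $\Delta u^{k+1}$ as a convex-type combination of $\Delta u^k$ and the prox output, in the style of the $v^k$ sequence of Theorem~\ref{thm:x-accelerate-prox-short}, and to determine $\gamma,\tau$ by requiring that the coefficient of $\langle\Delta y^{k+1}-\Delta y^k,\cdot\rangle$ vanish.
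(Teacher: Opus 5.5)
Your proposal has a genuine gap. Step~1 uses the wrong decomposition for this momentum scheme, and the piece that would repair it is exactly what you leave open in Step~3.

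\textbf{Why Step~1 fails.} Pairing the prox optimality condition with $\Delta y^{k+1}$ (a PDHG-type three-point inequality in the metric $B$) yields a coupling term proportional to $\langle M(\Delta x^k-t(\nabla f(x^k)-\nabla f(x^*))),\Delta y^{k+1}\rangle$, where $\hat s/s=t$. The $x$-update, however, is driven by $u^{k+1}=\xi y^{k+1}-(\xi-1)y^k$ and produces $-2\tilde t\langle \Delta x^k-t(\nabla f(x^k)-\nabla f(x^*)),M^\top\Delta u^{k+1}\rangle$.
\begin{itemize}
\item Since $\Delta u^{k+1}=\xi\Delta y^{k+1}-(\xi-1)\Delta y^k$, matching the $\Delta y^{k+1}$ parts leaves a cross term proportional to $(\xi-1)\langle M(\Delta x^k-t(\nabla f(x^k)-\nabla f(x^*))),\Delta y^k\rangle$. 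Nothing cancels it, it cannot be absorbed at rate $1/\Pi$, and no choice of $\gamma,\tau$ changes this.
\item The same pairing fails for $\phi$. It only gives $\langle g^{k+1}-g^*,\Delta y^{k+1}\rangle\ge D_\phi(y^{k+1},y^*)$, with no $D_\phi(y^k,y^*)$ on the right-hand side. So it cannot be the source of the $\frac1t D_\phi(y^k,y^*)$ block in $\Psi$.
\item The $B$-norms $\|\Delta w^k\|_B^2$ and $\|\Delta y^{k+1}\|_B^2$ it generates are not components of the prescribed $\Psi$. That function contains $H(y)=\frac12\|M^\top(y-y^*)\|^2$ and $\|u-y^*\|^2_{(I-\frac{\hat s}{2}MM^\top)}$ instead.
\end{itemize}

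\textbf{The missing idea.} Run the dual analysis as accelerated gradient descent on the quadratic $H$.
\begin{itemize}
\item Read $-\hat sMM^\top(w^k-y^*)$ as a gradient step on $H$ at the extrapolated point $w^k$. Apply the descent lemma for the $s_{\max}^2$-smooth $H$ from $w^k$ to $y^{k+1}$ with weight $\Xi_h=1$; this pairs everything with $y^{k+1}-w^k$.
\item Expand $\|u^{k+1}-y^*\|^2=\|\xi y^{k+1}-(\xi-1)y^k-y^*\|^2$ around $\xi w^k-(\xi-1)y^k-y^*$, with weight $\Xi_u=1/(2\xi^2\hat s)$.
\item These two pieces add up so that both the coupling term and $g^{k+1}-g^*$ are paired with $y^{k+1}-(1-\frac1\xi)y^k-\frac1\xi y^*=\frac1\xi(u^{k+1}-y^*)$.
\end{itemize}

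This pairing gives two things.
\begin{itemize}
\item Exact cancellation with the primal cross term for $\Xi_x=1/t^2$. No Young's inequality is needed: the $-2(t-\tilde t)D_f$ correction, combined with $D_f(x^{k+1},x^*)\ge D_f(x^k,x^*)+\langle\nabla f(x^k)-\nabla f(x^*),x^{k+1}-x^k\rangle$, already puts the gradient into the primal cross term in the matching form. The leftover $\Xi_x\tilde t^2\|M^\top\Delta u^{k+1}\|^2$ is absorbed exactly by the $-\frac{\hat s}{2}MM^\top$ in the $u$-norm, since $\Xi_x\tilde t^2=\Xi_u\hat s/2$.
\item By convexity of $D_\phi(\cdot,y^*)$ at that convex combination, $D_\phi(y^{k+1},y^*)-\langle g^{k+1}-g^*,\frac1\xi(u^{k+1}-y^*)\rangle\le(1-\frac1\xi)D_\phi(y^k,y^*)$.
\end{itemize}

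\textbf{How the parameters and the rate actually arise.}
\begin{itemize}
\item The momentum parameters are not fixed by a cancellation condition. They are set by $\xi=1+\gamma+\gamma\tau$ and $\tau=(\xi-1)/(1-1/\Pi)$, so that Jensen gives $(1-\frac1\Pi)\|u^k-y^*\|^2+\frac1\Pi\|w^k-y^*\|^2$.
\item The branch $\Pi\ge\frac2\xi\frac{s_{\max}^2}{s_{\min}^2}$ is exactly what lets the negative $\|M^\top(w^k-y^*)\|^2$ term from the $u$-expansion dominate $H(w^k)$ plus $\frac{1}{\Pi s_{\min}^2}\|M^\top(w^k-y^*)\|^2$.
\item $H(y^k)$ contracts by the factor $(\xi-\frac12)(\xi-1)/\xi^2\le 1-\frac1\xi$ coming from the $u$-expansion, not by a strong-convexity bound as your Step~4 suggests.
\end{itemize}
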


Note that when \(\frac{s_{\max}}{s_{\min}}\sqrt{\frac{\mu}{L}} = O(1)\), \Cref{thm:x-accelerate-prox-short} implies, and when \(\frac{s_{\max}}{s_{\min}}\sqrt{\frac{\mu}{L}} = \Omega(1)\), \Cref{thm:y-accelerate-prox-short} implies, that the number of iterations required to achieve \(\Psi^k \le \epsilon\) satisfies
\begin{equation*}
O\left(\frac{\log(1/\epsilon)}{\log\left(1/(1-1/\Pi)\right)}\right)
= O\left(\frac{s_{\max}}{s_{\min}}\sqrt{\frac{L}{\mu}}\log\left(\frac{1}{\epsilon}\right)\right).
\end{equation*}

\begin{remark}\label{remark:intuitions_of_direct_acceleration}
Let us explain the intuition behind our direct acceleration designs. When the $x$-side condition number, or equivalently the condition number of $f$, dominates, namely when 
\(
\frac{s_{\max}}{s_{\min}}\sqrt{\frac{\mu}{L}} = O(1),
\) 
$x$-side acceleration is necessary to obtain a rate whose dependence on the condition number of $f$ matches the $\sqrt{\frac{L}{\mu}}$ scale, as in a simpler problem $\min_x f(x)$. On the other hand, when 
\(
\frac{s_{\max}}{s_{\min}}\sqrt{\frac{\mu}{L}} = \Omega(1),
\) 
$y$-side acceleration is necessary to obtain a rate whose dependence on the condition number of the coupling matrix matches the $\frac{s_{\max}}{s_{\min}}$ scale, as in a simpler problem $\min_y \left\|\matA^{\top}y-c\right\|_2^2$.

From the $y$-side perspective, linear convergence of first-order methods can be understood through the strong concavity of the value function 
\(
\Phi_y(y) := \inf_{x \in \bbR^m} F(x,y),
\) 
which is induced by the full row rank of the bilinear coupling matrix $\matA$. In particular, $\Phi_y$ is $\frac{s_{\min}^2}{L}$-strongly concave. Therefore, when $\frac{s_{\min}}{s_{\max}}$ is small, acceleration must act on the $y$-iterates or otherwise leverage the spectral structure of $\matA$. This viewpoint motivates our Lyapunov function, in which the term \(
\frac{1}{2}\left\|\matA^\top(y^k-y^*)\right\|_2^2 
\) plays a role analogous to the Bregman distance $D_f(x^k,x^*)$ used in $x$-side acceleration. Moreover, the x-update step of the algorithm uses a smaller step size on the $x$-side (below the $\frac{1}{L}$ scale) to balance the convergence of the two variables. Our direct spectral acceleration removes the inner-loop Chebyshev preconditioning, admits a straightforward incorporation of proximable dual terms \(\phi\), and empirically converges faster in our experiments.
In contrast to our algorithm, \citet{salim2022optimal} apply Chebyshev iterations with a fixed number of inner steps to precondition the linear system.
\end{remark}

\section{Stochastic Block-Coordinate Directly Accelerated Primal--Dual Algorithms}
\label{sec:upper sto}
In this section, we develop and analyze stochastic block-coordinate extensions of our directly accelerated primal--dual algorithms for problem \eqref{eq:upper bound prob sto}. Proofs and additional remarks are deferred to \Cref{appensec:proof x stoc alg,appensec:proof y stoc alg}.
We modify Assumption~\ref{assum:Assumption Linear New} to match the block-wise formulation \eqref{eq:upper bound prob sto}.

\begin{assumption}[Block-wise affinely constrained strongly convex-minimization case]\label{assum:Assumption Linear Stoc}
Suppose problem \eqref{eq:upper bound prob sto} satisfies:\\
(i) For $i=1,\ldots,N$, $x_i \in \bbR^{m_i}$, and $y \in \bbR^n$, \(m=\sum_{i=1}^N m_i\ge n\), $\matA \in \bbR^{n\times m}$ has full row rank and minimal singular value $s_{\min} > 0$. Each block matrix $\matA_i$ has maximal singular value no larger than $\bar s_{\max}$;\\
(ii) For $i=1,\ldots,N$, $f_i:\bbR^{m_i}\to\bbR$ is globally $\mu$-strongly convex and globally $\bar L$-smooth for some $\bar L \ge \mu>0$.\\
\end{assumption}
Throughout this section, Assumption~\ref{assum:Assumption Linear Stoc} is made concerning problem \eqref{eq:upper bound prob sto}. Notice \Cref{prop:saddle_cond_2} still works for the current block-wise problem, yielding optimality conditions:
\begin{equation}\label{eq:saddle_cond_sto}
\gradfp{i}{x^*}+\matA_i^{\top} y^*=0,\ \forall i;\quad \sum_{i=1}^N\matA_i x_i^* - b=0.
\end{equation}

\subsection{Stochastic Block-Coordinate Accelerated Primal--Dual Algorithm \texorpdfstring{($x$-Side)}{(x-Side)}}
\label{subsec:upperx stoc}
We present the $x$-side block-coordinate algorithm (Algorithm~\ref{alg:x-accelerate-stoc}) and its linear convergence (\Cref{thm:x-accelerate-stoc-short}), with remarks on complexity, decorrelation, and extensions.
\begin{algorithm}[t]
\caption{$x$-side stochastic block-coordinate accelerated primal--dual algorithm}
\label{alg:x-accelerate-stoc}
\begin{algorithmic}[1]
\REQUIRE Parameters $t, s, \hat s, \xh, \xga, \xtau > 0, \xe > 1$
\STATE Initialize $x_i^{0}=z_i^{0}=v_i^{0} \in \bbR^{m_i}$ for \(i=1,\dots,N\), $y^{0} \in \bbR^{n}$
\REPEAT
  \STATE $\hat x^{k} = \xe z^{k} - (\xe-1)x^{k}$
  \STATE Randomly sample $i \sim U(\{1,\ldots,N\})$
  \STATE $y^{k+1} = y^{k} + \frac{\xh s}{N}(\matA \hat x^{k} - b)$
  \STATE \hspace{1.5em}$- \hat s \matA_i(\matA_i^{\top} y^{k} + \gradfp{i}{z^{k}})$
  \STATE Randomly sample $j \sim U(\{1,\ldots,N\})$
  \STATE $x_j^{k+1} = z_j^{k} - t(\gradfp{j}{z^{k}} + \matA_j^{\top} y^{k+1})$
  \STATE $z_j^{k+1} = (1+\xga)x_j^{k+1} - \xga x_j^{k}$
  \STATE $v_j^{k+1} = (1+\xtau)z_j^{k+1} - \xtau x_j^{k+1}$
  \STATE $(x_l^{k+1}, z_l^{k+1}, v_l^{k+1}) = (x_l^{k}, z_l^{k}, v_l^{k})$ for $l \in \{1,\ldots,N\}\backslash j$
\UNTIL{convergence}
\end{algorithmic}
\end{algorithm}
\begin{theorem}\label{thm:x-accelerate-stoc-short}
Consider applying Algorithm~\ref{alg:x-accelerate-stoc} to solve \eqref{eq:upper bound prob sto}. Let
\begin{equation}\label{eq:x-accelerate-stoc para-short}
\begin{aligned}
\xgt &= \min\left(\frac{1}{10},\ \sqrt{\frac{4}{7}}\frac{\bar s_{\max}}{s_{\min}}\sqrt{\frac{\mu}{\bar L}}\right),\\
\xconv &= N\max\left(\frac{\bar s_{\max}^2}{s_{\min}^2 } \cdot \frac{16}{7\xgt},\ 
\sqrt{\frac{14\bar L}{\mu}}+\frac{\bar L}{\mu}\cdot 4\xgt \right).
\end{aligned}
\end{equation}
For certain settings of
\({\hat s} = \frac{7}{32}\frac{1}{\bar s_{\max}^2}\), \(s\), \(t\), \(\xe\), \(\xtau\), \(\xga\), \(\xh,\ 
\Xi_f=1,\ \Xi_y,\ \Xi_v>0
\)
(detailed in \Cref{appensec:proof x stoc alg}), let
\begin{equation*}
\begin{aligned}
\Psi^k
&= \Xi_y \left\|y^k-y^*\right\|^2_{
\left(I-(1-2\xgt)\frac{{\hat s}}{N}\matA\matA^{\top}\right)} \\
&
+ \D{x^k}
+ \Xi_v \left\|v^k-x^*\right\|_2^2,
\end{aligned}
\end{equation*}
we have
\begin{equation*}
\bbE\Psi^{k+1}\le \left(1-\frac{1}{\xconv}\right)\bbE\Psi^k.
\end{equation*}
\end{theorem}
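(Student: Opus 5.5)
The plan is to adapt the Lyapunov argument behind \Cref{thm:x-accelerate-prox-short} to the randomized setting by conditioning on the past and taking expectations over the two independent samples $i$ and $j$. Here $\phi\equiv 0$, the optimality conditions \eqref{eq:saddle_cond_sto} hold, and $\mathbb{E}_i$, $\mathbb{E}_j$ denote expectations over the samples.

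\textbf{Step 1: the $y$-update.} Write $y^{k+1}-y^*$ as a deterministic part plus a zero-mean noise term. Using $\sum_i\matA_i x_i^*=b$ and $\gradfp{i}{x^*}=-\matA_i^\top y^*$,
\[
y^{k+1}-y^* = \Bigl(I-\hat s\matA_i\matA_i^\top\Bigr)(y^k-y^*) + \frac{\xh s}{N}\matA(\hat x^k-x^*) - \hat s\matA_i\Bigl(\gradfp{i}{z^k}-\gradfp{i}{x^*}\Bigr).
\]
Its conditional mean is
\[
\Bigl(I-\tfrac{\hat s}{N}\matA\matA^\top\Bigr)(y^k-y^*) + \frac{\xh s}{N}\matA(\hat x^k-x^*) - \frac{\hat s}{N}\matA\bigl(\nabla f(z^k)-\nabla f(x^*)\bigr).
\]
This is exactly the deterministic update of \Cref{alg:x-accelerate-prox} with $\hat s$ replaced by $\hat s/N$ and $\xh s$ by $\xh s/N$.

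The second moment adds a variance term. It is bounded by
\[
\mathbb{E}_i\,\hat s^2\bigl\|\matA_i\bigl(\matA_i^\top(y^k-y^*)+\gradfp{i}{z^k}-\gradfp{i}{x^*}\bigr)\bigr\|^2 \le \frac{\hat s^2\bar s_{\max}^2}{N}\Bigl(2\|\matA^\top(y^k-y^*)\|^2 + 2\|\nabla f(z^k)-\nabla f(x^*)\|^2\Bigr).
\]
The choice $\hat s=\tfrac{7}{32}\bar s_{\max}^{-2}$ makes these terms absorbable. The first goes into the $-\tfrac{\hat s}{N}\|\matA^\top(y^k-y^*)\|^2$ contraction, leaving the metric $I-(1-2\xgt)\tfrac{\hat s}{N}\matA\matA^\top$. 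The second goes into $D_f$ via $\|\nabla f(z)-\nabla f(x^*)\|^2\le 2\bar L D_f(z,x^*)$, using blockwise smoothness.

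\textbf{Step 2: the $x$-update.} Since $j$ is sampled independently of $i$, the block update is decorrelated from the $y$-noise. Conditionally on $y^{k+1}$, the expected change satisfies
\[
\mathbb{E}_j[x^{k+1}-z^k] = \tfrac1N\bigl(\text{full gradient step}\bigr).
\]
Separability of $f$ gives
\[
\mathbb{E}_j D_f(x^{k+1},x^*) = \bigl(1-\tfrac1N\bigr)D_f(x^k\text{-type terms}) + \tfrac1N D_f(\text{full-step point}).
\]
Here the structure $z=(1+\xga)x-\xga x^{\rm old}$ on the untouched blocks is what makes the per-block Bregman bookkeeping work. I would use the blockwise descent lemma with $t\le 1/\bar L$ and blockwise strong convexity. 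For $v^{k+1}$, the update on block $j$ is a linear combination of $z_j^k$ and the gradient step, so
\[
\mathbb{E}_j\|v^{k+1}-x^*\|^2 = \bigl(1-\tfrac1N\bigr)\|v^k-x^*\|^2 + \tfrac1N\|\bar v^{k+1}-x^*\|^2,
\]
where $\bar v^{k+1}$ is the full-update counterpart. This is the standard APPROX/accelerated coordinate descent trick: the momentum parameters $\xga,\xtau$ are scaled by $1/N$ relative to the deterministic ones, so that $v$ plays the role of the estimate sequence.

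\textbf{Step 3: combine.} Take $\Xi_f=1$ and choose $\Xi_y,\Xi_v$ and $\xe,\xh,s,t$ as in the deterministic proof with the $1/N$ rescaling. Then sum the three expected inequalities. The cross terms $\langle \matA^\top(y^{k+1}-y^*),\,\cdot\rangle$ produced by the $x$-step should cancel against the $\tfrac{\xh s}{N}\matA(\hat x^k-x^*)$ term from the $y$-step, with $\xe$ chosen to match the extrapolation to $v^k$. What remains is $\mathbb{E}\Psi^{k+1}\le(1-1/\xconv)\Psi^k$, where the two branches of $\xconv$ come from two limits:
\begin{itemize}
\item the $y$-contraction $\tfrac{2\xgt\hat s s_{\min}^2}{N}$, giving the branch $\tfrac{16}{7\xgt}\tfrac{\bar s_{\max}^2}{s_{\min}^2}$;
\item the accelerated $x$-contraction $\sim\tfrac1N\sqrt{\mu/\bar L}$, degraded by the $\xgt$-dependent smoothing, giving the branch $\sqrt{14\bar L/\mu}+4\xgt\bar L/\mu$.
\end{itemize}
Taking total expectation then finishes the proof.

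\textbf{Main obstacle.} I expect Step 3 to be the hardest: cancelling the cross terms in expectation while simultaneously absorbing the variance from Step 1. The constants $7/32$, $1/10$ and $\sqrt{4/7}$ suggest a tight budget. I would first try to mirror the deterministic inequality line by line, inserting $\mathbb{E}_i$ and $\mathbb{E}_j$ and Young's inequality with weights tuned so that the leftover variance consumes at most half of each contraction.
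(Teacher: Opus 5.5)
\textbf{The gap is in Step 1.} Your overall architecture matches the paper: separability, $\bbE_j$ giving $(1-\frac1N)$ old blocks plus $\frac1N$ full step, independence of $i$ and $j$, and the Jensen step for $v$. The problem is how you absorb the variance of the $y$-update. Done that way, you cannot reach the stated accelerated $\xconv$.

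You bound the second moment by $\frac{2\hat s^2\bar s_{\max}^2}{N}(\|\matA^\top(y^k-y^*)\|^2+\|\nabla f(z^k)-\nabla f(x^*)\|^2)$. You then push the pieces into the $y$-contraction and into $D_f(z^k,x^*)$. In the Lyapunov combination you intend to mirror, each of these has only $O(\xgt)$ slack.

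\emph{The $y$-side.} The $\frac{\hat s}{N}\matA\matA^\top$ part of the metric is entirely used to cancel the descent-step cross term $\frac tN\langle \matA^\top(y^{k+1}-y^*),\nabla f(z^k)+\matA^\top y^{k+1}\rangle$; this is what fixes $\Xi_y=t/(2\hat s)$. The only net contraction is $\xgt\frac tN\|\matA^\top(y^{k+1}-y^*)\|^2=2\xgt\,\Xi_y\frac{\hat s}{N}\|\matA^\top(y^{k+1}-y^*)\|^2$. That term, moved to the left-hand side, is where the factor $1-2\xgt$ in the metric comes from, not variance absorption. Your variance term has relative weight $2\hat s\bar s_{\max}^2=\frac7{16}>2\xgt$, so it does not fit.

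\emph{The $x$-side.} The coefficient of $D_f(z^k,x^*)$ must be cancelled exactly by $-2\Xi_v\xe^2\frac tN D_f(z^k,x^*)$. Suppose that coefficient is $\frac1N(1+e)$. Then $\xe=\frac{1+e}{e+N/\xconv}$, and the requirement $\xconv\xe\mu t\ge N$ (needed to kill the Jensen remainder $\frac1{\xconv}\|z^k-x^*\|^2$) forces $\xconv\ge\frac{Ne}{(1+e)\mu t}$. The paper keeps $e=4\bar L\xgt t$, which is exactly the $4\xgt\bar L/\mu$ in the second branch of $\xconv$. Your absorption adds $\Xi_y\cdot\frac{4\hat s^2\bar s_{\max}^2\bar L}{N}=\frac{7}{16}\frac{\bar L t}{N}$, so $e\ge\frac7{16}\bar L t$ regardless of $\xgt$. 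That gives $\xconv\gtrsim N\bar L/\mu$, the non-accelerated rate. Tuning $t$ cannot fix this, since essentially only $e/t$ enters, and $\hat s$ is fixed by the statement.

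\textbf{The missing idea} is to split the noise along the current residual rather than around the saddle point. The paper writes the variance as $\frac{\hat s^2}{N}\sum_l\|\matA_l(\nabla f_l(z_l^k)+\matA_l^\top y^k)\|^2$ and uses $\nabla f_l(z_l^k)+\matA_l^\top y^k=(\nabla f_l(z_l^k)+\matA_l^\top y^{k+1})-\matA_l^\top(y^{k+1}-y^k)$ with Young's inequality (weight $\beta=7$). It then absorbs the two pieces into terms with \emph{constant-order} negative weight:
\begin{itemize}
\item $-\frac tN(1-\frac{\bar L}2t-2\xgt)\|\nabla f(z^k)+\matA^\top y^{k+1}\|^2$ from the descent lemma, paid for by shrinking $t$ to $\frac{1-4\xgt-2(1+1/\beta)\hat s\bar s_{\max}^2}{\bar L(1+4\xgt)}$;
\item $-\Xi_y\|y^{k+1}-y^k\|^2_{(I-\frac{\hat s}{N}\matA\matA^\top)}$, which appears in the \emph{exact} expansion of the $y$-metric and requires $1-\hat s\bar s_{\max}^2-2(1+\beta)\hat s^2\bar s_{\max}^4\ge 0$.
\end{itemize}
This is precisely where $\hat s=\frac7{32}\bar s_{\max}^{-2}$, $t=\frac{1-8\xgt}{2+8\xgt}\frac1{\bar L}$, and hence $\sqrt{14\bar L/\mu}$ come from.

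For this to work, the $y$-step must be analyzed in the implicit form, with inner products against $y^{k+1}-y^*$ as in the deterministic proof, not as a mean-plus-noise expansion around $y^k$. Then, after taking $\bbE_j$ conditionally on $y^{k+1}$, the cross terms $\bbE\langle\matA^\top(y^{k+1}-y^*),\nabla f(z^k)+\matA^\top y^{k+1}\rangle$ and $\bbE\langle\matA^\top(y^{k+1}-y^*),\hat x^k-x^*\rangle$ cancel exactly. Your Step 3 leaves exactly this cancellation unresolved.
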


\begin{remark}\label{remark:stoc_x_complexity}
By \Cref{thm:x-accelerate-stoc-short}, when
\(\frac{\bar s_{\max}}{s_{\min}}\sqrt{\frac{\mu}{\bar L}} =O(1)\),
the number of iterations required to reach \(\bbE[\Psi^k]\le \epsilon\) satisfies
\begin{equation*}
O\left(\frac{\log(1/\epsilon)}{\log\left(\frac{1}{1-1/\xconv}\right)}\right)
=
O\left(N\frac{\bar s_{\max}}{s_{\min}}\sqrt{\frac{\bar L}{\mu}}\log\left(\frac{1}{\epsilon}\right)\right).
\end{equation*}
Similarly, \Cref{thm:y-accelerate-stoc-short} implies that when
\(\frac{\bar s_{\max}}{s_{\min}}\sqrt{\frac{\mu}{\bar L}} =\Omega(1)\),
the iteration complexity to achieve \(\bbE[\Psi^k]\le \epsilon\) is again
\(O\left(N\frac{\bar s_{\max}}{s_{\min}}\sqrt{\frac{\bar L}{\mu}}\log\left(\frac{1}{\epsilon}\right)\right).\)
This convergence rate is shown to be optimal given the lower bound in \Cref{cor:main_stoc_short}. Compared with the deterministic algorithms, which require
\(O\left(\frac{s_{\max}}{s_{\min}}\sqrt{\frac{L}{\mu}}\log\left(\frac{1}{\epsilon}\right)\right)\)
full matrix multiplications and full oracle access, the block-coordinate variants are advantageous when
\(\bar{s}_{\max} \ll s_{\max}\), a regime that typically arises when \(N\) is large.

Our convergence rate can also be interpreted from the dual perspective. The following problem is equivalent to
\eqref{eq:upper bound prob sto}:
\begin{equation*}
\min_{y\in \bbR^n}\ 
G(y)=b^{\top}y+\sum_{i=1}^N G_i(y)
=
b^{\top}y+\sum_{i=1}^N f_i^*\left(-\matA_i^{\top}y\right),
\end{equation*}
where \(f_i^*\) denotes the Fenchel conjugate of \(f_i\).
Under Assumption~\ref{assum:Assumption Linear Stoc}, \(G\) is
\(\frac{s_{\min}^2}{\bar L}\)-strongly convex, and each component \(G_i\) is
\(\frac{\bar s_{\max}^2}{\mu}\)-smooth. If one could access component oracles for \(\nabla G_i\) directly,
then classical accelerated stochastic methods for finite-sum optimization
apply and yield the oracle complexity
\begin{equation*}
\begin{aligned}
&O\left(\left(N+\sqrt{\frac{N\cdot \sum_{i=1}^N \frac{\bar s_{\max}^2}{\mu}}{\frac{s_{\min}^2}{\bar L}}}\right)
\log\left(\frac{1}{\epsilon}\right)\right)\\
=\ &
O\left(N\frac{\bar s_{\max}}{s_{\min}}\sqrt{\frac{\bar L}{\mu}}
\log\left(\frac{1}{\epsilon}\right)\right),    
\end{aligned}
\end{equation*}
see, e.g., \citet{allen2017katyusha,lin2018catalyst,li2021anita} for representative results. This matches the scaling of our bounds, even though our algorithm does not assume direct access to \(\nabla G_i\).
\end{remark}

\begin{remark}\label{remark:stoc_x_ij}
We sample two block indices \(i\) and \(j\) independently. This decoupling is essentially used in the convergence proof: if one enforces \(j=i\), then a direct adaptation of the deterministic Lyapunov argument would involve terms such as
\begin{equation*}
\|y^{k+1}-y^{*}\|_{(I-\hat s\,\matA_i\matA_i^\top)}^2,
\end{equation*}
whose conditional expectation is hard to control since \(y^{k+1}\) depends on the same random index \(i\). Independent sampling makes the expected Lyapunov decrease tractable; we use the same idea in Algorithm~\ref{alg:y-accelerate-stoc}.

The trade-off is a higher per-iteration oracle/matrix-access cost. Empirically, however, setting \(j=i\) performs similarly to independent sampling in our experiments (Figure~\ref{Fig_cst_stoc_appen}).
\end{remark}

\begin{remark}\label{remark:stoc_x_nonseparable}
The \(x\)-updates in Algorithm~\ref{alg:x-accelerate-stoc} are reminiscent of accelerated coordinate descent methods \cite{allen2016even}. In fact, consider a more general setting in which \eqref{eq:upper bound prob sto} is replaced by a problem with a non-separable objective \(f(x)=f(x_1,\dots,x_N)\). If we replace the component gradients \(\nabla f_j(x_j)\) by the block partial gradients \(\nabla_j f(x)\), then with modifications to the \(x\)-side steps of Algorithm~\ref{alg:x-accelerate-stoc} (see \Cref{appensubsec:x-stoc-nonsep}), one can establish a comparable convergence rate
\(
O\left(N\frac{\bar s_{\max}}{s_{\min}}\sqrt{\frac{L}{\mu}}\log\left(\frac{1}{\epsilon}\right)\right),
\)
under the regime \(\frac{\bar s_{\max}}{s_{\min}}\sqrt{\frac{\mu}{L}} =O(1)\), where \(f\) is \(L\)-smooth in \(x\). On the other hand, the \(y\)-side block-coordinate method Algorithm~\ref{alg:y-accelerate-stoc} does not extend naturally, since it relies on maintaining a stored quantity \(\matA \nabla f(x^k)\) while updating only a single block contribution \(\matA_i \nabla_i f(x^k)\) per iteration, a decomposition that is unavailable when \(f\) is non-separable.
\end{remark}
\subsection{Stochastic Block-Coordinate Accelerated Primal--Dual Algorithm \texorpdfstring{($y$-Side)}{(y-Side)}}
\label{subsec:uppery stoc}
We present the $y$-side block-coordinate algorithm (Algorithm~\ref{alg:y-accelerate-stoc}) and its linear convergence (\Cref{thm:y-accelerate-stoc-short}), with remarks on per-iteration complexity and variance reduction.
\begin{algorithm}[t]
\caption{$y$-side stochastic block-coordinate accelerated primal--dual algorithm}
\label{alg:y-accelerate-stoc}
\begin{algorithmic}[1]
\REQUIRE Parameters $\tilde t, s, \hat s, \yeta > 0, \yc > 1$
\STATE Initialize $x_i^{0} \in \bbR^{m_i}$ for $i=1,\ldots,N$, $y^{0}=w^{0}=u^{0} \in \bbR^{n}$
\REPEAT
  \STATE Randomly sample $i \sim U(\{1,\ldots,N\})$
  \STATE $\tilde y^{k+1} = w^{k} + \frac{s}{N}(\matA x^{k} - b)$
  \STATE \hspace{1.5em}$- \frac{{\hat s}}{N}\matA(\matA^{\top} y^{k} + \gradf{x^{k}}) - \hat s \matA_i\matA_i^{\top}(w^{k} - y^{k})$
\STATE $
y^{k+1} =
\left\{
\begin{aligned}
&\tilde y^{k+1} & \text{with probability } p=\frac{1}{N} \\
&y^{k} & \text{with probability } 1-p
\end{aligned}
\right.
$
  \STATE $u^{k+1} = \yc \tilde y^{k+1} - (\yc-1)y^{k}$
  \STATE $w^{k+1} = \frac{\yeta}{1+\yeta}y^{k+1} + \frac{1}{1+\yeta}u^{k+1}$
  \STATE Randomly sample $j \sim U(\{1,\ldots,N\})$
  \STATE $x_j^{k+1} = x_j^{k} - \tilde t(\gradfp{j}{x^{k}} + \matA_j^{\top} u^{k+1})$
  \STATE $x_l^{k+1} = x_l^{k}$ for $l \in \{1,\ldots,N\}\backslash j$
\UNTIL{convergence}
\end{algorithmic}
\end{algorithm}

\begin{theorem}\label{thm:y-accelerate-stoc-short}
Consider applying Algorithm~\ref{alg:y-accelerate-stoc} to solve \eqref{eq:upper bound prob sto}. Let
\begin{equation}\label{eq:y-accelerate-stoc para-short}
\begin{aligned}
\yc &= \max\left(\frac{1}{1-\sqrt{\frac23}},\ \sqrt{2}\frac{\bar s_{\max}}{s_{\min}}\sqrt{\frac{\mu}{\bar L}}\right),\\
\quad \yconv &= N\max\left(\frac{8}{\yc}\cdot\frac{\bar s_{\max}^2}{s_{\min}^2},\ 4\yc\cdot\frac{\bar L}{\mu}\right),
\end{aligned}
\end{equation}
For certain settings of 
\begin{align*}
\hat s = \frac{1}{4}\cdot\frac{1}{\bar s_{\max}^2},
\quad t = \frac{1}{2\bar L},\qquad
\tilde t = \frac{t}{2\yc},
\end{align*}
and the remaining parameters (detailed in \Cref{appensec:proof y stoc alg}), let
\begin{equation*}
\begin{aligned}
&\Psi^k
= \Xi_x \cdot \left[
\left\|x^k - x^*\right\|_2^2
- 2(t-\tilde t)\D{x^k}
\right] \\
&\quad + \frac12\left\|\matA^{\top}\left(y^k-y^*\right)\right\|_2^2
+ \Xi_u \left\|u^k-y^*\right\|^2_
{\left(I-\frac{\hat s}{2N}\matA\matA^{\top}\right)}.
\end{aligned}
\end{equation*}
Then
\begin{equation*}
\bbE\Psi^{k+1}\le (1-1/\yconv)\bbE\Psi^k.
\end{equation*}
\end{theorem}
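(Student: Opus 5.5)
The plan is to follow the proof of \Cref{thm:y-accelerate-prox-short} step by step, with two changes: every deterministic inequality becomes a conditional expectation over the independent indices $i$ and $j$, and the randomized $y$-update is handled as a loopless variance-reduced (Katyusha/L-SVRG-style) step. Throughout, write $\bbE_k$ for expectation conditional on $(x^k,y^k,w^k,u^k)$.

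\textbf{Step 1 ($y$-side one-step inequality).} Use $\bbE_i[\matA_i\matA_i^\top]=\frac1N\matA\matA^\top$ and the optimality conditions \eqref{eq:saddle_cond_sto}. In expectation over $i$, $\tilde y^{k+1}$ is a preconditioned gradient step
\[
w^k-\tfrac{1}{N}\bigl(s(b-\matA x^k)+\hat s\,\matA(\matA^\top w^k+\nabla f(x^k))\bigr)
\]
on the dual function. Its deviation $\hat s(\matA_i\matA_i^\top-\frac1N\matA\matA^\top)(w^k-y^k)$ has second moment bounded by $\hat s^2\bar s_{\max}^2\frac1N\|\matA^\top(w^k-y^k)\|^2$. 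The choice $\hat s=\frac14\bar s_{\max}^{-2}$ lets this variance be absorbed into a fraction of the $\frac1{2N}\|\matA^\top(\cdot)\|^2$ terms.

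Then expand $\|u^{k+1}-y^*\|^2_{(I-\frac{\hat s}{2N}\matA\matA^\top)}$ using $u^{k+1}=\yc\tilde y^{k+1}-(\yc-1)y^k$. The coefficient $\yc\ge 1/(1-\sqrt{2/3})$ is what keeps the momentum/extrapolation cross terms controllable after the probability-$p$ coin flip. That coin flip gives
\[
\bbE_k\bigl[\tfrac12\|\matA^\top(y^{k+1}-y^*)\|^2\bigr]=\tfrac1N\cdot\tfrac12\|\matA^\top(\tilde y^{k+1}-y^*)\|^2+\bigl(1-\tfrac1N\bigr)\tfrac12\|\matA^\top(y^k-y^*)\|^2 .
\]
Combine this with $w^{k+1}=\frac{\yeta}{1+\yeta}y^{k+1}+\frac1{1+\yeta}u^{k+1}$, mirroring the deterministic $w,u$ relations.

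\textbf{Step 2 ($x$-side one-step inequality).} Since $j$ is independent of $i$, we have $\bbE_j[\|x^{k+1}-x^*\|^2]=\|x^k-x^*\|^2+\frac1N(\cdots)$, where $(\cdots)$ is the full deterministic gradient-step change with step $\tilde t$ and dual iterate $u^{k+1}$. The same factor $\frac1N$ multiplies the change in $D_f(x,x^*)$, because $f$ is separable and only block $j$ moves. Applying the deterministic bound from the proof of \Cref{thm:y-accelerate-prox-short} blockwise, with $\bar L$ in place of $L$ and $t=\frac1{2\bar L}$, gives a decrease of order $\frac{\mu\tilde t}{N}$ in the bracketed $x$-term. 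It also produces a coupling term $\frac{2\tilde t}{N}\langle \matA(x^k-x^*),u^{k+1}-y^*\rangle$, which cancels against the corresponding dual-side inner product from Step 1 once $\Xi_x$ is chosen so the coefficients match. This cancellation relies on the primal residual $\matA x^k-b=\matA(x^k-x^*)$ appearing in $\tilde y^{k+1}$.

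\textbf{Step 3 (combine and tune parameters).} Add the two inequalities with weights $\Xi_x$ and $\Xi_u$, and use strong convexity of the dual, $\|\matA^\top v\|^2\ge s_{\min}^2\|v\|^2$, to turn the $\|\matA^\top(\cdot)\|^2$ decrease into contraction of the $u$- and $y$-terms. The two sources of contraction are then:
\begin{itemize}
\item a dual rate of order $\frac{\yc s_{\min}^2}{8N\bar s_{\max}^2}$, from the effective stepsize $\yc\hat s$ along the extrapolated direction;
\item a primal rate of order $\frac{\mu}{4N\yc\bar L}$, from $\tilde t=\frac{t}{2\yc}$.
\end{itemize}
Taking the minimum gives $1/\yconv$, and $\yc$ as in \eqref{eq:y-accelerate-stoc para-short} balances the two.

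\textbf{Main obstacle.} The hardest part is Step 1: showing that the leftover terms are nonpositive. These are the cross terms between $\tilde y^{k+1}-y^k$, $w^k-y^k$ and the variance term, which survive after averaging over $i$ and the Bernoulli coin. I would first establish a quadratic-form inequality in the three vectors $\matA^\top(\tilde y^{k+1}-y^k)$, $\matA^\top(w^k-y^k)$ and $\matA^\top(y^k-y^*)$, and choose $\yeta$ (expected around $\yc-1$ over $N$) and $\Xi_u$ so that the resulting $3\times3$ coefficient matrix is negative semidefinite. The bound $\yc\ge (1-\sqrt{2/3})^{-1}$ is presumably exactly what makes the relevant discriminant nonnegative.
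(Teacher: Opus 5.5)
Your architecture matches the paper's: the three-part Lyapunov function, independent $i$ and $j$, the coin-flip identity $\bbE H(y^{k+1})=(1-\frac1N)H(y^k)+\frac1N H(\tilde y^{k+1})$ (valid because $u^{k+1}$ is built from $\tilde y^{k+1}$), the variance bound $\frac{\hat s^2\bar s_{\max}^2}{N}\|\matA^\top(w^k-y^k)\|^2$, and the two rates whose minimum is $1/\yconv$. But two steps would not go through as you state them.

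First, your guess $\yeta\approx(\yc-1)/N$ destroys the only mechanism that contracts the $u$-term. Expanding $\|u^{k+1}-y^*\|^2$ produces $\|\yc w^k-(\yc-1)y^k-y^*\|^2$. The paper needs the exact convex combination $\yc w^k-(\yc-1)y^k=(1-\frac{1}{\yconv})u^k+\frac{1}{\yconv}w^k$, i.e.\ $(1-\frac{1}{\yconv})\yeta=\yc-1$, so $\yeta=\frac{\yc-1}{1-1/\yconv}\approx\yc-1$. Jensen plus $\|w^k-y^*\|^2\le s_{\min}^{-2}\|\matA^\top(w^k-y^*)\|^2$ then gives the contraction. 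With $\yeta$ smaller by a factor $N$, that point lies beyond $u^k$ on the ray from $w^k$, and you get a coefficient larger than $1$ on $\|u^k-y^*\|^2$. The $1/N$ slowdown belongs in $\yconv$, not in $\yeta$.

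Second, the coupling cancellation is not between $\langle\matA(x^k-x^*),\cdot\rangle$ terms alone. The $x$-side produces $-\frac{2\tilde t}{N}\langle x^k-x^*-t(\nabla f(x^k)-\nabla f(x^*)),\matA^\top(u^{k+1}-y^*)\rangle$; producing the gradient correction is what the $-2(t-\tilde t)D_f$ part of the potential is for. This matches the dual vector $s\matA(x^k-x^*)-\hat s\matA(\nabla f(x^k)-\nabla f(x^*))$ with $s=\hat s/t$. On the dual side that vector is paired with $\tilde y^{k+1}-w^k$ (from the smoothness bound on $H$) and with $\yc w^k-(\yc-1)y^k-y^*$ (from the $u$-expansion). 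These merge into a single pairing with $u^{k+1}-y^*$ only when $2\Xi_u\yc^2\hat s=1$ (the weight on $H$). Only then can $\Xi_x=\Xi_u\yc s/\tilde t$ cancel it.

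Your ``main obstacle'' is also closed more simply than by a $3\times3$ semidefiniteness certificate, and the vectors you list are not the right ones. After the polarization identity $2\langle\matA\matA^\top(w^k-y^k),w^k-y^*\rangle=\|\matA^\top(w^k-y^*)\|^2+\|\matA^\top(w^k-y^k)\|^2-\|\matA^\top(y^k-y^*)\|^2$, every leftover is a single square with its own coefficient. The step vector is $\tilde y^{k+1}-w^k$ in the plain Euclidean norm, not $\matA^\top(\tilde y^{k+1}-y^k)$.

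\begin{itemize}
\item The paper's full version uses $\hat s=\frac{1}{(1+1/\beta)\bar s_{\max}^2}$ and $\tilde t=\alpha t/\yc$, where $\beta$ is the Young parameter applied to the noise cross term $\langle\matA\matA^\top(w^k-y^k)-N\matA_i\matA_i^\top(w^k-y^k),\tilde y^{k+1}-w^k\rangle$. With this choice the coefficient $-(\frac{1}{\hat s}-\frac{\bar s_{\max}^2}{2}-\frac{\bar s_{\max}^2}{2\beta})+\Xi_u\yc^2$ of $\|\tilde y^{k+1}-w^k\|^2$ vanishes exactly.
\item The leftover $\frac{\beta}{2N}\|\matA^\top(w^k-y^k)\|^2$ is absorbed by the momentum term $-\Xi_u[\yc(\yc-1)-\alpha(1+\yeta)(\yc-1)]\frac{\hat s}{N}\|\matA^\top(w^k-y^k)\|^2$. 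With $\alpha=\frac12$, $\beta=\frac13$ and $1+\yeta\le 1+\yc$, this requirement reads $\frac12(\yc-1)^2\ge\frac13\yc^2$, i.e.\ $\yc\ge(1-\sqrt{2/3})^{-1}$. So that bound concerns the variance term, not the coin flip.
\item $H(y^k)$ contracts through its coefficient $1-\frac1N+\frac{(\yc-\alpha)(\yc-1)}{N\yc^2}\le 1-\frac{1}{N\yc}$, not through strong convexity.
\item The $\|\matA^\top(w^k-y^*)\|^2$ coefficient is nonpositive by the first branch of $\yconv$.
\end{itemize}
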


\begin{remark}\label{remark:stoc_y_oraclecomplexity}
Apart from the one-time cost of computing and storing $\gradf{x^0}$ and $\matA\left(x^0-t\gradf{x^0}\right)$ at initialization, each iteration of Algorithm~\ref{alg:y-accelerate-stoc} consists of a deterministic update plus a randomized refresh.

In the deterministic part, each iteration uses at most one oracle access (to update $\gradfp{j}{x^{k+1}}$) and $O(1)$ block matrix multiplications (block matrix--vector products), e.g., $\matA_i\matA_i^{\top}(w^k-y^k)$ and $\matA_j^{\top}u^{k+1}$. Moreover, $\matA\left(x^k-t\gradf{x^k}\right)$ can be stored and updated incrementally since only the $j$-th blocks of $x$ and $\gradf{x}$ change.

For the refresh of \(\matA\matA^{\top} y^{k+1}\), we set \(p = \frac{1}{N}\). When \(y^{k+1}\) is refreshed (i.e., \(y^{k+1} = \tilde{y}^{k+1}\)), we compute the additional full \(\matA/\matA^{\top}\) products needed to update; otherwise we reuse the stored value. Hence the expected cost of these refresh-related products is \(O(1)\) per iteration.
A similar $O(1)$ per-iteration oracle/matrix-access bound holds for the $x$-side method; see Remark~\ref{remark:stoc_x_oraclecomplexity} in the appendix.
\end{remark}

\begin{remark}\label{remark:stoc_y_variancereduction}
The \(y\)-updates in Algorithm~\ref{alg:y-accelerate-stoc} are closely related to accelerated stochastic methods for finite-sum problems
\cite{allen2017katyusha,lin2018catalyst,li2021anita}. It is well known that, to obtain linear convergence in such settings, one typically needs a variance-reduction mechanism.

To illustrate the point, consider the quadratic finite-sum problem
\begin{equation*}
\min_{y\in\bbR^n}\ \sum_{l=1}^N \left\|\matA_l^{\top}y-c_l\right\|_2^2 .
\end{equation*}
Using only stochastic gradients based on a single sampled index \(l\) generally does not yield the linear rate in terms of block matrix multiplications. In contrast, variance-reduced accelerated methods can achieve a complexity on the order of
\(O\left(N\frac{\bar s_{\max}}{s_{\min}}\log\left(\frac{1}{\epsilon}\right)\right)\)
block matrix multiplications.

In Algorithm~\ref{alg:y-accelerate-stoc}, we incorporate a variance-reduction step, similar in spirit to the loopless scheme in \cite{li2021anita}, to obtain the optimal rate while preserving a loopless structure.

In comparison, Algorithm~\ref{alg:x-accelerate-stoc} does not require an explicit variance-reduction mechanism to achieve the optimal rate in the regime
\( \frac{\bar s_{\max}}{s_{\min}}\sqrt{\frac{\mu}{\bar L}}=O(1)\).
In this regime, the accelerated coordinate-descent behavior on the \(x\)-side dominates the stochastic-gradient effects coming from the \(y\)-side.
\end{remark}

\section{Lower Bound Results}
\label{sec:lower}

We establish worst-case lower bounds for deterministic and stochastic block-coordinate first-order methods on
\eqref{eq:upper bound prob} and \eqref{eq:upper bound prob sto}. The bounds explicitly expose the
role of the coupling spectrum and match the rates of our direct acceleration
schemes, implying optimality up to constants. Existing works provide more general unified bounds
(e.g., \citep{kovalev2024linear}) and, for the equality-constrained strongly convex setting,
related lower bounds can be derived from \citep{salim2022optimal,scaman2017optimal}. In contrast,
our lower bound certificates use a simplified construction that makes the hard instances and their
dimensions explicit, and it also enables a direct extension from the deterministic result
(\Cref{thm:main_c1_short}) to the block-coordinate setting (\Cref{cor:main_stoc_short}). In addition, we include a smooth convex--concave lower bound (\Cref{thm:main_c2_short}) under different assumptions.
All formal statements and proofs, and detailed comparisons with existing results, are deferred to Appendix~\ref{appensec:detail construction}.

Our deterministic results apply to the standard linear-span first-order oracle model, and the
stochastic results apply to block-coordinate first-order methods that query $(\matA_i^\top y,\matA_i x_i)$
and $\nabla f_i(x_i)$ (or $\nabla_i f(x)$). Formal definitions are in Appendix~\ref{appensec:detail construction}.
We measure progress by the dual error $\|y^k-y^*\|_2$ (and the duality gaps in the appendix).

\begin{assumption}[Affinely constrained strongly convex-minimization case]\label{assum:Assumption Linear Lower 1}
Suppose problem \eqref{eq:upper bound prob} satisfies Assumption~\ref{assum:Assumption Linear New} with $\phi(y)=0$.
\end{assumption}

\begin{theorem}
\label{thm:main_c1_short}
Fix $L \ge \mu > 0$ and $s_{\max} \ge \sqrt{5}s_{\min} > 0$.
For any $\epsilon \in (0,1)$, there exists an instance of \eqref{eq:upper bound prob}
satisfying Assumption~\ref{assum:Assumption Linear Lower 1} such that any deterministic first-order method
requires
\begin{equation*}
k = \Omega\left(\frac{s_{\max}}{s_{\min}}\sqrt{\frac{L}{\mu}}
\log\left(\frac{1}{\epsilon}\right)\right)
\end{equation*}
iterations to produce an iterate with $\|y^k-y^*\|_2 \le \epsilon$.
\end{theorem}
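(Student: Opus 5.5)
We will build an explicit quadratic hard instance and argue in the linear-span oracle model, following the classical approach of \citet{ouyang2021lower,zhang2022lower}. Take $\phi\equiv0$, $f(x)=\frac{\mu}{2}\|x\|_2^2+\frac{L-\mu}{2}\,(\text{a chain-type quadratic})$ and $\matA$ a tridiagonal or bidiagonal ``chain'' matrix. The idea is to arrange the structure so that each oracle call (a gradient $\nabla f(x)$ and products $\matA x$, $\matA^\top y$) extends the support of the iterates by at most one coordinate. Without loss of generality the method starts at $x^0=0$, $y^0=0$; shifting $b$ handles general starts. Thus after $k$ iterations, $x^k$ and $y^k$ lie in the span of the first $O(k)$ coordinates.

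\textbf{Step 1: choice of $f$.} To make both conditionings bite simultaneously, I would take $f(x)=\frac{\mu}{2}\|x\|^2$ plus nothing else, or a mild quadratic, and put all difficulty into the dual. By Remark~\ref{remark:intuitions_of_direct_acceleration}, the dual function is
\[
\Phi_y(y)=-\tfrac{1}{2\mu}\|\matA^\top y\|^2-b^\top y,
\]
whose Hessian condition number is $s_{\max}^2/s_{\min}^2$. This only yields $\Omega(\frac{s_{\max}}{s_{\min}}\log\frac1\epsilon)$, so the $\sqrt{L/\mu}$ factor must also come from $f$. I would therefore use a block/Kronecker construction: take $f(x)=\frac12 x^\top Hx$ and $\matA$ such that the dual Hessian $\matA H^{-1}\matA^\top$ has condition number of order $\frac{s_{\max}^2}{s_{\min}^2}\frac{L}{\mu}$ and is itself a tridiagonal chain. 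For example, let $\matA=[B\ \ \sigma I]$ with $B$ a bidiagonal matrix, and let $H=\mathrm{diag}(\mu I, L I)$. The condition $s_{\max}\ge\sqrt5 s_{\min}$ gives room to choose $B$ so that the singular values of $\matA$ lie in $[s_{\min},s_{\max}]$. Then $\matA H^{-1}\matA^\top=\frac1\mu BB^\top+\frac{\sigma^2}{L}I$, and its condition number is $1+\frac{L\,\|B\|^2}{\mu\sigma^2}\approx\frac{s_{\max}^2L}{s_{\min}^2\mu}$ when $\sigma\approx s_{\min}$ and $\|B\|\approx s_{\max}$. Each oracle round grows the support by one index via $BB^\top$.

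\textbf{Step 2: explicit solution.} Choose $b=e_1$-type (scaled) so that the optimality conditions \eqref{eq:saddle_cond_2} reduce to a linear recurrence $(\matA H^{-1}\matA^\top)y^*=-b$. For tridiagonal Toeplitz-like structure, with a modified last entry as in Nesterov's construction, the solution is geometric: $y^*_i=q^i$ with $q=\frac{\sqrt\kappa-1}{\sqrt\kappa+1}$ and $\kappa$ the dual condition number. Thus $1-q=\Theta\!\left(\frac{s_{\min}}{s_{\max}}\sqrt{\mu/L}\right)$.

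\textbf{Step 3: span argument.} Show by induction that $y^k$ is supported on the first $k$ (or $2k$) coordinates. Then
\[
\|y^k-y^*\|^2\ge\sum_{i>k}q^{2i}\gtrsim q^{2k}\|y^*\|^2,
\]
which requires dimension $n\gtrsim 2k$ with a finite-dimension tail correction. Solving $q^{k}\le\epsilon$ gives $k=\Omega\!\left(\frac{\log(1/\epsilon)}{1-q}\right)$, which is the claimed bound.

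\textbf{Main obstacle.} The main obstacle is Step 1: simultaneously satisfying the exact singular value bounds of Assumption~\ref{assum:Assumption Linear New} and the $\mu,L$ bounds, while keeping the dual Hessian a clean tridiagonal chain whose condition number scales as the product $\frac{s_{\max}^2}{s_{\min}^2}\frac{L}{\mu}$. If the diagonal-$H$ construction fails to preserve the chain structure, I would instead try $H$ itself tridiagonal and $\matA$ a scaled bidiagonal chain aligned with it, and compute the dual Hessian through the Schur complement.
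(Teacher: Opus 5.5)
Your approach is essentially the paper's. The paper takes $f(x)=\frac12\left(L\|x_1\|_2^2+\mu\|x_2\|_2^2\right)-h^\top x_2$, $b=0$, and $\matA=(-s_{\min}I\ \ \hat s A)$ with $\hat s=\sqrt{s_{\max}^2-s_{\min}^2}/2$, where $A^2$ is exactly the tridiagonal matrix $BB^\top$ produced by a unit bidiagonal $B$. The dual system is then $(I+\alpha A^2)y^*=\hat h$ with $\alpha=\frac{L}{\mu}\frac{\hat s^2}{s_{\min}^2}$, which the paper approximates by $\hat y^*_i=q^i$ with an $\alpha q^{n+1}$ tail error, and it closes with the same zero-chain argument $\mathcal{H}_y^{2k}=\mathrm{span}\{e_1,\dots,e_k\}$. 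The remaining differences are cosmetic (the paper puts the linear term in $f$ rather than in $b$, and bounds the boundary residual rather than modifying the last entry). The ``main obstacle'' you flag does not arise, because $\matA H^{-1}\matA^\top=\frac1\mu BB^\top+\frac{\sigma^2}{L}I$ is tridiagonal whenever $BB^\top$ is.
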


The following corollary can be extended from \Cref{thm:main_c1_short} by applying its lower bound certificate.
\begin{corollary}\label{cor:main_stoc_short}
Fix $\bar L \ge \mu > 0$ and $\bar s_{\max} \ge \sqrt{5}s_{\min} > 0$. For any \(\epsilon \in (0,1)\),
there exists an instance of \eqref{eq:upper bound prob sto} satisfying Assumption~\ref{assum:Assumption Linear Stoc}
such that any block-coordinate first-order method needs
\begin{equation*}
\Omega\left(
N\frac{\bar s_{\max}}{s_{\min}}\sqrt{\frac{\bar L}{\mu}}
\log\left(\frac{1}{\epsilon}\right)
\right)
\end{equation*}
block-coordinate iterations to produce $\|y^k-y^*\|_2 \le \epsilon$.
\end{corollary}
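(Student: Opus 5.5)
The plan is to build the hard block instance as a direct sum of $N$ scaled copies of the deterministic hard instance from \Cref{thm:main_c1_short}. Then any block-coordinate method must effectively run a deterministic first-order method on each copy. Progress on a copy happens only when its block is queried, which in expectation is once every $N$ iterations.

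\textbf{Construction.} Let $(f^{(0)}, \matA^{(0)}, b^{(0)})$ be the instance from the proof of \Cref{thm:main_c1_short}, with parameters $(\bar L,\mu,\bar s_{\max},s_{\min})$ and dimensions $n_0\times m_0$. Set
\begin{equation*}
y=(y_1,\dots,y_N),\quad \matA_i = e_i\otimes \matA^{(0)},\quad f_i=f^{(0)},\quad b=(b^{(0)},\dots,b^{(0)}).
\end{equation*}
Thus $\matA_i$ writes $\matA^{(0)}$ into the $i$-th dual block and zeros elsewhere, and $\matA=\mathrm{blkdiag}(\matA^{(0)},\dots,\matA^{(0)})$. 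Its singular values are those of $\matA^{(0)}$, so $\matA$ has full row rank with minimal singular value $s_{\min}$, and each $\|\matA_i\|\le \bar s_{\max}$. Each $f_i$ is $\mu$-strongly convex and $\bar L$-smooth, so Assumption~\ref{assum:Assumption Linear Stoc} holds. The saddle point decouples as $y^*=(y^{(0)*},\dots,y^{(0)*})$ and $x_i^*=x^{(0)*}$. If one prefers a dual error normalized independently of $N$, the copies can be rescaled by $1/\sqrt N$; this only shifts $\epsilon$ by a constant factor inside the logarithm.

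\textbf{Span argument.} In the block-coordinate oracle model, querying block $i$ returns $\matA_i^\top y=\matA^{(0)\top}y_i$, $\matA_i x_i$ (supported on dual block $i$) and $\nabla f_i(x_i)$. By induction on $k$, the $i$-th primal block and $i$-th dual block of every iterate lie in the linear span generated by the deterministic first-order method of \Cref{thm:main_c1_short} applied to copy $i$, after $K_i(k)$ steps. Here $K_i(k)$ is the number of iterations up to $k$ in which block $i$ was queried. The lower-bound certificate of \Cref{thm:main_c1_short} then yields a geometric bound that does not depend on the method, namely
\begin{equation*}
\|y_i^k-y^{(0)*}\|\ge c\,q^{K_i(k)}\quad\text{with}\quad 1-q=\Theta\!\left(\frac{s_{\min}}{\bar s_{\max}}\sqrt{\frac{\mu}{\bar L}}\right).
\end{equation*}
This is exactly the zero-chain/Krylov-subspace statement in that proof. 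I need the certificate in this form (a bound in terms of the span dimension), not only the final iteration count, which is why the corollary says it is extended by applying the certificate.

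\textbf{Counting and main obstacle.} Since $\sum_i K_i(k)\le k$ (or $\le ck$ if one iteration may touch $O(1)$ blocks), some block has $K_i(k)\le k/N$. Then $\|y^k-y^*\|\ge c\,q^{k/N}$, and reaching accuracy $\epsilon$ forces
\begin{equation*}
k\ge N\log(c/\epsilon)/\log(1/q)=\Omega\!\left(N\frac{\bar s_{\max}}{s_{\min}}\sqrt{\frac{\bar L}{\mu}}\log\frac1\epsilon\right).
\end{equation*}
This pigeonhole step makes the bound hold deterministically for every realization, so it also covers randomized block selection. The main obstacle is the span step. I must verify that the deterministic certificate bounds error in terms of the number of oracle calls on that copy, including the coupled updates where $\matA_i x_i$ enters $y$. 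I also need a cross-block mixing argument: a linear-span method can form combinations across blocks, but because $\matA$ and $f$ are block-separable, the block-$i$ components of all queried vectors stay inside copy $i$'s Krylov chain. I would confirm this carefully by projecting every oracle output onto block coordinates.
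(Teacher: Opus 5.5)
Your proposal is correct and takes essentially the same route as the paper. The paper also stacks $N$ copies of the deterministic hard instance \eqref{eq:problem 1} with block-diagonal coupling and $b=0$, then applies the certificate of \Cref{thm:full c1} to a copy that has been queried at most $\lfloor k/N\rfloor$ times. Your pigeonhole count and block-projection span argument spell out what the paper compresses into ``follows directly from \Cref{thm:full c1} on each copy''. Make sure each copy's dimension is at least of order $k/N$, together with the logarithmic threshold, so that the per-copy certificate applies.
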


Beyond the strongly convex-minimization setting where the dual objective is affine, we also consider a smooth convex--concave case \eqref{eq:problem_lower_2} in which the primal and dual dimensions match (\(m=n\)) and \(b^{\top}y\) is replaced by a convex function; neither strong convexity nor strong concavity is assumed.
Related convex--concave lower bounds appear in prior work \citep{kovalev2024linear}, but we include this case because our certificate uses the same construction technique as above and yields an explicit finite-dimensional instance with a more interpretable dependence on the problem parameters.
\begin{equation}\label{eq:problem_lower_2}
\min_{x\in \bbR^n}\max_{y\in\bbR^n} f(x)+y^{\top} \matA x - g(y)
\end{equation}

\begin{assumption}[Convex--concave case]\label{assum:Assumption Linear Lower 2}
Problem \eqref{eq:problem_lower_2} satisfies the properties:
(i) $x,y\in\bbR^n$, $m=n$, $\matA\in\bbR^{n\times n}$ has full rank, with maximal singular value no larger than $s_{\max}$,
and minimal singular value $s_{\min}>0$;
(ii) $f:\bbR^n\to\bbR$ is globally convex and $L_x$-smooth;
(iii) $g:\bbR^n\to\bbR$ is globally convex and $L_y$-smooth.
\end{assumption}

\begin{theorem}
\label{thm:main_c2_short}
Fix $L_x,L_y>0$ and $s_{\max} \ge \sqrt{5}s_{\min} > 0$.
For any $\epsilon \in (0,1)$, there exists a smooth convex--concave instance of \eqref{eq:problem_lower_2}
satisfying Assumption~\ref{assum:Assumption Linear Lower 2} such that any deterministic first-order method requires
\begin{equation*}
k = \Omega\left(
\sqrt{\frac{s_{\max}^2}{s_{\min}^2}+\frac{L_xL_y s_{\max}^2}{s_{\min}^4}}
\log\left(\frac{1}{\epsilon}\right)
\right)
\end{equation*}
iterations to produce $\|y^k-y^*\|_2 \le \epsilon$.
\end{theorem}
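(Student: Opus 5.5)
We prove \Cref{thm:main_c2_short} with a quadratic hard instance in the linear-span model, in the style of \citet{ouyang2021lower,zhang2022lower}. The idea is to reduce the saddle point problem to a linear system $B z = c$ with a tridiagonal (chain) structure. Each oracle call then reveals at most one new coordinate, so the error is bounded below by the tail of a geometrically decaying solution.

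\textbf{Instance.} Take $f(x)=\frac{L_x}{2}\|x\|^2 - \langle a, x\rangle$ and $g(y)=\frac{L_y}{2}\|y\|^2$, so both are convex and smooth with the required constants. For the coupling, choose a lower-bidiagonal matrix $\matA$ with diagonal entries $\alpha$ and subdiagonal entries $\beta$. This makes $\matA^{\top}\matA$ and $\matA\matA^{\top}$ tridiagonal, apart from a corner modification. We pick $\alpha,\beta$ with $\alpha+\beta\le s_{\max}$ and $\alpha-\beta\ge s_{\min}$; the condition $s_{\max}\ge\sqrt5 s_{\min}$ leaves room for this, e.g.\ $\alpha=(s_{\max}+s_{\min})/2$ and $\beta=(s_{\max}-s_{\min})/2$. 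We also adjust the last diagonal entry so that $s_{\min}$ is attained exactly.

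The optimality conditions read $L_x x + \matA^{\top} y = a$ and $\matA x = L_y y$. Eliminating $x$ gives
\begin{equation*}
\left(L_xL_y I + \matA\matA^{\top}\right) y^* = \matA a .
\end{equation*}
We take $a$ supported on the first coordinate.

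\textbf{Span argument.} By induction we show that after $k$ iterations, $x^k$ and $y^k$ lie in $\mathrm{span}\{e_1,\dots,e_{k+1}\}$. The reason is that $\matA$, $\matA^{\top}$, $\nabla f$ and $\nabla g$ each extend the support by at most one coordinate. Consequently $\|y^k-y^*\|^2\ge\sum_{j>k+1}(y_j^*)^2$.

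\textbf{Decay of $y^*$.} For the infinite-dimensional tridiagonal recursion, $y^*_j = c\,q^j$, where $q$ is the smaller root of a quadratic
\begin{equation*}
\beta\alpha\, q^2 - \left(L_xL_y + \alpha^2+\beta^2\right) q + \alpha\beta = 0 .
\end{equation*}
In the finite instance, the boundary row is corrected, by the modified last entry, so that the geometric sequence is exact. This is the standard trick. Choosing the dimension $n \gtrsim 2k$ keeps the tail comparable to $q^{2k}\|y^0-y^*\|^2$.

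\textbf{Rate estimate.} We then estimate
\begin{equation*}
\log(1/q) \approx 2\Big/\sqrt{\kappa_{\mathrm{eff}}},\qquad \kappa_{\mathrm{eff}} = \frac{\lambda_{\max}\left(L_xL_y+\matA\matA^{\top}\right)}{\lambda_{\min}\left(L_xL_y+\matA\matA^{\top}\right)} .
\end{equation*}
The exact expression involves $\frac{(\alpha+\beta)^2+L_xL_y}{(\alpha-\beta)^2+L_xL_y}$. The target scaling $\frac{s_{\max}^2}{s_{\min}^2}+\frac{L_xL_y s_{\max}^2}{s_{\min}^4}$ does not follow from this ratio directly, since it suggests $L_xL_y$ should not simply be added. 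So we rescale: set $L_x$ small relative to the spectrum, or use $g$ with curvature $L_y$ only on part of the space.

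\textbf{Main obstacle.} The hardest step is calibrating the instance so that the effective condition number genuinely equals $\frac{s_{\max}^2}{s_{\min}^2}+\frac{L_xL_y s_{\max}^2}{s_{\min}^4}$. The second term should come from the two-sided smoothness acting through $\matA^{\top}\matA$. Heuristically, after eliminating $y$ the relevant operator is $\matA^{\top}(L_y I)^{-1}\matA + L_x$-type. Here $\matA^{\top}\matA/L_y$ has conditioning $s_{\max}^2/s_{\min}^2$, and the ratio $L_x L_y/s_{\min}^2$ enters multiplicatively.

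I would first try $f$ with Hessian $L_x \matA^{\top}\matA/s_{\max}^2$-like structure. For example, $f(x)=\frac{L_x}{2 s_{\max}^2}\|\matA x\|^2$-type terms keep everything tridiagonal, so the eliminated system becomes $(\matA\matA^{\top})(I + \frac{L_xL_y}{s_{\max}^2 s_{\min}^2}\cdot\ldots)$. I would then tune the constants so that the ratio of the extreme eigenvalues gives the desired sum. A two-case split completes the argument. In the regime $L_xL_y\le s_{\min}^2$, the first term dominates and plain $g$ curvature suffices. In the regime $L_xL_y>s_{\min}^2$, we use the product structure.

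Finally, $\log(1/q)=\Theta(1/\sqrt{\kappa_{\mathrm{eff}}})$ combined with the tail bound gives $k=\Omega(\sqrt{\kappa_{\mathrm{eff}}}\log(1/\epsilon))$, after normalizing $\|y^0-y^*\|$ so that $\epsilon<1$ is meaningful.
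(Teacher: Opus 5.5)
Your argument covers only the regime $L_xL_y\lesssim s_{\min}^2$; for $L_xL_y>s_{\min}^2$, where the $\frac{L_xL_ys_{\max}^2}{s_{\min}^4}$ term dominates, no working instance is given.

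\textbf{Why your instance is too easy.} With $f=\frac{L_x}{2}\|x\|^2-\langle a,x\rangle$ and $g=\frac{L_y}{2}\|y\|^2$ the problem is strongly convex--strongly concave, with unit condition numbers in $x$ and $y$. The reduced operator $L_xL_yI+\matA\matA^{\top}$ has condition number $\frac{L_xL_y+s_{\max}^2}{L_xL_y+s_{\min}^2}\le \frac{s_{\max}^2}{s_{\min}^2}$. Adding $L_xL_y$ only improves conditioning. So your chain argument gives at best $\Omega\bigl(\sqrt{(s_{\max}^2+L_xL_y)/(s_{\min}^2+L_xL_y)}\,\log(1/\epsilon)\bigr)$, which contains no $\frac{L_xL_ys_{\max}^2}{s_{\min}^4}$ term.

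\textbf{Why the proposed repairs fail.} Uniformly rescaling the curvatures to $\mu\le L_x$, $\nu\le L_y$ gives $\mu\nu I+\matA\matA^{\top}$, whose condition number is still at most $s_{\max}^2/s_{\min}^2$. Your candidate $f(x)=\frac{L_x}{2s_{\max}^2}\|\matA x\|^2-a^{\top}x$, with $g=\frac{L_y}{2}\|y\|^2$, gives $\matA x^*=L_yy^*$ and hence $\bigl(1+\frac{L_xL_y}{s_{\max}^2}\bigr)\matA^{\top}y^*=a$. Here $L_xL_y$ merely rescales the right-hand side, so the instance is no harder than a linear system in $\matA^{\top}$. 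The ``product structure'' for the second regime is never specified.

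\textbf{The missing idea: degenerate curvature.} Assumption~\ref{assum:Assumption Linear Lower 2} asks only for convexity, which allows curvature on part of the space only. The paper takes $m=n=2\ell$, $f(x)=\frac{L_x}{2}\|x_1\|^2$, $g(y)=\frac{L_y}{2}\|y_1\|^2-h^{\top}y_2$, so there is no curvature in $x_2$ or $y_2$. It couples them by $\matA=\begin{pmatrix}\hs A & s_{\min}I\\ -s_{\min}I & \hs A\end{pmatrix}$ with $\hs=\frac12\sqrt{s_{\max}^2-s_{\min}^2}$.

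The stationarity condition in $x_2$ forces $y_1^*=-\frac{\hs}{s_{\min}}Ay_2^*$; equivalently, $f^*$ is finite only on a subspace, so the dual is constrained. Eliminating everything else leaves a fourth-order, pentadiagonal system $(A^4+\alpha A^2+\beta I)y_2^*=\frac{L_xs_{\min}^2}{\hs^4}h$, with $\alpha=\frac{2s_{\min}^2+L_xL_y}{\hs^2}$ and $\beta=\frac{s_{\min}^4}{\hs^4}$. Now $L_xL_y$ sits in the middle coefficient, against a small constant term $\beta$.

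The relevant root $q\in(0,1)$ of $1-(4+\alpha)q+(6+2\alpha+\beta)q^2-(4+\alpha)q^3+q^4=0$ satisfies $(1-q)^{-1}=\Theta(\sqrt{\alpha/\beta})=\Theta\bigl(\sqrt{s_{\max}^2/s_{\min}^2+L_xL_ys_{\max}^2/s_{\min}^4}\bigr)$.

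From there your template carries over essentially unchanged:
\begin{itemize}
\item a geometric approximate solution $\hat y_{2,i}^*=q^i$, exact except in the last two rows;
\item support growth of one coordinate of $y_2$ per two iterations;
\item the bound $\|y^*\|_2\le\frac{s_{\max}}{s_{\min}}\|y_2^*\|_2$ to pass from $y_2$ to $y$.
\end{itemize}
Your chain technique is fine; what must change is the instance, which has to give up strong convexity to realize the $L_xL_y$ dependence.
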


\section{Numerical Experiments}
\label{sec:numerical}
We examine the performance of Algorithms~\ref{alg:x-accelerate-prox} and \ref{alg:y-accelerate-prox}
on a compressed sensing task (CST) and a non-smooth estimation (NSE) problem for a linear
time-invariant system. We also evaluate Algorithms~\ref{alg:x-accelerate-stoc} and
\ref{alg:y-accelerate-stoc} on a larger CST instance. We compare our methods with the algorithms
of \citet{drori2015simple,salim2022dualize,salim2022optimal}. We report the relative errors
\(\frac{\|x^k - x^{*}\|_2}{\|x^{*}\|_2}\)
(or \(\frac{\|\X^k - \X^{*}\|_F}{\|\X^{*}\|_F}\) for \Cref{subsec:nse})
across different deterministic methods as a function of the iteration \(k\); 
for the block-coordinate experiments we report errors versus the number of block matrix multiplications (BMMs) to match arithmetic cost.
The reference solutions \(x^{*}\) (or \(\X^*\)) are either computed using the Gurobi Optimizer
(version \texttt{12.0.2}) via the \texttt{gurobipy} Python interface (version \texttt{12.0.2})
in Python \texttt{3.9}, or are available in closed form due to the problem construction in
\Cref{subsec:ineq}. All solutions were obtained in Python/Jupyter notebooks. (Some curves eventually plateau due to the finite accuracy of the reference solution; see \Cref{Fig_cst_metric} in the appendix.)
We use \texttt{PAPC} to denote the proximal alternating predictor--corrector algorithm \citep{drori2015simple}
and adopt the parameter settings of \citet{salim2022dualize}, for which the complexity is
\(O\!\left(\left(\frac{L}{\mu}+\frac{s_{\max}^2}{s_{\min}^2}\right)\log\!\left(\frac{1}{\epsilon}\right)\right)\)
(Theorem~6.2). We use \texttt{CAPD} (Chebyshev-accelerated primal--dual) to denote Algorithm~1 of
\citet{salim2022optimal} with Chebyshev acceleration. We denote Algorithms~\ref{alg:x-accelerate-prox}
and \ref{alg:y-accelerate-prox} by \texttt{x-DAPD} and \texttt{y-DAPD}, and
Algorithms~\ref{alg:x-accelerate-stoc} and \ref{alg:y-accelerate-stoc} by \texttt{x-SBC-DAPD} and
\texttt{y-SBC-DAPD}, respectively (\texttt{DAPD}: directly accelerated primal--dual; \texttt{SBC}: stochastic block-coordinate).
\subsection{Compressed-Sensing-Type Experiment}
\label{subsec:cst}
We first illustrate the performance of our algorithms, especially \Cref{alg:y-accelerate-prox}, in the compressed-sensing-type experiment considered by \citet{salim2022optimal}. The goal is to estimate a sparse vector \(x^{\sharp}\in\bbR^m\) with \(m=1000\), having 50 randomly chosen nonzero elements (equal to 1), from measurements $b=\matA x^{\sharp} \in \mathbb{R}^n$, with $n=250$, where $\matA$ has random i.i.d. Gaussian elements and its nonzero singular values are modified so that they span the interval $[s_{\min}, s_{\max}]$ for given \(0<s_{\min}<s_{\max}\). The objective \(f:\bbR^m\to\bbR\) is a strongly convex approximation of \(\ell_1\)-norm: \(f(x)=\sum_{i=1}^m \left[\sqrt{x_i^2+e^2}+\frac{e}{2}x_i^2\right]\), with \(e=\sqrt{1/(\kappa-1)}\), so that \(f\) has a condition number \(\kappa\) for given \(\kappa> 1\). Specifically, \(f(x)\) is a combination of a Pseudo-Huber loss and a ridge regularization. Hence, the problem becomes
\begin{equation}\label{eq:CST numerical}
\min_{x\in\bbR^m}\max_{y\in\bbR^n}\quad f(x) + y^{\top}(\matA x -b).
\end{equation}

Note that the solution \(x^*\) to \eqref{eq:CST numerical} is not equal to \(x^{\sharp}\) for general settings of \(s_{\min},s_{\max},\mu,L\). We test two different settings of the condition number parameters. The first is \(\frac{s_{\max}^2}{s_{\min}^2} = 10^5\) and \(\frac{L}{\mu}=10^4\), which is the same setting as in \citep{salim2022optimal}, and the second is \(\frac{s_{\max}^2}{s_{\min}^2} = 10^6\) and \(\frac{L}{\mu}=10^3\). 
The former is shown in the left panels of \Cref{Fig_cst_main} and \Cref{Fig_cst_appen} and the latter in the right panels of \Cref{Fig_cst_main} and \Cref{Fig_cst_appen}, both of which show a single simulation varying the number of iterations.
We notice that in both settings, \texttt{y-DAPD} converges much faster than the other algorithms. Though \texttt{CAPD} has the same \(O\left(\frac{s_{\max}}{s_{\min}}\sqrt{\frac{L}{\mu}}\log(\frac{1}{\epsilon})\right)\) complexity as \texttt{y-DAPD} in these situations, it requires a constant number of inner loop iterations (317 for the first setting and 1000 for the second), which slows down its convergence. For the problems with larger ratios of \(\frac{s_{\max}}{s_{\min}}\) to \(\sqrt{\frac{L}{\mu}}\), \texttt{CAPD} converges slower than \texttt{PAPC} at the beginning, but catches up in the later iterations due to a better convergence rate. (See \Cref{appensec:supp cst} for \Cref{Fig_cst_appen} and further details.)
\begin{figure}[t]
\begin{minipage}{0.5\linewidth}
\centering
\includegraphics[width=\linewidth]{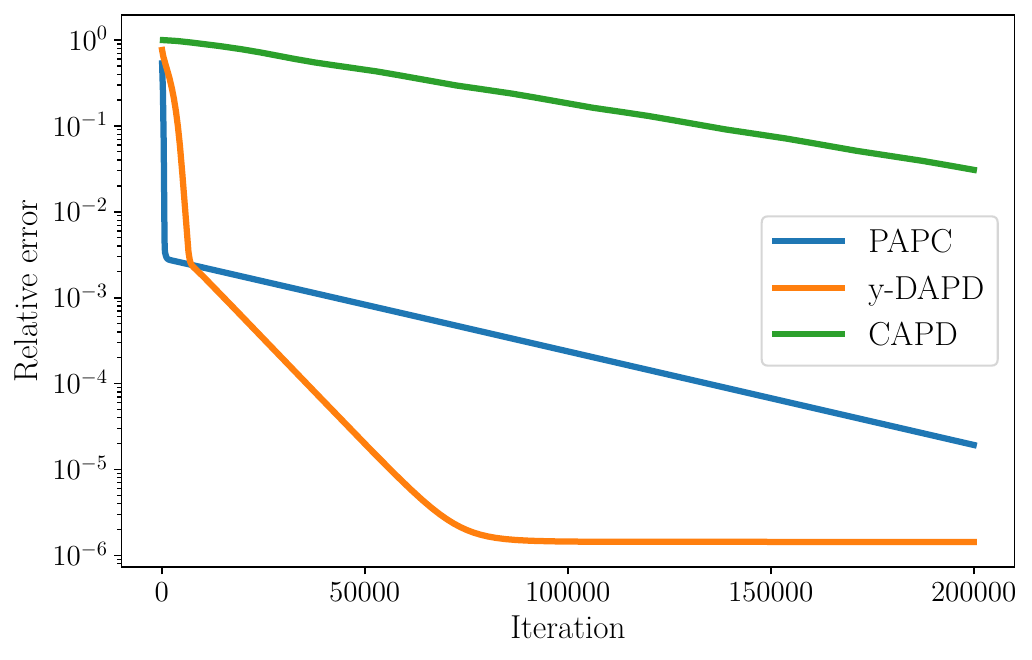}
\end{minipage}%
\begin{minipage}{0.5\linewidth}
\centering
\includegraphics[width=\linewidth]{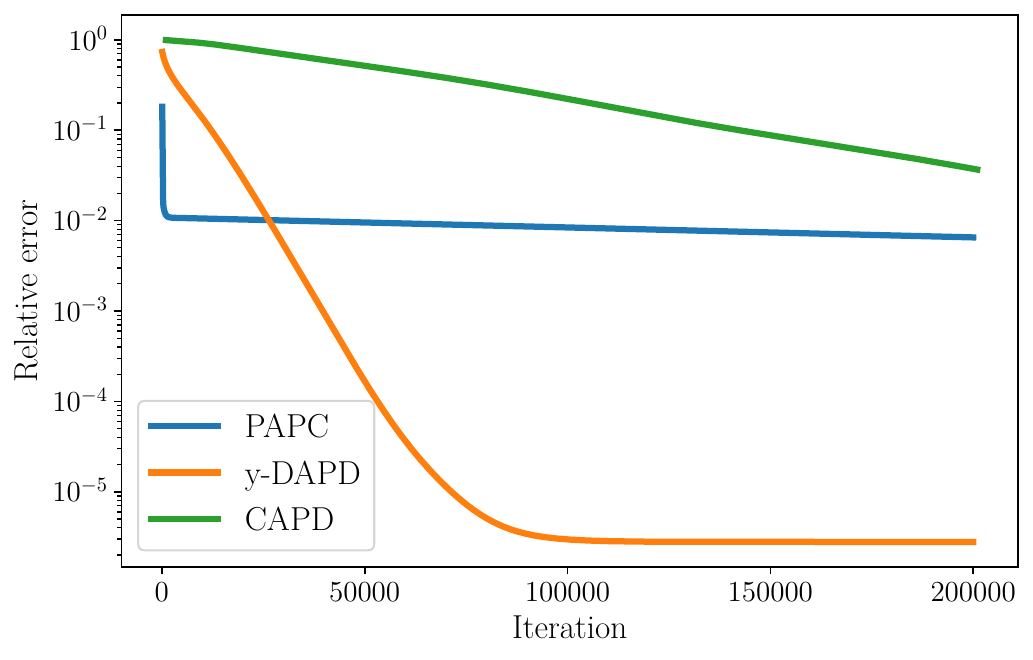}
\end{minipage}
\caption{Results for the compressed-sensing-type (CST) experiment. Left: \(\frac{s_{\max}^2}{s_{\min}^2} = 10^5\), \(\frac{L}{\mu}=10^4\); right: \(\frac{s_{\max}^2}{s_{\min}^2} = 10^6\), \(\frac{L}{\mu}=10^3\).}
\label{Fig_cst_main}
\end{figure}

We additionally consider a larger instance to evaluate the block-coordinate algorithms
(Algorithms~\ref{alg:x-accelerate-stoc} and \ref{alg:y-accelerate-stoc}). In \Cref{Fig_cst_stoc},
we set \(N=200\) and, for each block \(\matA_i\), \(m_i=50\) and \(n=100\), so that \(m=Nm_i=10000\), and in total \(500\) randomly chosen nonzero elements in \(x^{\sharp}\);
all other settings, including the data generation procedure, remain the same (the condition-number regime is slightly different). We plot the relative
error versus the number of block matrix multiplications (for deterministic methods this equals \(2N\)
per iteration, while it equals \(4\) for \texttt{x-SBC-DAPD} and \(6\) for \texttt{y-SBC-DAPD}).
As expected, the \(x\)-side and \(y\)-side accelerated methods converge faster than the others under
their respective favorable condition-number regimes. Moreover, since the block-wise maximal singular
values satisfy \(\bar{s}_{\max} \approx 0.11\, s_{\max}\), \texttt{SBC-DAPD} exhibits faster progress than
its deterministic counterpart in this large-\(N\) setting.

\begin{figure}[t]
\begin{minipage}{0.5\linewidth}
\centering
\includegraphics[width=\linewidth]{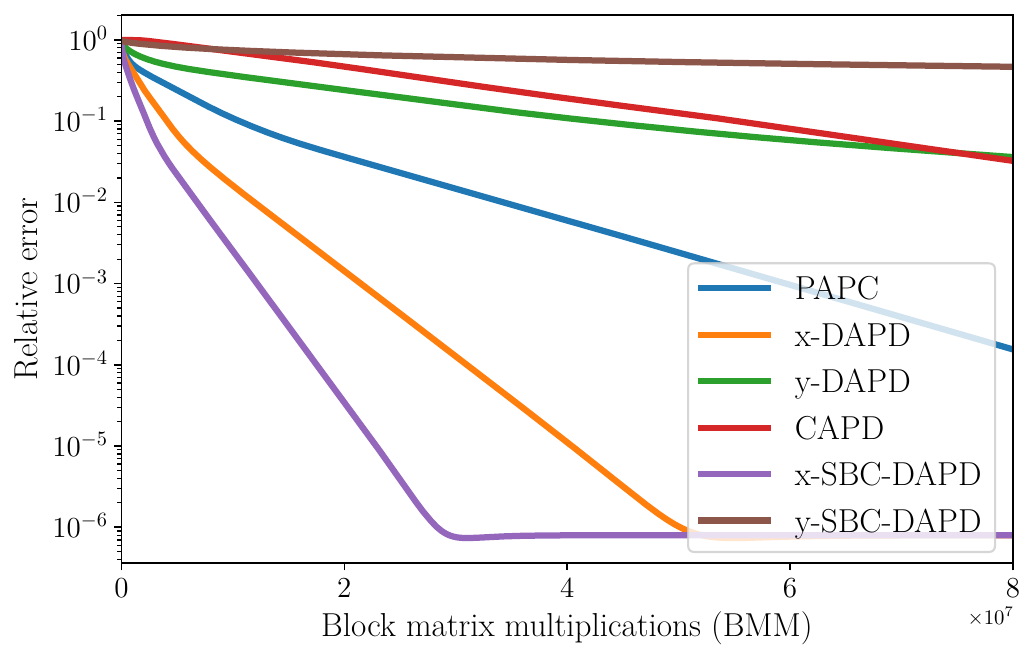}
\end{minipage}%
\begin{minipage}{0.5\linewidth}
\centering
\includegraphics[width=\linewidth]{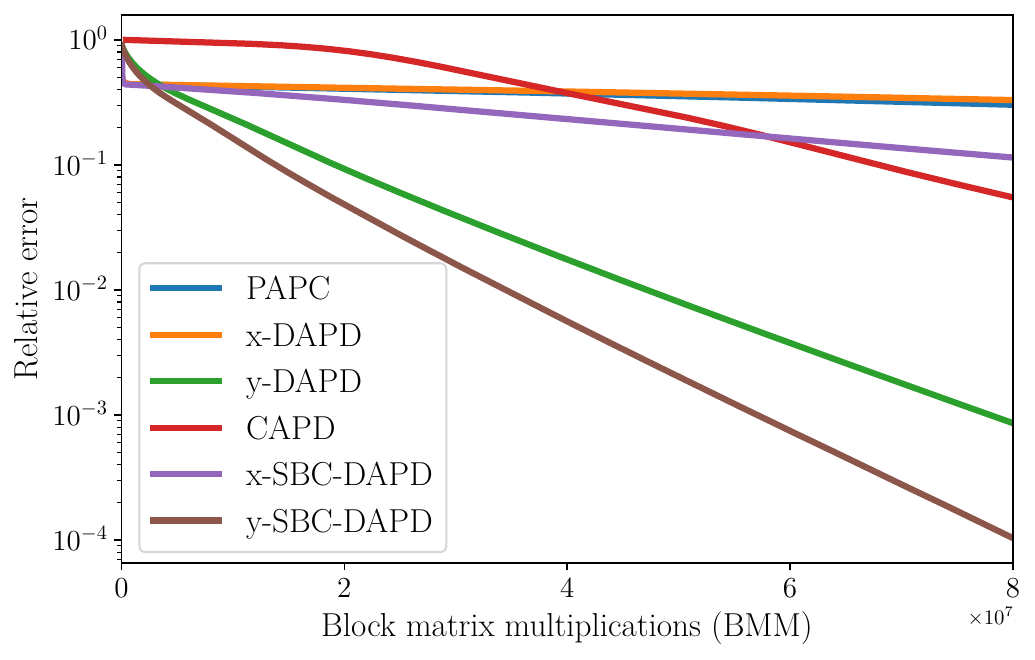}
\end{minipage}
\caption{Results for the CST experiment with \texttt{SBC-DAPD}. Left: \(\frac{s_{\max}^2}{s_{\min}^2} = 10^4\), \(\frac{L}{\mu}=10^5\); right: \(\frac{s_{\max}^2}{s_{\min}^2} = 10^6\), \(\frac{L}{\mu}=10^3\).}
\label{Fig_cst_stoc}
\end{figure}

\subsection{Non-Smooth Estimator of the Linear Time-Invariant System}
\label{subsec:nse}
We consider a linear time-invariant dynamical system of order $\p$ with the system update equation
\begin{equation}\label{eq:exp system}
    \s_{t}=\bar{\X} \s_{t-1}+\bar{d}_{t-1}, \quad t=1, \ldots, \T,
\end{equation}
where $\bar{\X} \in \mathbb{R}^{\p \times \p}$ is the unknown system matrix and $\bar{d}_t \in \mathbb{R}^{\p}$ are unknown system disturbances. \citet{yalcin2025subgradient} proposed the non-smooth estimator (NSE), \(\min _{\X \in \mathbb{R}^{\p \times \p}} \sum_{t=1}^{\T}\left\|\s_{t}-\X \s_{t-1}\right\|_2\), to address robustness to sparsely adversarial disturbance vectors.
Here, we add a similar strongly convex regularization as \Cref{subsec:cst} to recover the sparsity of \(\bar {\X}\), and our problem becomes:
\begin{equation}\label{eq:nse reg short}
\begin{aligned}
\min _{\X \in \mathbb{R}^{\p \times \p}}\quad f\left(\mathrm{vec}(\X)\right)+\lambda\sum_{t=1}^{\T}\left\|\s_{t}-\X \s_{t-1}\right\|_2
\end{aligned}
\end{equation}
where \(\mathrm{vec}(\cdot)\) is the vectorization operator. In \Cref{appensec:supp nse}, we demonstrate that this problem can be reformulated as \eqref{eq:upper bound prob} with a \(\p\T \times \p^2\) bilinear coupling matrix, and we present efficient methods for performing the first-order updates and computing the singular values of the coupling matrix. We note that, in contrast to \citet{yalcin2025subgradient} who study an online algorithm, our algorithms may be considered as offline estimators in this situation.

For simulations, we adopt a similar random problem generation procedure (detailed in \Cref{appensec:supp nse}) as that used in \citep{yalcin2025subgradient}. In \Cref{Fig_NSE}, we observe that when the condition number \(\frac{L}{\mu}\) is significantly larger than \(\frac{s_{\max}^2}{s_{\min}^2}\), the \texttt{x-DAPD} method converges faster, whereas \texttt{y-DAPD} exhibits a slightly slower convergence rate, similar to that of \texttt{PAPC}. In contrast, when \(\frac{s_{\max}^2}{s_{\min}^2}\) dominates, the \texttt{y-DAPD} method achieves significantly faster convergence, while \texttt{x-DAPD} performs comparably to \texttt{PAPC}.
\begin{figure}[t]
\begin{minipage}{0.5\linewidth}
\centering
\includegraphics[width=\linewidth]{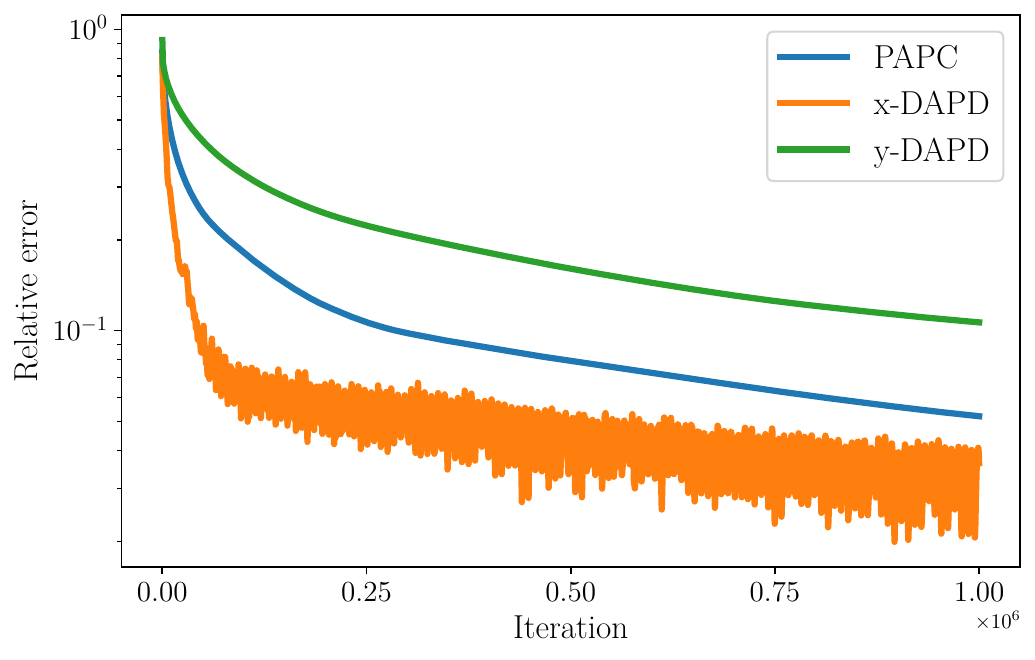}
\end{minipage}%
\begin{minipage}{0.5\linewidth}
\centering
\includegraphics[width=\linewidth]{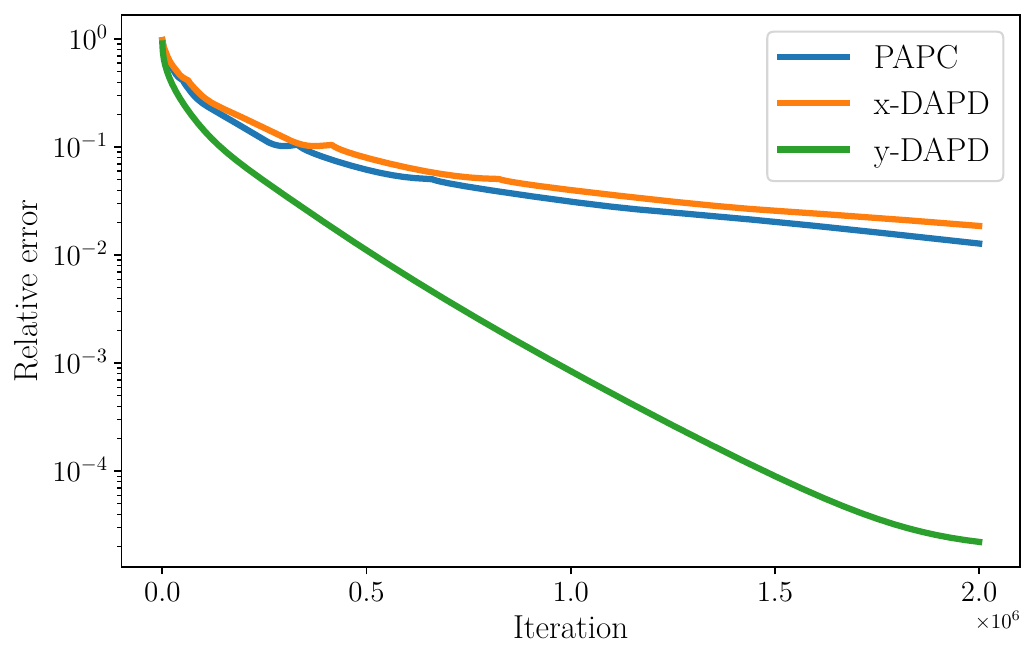}
\end{minipage}
\caption{Results for the NSE experiment \eqref{eq:nse reg short}. Left: \(\frac{L}{\mu}=10^9\), \(\p=40\), \(\T=10\), \(\frac{s_{\max}^2}{s_{\min}^2} \approx 2.7 \times 10^5\) in the simulation; right: \(\frac{L}{\mu}=10^4\), \(\p=100\), \(\T=20\), \(\frac{s_{\max}^2}{s_{\min}^2} \approx 1.3 \times 10^9\) in the simulation.}
\label{Fig_NSE}
\end{figure}

\section{Conclusions}
\label{sec:conclusions}

We developed single-loop direct spectral acceleration for first-order primal--dual methods on bilinear saddle-point problems and showed how objective conditioning and the coupling spectrum jointly determine linear rates. Our deterministic methods handle a proximable dual term and achieve optimal complexity without double-loop Chebyshev preconditioning. We further obtained optimal stochastic block-coordinate rates for separable objectives in the equality-constrained case and provided matching lower bounds via explicit finite-dimensional hard instances, with experiments confirming the practical gains.

Promising directions include extending our stochastic block-coordinate direct acceleration beyond the equality-constrained separable setting---incorporating proximable dual terms and nonseparable objectives---and applying the same objective-conditioning/coupling-spectrum perspective to broader (block-wise) primal--dual problems.

\section*{Acknowledgements}
Paul Grigas acknowledges the support of the NSF AI Institute for Advances in Optimization, Award 2112533.


\bibliography{bilinear_260103}
\bibliographystyle{icml2026}

\newpage
\appendix
\onecolumn

\section{Notations}
\label{appensec:notations}
We introduce the following notation that will be useful in the appendix. Let \(e_i\) denote the \(i\)-th unit vector, which has value 1 at the \(i\)-th component and 0 elsewhere. We define \(H(y) = \frac{1}{2} \left\| \matA^{\top}(y - y^*) \right\|_2^2\).

\section{Summary of Main Algorithms}
\label{appensec:summary}
We summarize Algorithms~\ref{alg:x-accelerate-prox}--\ref{alg:y-accelerate-stoc} in Table~\ref{tab:alg-summary-4}.
\begin{table}[!ht]
\centering
\caption{Summary of main accelerated primal--dual algorithms in this paper.}
\label{tab:alg-summary-4}
\small
\setlength{\tabcolsep}{6pt}
\begin{tabular}{lllllll}
\toprule
Algorithm & Accel.\ side & Type & Objective & $\phi(y)$ & Preferred regime & Complexity \\
\midrule
Alg.~\ref{alg:x-accelerate-prox}
& $x$-side
& deterministic
& nonseparable
& proximable
& $\frac{s_{\max}}{s_{\min}}\sqrt{\frac{\mu}{L}} = O(1)$
& $O\!\left(\frac{s_{\max}}{s_{\min}}\sqrt{\frac{L}{\mu}}\log\!\left(\frac{1}{\epsilon}\right)\right)$ \\

Alg.~\ref{alg:y-accelerate-prox}
& $y$-side
& deterministic
& nonseparable
& proximable
& $\frac{s_{\max}}{s_{\min}}\sqrt{\frac{\mu}{L}} = \Omega(1)$
& $O\!\left(\frac{s_{\max}}{s_{\min}}\sqrt{\frac{L}{\mu}}\log\!\left(\frac{1}{\epsilon}\right)\right)$ \\

Alg.~\ref{alg:x-accelerate-stoc}\textsuperscript{$\dagger$}
& $x$-side
& stochastic
& separable\textsuperscript{$\ddagger$}
& $\phi\equiv 0$
& $\frac{\bar s_{\max}}{s_{\min}}\sqrt{\frac{\mu}{\bar L}} = O(1)$
& $O\!\left(N\frac{\bar s_{\max}}{s_{\min}}\sqrt{\frac{\bar L}{\mu}}\log\!\left(\frac{1}{\epsilon}\right)\right)$ \\

Alg.~\ref{alg:y-accelerate-stoc}\textsuperscript{$\dagger$}
& $y$-side
& stochastic
& separable
& $\phi\equiv 0$
& $\frac{\bar s_{\max}}{s_{\min}}\sqrt{\frac{\mu}{\bar L}} = \Omega(1)$
& $O\!\left(N\frac{\bar s_{\max}}{s_{\min}}\sqrt{\frac{\bar L}{\mu}}\log\!\left(\frac{1}{\epsilon}\right)\right)$ \\
\bottomrule
\end{tabular}

\vspace{0.5ex}
{\footnotesize
\textsuperscript{$\dagger$} Stochastic = random block-coordinate updates.
\quad
\textsuperscript{$\ddagger$} A nonseparable extension is given in Alg.~\ref{alg:x-accelerate-stoc-nonseparable} in Section~\ref{appensubsec:x-stoc-nonsep}.
\quad
Complexities for deterministic methods count iterations with full first-order oracle access and full matrix multiplications (e.g., involving $\matA$ and $\matA^\top$).
Complexities for stochastic block-coordinate methods count iterations with block/partial first-order oracle access (\(\nabla f_i(x)\) or \(\nabla_i f(x)\))  and block matrix multiplications (e.g., involving $\matA_i$ and $\matA_i^\top$).
}
\end{table}

\section{Proof of Proposition~\ref{prop:saddle_cond_2}}
\label{appensec:proof saddle cond 2}
\begin{proof}
Similar to the proof of Lemma 1 in \citet{kovalev2022accelerated}, notice that 
\begin{equation*}
\Phi_y(y)=\inf_{x\in\bbR^m} F(x,y) = -b^{\top}y-\phi(y) + \inf_x\left[f(x)+x^{\top}\matA^{\top}y\right]=-b^{\top}y-\phi(y)-f^*\left(-\matA^{\top}y\right),
\end{equation*}
where \(f^*\) is the Fenchel conjugate of \(f\), and thus \(\frac{1}{\mu}\)-smooth and \(\frac{1}{L}\)-strongly convex (by Theorem 6 of \citet{kakade2009duality}). Therefore, \(-f^*\left(-\matA^{\top}y\right)\) is \(\frac{s_{\max}^2}{\mu}\)-smooth and \(\frac{s_{\min}^2}{L}\)-strongly concave. Hence, the solution 
\begin{equation*}y^*=\arg\max_{y\in\bbR^n} \Phi_y(y).\end{equation*} 
exists and is unique. By the strong convexity of \(f\), the solution \(x^*=\arg\min_{x\in\bbR^m} F(x,y^*)\) is unique, and it satisfies \(\gradf{x^*}+\matA^{\top} y^*=0\). By Danskin's theorem applying on \(f^*\left(-\matA^{\top}y\right)=\sup_x\left[-f(x)-x^{\top}\matA^{\top}y\right]\), \(\nabla_{y} f^*\left(-\matA^{\top}y\right)|_{y=y^*}=-\matA x^*\). Hence, \(\matA x^* -b \in\partial \phi\left(y^*\right)\). Therefore, \((x^*,y^*)\) satisfies \eqref{eq:saddle_cond_2}.

Since \(\matA x^* -b \in\partial \phi\left(y^*\right)\), \(y^*\in\arg\max_{y} \left[{x^*}^{\top}\matA^{\top}y-b^{\top}y-\phi(y)\right]\). Hence, by Danskin's theorem, \(\matA^{\top}y^*\in \partial_x \sup_{y}\left[{x}^{\top}\matA^{\top}y-b^{\top}y-\phi(y)\right]|_{x=x^*}\), and thus 
\begin{equation*}
0=\gradf{x^*}+\matA^{\top} y^*\in \partial_x\sup_{y}F(x,y)|_{x=x^*}=\partial_x \Phi_x\left(x^*\right).
\end{equation*} 
In conclusion, there exists a unique {saddle point} \((x^*,y^*)\), and it satisfies \eqref{eq:saddle_cond_2}.

For \((x^*,y^*)\) satisfying \eqref{eq:saddle_cond_2}, we could know 
\(x^*\in \arg\min_{x} F\left(x,y^*\right)\), \(y^*\in \arg\max_{y} F\left(x^*,y\right)\). By Danskin's theorem, 
\begin{equation*}
    0\in\partial_x {F}\left(x,y^*\right)|_{x=x^*}=\partial_x {\Phi_x}\left(x^*\right)\quad \text{and}\quad 0 \in \partial_y {F}\left(x^*,y\right)|_{y=y^*}=\partial_y {\Phi_y}\left(y^*\right).
\end{equation*}
Hence, \((x^*,y^*)\) is the unique saddle point.
\end{proof}

\section{Proof of Theorem~\ref{thm:x-accelerate-prox-short}}
\label{appensec:proof x alg}
We begin by stating the full version of \Cref{thm:x-accelerate-prox-short}.
\begin{theorem}\label{thm:x-accelerate-prox}
Consider applying Algorithm~\ref{alg:x-accelerate-prox} to solve \eqref{eq:upper bound prob}. Let \begin{equation}\label{eq:x-accelerate-prox para}
\begin{aligned}
{\hat s} & = \frac{1}{s_{\max}^2}, 
& \quad \xgt & = \min\left(\frac{1}{5}, \frac{s_{\max}}{s_{\min}} \sqrt{\frac{\mu}{8L}} \right), \\
t & = \frac{1 - 4\xgt}{L + 4L\xgt}, 
& \quad s & = \frac{\hat s}{t}, \\
\xconv & = \max\left(\frac{s_{\max}^2}{s_{\min}^2} \cdot \frac{1}{2\xgt},\ \sqrt{\frac{1}{\mu t}}+\frac{L}{\mu}\cdot 4\xgt \right), 
& \quad \xe & = \frac{1 + 4L\xgt t}{1/\xconv + 4L\xgt t}, \\
\xtau & = \frac{\xe - 1}{1 - 1/\xconv}, 
& \quad \xga & = \frac{\xe - 1}{\xtau + 1}, \\
\Xi_f & = 1, 
& \quad \Xi_y & = \frac{1}{2s} = \frac{t}{2\hat s}, \\
\Xi_v & = \frac{1 + 4L\xgt t}{2\xe^2 t}, 
& \quad \xh & = 2\Xi_v \xe t = \frac{\Xi_v \xe t}{\Xi_y s}.
\end{aligned}
\end{equation}
Then, let \begin{equation*}
\Psi^k = {\Xi_y}\left\|y^k-y^*\right\|^2_{\left(I-(1-2\xgt){\hat s} \matA \matA^{\top}\right)}+\D{x^k}+{\Xi_v}\left\|{v}^k-x^*\right\|_2^2,
\end{equation*} where \({v}^k = (1+\xtau){z}^k-\xtau x^k\). We have \begin{equation*}\Psi^{k+1}\le (1-1/\xconv)\Psi^k.\end{equation*}
    
\end{theorem}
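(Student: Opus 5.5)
We prove \Cref{thm:x-accelerate-prox} with a one-step Lyapunov argument. We bound each of the three pieces of $\Psi^{k+1}$ separately against quantities at step $k$, then combine the three inequalities with the weights $\Xi_y,\Xi_f=1,\Xi_v$ so that all cross terms cancel or are nonpositive. The structure follows the accelerated gradient analysis of $\min_x f(x)$, in the style of Allen-Zhu--Orecchia linear coupling with $x^k, z^k, v^k$ as the three sequences. It is perturbed by the dual update, which we treat as a preconditioned proximal step.

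\emph{Step 1: primal gradient step.} Since $x^{k+1}=z^k-t(\nabla f(z^k)+\matA^\top y^{k+1})$ with $t<1/L$, $L$-smoothness and $\mu$-strong convexity give a three-point inequality. It bounds $D_f(x^{k+1},x^*)$ by $D_f(z^k,x^*)$ minus a gradient-norm term, plus the coupling term $\langle \matA^\top(y^{k+1}-y^*), x^*-x^{k+1}\rangle$; here we use $\nabla f(x^*)=-\matA^\top y^*$ from \Cref{prop:saddle_cond_2}. We then convert $D_f(z^k,x^*)$ into a convex combination of $D_f(x^k,x^*)$ and a term involving $v^k$, using the relation $z^k=\frac{\xtau}{1+\xtau}x^k+\frac{1}{1+\xtau}v^k$ and convexity/strong convexity. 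This is exactly where $\xtau,\xga,\xe$ enter: the identity $v^{k+1}=(1+\xtau)z^{k+1}-\xtau x^{k+1}$ together with $z^{k+1}=(1+\xga)x^{k+1}-\xga x^k$ yields $v^{k+1}=v^k$-type mirror step $v^{k+1}=\hat x^k - \xe t(\nabla f(z^k)+\matA^\top y^{k+1})$ plus a $z$-correction. We check this algebra directly from $\xga=(\xe-1)/(\xtau+1)$.

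\emph{Step 2: the $v$-sequence.} We expand $\|v^{k+1}-x^*\|^2$ as a mirror-descent step with step size $\xe t$. The strong-convexity contribution produces the contraction factor $1-1/\xconv$ on $\Xi_v\|v^k-x^*\|^2$, using $\Xi_v=(1+4L\xgt t)/(2\xe^2 t)$ and the definition of $\xe$. It also yields a second coupling term of the form $\xe t\langle \matA^\top(y^{k+1}-y^*),\hat x^k-x^*\rangle$.

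\emph{Step 3: dual prox step.} The dual update is a proximal step with step $\xh s$ on $\phi$, taken in the metric $I-\hat s\matA\matA^\top$ (positive semidefinite since $\hat s=1/s_{\max}^2$). Its forward part $y^k+\xh s(\matA\hat x^k-b)-\hat s\matA(\matA^\top y^k+\nabla f(z^k))$ is arranged so that, after subtracting the optimality condition $\matA x^*-b\in\partial\phi(y^*)$, the prox inequality gives three things:
\begin{itemize}
\item a term $\|y^{k+1}-y^*\|^2$ weighted by $\Xi_y$;
\item $-\Xi_y\|y^{k}-y^*\|^2$ in the modified norm;
\item the inner product $\xh s\langle y^{k+1}-y^*, \matA(\hat x^k-x^*)\rangle$ and a term $\hat s\langle \matA^\top(y^{k+1}-y^*),\nabla f(z^k)-\nabla f(x^*)\rangle$.
\end{itemize}
The prox also contributes $-D_\phi\le 0$, which we drop. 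The choice $\xh=\Xi_v\xe t/(\Xi_y s)$ makes the $\hat x^k$ coupling cancel exactly with Step 2. The choice $s=\hat s/t$ makes the $\nabla f(z^k)$ coupling cancel with the corresponding piece in Step 1, using $x^{k+1}-z^k=-t(\cdots)$. The leftover $\|\matA^\top(y^{k+1}-y^*)\|^2$ terms are absorbed by the gradient-norm surplus from Step 1. The contraction of the $y$-term then comes from $\|\matA^\top(y-y^*)\|^2\ge s_{\min}^2\|y-y^*\|^2$: keeping a $2\xgt\hat s$ fraction of $\matA\matA^\top$ gives factor $1-2\xgt s_{\min}^2/s_{\max}^2\le 1-1/\xconv$, which matches the first branch of $\xconv$.

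\emph{Main obstacle.} The hard part is the bookkeeping in Step 1 and its combination: showing that after the cancellations all residual quadratic terms have nonpositive coefficients. The residuals are the gradient differences $\|\nabla f(z^k)-\nabla f(x^*)\|^2$, $\|\matA^\top(y^{k+1}-y^*)\|^2$ and their cross term, and they must be controlled with only the slack $4\xgt$ built into $t=(1-4\xgt)/(L(1+4\xgt))$. We plan to write these residuals as a $2\times2$ quadratic form in $(\nabla f(z^k)-\nabla f(x^*),\matA^\top(y^{k+1}-y^*))$ and verify negative semidefiniteness via Young's inequality with parameter $\xgt$. The bound $\xgt\le\frac15$ is what should make the determinant condition hold. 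The second branch of $\xconv$, namely $\sqrt{1/(\mu t)}+4\xgt L/\mu$, should emerge as the requirement that the $D_f(x^k,x^*)$ and $v$ coefficients contract at rate $1-1/\xconv$. We will confirm this last step by substituting the stated $\xe$ and $\xtau$ explicitly.
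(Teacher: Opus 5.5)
Your architecture matches the paper's. It uses three one-step inequalities: descent on $D_f(\cdot,x^*)$ for the gradient step; the $v$-step written as $v^{k+1}-x^*=\hat x^k-x^*-\xe t g$, with linear coupling and Jensen via $\xe-1=(1-1/\xconv)\xtau$; and the dual step rewritten as $(I-\hat s\matA\matA^\top)(y^{k+1}-y^k)=\cdots$ plus monotonicity of $\partial\phi$. These are combined with the stated weights so that the two coupling inner products cancel. Two things are off, and the second is a real gap.

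Write $a=\matA^\top(y^{k+1}-y^*)$, $b=\nabla f(z^k)-\nabla f(x^*)$, and $g=a+b=\nabla f(z^k)+\matA^\top y^{k+1}$. First, bounding $D_f(x^{k+1},x^*)$ against $D_f(z^k,x^*)$ by smoothness produces the coupling $t\langle a,g\rangle=\langle a,z^k-x^{k+1}\rangle$, not $\langle a,x^*-x^{k+1}\rangle$. The two differ by $\langle a,z^k-x^*\rangle$, which has no sign. Only the former cancels against the dual step's $-2\hat s\,\Xi_y\langle a,g\rangle$, which is what $\Xi_y=t/(2\hat s)$ is for. Once it cancels, there are no leftover $\|a\|^2$ terms to absorb. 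On the contrary, the $y$-contraction you describe needs a strictly negative term $-\xgt t\|a\|^2$, and that term has to be manufactured.

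Second, your plan for manufacturing it cannot work as stated. You propose a negative semidefinite $2\times2$ form in $(b,a)$ via Young's inequality, with $\xgt\le\frac15$ supplying a determinant condition. With the given $t$ and $\Xi_v$, after all cancellations the residual is exactly $-2\xgt t\|g\|^2-4L\xgt t\,D_f(z^k,x^*)$. The second piece is the mismatch between the coefficient $1$ on $D_f(z^k,x^*)$ from the gradient step and $2\Xi_v\xe^2 t=1+4L\xgt t$ from the $v$-step. Without that $D_f(z^k,x^*)$ term, the only negative quadratic available is $-2\xgt t\|g\|^2$. It vanishes when $b=-a$, so no Young-type manipulation of a form in $(b,a)$ can yield $-\xgt t\|a\|^2$.

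The missing ingredient is cocoercivity, $\|b\|^2\le 2L\,D_f(z^k,x^*)$. It turns the reserved $-4L\xgt t\,D_f(z^k,x^*)$ into $-2\xgt t\|b\|^2$, after which $-2\xgt t(\|a+b\|^2+\|b\|^2)=-\xgt t(\|a\|^2+\|a+2b\|^2)\le-\xgt t\|a\|^2$. The paper builds this into its first proposition: it applies $\|a\|^2\le2\|g\|^2+2\|b\|^2$ and then cocoercivity, which gives the factor $1+4L\xgt t$ on $D_f(z^k,x^*)$. That is exactly why $\Xi_v$ carries this factor and why $t$ is chosen to zero the $\|g\|^2$ coefficient. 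In particular, your linear-coupling step must not consume all of $D_f(z^k,x^*)$.

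Three smaller corrections follow. The bound $\xgt\le\frac15$ is not a determinant condition, since the identity above holds for every $\xgt<\frac14$. It only ensures $t>0$ and $\sqrt{1/(\mu t)}\le 3\sqrt{L/\mu}$. Also, the coefficient of $D_f(x^k,x^*)$ equals $1-1/\xconv$ identically by the definition of $\xe$. The branch $\sqrt{1/(\mu t)}+4\xgt L/\mu$ of $\xconv$ is needed only for the $v$-contraction, through $\xconv\xe\mu t\ge 1$.
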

(Notice that with our choices of \(\xgt\) and \(\hat s\), the matrix \(I-(1-2\xgt){\hat s} \matA \matA^{\top}\) is positive definite. By the choice of \(\xgt\le\frac15
\) and \(t\), \(\sqrt{\frac{1}{\mu t}}\le3\sqrt{\frac{L}{\mu}}\). )

\begin{remark}\label{remark:range new}
Despite the incorporation of the proximal term (to allow for more general applications) in our Algorithm~\ref{alg:x-accelerate-prox}, a key difference between our Theorem~\ref{thm:x-accelerate-prox-short} and Proposition~1 regarding Algorithm~3 in~\citet{salim2022optimal} is that the latter does not require \(\matA\) to have full row rank—they only rely on the minimal positive singular value, rather than the smallest singular value. The reason for this difference is that, in the absence of a proximal term, the dual variable \(y\) remains in \(\mathrm{range}(\matA)\) throughout the iterations when initialized with \(y^0 = 0\), which is equivalent to having full rank \(\matA\) on the corresponding subspace. However, the introduction of the proximal mapping breaks this property, and thus our analysis requires \(\matA\) to have full row rank. This requirement also applies to Theorem~\ref{thm:y-accelerate-prox-short}. Notably, when the proximal term is trivial, i.e., \(\phi(y) = 0\), our results in Theorems~\ref{thm:x-accelerate-prox-short} and~\ref{thm:y-accelerate-prox-short} also hold when using the minimal positive singular value of \(\matA\).
\end{remark}

We divide the proof into several parts. Specifically, we analyze the individual components of \(\Psi^{k+1}\) separately in Propositions~\ref{prop:x-accelerate-prox-1}, \ref{prop:x-accelerate-prox-2}, and \ref{prop:x-accelerate-prox-3}, and then combine them to complete the full argument.

\begin{proposition}\label{prop:x-accelerate-prox-1}
Consider applying Algorithm~\ref{alg:x-accelerate-prox} to solve \eqref{eq:upper bound prob}. Using the parameters in \Cref{thm:x-accelerate-prox}, we have 
\begin{equation}\label{eq:x-acce-prox_df}
\begin{aligned}
\D{x^{k+1}} 
&\le (1 + 4L \xgt t)\, \D{z^k} 
- t \left(1 - \frac{L}{2} t - 2\xgt \right) 
  \left\| \gradf{z^k} + \matA^{\top} y^{k+1} \right\|_2^2 \\
&\quad - \xgt t \left\| \matA^{\top} \left(y^{k+1} - y^*\right) \right\|_2^2 
+ t \left\langle \matA^{\top} \left(y^{k+1} - y^*\right),\, 
  \gradf{z^k} + \matA^{\top} y^{k+1} \right\rangle.
\end{aligned}
\end{equation}
\end{proposition}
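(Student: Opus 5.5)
The plan is a one-step descent estimate for the Bregman distance $\D{\cdot}$ along the primal gradient step of \Cref{alg:x-accelerate-prox}, followed by a correction that trades a small multiple of $\|\matA^{\top}(y^{k+1}-y^*)\|_2^2$ against $\D{z^k}$ and $\|g^k\|_2^2$. Throughout I write $g^k := \gradf{z^k}+\matA^{\top}y^{k+1}$, so the $x$-update reads $x^{k+1}=z^k-tg^k$. No property of $y^{k+1}$ beyond its appearance in $g^k$ is needed, so the proximal step plays no role here.

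\emph{Step 1 (descent inequality).} The map $x\mapsto \D{x}$ equals $f$ minus an affine function, so it is convex and $L$-smooth with gradient $\gradf{x}-\gradf{x^*}$. The descent lemma then gives
\begin{equation*}
\D{x^{k+1}}\le \D{z^k}-t\left\langle \gradf{z^k}-\gradf{x^*},\,g^k\right\rangle+\frac{Lt^2}{2}\|g^k\|_2^2 .
\end{equation*}
By \Cref{prop:saddle_cond_2}, $\gradf{x^*}=-\matA^{\top}y^*$, hence $\gradf{z^k}-\gradf{x^*}=g^k-\matA^{\top}(y^{k+1}-y^*)$. Substituting this yields
\begin{equation*}
\D{x^{k+1}}\le \D{z^k}-t\left(1-\frac{L}{2}t\right)\|g^k\|_2^2+t\left\langle \matA^{\top}(y^{k+1}-y^*),\,g^k\right\rangle .
\end{equation*}

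\emph{Step 2 (inserting the $-\xgt t\|\matA^{\top}(y^{k+1}-y^*)\|^2$ term).} It remains to show
\begin{equation*}
\xgt t\|\matA^{\top}(y^{k+1}-y^*)\|_2^2\le 2\xgt t\|g^k\|_2^2+4L\xgt t\,\D{z^k},
\end{equation*}
since adding this inequality to the bound from Step 1 gives exactly \eqref{eq:x-acce-prox_df}. To prove it, write $\matA^{\top}(y^{k+1}-y^*)=g^k-(\gradf{z^k}-\gradf{x^*})$. Applying $\|a-b\|^2\le 2\|a\|^2+2\|b\|^2$ bounds the left side by $2\xgt t\|g^k\|^2+2\xgt t\|\gradf{z^k}-\gradf{x^*}\|^2$. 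Finally, the standard co-coercivity bound for convex $L$-smooth functions, $\|\gradf{z}-\gradf{x^*}\|_2^2\le 2L\,\D{z}$, turns the second term into $4L\xgt t\,\D{z^k}$.

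The only step needing care is choosing this split in Step 2 so that the constants match $2\xgt$ and $4L\xgt t$; everything else is routine. No parameter restriction from \Cref{thm:x-accelerate-prox} is needed beyond $t,\xgt>0$.
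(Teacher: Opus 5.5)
Your proposal is correct and follows the paper's proof essentially step for step. You use the descent lemma for $D_f(\cdot,x^*)$, substitute the optimality condition $\nabla f(x^*)=-M^{\top}y^*$, bound $\|M^{\top}(y^{k+1}-y^*)\|_2^2\le 2\|\nabla f(z^k)+M^{\top}y^{k+1}\|_2^2+2\|\nabla f(z^k)-\nabla f(x^*)\|_2^2$, and finish with $\|\nabla f(z^k)-\nabla f(x^*)\|_2^2\le 2L\,D_f(z^k,x^*)$, which is exactly the paper's chain of inequalities.
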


\begin{proof}
\begin{equation*}
\begin{aligned}
&\D{x^{k+1}} \\
&\le \D{z^k} 
- t \left\langle \gradf{z^k} - \gradf{x^*},\, 
              \gradf{z^k} + \matA^{\top} y^{k+1} \right\rangle 
+ \frac{L}{2} t^2 \left\| \gradf{z^k} + \matA^{\top} y^{k+1} \right\|_2^2 \\
&= \D{z^k} 
- t \left(1 - \frac{L}{2} t \right) 
  \left\| \gradf{z^k} + \matA^{\top} y^{k+1} \right\|_2^2 \\
&\quad + t \left\langle \matA^{\top} \left(y^{k+1} - y^*\right),\, 
              \gradf{z^k} + \matA^{\top} y^{k+1} \right\rangle \\
&\le \D{z^k} 
- t \left(1 - \frac{L}{2} t - 2\xgt \right) 
  \left\| \gradf{z^k} + \matA^{\top} y^{k+1} \right\|_2^2 
- \xgt t \left\| \matA^{\top} \left(y^{k+1} - y^*\right) \right\|_2^2 \\
&\quad + 2 \xgt t \left\| \gradf{z^k} - \gradf{x^*} \right\|_2^2 
+ t \left\langle \matA^{\top} \left(y^{k+1} - y^*\right),\, 
              \gradf{z^k} + \matA^{\top} y^{k+1} \right\rangle \\
&\le (1 + 4L \xgt t)\, \D{z^k} 
- t \left(1 - \frac{L}{2} t - 2\xgt \right) 
  \left\| \gradf{z^k} + \matA^{\top} y^{k+1} \right\|_2^2 \\
&\quad - \xgt t \left\| \matA^{\top} \left(y^{k+1} - y^*\right) \right\|_2^2 
+ t \left\langle \matA^{\top} \left(y^{k+1} - y^*\right),\, 
              \gradf{z^k} + \matA^{\top} y^{k+1} \right\rangle.
\end{aligned}
\end{equation*}
where the first inequality comes from the \(L\)-smoothness of \(f\); the first equality comes from the fact \(\gradf{x^*}+\matA^{\top} y^*=0\); the second inequality comes from \(\left\|a\right\|_2^2\le 2\left\|b\right\|_2^2+2\left\|a+b\right\|_2^2\) (\(a=\matA^{\top}(y^{k+1}-y^*)\) and \(b=\gradf{z^k}-\gradf{x^*}\)); and the third inequality comes from \begin{equation*}\left\|\gradf{z^k}-\gradf{x^*}\right\|_2^2\le 2L \D{z^k}.\end{equation*}
\end{proof}

\begin{proposition}\label{prop:x-accelerate-prox-2}
Consider applying Algorithm~\ref{alg:x-accelerate-prox} to solve \eqref{eq:upper bound prob}. Using the parameters in \Cref{thm:x-accelerate-prox}, we have 
\begin{equation}\label{eq:x-acce-prox_v_3}
\begin{aligned}
\left\|v^{k+1} - x^*\right\|_2^2
&\le \left(1 - \frac{1}{\xconv}\right) \left\|v^k - x^*\right\|_2^2
- 2\xe^2 t\, \D{z^k}
+ 2\xe(\xe - 1)t\, \D{x^k} \\
&\quad - 2\xe t \left\langle \matA^{\top} \left(y^{k+1} - y^*\right),\,
\hat{x}^k - x^* \right\rangle + \xe^2 t^2 \left\| \nabla f(z^k) + \matA^{\top} y^{k+1} \right\|_2^2.
\end{aligned}
\end{equation}
where \({v}^k = (1+\xtau){z}^k-\xtau x^k\)
\end{proposition}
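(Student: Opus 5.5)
The plan is to express $v^{k+1}$ as a single gradient-type step taken from the extrapolated point $\hat x^k$, then expand the square and lower-bound the one inner product that involves $f$ using convexity and strong convexity.

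\textbf{Step 1 (two algebraic identities).} From $z^{k+1}=(1+\xga)x^{k+1}-\xga x^k$ we get
\[
v^{k+1}=(1+\xga+\xtau\xga)\,x^{k+1}-(1+\xtau)\xga\, x^k .
\]
With $\xga=(\xe-1)/(\xtau+1)$ this becomes $v^{k+1}=\xe x^{k+1}-(\xe-1)x^k$. Substituting $x^{k+1}=z^k-t g^k$, where $g^k:=\gradf{z^k}+\matA^\top y^{k+1}$, gives
\[
v^{k+1}=\hat x^k-\xe t\, g^k .
\]
Second, I will write $\hat x^k$ as a combination of $v^k$ and $z^k$. Matching coefficients and using $\xtau=(\xe-1)/(1-1/\xconv)$ yields
\[
\hat x^k=\left(1-\tfrac{1}{\xconv}\right)v^k+\tfrac{1}{\xconv}\,z^k .
\]

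\textbf{Step 2 (expand the square).} Expanding gives
\[
\|v^{k+1}-x^*\|_2^2=\|\hat x^k-x^*\|_2^2-2\xe t\langle g^k,\hat x^k-x^*\rangle+\xe^2t^2\|g^k\|_2^2 .
\]
Since $\gradf{x^*}=-\matA^\top y^*$, we can split $g^k=(\gradf{z^k}-\gradf{x^*})+\matA^\top(y^{k+1}-y^*)$. The $\matA^\top(y^{k+1}-y^*)$ part produces exactly the cross term in \eqref{eq:x-acce-prox_v_3}, and the last term of the expansion is already the $\xe^2t^2\|g^k\|_2^2$ term of the claim. By convexity of $\|\cdot\|_2^2$ and Step 1,
\[
\|\hat x^k-x^*\|_2^2\le(1-1/\xconv)\|v^k-x^*\|_2^2+\tfrac1\xconv\|z^k-x^*\|_2^2 .
\]

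\textbf{Step 3 (the $f$ inner product).} Write $\hat x^k-x^*=(z^k-x^*)+(\xe-1)(z^k-x^k)$. Strong convexity gives
\[
\langle\gradf{z^k}-\gradf{x^*},z^k-x^*\rangle\ge \D{z^k}+\tfrac{\mu}{2}\|z^k-x^*\|_2^2 .
\]
Convexity, namely $f(x^k)\ge f(z^k)+\langle \gradf{z^k},x^k-z^k\rangle$, combined with cancelling the $\gradf{x^*}$ terms gives
\[
\langle\gradf{z^k}-\gradf{x^*},z^k-x^k\rangle\ge \D{z^k}-\D{x^k}.
\]
Multiplying the combination by $-2\xe t$ produces the terms $-2\xe^2t\,\D{z^k}+2\xe(\xe-1)t\,\D{x^k}$ in the claim, together with an extra $-\xe t\mu\|z^k-x^*\|_2^2$.

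\textbf{Step 4 (main obstacle).} The leftover terms are
\[
\left(\tfrac1\xconv-\xe t\mu\right)\|z^k-x^*\|_2^2 ,
\]
and I must show this is nonpositive, i.e.\ $\xe\mu t\xconv\ge1$. Inserting $\xe=(1+4L\xgt t)/(1/\xconv+4L\xgt t)$, the condition is equivalent to
\[
(1+4L\xgt t)\,\mu t\xconv\ \ge\ 1/\xconv+4L\xgt t .
\]
The branch $\xconv\ge\sqrt{1/(\mu t)}$ gives $\mu t\xconv\ge1/\xconv$, which handles the first term on the right. The $4L\xgt t$ term is the delicate part. Here I would try to use the second summand $\tfrac{L}{\mu}4\xgt$ in $\xconv$, since $\xconv$ is the sum of the two pieces and is at least each of them. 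If that turns out insufficient, the fallback is to keep part of the $\D{z^k}$ slack: use $\D{z^k}\ge\frac\mu2\|z^k-x^*\|_2^2$ on a small fraction of the $-2\xe^2t\,\D{z^k}$ term. I expect this parameter verification to be the only nontrivial step; Steps 1--3 are mechanical.
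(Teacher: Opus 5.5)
Your Steps 1--3 follow the paper's proof essentially verbatim. The paper also writes $v^{k+1}=\xe x^{k+1}-(\xe-1)x^k$ via $\xe=1+\xga+\xga\xtau$ and expands around $\hat x^k=\xe z^k-(\xe-1)x^k$. It uses the same two inequalities $\langle\nabla f(z^k)-\nabla f(x^*),z^k-x^*\rangle\ge\D{z^k}+\frac{\mu}{2}\|z^k-x^*\|_2^2$ and $\langle\nabla f(z^k)-\nabla f(x^*),z^k-x^k\rangle\ge\D{z^k}-\D{x^k}$, and applies Jensen with $\xe-1=(1-1/\xconv)\xtau$. So everything reduces to $\xconv\xe\mu t\ge 1$.

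You leave that last check open. It closes in one line if you use the sum structure of $\xconv$ (the paper instead compares $\xconv$ with the positive root of $\mu t\,\xconv^2-4L\xgt t\,\xconv-1=0$):
\[
\mu t\,\xconv\ \ge\ \mu t\Bigl(\frac{1}{\sqrt{\mu t}}+\frac{4L\xgt}{\mu}\Bigr)=\sqrt{\mu t}+4L\xgt t\ \ge\ \frac{1}{\xconv}+4L\xgt t .
\]
The last step uses $\xconv\ge 1/\sqrt{\mu t}$. Hence $\xe\mu t\xconv=(1+4L\xgt t)\,\mu t\xconv/(1/\xconv+4L\xgt t)\ge 1$.

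Two cautions. First, your initial allocation spends all of $\mu t\xconv$ on $1/\xconv$. That leaves only $4L\xgt t\cdot\mu t\xconv$ to cover $4L\xgt t$, which fails whenever $\mu t\xconv<1$. Second, the fallback of borrowing part of $-2\xe^2t\,\D{z^k}$ is not admissible. It would prove a weaker inequality than the one stated, and that exact coefficient is what gives the cancellation $\Xi_f(1+4L\xgt t)-2\Xi_v\xe^2t=0$ in the proof of Theorem~\ref{thm:x-accelerate-prox}.
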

\begin{proof}
For the term \(\left\|v^{k+1}-x^*\right\|_2^2\), we have the following equalities:
\begin{equation}\label{eq:x-acce-prox-2-init}
\begin{aligned}
&\left\|v^{k+1} - x^*\right\|_2^2 
= \left\|(1 + \xtau) z^{k+1} - \xtau x^{k+1} - x^*\right\|_2^2 
= \left\| \xe x^{k+1} - (\xe - 1) x^k - x^* \right\|_2^2 \\
=\ & \left\| \xe z^k - (\xe - 1) x^k - x^* \right\|_2^2 
- 2 \xe t \left\langle 
  \nabla f\left(z^k\right) - \gradf{x^*},\, 
  \xe z^k - (\xe - 1) x^k - x^* 
\right\rangle \\
&\quad - 2 \xe t \left\langle 
  \matA^{\top} \left( y^{k+1} - y^* \right),\, 
  \xe z^k - (\xe - 1) x^k - x^*
\right\rangle 
+ \xe^2 t^2 \left\| \nabla f\left(z^k\right) + \matA^{\top} y^{k+1} \right\|_2^2 \\
=\  &\left\| \xe z^k - (\xe - 1) x^k - x^* \right\|_2^2 
- 2 \xe t \left\langle 
  \nabla f\left(z^k\right) - \gradf{x^*},\, 
  \xe z^k - (\xe - 1) x^k - x^* 
\right\rangle \\
&\quad - 2 \xe t \left\langle 
  \matA^{\top} \left( y^{k+1} - y^* \right),\, 
  \hat{x}^k - x^* 
\right\rangle 
+ \xe^2 t^2 \left\| \nabla f\left(z^k\right) + \matA^{\top} y^{k+1} \right\|_2^2.
\end{aligned}
\end{equation}
where the second equality comes from \(\xe=1+\xga+\xga\xtau\) (see \eqref{eq:x-accelerate-prox para}), and the third equality comes from the momentum step of Algorithm~\ref{alg:x-accelerate-prox}. Then, by the \(\mu\)-strong convexity of \(f\) (and \(\D{x}\)), we have 
\begin{equation}\label{eq:x-acce-prox-2-mu-strong-convexity}
\begin{aligned}
\D{z^k}& - \left\langle \nabla f\left(z^k\right)-\gradf{x^*}, z^k-x^k\right\rangle   \le \D{x^k}\\
\D{z^k}& - \left\langle \nabla f\left(z^k\right)-\gradf{x^*}, z^k-x^*\right\rangle   \le -\frac{\mu}{2}\left\|z^k-x^*\right\|_2^2.\\
\end{aligned}
\end{equation}
Therefore, 
\begin{equation*}
\begin{aligned}
\left\|v^{k+1} - x^*\right\|_2^2 
&= \left\|\xe z^k - (\xe - 1) x^k - x^*\right\|_2^2 
- 2\xe t \left\langle  \gradf{z^k} - \gradf{x^*},\,
\xe z^k - (\xe - 1) x^k - x^* \right\rangle   \\
&\quad - 2\xe t \left\langle  \matA^{\top}(y^{k+1} - y^*),\, 
\hat{x}^k - x^* \right\rangle   
+ \xe^2 t^2 \left\|\gradf{z^k} + \matA^{\top} y^{k+1} \right\|_2^2 \\
&\le \left\|\xe z^k - (\xe - 1) x^k - x^*\right\|_2^2 
- 2\xe^2 t\, \D{z^k} \\
&\quad + 2\xe(\xe - 1)t \left[ \D{z^k} 
- \left\langle  \gradf{z^k} - \gradf{x^*},\, z^k - x^k \right\rangle   \right] \\
&\quad + 2\xe t \left[ \D{z^k} 
- \left\langle  \gradf{z^k} - \gradf{x^*},\, z^k - x^* \right\rangle   \right] \\
&\quad - 2\xe t \left\langle  \matA^{\top}(y^{k+1} - y^*),\, 
\hat{x}^k - x^* \right\rangle   
+ \xe^2 t^2 \left\|\gradf{z^k} + \matA^{\top} y^{k+1} \right\|_2^2 \\
&\le \left\|\xe z^k - (\xe - 1) x^k - x^*\right\|_2^2 
- \xe \mu t \left\|z^k - x^*\right\|_2^2 
- 2\xe^2 t\, \D{z^k} \\
&\quad + 2\xe(\xe - 1)t\, \D{x^k} 
- 2\xe t \left\langle  \matA^{\top}(y^{k+1} - y^*),\, 
\hat{x}^k - x^* \right\rangle   \\
&\quad + \xe^2 t^2 \left\|\gradf{z^k} + \matA^{\top} y^{k+1} \right\|_2^2.
\end{aligned}
\end{equation*}

Notice that \(\xconv \xe \mu t \ge 1\) by \eqref{eq:x-accelerate-prox para}. (\(\xconv \xe \mu t =\xconv \mu t \frac{1+4L\xgt t}{4L \xgt t + 1/\xconv}\ge\xconv \mu t \frac{1}{4L \xgt t + 1/\xconv}\). The positive solution to \(\xconv \mu t \frac{1}{4L \xgt t + 1/\xconv}=1\) is \(\xconv^{+}=\frac{4L \xgt t+\sqrt{(4L\xgt t)^2+4\mu t}}{2\mu t}\). By \eqref{eq:x-accelerate-prox para}, \(\xconv\ge \sqrt{\frac{1}{\mu t}}+4\frac{L}{\mu}\xgt\ge \xconv^{+}\).) Since \(\xe - 1 = (1-1/\xconv)\xtau\), by Jensen's inequality
\begin{equation*}\left\|\xe z^{k}-(\xe-1)x^k-x^*\right\|_2^2\le \left(1-\frac1{\xconv}\right)\left\|(1+\xtau)z^{k}-\xtau x^{k}-x^*\right\|_2^2 +\frac1{\xconv} \left\|z^k-x^*\right\|_2^2.\end{equation*}
Thus,
\begin{equation*}
\begin{aligned}
&\left\|v^{k+1} - x^*\right\|_2^2\\
&\le \left\| \xe z^k - (\xe - 1) x^k - x^* \right\|_2^2 
- \frac{1}{\xconv} \left\| z^k - x^* \right\|_2^2 
- 2\xe^2 t\, \D{z^k} 
+ 2\xe(\xe - 1)t\, \D{x^k} \\
&\quad - 2\xe t \left\langle \matA^{\top} \left( y^{k+1} - y^* \right),\, 
\hat{x}^k - x^* \right\rangle 
+ \xe^2 t^2 \left\| \nabla f\left(z^k\right) + \matA^{\top} y^{k+1} \right\|_2^2 \\
&\le \left( 1 - \frac{1}{\xconv} \right) 
\left\| \left(1 + \xtau \right) z^k - \xtau x^k - x^* \right\|_2^2 
- 2\xe^2 t\, \D{z^k} 
+ 2\xe(\xe - 1)t\, \D{x^k} \\
&\quad - 2\xe t \left\langle \matA^{\top} \left( y^{k+1} - y^* \right),\, 
\hat{x}^k - x^* \right\rangle 
+ \xe^2 t^2 \left\| \nabla f\left(z^k\right) + \matA^{\top} y^{k+1} \right\|_2^2 \\
&= \left( 1 - \frac{1}{\xconv} \right) \left\| v^k - x^* \right\|_2^2 
- 2\xe^2 t\, \D{z^k} 
+ 2\xe(\xe - 1)t\, \D{x^k} \\
&\quad - 2\xe t \left\langle \matA^{\top} \left( y^{k+1} - y^* \right),\, 
\hat{x}^k - x^* \right\rangle 
+ \xe^2 t^2 \left\| \nabla f\left(z^k\right) + \matA^{\top} y^{k+1} \right\|_2^2.
\end{aligned}
\end{equation*}
\end{proof}

\begin{proposition}\label{prop:x-accelerate-prox-3}
Consider applying Algorithm~\ref{alg:x-accelerate-prox} to solve \eqref{eq:upper bound prob}. Using the parameters in \Cref{thm:x-accelerate-prox}, we have 
\begin{equation}\label{eq:x-acce-prox_y}
\begin{aligned}
\left\|y^{k+1}-y^*\right\|_{\left(I-{\hat s}\matA \matA^{\top}\right)}^2&\le \left\|y^{k}-y^*\right\|_{\left(I-{\hat s}\matA \matA^{\top}\right)}^2+2\xh s\left\langle \matA^{\top}\left(y^{k+1}-y^*\right),{\hat x}^k - x^*\right\rangle   \\ &\quad -2{\hat s} \left\langle \matA^{\top}\left(y^{k+1}-y^*\right), \matA^{\top}y^{k+1}+\gradf{z^k}\right\rangle  .
\end{aligned}
\end{equation}
\end{proposition}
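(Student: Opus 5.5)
We read the $y$-update as a proximal step and use the three-point/firm-nonexpansiveness inequality for $\mathrm{prox}_{\xh s\phi}$, paired with the fixed-point characterization of $y^*$ from \Cref{prop:saddle_cond_2}. Write the prox argument as $p^k := y^k + \xh s(\matA\hat x^k - b) - \hat s\matA(\matA^{\top}y^k + \gradf{z^k})$. Then $(p^k - y^{k+1})/(\xh s)\in\partial\phi(y^{k+1})$. Also $g^* = \matA x^* - b\in\partial\phi(y^*)$, and $\gradf{x^*} = -\matA^{\top}y^*$.

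\textbf{Step 1: monotonicity of $\partial\phi$.} Monotonicity gives $\langle p^k - y^{k+1} - \xh s\, g^*,\, y^{k+1}-y^*\rangle \ge 0$. Substituting $p^k$, this reads
\begin{equation*}
\left\langle y^k - y^{k+1} + \xh s\,\matA(\hat x^k - x^*) - \hat s\matA\left(\matA^{\top}y^k + \gradf{z^k}\right),\, y^{k+1}-y^*\right\rangle \ge 0 .
\end{equation*}

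\textbf{Step 2: rewrite the $\hat s$ term.} Split $\matA^{\top}y^k + \gradf{z^k} = \left(\matA^{\top}y^{k+1} + \gradf{z^k}\right) + \matA^{\top}(y^k - y^{k+1})$. The inequality becomes
\begin{equation*}
\left\langle (I - \hat s\matA\matA^{\top})(y^k - y^{k+1}),\, y^{k+1}-y^*\right\rangle + \xh s\left\langle \matA^{\top}(y^{k+1}-y^*),\, \hat x^k - x^*\right\rangle - \hat s\left\langle \matA^{\top}(y^{k+1}-y^*),\, \matA^{\top}y^{k+1}+\gradf{z^k}\right\rangle \ge 0 .
\end{equation*}

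\textbf{Step 3: polarization.} Let $G := I - \hat s\matA\matA^{\top}$, which is positive semidefinite because $\hat s = 1/s_{\max}^2$. Apply the identity
\begin{equation*}
2\langle G(a-b), b-c\rangle = \|a-c\|_G^2 - \|b-c\|_G^2 - \|a-b\|_G^2
\end{equation*}
with $a = y^k$, $b = y^{k+1}$, $c = y^*$. Multiply the Step 2 inequality by 2 and substitute. Dropping the nonpositive term $-\|y^k - y^{k+1}\|_G^2$ yields exactly \eqref{eq:x-acce-prox_y}.

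The only delicate points are the sign bookkeeping in Step 2 and the positive semidefiniteness of $G$, which is needed to discard $\|y^k-y^{k+1}\|_G^2$. Both are routine, so I do not expect a real obstacle. No strong convexity is needed here; it enters later when this bound is combined with \Cref{prop:x-accelerate-prox-1,prop:x-accelerate-prox-2}.
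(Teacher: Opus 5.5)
Your proof is correct and uses the same ingredients as the paper: the split $\matA^{\top}y^k=\matA^{\top}y^{k+1}+\matA^{\top}(y^k-y^{k+1})$, the three-point identity in the $(I-\hat s\matA\matA^{\top})$-norm (with positive semidefiniteness justifying dropping $\|y^{k+1}-y^k\|^2$, a step the paper leaves implicit), and monotonicity of $\partial\phi$ tested against $g^*=\matA x^*-b$. The only difference is ordering: the paper first writes an exact identity for $(I-\hat s\matA\matA^{\top})(y^{k+1}-y^*)$ that carries the term $-\xh s\,(g^{k+1}-g^*)$ and discards it by monotonicity at the end, whereas you invoke monotonicity first.
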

\begin{proof}
For the term \(\left\|y^{k+1}-y^*\right\|_{\left(I-{\hat s}\matA \matA^{\top}\right)}^2\), first let \begin{equation*}g^{k+1}=- \left[y^{k+1}-\left[y^k+\xh s\left({\matA}{\hat x} ^k-b\right)-\hat s{\matA}({\matA}^{\top}y^k+\nabla f({z}^k))\right]\right] / (\xh s).\end{equation*} Since \(y^{k+1}=\mathrm{prox}_{\xh {s}\phi}\left[y^k+\xh s\left({\matA}{\hat x} ^k-b\right)-\hat s{\matA}({\matA}^{\top}y^k+\nabla f({z}^k))\right]\), we have \(g^{k+1}\in \partial \phi\left(y^{k+1}\right)\).\\
Hence, from the y-update step in Algorithm~\ref{alg:x-accelerate-prox}, as
\begin{equation*}
- \hat s{\matA}\left(\matA^{\top} y^{k} + \gradf{z^{k}}\right)
= - \hat s{\matA}\left(\matA^{\top} y^{k+1} + \gradf{z^{k}}\right)
+ \hat s {\matA}\matA^{\top}\left(y^{k+1} - y^{k}\right),    
\end{equation*}
we have 
\begin{equation*}\begin{aligned}
&\left(I-\hat s \matA\matA^{\top}\right) \left(y^{k+1}-y^*\right)\\=&\left(I-\hat s \matA\matA^{\top}\right) \left(y^{k}-y^*\right) + \xh {s} \left(\matA {\hat x}^k - b\right) - {\hat s} \matA\left(\matA^{\top}y^{k+1}+\gradf{z^k}\right) -\xh {s} g^{k+1}\\
=&\left(I-\hat s \matA\matA^{\top}\right) \left(y^{k}-y^*\right) + \xh {s} \matA \left({\hat x}^k - x^*\right) - {\hat s} \matA\left(\matA^{\top}y^{k+1}+\gradf{z^k}\right) -\xh {s} \left(g^{k+1} - g^*\right),
\end{aligned}\end{equation*}
where \(g^*=\matA x^*-b\in\partial \phi(y^*)\) as defined in \Cref{sec:upper}. Then, 
\begin{equation*}
\begin{aligned}
&\left\| y^{k+1} - y^* \right\|^2_{\left( I - {\hat s} \matA \matA^{\top} \right)} \\
\le\ 
&\left\| y^k - y^* \right\|^2_{\left( I - {\hat s} \matA \matA^{\top} \right)} 
+ 2 \xh s \left\langle y^{k+1} - y^*,\, \matA \left( \hat x^k - x^* \right) \right\rangle \\
&- 2 {\hat s} \left\langle y^{k+1} - y^*,\, \matA \left( \matA^{\top} y^{k+1} + \gradf{z^k} \right) \right\rangle - 2 \xh s \left\langle y^{k+1} - y^*,\, g^{k+1} - g^* \right\rangle \\
\le\ 
&\left\| y^k - y^* \right\|^2_{\left( I - {\hat s} \matA \matA^{\top} \right)} 
+ 2 \xh s \left\langle \matA^{\top} \left( y^{k+1} - y^* \right),\, \hat x^k - x^* \right\rangle \\
&- 2 {\hat s} \left\langle \matA^{\top} \left( y^{k+1} - y^* \right),\, \matA^{\top} y^{k+1} + \gradf{z^k} \right\rangle,
\end{aligned}
\end{equation*}
where the last inequality comes from the monotonicity of subgradients of the convex function \(\phi(y)\).
\end{proof}

\begin{proof}[Proof of \Cref{thm:x-accelerate-prox}]
Consider the term \({\Xi_y} \left\|y^{k+1}-y^*\right\|^2_{\left(I-{\hat s} \matA \matA^{\top}\right)}+\D{x^{k+1}}+{\Xi_v}\left\|{v}^{k+1}-x^*\right\|_2^2\). Combine \eqref{eq:x-acce-prox_df}, \eqref{eq:x-acce-prox_v_3}, and \eqref{eq:x-acce-prox_y}, and by the settings of the weights \({\Xi_f}\), \({\Xi_y}\), \({\Xi_v}\), and other parameters in \eqref{eq:x-accelerate-prox para}, we see that on the right hand side: \begin{itemize}
\item{the term \(\left\langle \matA^{\top}\left(y^{k+1}-y^*\right), \matA^{\top}y^{k+1}+\gradf{z^k}\right\rangle  \) has coefficient \({\Xi_f}{t} - {\Xi_y} 2\hat s=0\);}
\item{the term \(\left\langle \matA^{\top}\left(y^{k+1}-y^*\right),{\hat x}^k - x^*\right\rangle  \) has coefficient \(-{\Xi_v} 2 \xe t + {\Xi_y} 2 \xh s=0\);}
\item{the term \(\left\|\gradf{z^k}+\matA^{\top} y^{k+1}\right\|_2^2\) has coefficient \({\Xi_f} [-t(1-\frac{L}{2}{t}-2\xgt) ] + {\Xi_v} \xe^2 {t}^2 = t(-1+\frac{L}{2}{t}+2\xgt + \frac12 (1+4L\xgt t))=0\);}
\item{the term \(\D{z^k}\) has coefficient \({\Xi_f}(1+4L\xgt t) - {\Xi_v}2\xe^2t=0\).}
\end{itemize}
Also, the coefficient of \(\D{x^k}\) is \({\Xi_v} 2\xe (\xe-1) t = 1-\frac{1}{\xconv} = {\Xi_f}(1-1/\xconv)\). Hence, by moving \(-\xgt t\left\|\matA^{\top}\left(y^{k+1}-y^*\right)\right\|_2^2\) to the left hand side,
\begin{equation*}
\begin{aligned}
&\Xi_y \left\| y^{k+1} - y^* \right\|^2_{\left( I - (1-2\xgt)\hat{s} \matA \matA^{\top} \right)} 
+ \D{x^{k+1}} 
+ \Xi_v \left\| v^{k+1} - x^* \right\|_2^2 \\
\le\ & \Xi_y \left\| y^k - y^* \right\|^2_{\left( I - \hat{s} \matA \matA^{\top} \right)} 
+ \left( 1 - \frac{1}{\xconv} \right) \D{x^k} 
+ \Xi_v \left( 1 - \frac{1}{\xconv} \right) \left\| v^k - x^* \right\|_2^2  \\
\le\ & \Xi_y \left\| y^k - y^* \right\|^2_{\left( I - (1-2\xgt)\hat{s} \matA \matA^{\top} \right)} 
+ \left( 1 - \frac{1}{\xconv} \right) \D{x^k} 
+ \Xi_v \left( 1 - \frac{1}{\xconv} \right) \left\| v^k - x^* \right\|_2^2 \\
&\quad - 2 \Xi_y \xgt \hat{s} s_{\min}^2 \left\| y^{k} - y^* \right\|_2^2.
\end{aligned}
\end{equation*}
Since \(I-(1-2\xgt)\hat s\matA\matA^\top\preceq I\) and \(\xconv \ge \frac{1}{2\xgt \hat s s_{\min}^2}\), we have
\begin{equation*}
2\xgt\hat s s_{\min}^2\left\|y^k-y^*\right\|_2^2 \ge \frac1{\xconv}\left\|y^k-y^*\right\|^2_{\left(I-(1-2\xgt)\hat s\matA\matA^\top\right)},
\end{equation*}
and hence the last line implies \(\Psi^{k+1}\le (1-1/\xconv)\Psi^k\).
\end{proof}

\section{Proof of Theorem~\ref{thm:y-accelerate-prox-short}}
\label{appensec:proof y alg}
We begin by stating the full version of \Cref{thm:y-accelerate-prox-short}.
\begin{theorem}\label{thm:y-accelerate-prox}
Consider applying Algorithm~\ref{alg:y-accelerate-prox} to solve \eqref{eq:upper bound prob}. Let 
\begin{equation}\label{eq:y-accelerate-prox para}
\begin{aligned}
\hat s & = \frac{1}{s_{\max}^2}, 
& \quad t & = \frac{1}{2L}, \\
\alpha &= \frac12,
&\yc & = \max\left(1, \frac{1}{\sqrt 2}\frac{s_{\max}}{s_{\min}}\sqrt{\frac{\mu}{L}}\right), \\
\tilde t & = \frac{\alpha t}{\yc}, & \quad \yconv &\ge \max\left(\frac{2}{\yc}\frac{s_{\max}^2}{s_{\min}^2}, 4\yc\frac{L}{\mu}\right),\\
\yeta & = \frac{\yc - 1}{1 - 1/\yconv}, 
& \quad \yq & = \frac{\yc - 1}{\yeta + 1}, \\
\quad s & = \frac{\hat s}{t}, 
& \quad \Xi_u & = \frac{1}{2\yc^2 \hat s}, \\
\Xi_x & = \Xi_u \cdot \frac{\yc s}{\tilde t} = \frac{1}{t^2}, & \quad \Xi_h & = 1.
\end{aligned}
\end{equation}
Then, let 
\begin{equation*}
\begin{aligned}
\Psi^k =\ &\Xi_x \cdot \left[\left\| x^k - x^*\right\|_2^2-2\left(t-\tilde t\right)\D{x^k}\right] \\
&+ H(y^k) + \frac{1}{t} \Dphi(y^k, y^*) + \Xi_u \left\| u^k - y^* \right\|^2_{ \left(I - \alpha{\hat s} \matA \matA^{\top} \right)},
\end{aligned}
\end{equation*}
we have 
\begin{equation*}\Psi^{k+1}\le (1-1/\yconv)\Psi^k.\end{equation*}
\end{theorem}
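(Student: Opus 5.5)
I will mirror the structure of the proof of \Cref{thm:x-accelerate-prox}. I bound each component of $\Psi^{k+1}$ separately and then choose the weights so that all cross terms cancel.

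\textbf{Step 1: the $x$-component.} Write $x^{k+1}=x^k-\tilde t(\gradf{x^k}+\matA^\top u^{k+1})$ and use $\gradf{x^*}=-\matA^\top y^*$. Expanding $\|x^{k+1}-x^*\|_2^2$ gives
\begin{equation*}
\|x^k-x^*\|_2^2-2\tilde t\langle \gradf{x^k}-\gradf{x^*},x^k-x^*\rangle-2\tilde t\langle \matA^\top(u^{k+1}-y^*),x^k-x^*\rangle+\tilde t^2\|\gradf{x^k}+\matA^\top u^{k+1}\|_2^2 .
\end{equation*}
I then combine this with the smoothness descent inequality for $\D{x^{k+1}}$ with weight $-2(t-\tilde t)$, which is why the Lyapunov term is $\|x-x^*\|^2-2(t-\tilde t)D_f$. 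This term is nonnegative because $t-\tilde t\le t=1/(2L)$. From strong convexity and cocoercivity, $\langle\nabla f(x^k)-\nabla f(x^*),x^k-x^*\rangle\ge D_f+\frac{\mu}{2}\|x^k-x^*\|^2$ together with $\|\nabla f(x^k)-\nabla f(x^*)\|^2\le 2LD_f$, I expect a contraction of the form
\begin{equation*}
\left(1-\mu\tilde t\right)\left[\|x^k-x^*\|^2-2(t-\tilde t)D_f\right],
\end{equation*}
plus a cross term $-2\tilde t\langle \matA^\top(u^{k+1}-y^*),\,x^k-t\gradf{x^k}-x^*\rangle$ and some negative multiples of $\|\gradf{x^k}-\gradf{x^*}\|^2$. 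Since $\yconv\ge 4\yc L/\mu=1/(\mu\tilde t)$, the factor $1-\mu\tilde t$ is at most $1-1/\yconv$.

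\textbf{Step 2: the $y$-components.} For $y^{k+1}$ I use the prox optimality $g^{k+1}\in\partial\phi(y^{k+1})$, exactly as in \Cref{prop:x-accelerate-prox-3}. Write the $y$-step as a preconditioned gradient step on the dual function $\frac{1}{t}$-scaled $H(y)+\frac1t\Dphi$. Its linear part is
\begin{equation*}
s(\matA x^k-b)-\hat s\matA(\matA^\top w^k+\gradf{x^k})=\frac{\hat s}{t}\,\matA\bigl(x^k-t\gradf{x^k}-x^*\bigr)-\hat s\matA\matA^\top(w^k-y^*)+s g^*.
\end{equation*}
So $y^{k+1}$ is a proximal gradient step from $w^k$ on $H+\frac1t\Dphi$, with step $\hat s\le 1/s_{\max}^2$, perturbed by the error $\frac{\hat s}{t}\matA(x^k-t\gradf{x^k}-x^*)$. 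I then run the standard accelerated (Nesterov/linear-coupling) estimate-sequence argument with the extrapolations $w^{k+1}=(1+\yq)y^{k+1}-\yq y^k$ and $u^{k+1}=(1+\yeta)w^{k+1}-\yeta y^{k+1}$; note $\yc=1+\yq+\yq\yeta$, so $u^{k+1}=\yc y^{k+1}-(\yc-1)y^k$. The argument is analogous to \Cref{prop:x-accelerate-prox-2}, with $H$ playing the role of $D_f$ (it is $s_{\min}^2$-strongly convex in $y$ and $s_{\max}^2$-smooth). This yields a bound on
\begin{equation*}
H(y^{k+1})+\frac1t\Dphi(y^{k+1},y^*)+\Xi_u\|u^{k+1}-y^*\|^2_{(I-\alpha\hat s\matA\matA^\top)}
\end{equation*}
in terms of $(1-1/\yconv)$ times the same quantity at step $k$, plus a cross term proportional to $\langle \matA^\top(u^{k+1}-y^*),x^k-t\gradf{x^k}-x^*\rangle$. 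For the Jensen step I need $\yc-1=(1-1/\yconv)\yeta$ and $\yc s_{\min}^2\hat s\cdot(\text{const})\ge 1/\yconv$; the latter follows from $\yconv\ge\frac{2}{\yc}\frac{s_{\max}^2}{s_{\min}^2}$. The $\alpha=\frac12$ preconditioner in the $u$-norm absorbs $\frac{\hat s}{2}\|\matA^\top(\cdot)\|^2$ terms arising from the gradient-norm expansion.

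\textbf{Step 3: cancellation.} The weights are chosen so that the two cross terms cancel exactly: $\Xi_x\cdot 2\tilde t=\Xi_u\cdot 2\yc s$, i.e. $\Xi_x=\Xi_u\yc s/\tilde t$, as stated. After that, I check that the leftover squared-norm terms have nonpositive coefficients. These are $\|\gradf{x^k}-\gradf{x^*}\|^2$, $\|\matA^\top(u^{k+1}-y^*)\|^2$ coming from the $\tilde t^2$ term (bounded using $\tilde t\le t/(2\yc)$ and $\Xi_x\tilde t^2 s_{\max}^2$ versus the $\alpha\hat s$ slack), and the $y$-gradient mapping norms. Summing the bounds then gives $\Psi^{k+1}\le(1-1/\yconv)\Psi^k$.

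\textbf{Main obstacle.} The hardest part is Step 2: a clean accelerated prox-gradient inequality on $y$ in which the error $\frac{\hat s}{t}\matA(x^k-t\gradf{x^k}-x^*)$ enters only linearly, through the $u^{k+1}$ cross term. To achieve this I would keep $\langle\matA^\top(y^{k+1}-y^*),\cdot\rangle$ terms exact rather than using Young's inequality. I would also verify that the residual quadratic in $x^k-t\gradf{x^k}-x^*$ is dominated by the Step 1 negative terms, using $t=1/(2L)$ and $\tilde t=t/(2\yc)$.
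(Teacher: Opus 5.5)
\textbf{Verdict: genuine gap in Step 2; Steps 1 and 3 are essentially right apart from two slips.}

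What matches the paper: separate bounds for the $x$-term, for $H(y^{k+1})$, and for $\|u^{k+1}-y^*\|^2$; exact substitution of $y^{k+1}-w^k$ from the update, so the error enters linearly; Jensen with $\xi-1=(1-\frac1\Pi)\tau$; and cancellation of the linear cross term via $\Xi_x=\Xi_u\xi s/\tilde t$. Your reading of the $y$-step as a proximal gradient step on $H+\frac1t\Dphi(\cdot,y^*)$ is also correct.

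The gap is that Step 2 asserts the $y$-analysis returns $(1-\frac1\Pi)$ times the \emph{preconditioned} quantity $\Xi_u\|u^k-y^*\|^2_{(I-\alpha\hat s\matA\matA^\top)}$, and this is the nontrivial part. The $\tilde t^2$ term from the $x$-update is
$\Xi_x\tilde t^2\|\matA^\top(u^{k+1}-y^*)\|_2^2=\frac{1}{4\xi^2}\|\matA^\top(u^{k+1}-y^*)\|_2^2=\Xi_u\alpha\hat s\|\matA^\top(u^{k+1}-y^*)\|_2^2$.
So it consumes the entire preconditioner at step $k+1$; it is not ``bounded against slack''. Estimating it by $s_{\max}^2\|u^{k+1}-y^*\|_2^2$, as your Step 3 suggests, would leave an extra $\frac12\Xi_u\|u^{k+1}-y^*\|_2^2$ that nothing pays for. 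It cannot be charged to $\|y^{k+1}-w^k\|_2^2$ either, since with $\hat s=1/s_{\max}^2$ that coefficient is exactly $-(1/\hat s-s_{\max}^2/2)+\Xi_u\xi^2=0$. Hence you must bound the \emph{plain} $\|u^{k+1}-y^*\|_2^2$ by $(1-\frac1\Pi)\|u^k-y^*\|^2_{(I-\alpha\hat s\matA\matA^\top)}$ plus controllable terms. That means generating an extra $-(1-\frac1\Pi)\alpha\hat s\|\matA^\top(u^k-y^*)\|_2^2$ at step $k$. An argument modeled on Proposition~\ref{prop:x-accelerate-prox-2}, using only convexity and strong convexity of $H$, gives the plain norm $(1-\frac1\Pi)\|u^k-y^*\|_2^2$ on the right, and the recursion does not close for this $\Psi$.

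The missing idea is the exact quadratic bookkeeping of Proposition~\ref{prop:y-accelerate-prox-3}:
\begin{itemize}
\item Expand $\|\matA^\top(u^k-y^*)\|_2^2=\tau^2\|\matA^\top(w^k-y^k)\|_2^2+\|\matA^\top(w^k-y^*)\|_2^2+2\tau\langle\matA\matA^\top(w^k-y^k),w^k-y^*\rangle$ and add $(1-\frac1\Pi)\alpha\hat s$ times it to both sides.
\item Merge it with the gradient-step term $-2\xi\hat s\langle\xi w^k-(\xi-1)y^k-y^*,\matA\matA^\top(w^k-y^*)\rangle$, using $(1-\frac1\Pi)\tau=\xi-1$, and apply polarization.
\item This leaves $+(\xi-\alpha)(\xi-1)\hat s\|\matA^\top(y^k-y^*)\|_2^2$, which gives the factor $(\xi-\alpha)(\xi-1)/\xi^2\le 1-\frac1\xi$ on $H(y^k)$.
\item It leaves the coefficient $-(\xi-1)(\xi-\alpha-\alpha\tau)\hat s\le 0$ on $\|\matA^\top(w^k-y^k)\|_2^2$; this needs $\alpha=\frac12$, $\xi\ge1$ and $\tau\le\xi$.
\item It leaves at most $-(\xi^2+(1-\alpha)\xi)\hat s$ on $\|\matA^\top(w^k-y^*)\|_2^2$. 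The factor $1-\alpha$ is why $\Pi\ge\frac{2}{\xi}\frac{s_{\max}^2}{s_{\min}^2}$ is required; your unspecified ``const'' is $1-\alpha$.
\end{itemize}
You also need to say how the prox term is handled. Apply the subgradient inequality for $\Dphi(\cdot,y^*)$ at $y^{k+1}$, with subgradient $g^{k+1}-g^*$, evaluated at $(1-\frac1\xi)y^k+\frac1\xi y^*$. This gives the factor $1-\frac1\xi$ and absorbs exactly the $g^{k+1}$ cross terms from the other two bounds.

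Two smaller slips in Step 1 and the error term:
\begin{itemize}
\item $D_f(x^{k+1})$ carries the negative weight $-2(t-\tilde t)$, so you need the convexity lower bound $D_f(x^{k+1})\ge D_f(x^k)+\langle\nabla f(x^k)-\nabla f(x^*),x^{k+1}-x^k\rangle$. The smoothness descent inequality points the wrong way here.
\item The error vector is $x^k-x^*-t(\nabla f(x^k)-\nabla f(x^*))$, not $x^k-t\nabla f(x^k)-x^*$; your displayed identity is off by $\hat s\matA\matA^\top y^*$. With the exact substitution no quadratic in this error appears, so nothing needs to be dominated by Step 1.
\end{itemize}
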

(Notice that with our choices of \(\alpha\), \(\hat s\), \(t\), and \(\tilde t\), the matrix
\(I-\alpha{\hat s}\matA\matA^{\top}\) is positive definite, and \(\left\| x^k - x^*\right\|_2^2-2\left(t-\tilde t\right)\D{x^k}\ge \left(1-L\left(t-\tilde t\right)\right)\left\| x^k - x^*\right\|_2^2\ge 0\).)

\begin{remark}\label{remark:num M_MT_grad}
To clearly illustrate the first-order updates and momentum steps in Algorithms~\ref{alg:x-accelerate-prox} and \ref{alg:y-accelerate-prox}, we explicitly write out multiple multiplications involving \(\matA\) and \(\matA^{\top}\), as well as gradient computations. However, each iteration of both algorithms only requires one multiplication with \(\matA\), one with \(\matA^{\top}\), and one gradient evaluation, matching the per-iteration cost of Algorithm 3 in \citet{salim2022optimal} and the scheme in equations (2.3)–(2.5) of \citet{drori2015simple}. For instance, in Algorithm~\ref{alg:y-accelerate-prox}, \(\matA^{\top}y^k\) can be precomputed and reused to recover \(\matA^{\top} u^k\) and \(\matA^{\top} w^k\) from \(\matA^{\top}y^k\) and \(\matA^{\top}y^{k-1}\).
\end{remark}

We divide the proof into several parts. Specifically, we analyze the individual components of \(\Psi^{k+1}\) in Propositions~\ref{prop:y-accelerate-prox-1}, \ref{prop:y-accelerate-prox-2}, and \ref{prop:y-accelerate-prox-3}, and then combine them to complete the full argument.

\begin{proposition}\label{prop:y-accelerate-prox-1}
Consider applying Algorithm~\ref{alg:y-accelerate-prox} to solve \eqref{eq:upper bound prob}. Using the parameters in \Cref{thm:y-accelerate-prox}, we have 
\begin{equation}\label{eq:y-acce-prox_x}
\begin{aligned}
  &\left\|x^{k+1} - x^*\right\|_2^2
    - 2 \left(t - \tilde t\right)\, \D{x^{k+1}}\\
  &\qquad \le
    \left[
      \left\|x^{k} - x^*\right\|_2^2
      - 2 \left(t - \tilde t\right)\, \D{x^{k}}
    \right]
    \left(1 - \frac{1}{\yconv}\right)\\
  &\qquad\quad
    - 2 \tilde t \left\langle
        x^{k} - x^* - t\left(\gradf{x^{k}} - \gradf{x^*}\right),
        \matA^\top \left(u^{k+1} - y^*\right)
      \right\rangle
    + \tilde t^2 \left\| \matA^\top \left(u^{k+1} - y^*\right) \right\|_2^2 .
\end{aligned}
\end{equation}
\end{proposition}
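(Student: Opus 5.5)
The plan is a direct expansion of the $x$-update $x^{k+1}=x^k-\tilde t\,(\gradf{x^k}+\matA^\top u^{k+1})$. Write $e=x^k-x^*$, $g=\gradf{x^k}-\gradf{x^*}$ and $a=\matA^\top(u^{k+1}-y^*)$. By the optimality condition $\gradf{x^*}+\matA^\top y^*=0$ from \Cref{prop:saddle_cond_2}, the update direction is exactly $g+a$. Hence
\begin{equation*}
\left\|x^{k+1}-x^*\right\|_2^2=\|e\|_2^2-2\tilde t\langle e,g\rangle-2\tilde t\langle e,a\rangle+\tilde t^2\|g\|_2^2+2\tilde t^2\langle g,a\rangle+\tilde t^2\|a\|_2^2 .
\end{equation*}
The target right-hand side contains the cross term $-2\tilde t\langle e-tg,a\rangle=-2\tilde t\langle e,a\rangle+2t\tilde t\langle g,a\rangle$. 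So, compared with the target, this expansion leaves a surplus of $-2\tilde t(t-\tilde t)\langle g,a\rangle$.

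\textbf{Step 1: cancel the surplus with the Bregman term.} I bound $-2(t-\tilde t)\D{x^{k+1}}$ using convexity of $D_f(\cdot,x^*)$, whose gradient at $x^k$ is $g$:
\begin{equation*}
\D{x^{k+1}}\ge \D{x^k}+\langle g,x^{k+1}-x^k\rangle=\D{x^k}-\tilde t\langle g,g+a\rangle .
\end{equation*}
Since $t>\tilde t$ (recall $\tilde t=t/(2\yc)$ with $\yc\ge1$), this gives
\begin{equation*}
-2(t-\tilde t)\D{x^{k+1}}\le-2(t-\tilde t)\D{x^k}+2(t-\tilde t)\tilde t\|g\|_2^2+2(t-\tilde t)\tilde t\langle g,a\rangle .
\end{equation*}
The last term cancels the surplus exactly. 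After Step 1, the left-hand side of \eqref{eq:y-acce-prox_x} is bounded by the target's $a$-terms plus the residual
\begin{equation*}
R=\|e\|_2^2-2(t-\tilde t)\D{x^k}-2\tilde t\langle e,g\rangle+\tilde t(2t-\tilde t)\|g\|_2^2 .
\end{equation*}

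\textbf{Step 2: show the residual contracts.} It remains to prove
\begin{equation*}
R\le\left(1-\tfrac1{\yconv}\right)\left[\|e\|_2^2-2(t-\tilde t)\D{x^k}\right],
\end{equation*}
which is equivalent to
\begin{equation*}
\tfrac1{\yconv}\left[\|e\|_2^2-2(t-\tilde t)\D{x^k}\right]\le 2\tilde t\langle e,g\rangle-\tilde t(2t-\tilde t)\|g\|_2^2 .
\end{equation*}
I use the identity $\langle e,g\rangle=\D{x^k}+D_f(x^*,x^k)$ together with the smoothness bound $D_f(x^*,x^k)\ge\frac1{2L}\|g\|_2^2=t\|g\|_2^2$, valid because $t=\frac1{2L}$. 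This gives
\begin{equation*}
2\tilde t\langle e,g\rangle\ge 2\tilde t\,\D{x^k}+2t\tilde t\|g\|_2^2 .
\end{equation*}
So the right-hand side of the required inequality is at least $2\tilde t\,\D{x^k}+\tilde t^2\|g\|_2^2\ge 2\tilde t\,\D{x^k}$.

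On the left, the term $-2(t-\tilde t)\D{x^k}/\yconv$ is nonpositive and can be dropped. Strong convexity gives $2\tilde t\,\D{x^k}\ge\mu\tilde t\|e\|_2^2$. It therefore suffices that $\mu\tilde t\ge 1/\yconv$. Now $\tilde t=\frac{1}{4L\yc}$, so this condition reads $\yconv\ge 4\yc L/\mu$, which is guaranteed by the choice of $\yconv$ in \eqref{eq:y-accelerate-prox para}.

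The main obstacle is Step 1: recognizing that the mismatch between $\tilde t^2$ and $t\tilde t$ in the $\langle g,a\rangle$ coefficient is absorbed exactly by linearizing $D_f$ at $x^k$. This is what explains the particular weight $2(t-\tilde t)$ on the Bregman term in the Lyapunov function. Once that cancellation is in place, Step 2 is a routine combination of cocoercivity and strong convexity.
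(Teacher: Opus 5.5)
Your proof is correct and follows the paper's argument. You use the same expansion of the $x$-update, the same linearization of $D_f(\cdot,x^*)$ at $x^k$ to turn the $\tilde t^2\langle g,a\rangle$ coefficient into $t\tilde t\langle g,a\rangle$, and you arrive at the same closing condition $\mu\tilde t=\frac{1}{4\yc L}\ge 1/\yconv$. The only difference is cosmetic, in Step 2. The paper combines cocoercivity $\|g\|_2^2\le L\langle g,e\rangle$ with strong monotonicity $\langle g,e\rangle\ge\mu\|e\|_2^2$ to get the factor $1-\mu\tilde t$. You instead split $\langle e,g\rangle=D_f(x^k,x^*)+D_f(x^*,x^k)$ and use $D_f(x^*,x^k)\ge\frac{1}{2L}\|g\|_2^2$ and $D_f(x^k,x^*)\ge\frac{\mu}{2}\|e\|_2^2$, which gives the identical constant.
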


\begin{proof}
From the $x$-update
\(
  x^{k+1} = x^k - \tilde t\left(\gradf{x^k} + \matA^\top u^{k+1}\right)
\)
and the optimality condition
\(
  \gradf{x^*} + \matA^\top y^* = 0
\),
we have
\begin{equation*}
\begin{aligned}
  \left\|x^{k+1} - x^*\right\|_2^2
  &= \left\|
       x^k - x^*
       - \tilde t\left(\gradf{x^k} - \gradf{x^*}\right)
       - \tilde t \matA^\top \left(u^{k+1} - y^*\right)
     \right\|_2^2 \\
  &= \left\|
       x^k - x^*
       - \tilde t\left(\gradf{x^k} - \gradf{x^*}\right)
     \right\|_2^2 \\
  &\quad
     - 2 \tilde t \left\langle
       x^k - x^* - \tilde t\left(\gradf{x^k} - \gradf{x^*}\right),
       \matA^\top \left(u^{k+1} - y^*\right)
     \right\rangle \\
  &\quad
     + \tilde t^2 \left\|\matA^\top \left(u^{k+1} - y^*\right)\right\|_2^2 .
\end{aligned}
\end{equation*}

Moreover, by the definition of \(D_f\),
\begin{equation*}
\begin{aligned}
  \D{x^{k+1}}
  &= f\left(x^{k+1}\right) - f\left(x^*\right)
     - \left\langle \gradf{x^*}, x^{k+1} - x^*\right\rangle \\
  &\ge
    \D{x^{k}}
    + \left\langle \gradf{x^k} - \gradf{x^*},
                 x^{k+1} - x^k \right\rangle \\
  &= \D{x^{k}}
    - \tilde t
      \left\langle
        \gradf{x^k} - \gradf{x^*},
        \gradf{x^k} + \matA^\top u^{k+1}
      \right\rangle.
\end{aligned}
\end{equation*}

Consider now
\begin{equation*}
  \left\|x^{k+1} - x^*\right\|_2^2 - 2 \left(t - \tilde t\right)\, \D{x^{k+1}}.
\end{equation*}
Using the previous two relations, we have
\begin{equation*}
\begin{aligned}
  &\left\|x^{k+1} - x^*\right\|_2^2 - 2 \left(t - \tilde t\right)\, \D{x^{k+1}} \\
  &\le
    \left\|
        x^k - x^*
        - \tilde t \left(\gradf{x^k} - \gradf{x^*}\right)
     \right\|_2^2
    + \tilde t^2 \left\|\matA^\top \left(u^{k+1} - y^*\right)\right\|_2^2 \\
  &\quad
    - 2 \tilde t
      \left\langle
        x^k - x^* - t\left(\gradf{x^k} - \gradf{x^*}\right),
        \matA^\top \left(u^{k+1} - y^*\right)
      \right\rangle \\
  &\quad
    + 2\left(t - \tilde t\right)\tilde t
      \left\|\gradf{x^k} - \gradf{x^*}\right\|_2^2
    - 2 \left(t - \tilde t\right)\, \D{x^k}.
\end{aligned}
\end{equation*}

Now,
\begin{equation*}
\begin{aligned}
  &\left\|
     x^k - x^*
     - \tilde t \left(\gradf{x^k} - \gradf{x^*}\right)
   \right\|_2^2
   + 2\left(t - \tilde t\right)\tilde t
     \left\|\gradf{x^k} - \gradf{x^*}\right\|_2^2 \\
  &\le
    \left\|x^k - x^*\right\|_2^2
    - 2\tilde t
      \left\langle
        \gradf{x^k} - \gradf{x^*},
        x^k - x^*
      \right\rangle
    + 2 t \tilde t
      \left\|\gradf{x^k} - \gradf{x^*}\right\|_2^2 \\
  &\le
    \left\|x^k - x^*\right\|_2^2
    - 2\tilde t \left(1 - L t\right)
      \left\langle
        \gradf{x^k} - \gradf{x^*},
        x^k - x^*
      \right\rangle ,
\end{aligned}
\end{equation*}
where in the last inequality we used \(L\)-smoothness of \(f\),
\begin{equation*}
  \left\|\gradf{x^k} - \gradf{x^*}\right\|_2^2
  \le L \left\langle \gradf{x^k} - \gradf{x^*}, x^k - x^*\right\rangle .
\end{equation*}

By \(\mu\)-strong convexity of \(f\),
\begin{equation*}
  \left\langle
    \gradf{x^k} - \gradf{x^*},
    x^k - x^*
  \right\rangle
  \ge \mu \left\|x^k - x^*\right\|_2^2.
\end{equation*}
Hence
\begin{equation*}
\begin{aligned}
&\left\|
     x^k - x^*
     - \tilde t \left(\gradf{x^k} - \gradf{x^*}\right)
   \right\|_2^2
   + 2\left(t - \tilde t\right)\tilde t
     \left\|\gradf{x^k} - \gradf{x^*}\right\|_2^2\\
  &\le
    \left\|x^k - x^*\right\|_2^2
    - 2 \mu \tilde t \left(1 - L t\right)\, \left\|x^k - x^*\right\|_2^2 \\
  &\le
    \left(1 - \mu \tilde t\right)\, \left\|x^k - x^*\right\|_2^2,
\end{aligned}
\end{equation*}
where in the last step we used $t = \tfrac{1}{2L}$ so that
$2(1 - Lt) = 1$. 
Since \(\tilde t=\frac{\alpha t}{\yc}\) with \(t=\frac1{2L}\) and \(\alpha=\frac12\), we have
\(\mu\tilde t=\frac{\mu}{4\yc L}\ge \frac1{\yconv}\) because \(\yconv\ge 4\yc\frac{L}{\mu}\).
Moreover, \(\D{x^k}\ge 0\) implies
\begin{equation*}
(1-\mu\tilde t)\left\|x^k-x^*\right\|^2 -2(t-\tilde t)\D{x^k}
\le \left(1-\frac1{\yconv}\right)\left[\left\|x^k-x^*\right\|^2-2(t-\tilde t)\D{x^k}\right].    
\end{equation*}
\end{proof}

\begin{proposition}\label{prop:y-accelerate-prox-2}
Let \begin{equation*}g^{k+1}=- \left[y^{k+1}-\left[w^k+s\left({\matA}{x} ^k-b\right)-\hat s{\matA}({\matA}^{\top}w^k+\nabla f({x}^k))\right]\right] /s.\end{equation*}
Consider applying Algorithm~\ref{alg:y-accelerate-prox} to solve \eqref{eq:upper bound prob}. Using the parameters in \Cref{thm:y-accelerate-prox}, we have 
\begin{equation}\label{eq:y-acce-prox_H}
\begin{aligned}
H\left( y^{k+1} \right)
\le\ 
&H\left( w^k \right) 
- \left( \frac{1}{{\hat s}} - \frac{s_{\max}^2}{2} \right) 
\left\| y^{k+1} - w^k \right\|_2^2 \\
&+ \left\langle 
\frac{s}{{\hat s}} \matA \left( x^k - x^* \right)
- \matA \left( \gradf{{x}^k} - \gradf{x^*} \right)
- \frac{s}{{\hat s}} \left( g^{k+1} - g^* \right),\, 
y^{k+1} - w^k 
\right\rangle,
\end{aligned}
\end{equation}
where \(g^*=\matA x^*-b\in\partial \phi(y^*)\) as defined in \Cref{sec:upper}.
\end{proposition}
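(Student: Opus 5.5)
Since $H(y)=\frac12\|\matA^\top(y-y^*)\|_2^2$ is a quadratic with Hessian $\matA\matA^\top\preceq s_{\max}^2 I$, I would start from the exact expansion around $w^k$:
\begin{equation*}
H(y^{k+1}) = H(w^k) + \left\langle \matA\matA^\top(w^k-y^*),\, y^{k+1}-w^k\right\rangle + \frac12\left\|\matA^\top(y^{k+1}-w^k)\right\|_2^2 ,
\end{equation*}
and bound the last term by $\frac{s_{\max}^2}{2}\|y^{k+1}-w^k\|_2^2$. The remaining work is to rewrite the linear term $\langle \matA\matA^\top(w^k-y^*), y^{k+1}-w^k\rangle$ using the $y$-update of Algorithm~\ref{alg:y-accelerate-prox}.

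By the definition of $g^{k+1}$ in the statement, the prox step reads
\begin{equation*}
y^{k+1}-w^k = s(\matA x^k-b) - \hat s\matA\left(\matA^\top w^k+\gradf{x^k}\right) - s g^{k+1},
\end{equation*}
with $g^{k+1}\in\partial\phi(y^{k+1})$ by the prox optimality condition (as in Proposition~\ref{prop:x-accelerate-prox-3}). I would subtract the optimality relations from Proposition~\ref{prop:saddle_cond_2}: $\gradf{x^*}+\matA^\top y^*=0$ gives $\matA\matA^\top w^k+\matA\gradf{x^k}=\matA\matA^\top(w^k-y^*)+\matA(\gradf{x^k}-\gradf{x^*})$, and $g^*=\matA x^*-b$ gives $\matA x^k-b-g^{k+1}=\matA(x^k-x^*)-(g^{k+1}-g^*)$. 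Solving for $\matA\matA^\top(w^k-y^*)$ yields
\begin{equation*}
\matA\matA^\top(w^k-y^*) = -\frac{1}{\hat s}(y^{k+1}-w^k) + \frac{s}{\hat s}\matA(x^k-x^*) - \matA\left(\gradf{x^k}-\gradf{x^*}\right) - \frac{s}{\hat s}(g^{k+1}-g^*).
\end{equation*}

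Taking the inner product with $y^{k+1}-w^k$ produces $-\frac1{\hat s}\|y^{k+1}-w^k\|_2^2$ plus exactly the inner product in \eqref{eq:y-acce-prox_H}; combining with the quadratic term gives the coefficient $-(\frac1{\hat s}-\frac{s_{\max}^2}{2})$, which is the claim (in fact with the quadratic bound as the only inequality used). There is no real obstacle: the only point needing care is the sign and scaling bookkeeping when isolating $\matA\matA^\top(w^k-y^*)$ (dividing by $\hat s$, and remembering the prox parameter here is $s$, not $\xh s$). Note that monotonicity of $\partial\phi$ is not yet used; the $g^{k+1}-g^*$ term is deliberately kept so it can be combined later with the $\frac1t\Dphi$ term of the Lyapunov function.
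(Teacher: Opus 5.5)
Your proposal is correct and follows essentially the same route as the paper, which applies the $s_{\max}^2$-smoothness descent inequality for $H$ at $w^k$ with $\nabla H(w^k)=\matA\matA^\top(w^k-y^*)$ (equivalent to your exact quadratic expansion plus $\matA\matA^\top\preceq s_{\max}^2 I$). It then rewrites the linear term using the definition of $g^{k+1}$ and the optimality conditions of Proposition~\ref{prop:saddle_cond_2}, exactly as you do. Your identity for $\matA\matA^\top(w^k-y^*)$ is correct, and you are right that monotonicity of $\partial\phi$ is not needed at this stage.
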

\begin{proof}
For the term \(H\left(y^{k+1}\right)\), since \(H\) is \(s_{\max}^2\)-smooth, and \(\nabla H\left(w^k\right)=\matA\matA^{\top}\left(w^k-y^*\right),\)
\begin{equation*}
\begin{aligned}
H\left( y^{k+1} \right)
\le\ 
&H\left( w^k \right) 
+ \left\langle 
\matA \matA^{\top} \left( w^k - y^* \right),\,
y^{k+1} - w^k 
\right\rangle 
+ \frac{s_{\max}^2}{2} 
\left\| y^{k+1} - w^k \right\|_2^2 \\
=\ 
&H\left( w^k \right) 
- \left( \frac{1}{{\hat s}} - \frac{s_{\max}^2}{2} \right)
\left\| y^{k+1} - w^k \right\|_2^2 \\
&+ \left\langle 
\frac{s}{{\hat s}} \matA \left( x^k - x^* \right) 
- \matA \left( \gradf{{x}^k} - \gradf{x^*} \right) 
- \frac{s}{{\hat s}} \left( g^{k+1} - g^* \right),\,
y^{k+1} - w^k 
\right\rangle.
\end{aligned}
\end{equation*}
where in the equality we apply Proposition~
\ref{prop:saddle_cond_2} and the definition of \(g^{k+1}\).
\end{proof}

\begin{proposition}\label{prop:y-accelerate-prox-3}
Let \begin{equation*}g^{k+1}=- \left[y^{k+1}-\left[w^k+s\left({\matA}{x} ^k-b\right)-\hat s{\matA}({\matA}^{\top}w^k+\nabla f({x}^k))\right]\right] /s.\end{equation*} Consider applying Algorithm~\ref{alg:y-accelerate-prox} to solve \eqref{eq:upper bound prob}. Using the parameters in \Cref{thm:y-accelerate-prox}, we have 
\begin{equation}\label{eq:y-acce-prox_u_3}
\begin{aligned}
&\left\|{u}^{k+1}-y^*\right\|_2^2\\
\le\ &\left(1-\frac1{\yconv}\right)\left\|{u}^k-y^*\right\|_{\left(I-\alpha{\hat s}\matA \matA^{\top}\right)}^2 + 2(\yc-\alpha)(\yc-1)\hat s H\left(y^k\right)\\ 
&+ 2\yc\left\langle \yc{w}^k-(\yc-1)y^k-y^*,s\matA \left(x^{k}-x^*\right)-{\hat s} \matA \left( \gradf{{x}^{k}}-\gradf{x^*}\right)-s\left(g^{k+1}-g^*\right)\right\rangle\\
&+\yc^2\left\|y^{k+1}-w^k\right\|_2^2 - \left[\left(\yc^2 +(1-\alpha) \yc\right)\hat s-\frac1{\yconv s_{\min}^2}\right]\left\|\matA^{\top}\left(w^k-y^*\right)\right\|_2^2.
\end{aligned}
\end{equation}
\end{proposition}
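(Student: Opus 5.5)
The plan is to expand $u^{k+1}-y^*$ around an extrapolated point built from $w^k,y^k$ and bound each resulting piece. First I will collapse the two momentum steps of Algorithm~\ref{alg:y-accelerate-prox} into one. From $w^{k+1}=(1+\yq)y^{k+1}-\yq y^k$ and $u^{k+1}=(1+\yeta)w^{k+1}-\yeta y^{k+1}$ we get
\[
u^{k+1}=(1+\yq+\yq\yeta)\,y^{k+1}-\yq(1+\yeta)\,y^k .
\]
With the parameter choices in \eqref{eq:y-accelerate-prox para}, $\yq(1+\yeta)=\yc-1$, so $u^{k+1}=\yc y^{k+1}-(\yc-1)y^k$. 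Writing $y^{k+1}=w^k+(y^{k+1}-w^k)$ then gives
\[
u^{k+1}-y^*=\bigl[\yc w^k-(\yc-1)y^k-y^*\bigr]+\yc\,(y^{k+1}-w^k).
\]
Expanding the square produces three pieces: $\|\yc w^k-(\yc-1)y^k-y^*\|_2^2$, a cross term $2\yc\langle \yc w^k-(\yc-1)y^k-y^*,\,y^{k+1}-w^k\rangle$, and $\yc^2\|y^{k+1}-w^k\|_2^2$. The last piece is already the term appearing in \eqref{eq:y-acce-prox_u_3}.

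Next I rewrite $y^{k+1}-w^k$ using the definition of $g^{k+1}$ and the saddle-point conditions of Proposition~\ref{prop:saddle_cond_2}. Since $s(\matA x^*-b)-\hat s\matA(\matA^\top y^*+\gradf{x^*})=s g^*$, we obtain
\[
y^{k+1}-w^k=s\matA(x^k-x^*)-\hat s\matA\bigl(\gradf{x^k}-\gradf{x^*}\bigr)-s(g^{k+1}-g^*)-\hat s\matA\matA^\top(w^k-y^*).
\]
The first three terms give exactly the inner product in the statement. What remains is the extra quantity $-2\yc\hat s\langle \matA^\top(\yc a-(\yc-1)b),\matA^\top a\rangle$, where $a=w^k-y^*$ and $b=y^k-y^*$.

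For the first squared piece I use the identity $\yc w^k-(\yc-1)y^k=(1-1/\yconv)u^k+(1/\yconv)w^k$. This holds because $u^k=(1+\yeta)w^k-\yeta y^k$ and $(1-1/\yconv)\yeta=\yc-1$, and it mirrors the $x$-side argument in Proposition~\ref{prop:x-accelerate-prox-2}. I then split $\|\cdot\|_2^2=\|\cdot\|^2_{G}+\alpha\hat s\|\matA^\top\cdot\|_2^2$ with $G=I-\alpha\hat s\matA\matA^\top\succ0$. Jensen's inequality in the $G$-norm gives a bound of $(1-1/\yconv)\|u^k-y^*\|_G^2+(1/\yconv)\|a\|_G^2$. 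Using $\|a\|_G^2\le\|a\|_2^2\le s_{\min}^{-2}\|\matA^\top a\|_2^2$ (full row rank), the second part becomes the $\frac{1}{\yconv s_{\min}^2}\|\matA^\top(w^k-y^*)\|_2^2$ term in \eqref{eq:y-acce-prox_u_3}.

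The key step, which I expect to be the only delicate one, is combining the leftover $\alpha\hat s\|\matA^\top(\yc a-(\yc-1)b)\|_2^2$ with the extra cross term. Set $a'=\matA^\top a$ and $b'=\matA^\top b$. The combination equals
\[
\hat s\Bigl[(\alpha-2)\yc^2\|a'\|_2^2+\alpha(\yc-1)^2\|b'\|_2^2+2(1-\alpha)\yc(\yc-1)\langle a',b'\rangle\Bigr].
\]
Young's inequality gives $2\langle a',b'\rangle\le\|a'\|_2^2+\|b'\|_2^2$, valid since $\alpha\le1$ and $\yc\ge1$. With it, the coefficient of $\|a'\|_2^2$ becomes $-\yc^2-(1-\alpha)\yc$. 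The coefficient of $\|b'\|_2^2$ becomes $(\yc-1)(\yc-\alpha)$, which equals $2(\yc-\alpha)(\yc-1)\hat s H(y^k)$ after restoring the $\hat s$ factor.

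Collecting all terms yields \eqref{eq:y-acce-prox_u_3}. The main thing to check carefully is the parameter bookkeeping: the identities $\yq(1+\yeta)=\yc-1$ and $(1-1/\yconv)\yeta=\yc-1$, which make both the momentum collapse and the convex-combination representation exact.
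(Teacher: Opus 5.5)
Your proof is correct, and its skeleton matches the paper's. Both arguments collapse the momentum steps to $u^{k+1}=\xi y^{k+1}-(\xi-1)y^k$ and expand around $P=\xi w^k-(\xi-1)y^k-y^*$. Both rewrite $y^{k+1}-w^k$ through $g^{k+1}$ and the saddle-point conditions. Both then use $P=(1-1/\Pi)(u^k-y^*)+(1/\Pi)(w^k-y^*)$ together with $\|w^k-y^*\|_2\le s_{\min}^{-1}\|M^\top(w^k-y^*)\|_2$.

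You differ in how you produce the weighted norm of $u^k$ and absorb the extra cross term $-2\xi\hat s\langle M^\top P,M^\top(w^k-y^*)\rangle$.

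The paper applies Jensen in the Euclidean norm and adds $(1-1/\Pi)\alpha\hat s\|M^\top(u^k-y^*)\|_2^2$ to both sides. It expands that term via $u^k=(1+\tau)w^k-\tau y^k$ and the polarization identity. It then discards the term $-(\xi-1)(\xi-\alpha-\alpha\tau)\hat s\|M^\top(w^k-y^k)\|_2^2$. Showing this term is nonpositive requires the side conditions $\xi\ge\alpha/(1-\alpha)$ and $\tau\le\xi$ (i.e.\ $\Pi\ge\xi$).

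You instead apply Jensen directly in the $(I-\alpha\hat s MM^\top)$-norm, so the weighted norm appears immediately. The remaining quadratic form in $M^\top(w^k-y^*)$ and $M^\top(y^k-y^*)$ is then closed by a single Young inequality. This uses only $0\le\alpha\le1$ and $\xi\ge1$, so your route is shorter and slightly more general. For $\alpha=1/2$ the paper's extra conditions hold anyway, so nothing is lost in the theorem.

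The paper's bookkeeping does buy something: it exhibits an explicit $\|M^\top(w^k-y^k)\|_2^2$ term. Its stochastic analogue (Proposition~\ref{prop:y-accelerate-stoc-3}) keeps that term to absorb the variance of $N M_iM_i^\top(w^k-y^k)$. Your route yields the same kind of term if you retain the Young slack $-(1-\alpha)\xi(\xi-1)\hat s\|M^\top(w^k-y^k)\|_2^2$ instead of dropping it. Its magnitude is at least the paper's, larger by $\alpha\tau(\xi-1)\hat s/\Pi$. The reason is that the two arguments split the same total slack differently between the Jensen gap and this term.
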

\begin{proof}
For the term \(\left\|{u}^{k+1}-y^*\right\|_2^2\), by the definitions of \(\yc\) and \(\yeta\), we have the following equalities:
\begin{equation}\label{eq:y-acce-prox_u_start}
\begin{aligned}
&\left\|{u}^{k+1}-y^*\right\|_2^2 =\left\|(1+\yeta){w}^{k+1} - \yeta y^{k+1}-y^*\right\|_2^2 = \left\|\yc {y}^{k+1} - (\yc-1) y^{k}-y^*\right\|_2^2\\
=\ & \left\|\yc {w}^{k} - (\yc-1) y^{k}-y^*\right\|_2^2 + 2\yc\left\langle \yc {w}^{k} - (\yc-1) y^{k}-y^*,y^{k+1}-w^k\right\rangle  + \yc^2\left\|y^{k+1}-w^k\right\|_2^2\\
=\ & \left\|\yc {w}^{k} - (\yc-1) y^{k}-y^*\right\|_2^2 - 2\yc\left\langle \yc {w}^{k} - (\yc-1) y^{k}-y^*,{\hat s} \matA\matA^{\top} \left( {w}^k -y^*\right)\right\rangle\\
& +2\yc\left\langle \yc {w}^{k} - (\yc-1) y^{k}-y^*,s\matA \left(x^{k}-x^*\right)-{\hat s} \matA \left( \gradf{{x}^{k}}-\gradf{x^*}\right)-s\left(g^{k+1}-g^*\right)\right\rangle\\
&+\yc^2\left\|y^{k+1}-w^k\right\|_2^2,
\end{aligned}   
\end{equation}
and 
\begin{equation}\label{eq:y-acce-prox_u_start_2}
\begin{aligned}
&\left\|\matA^{\top}\left(u^k-y^*\right)\right\|_2^2=\left\|\matA^{\top}\left((1+\yeta)w^k-\yeta y^k-y^*\right)\right\|_2^2\\
=\ & \yeta^2\left\|\matA^{\top}\left(w^k-y^k\right)\right\|_2^2+ \left\|\matA^{\top}\left(w^k-y^*\right)\right\|_2^2+2\yeta \left\langle \matA\matA^{\top}\left(w^k-y^k\right),w^k-y^*\right\rangle  .
\end{aligned}
\end{equation}
Notice that 
\begin{equation}\label{eq:y-acce-prox_u_start_3}
    2\left\langle \matA\matA^{\top}\left(w^k-y^k\right),w^k-y^*\right\rangle  =\left\|\matA^{\top}\left(w^k-y^*\right)\right\|_2^2+\left\|\matA^{\top}\left(w^k-y^k\right)\right\|_2^2-\left\|\matA^{\top}\left(y^k-y^*\right)\right\|_2^2.
\end{equation}
Hence, combining \eqref{eq:y-acce-prox_u_start}, \eqref{eq:y-acce-prox_u_start_2}, and \eqref{eq:y-acce-prox_u_start_3}, and using the setting \(\yeta=\frac{\yc-1}{1-1/\yconv}\),  we have

\begin{equation}\label{eq:y-acce-prox_long_ineq}
\begin{aligned}
&\left\| u^{k+1} - y^* \right\|_2^2 
+ \left( 1 - \frac{1}{\yconv} \right) \alpha {{\hat s}} 
\left\| \matA^{\top} \left( u^k - y^* \right) \right\|_2^2 \\
=\ 
&\left\| \yc w^k - (\yc - 1) y^k - y^* \right\|_2^2 
+ \yc^2 \left\| y^{k+1} - w^k \right\|_2^2 \\
&+ 2 \yc \left\langle 
\yc w^k - (\yc - 1) y^k - y^*,\,
s \matA \left( x^k - x^* \right) 
- {\hat s} \matA \left( \gradf{x^k} - \gradf{x^*} \right) 
- s \left( g^{k+1} - g^* \right)
\right\rangle \\
&- 2 \yc (\yc - 1) {\hat s} 
\left\langle w^k - y^k,\,
\matA \matA^{\top} \left( w^k - y^* \right) \right\rangle 
- 2 \yc {\hat s} 
\left\| \matA^{\top} \left( w^k - y^* \right) \right\|_2^2 \\
&+ \left( 1 - \frac{1}{\yconv} \right) \alpha {\hat s} 
\Big[ \yeta^2 \left\| \matA^{\top} \left( w^k - y^k \right) \right\|_2^2 
+ \left\| \matA^{\top} \left( w^k - y^* \right) \right\|_2^2 \\
&+ 2 \yeta \left\langle 
\matA \matA^{\top} \left( w^k - y^k \right),\, 
w^k - y^* \right\rangle \Big] \\
=\ 
&\left\| \yc w^k - (\yc - 1) y^k - y^* \right\|_2^2 
+ \yc^2 \left\| y^{k+1} - w^k \right\|_2^2 \\
&+ 2 \yc \left\langle 
\yc w^k - (\yc - 1) y^k - y^*,\,
s \matA \left( x^k - x^* \right) 
- {\hat s} \matA \left( \gradf{x^k} - \gradf{x^*} \right) 
- s \left( g^{k+1} - g^* \right)
\right\rangle \\
&- 2 \left( \yc - \alpha \right) (\yc - 1) {\hat s} 
\left\langle w^k - y^k,\,
\matA \matA^{\top} \left( w^k - y^* \right) \right\rangle \\
&+ \alpha {{\hat s}} \yeta (\yc - 1) 
\left\| \matA^{\top} \left( w^k - y^k \right) \right\|_2^2 
- \left[ 2 \yc {\hat s} - \left( 1 - \frac{1}{\yconv} \right) \alpha {{\hat s}}\right]
\left\| \matA^{\top} \left( w^k - y^* \right) \right\|_2^2 \\
=\ 
&\left\| \yc w^k - (\yc - 1) y^k - y^* \right\|_2^2 
+ \yc^2 \left\| y^{k+1} - w^k \right\|_2^2 \\
&+ 2 \yc \left\langle 
\yc w^k - (\yc - 1) y^k - y^*,\,
s \matA \left( x^k - x^* \right) 
- {\hat s} \matA \left( \gradf{x^k} - \gradf{x^*} \right) 
- s \left( g^{k+1} - g^* \right)
\right\rangle \\
&+ \left( \yc - \alpha \right) (\yc - 1) {\hat s} 
\left\| \matA^{\top} \left( y^k - y^* \right) \right\|_2^2\\ &- \left[ \left( \yc - \alpha \right) (\yc - 1) 
- \alpha \yeta (\yc - 1) \right] {\hat s} 
\left\| \matA^{\top} \left( w^k - y^k \right) \right\|_2^2 \\
&- \left[ \left( \yc - \alpha \right) (\yc - 1) {\hat s} 
+ 2 \yc {\hat s} 
- \left( 1 - \frac{1}{\yconv} \right) \alpha{{\hat s}} \right] 
\left\| \matA^{\top} \left( w^k - y^* \right) \right\|_2^2 \\
\le\ 
&\left\| \yc w^k - (\yc - 1) y^k - y^* \right\|_2^2 
+ \yc^2 \left\| y^{k+1} - w^k \right\|_2^2 \\
&+ 2 \yc \left\langle 
\yc w^k - (\yc - 1) y^k - y^*,\,
s \matA \left( x^k - x^* \right) 
- {\hat s} \matA \left( \gradf{x^k} - \gradf{x^*} \right) 
- s \left( g^{k+1} - g^* \right)
\right\rangle \\
&+ \left( \yc - \alpha \right) (\yc - 1) {\hat s} 
\left\| \matA^{\top} \left( y^k - y^* \right) \right\|_2^2 - \left( \yc^2 + (1-\alpha) \yc \right) {\hat s} 
\left\| \matA^{\top} \left( w^k - y^* \right) \right\|_2^2,
\end{aligned}
\end{equation}
where \(\left(\yc-\alpha\right)(\yc-1)-\alpha(\yc -1)\yeta\ge 0\) comes from \(\yc\ge \max\left(1,\alpha/(1-\alpha)\right)\), \(\yconv\ge \yc\), and \(\yeta=\frac{\yc-1}{1-1/\yconv}\le \yc\).

Since \(\yc - 1 = (1-1/\yconv)\yeta\), by Jensen's inequality
\begin{equation*}\begin{aligned}
\left\|\yc w^{k}-(\yc-1)y^k-y^*\right\|_2^2&\le \left(1-\frac1{\yconv}\right)\left\|(1+\yeta)w^{k}-\yeta y^{k}-y^*\right\|_2^2 +\frac1{\yconv} \left\|{w}^k-y^*\right\|_2^2\\
&\le \left(1-\frac1{\yconv}\right)\left\|{u}^k-y^*\right\|_2^2 +\frac1{\yconv s_{\min}^2} \left\|\matA^{\top}\left({w}^k-y^*\right)\right\|_2^2.
\end{aligned}\end{equation*}
Thus, combining all these, we have
\begin{equation*}
\begin{aligned}
& \left\|{u}^{k+1}-y^*\right\|_2^2 +\left(1-\frac{1}{\yconv}\right)\alpha {{\hat s}} \left\|\matA^{\top}\left(u^k-y^*\right)\right\|_2^2\\
\le\ &(1-\frac1{\yconv})\left\|{u}^k-y^*\right\|_2^2+\yc^2\left\|y^{k+1}-w^k\right\|_2^2\\
 &+2\yc\left\langle \yc {w}^{k} - (\yc-1) y^{k}-y^*,s\matA \left(x^{k}-x^*\right)-{\hat s} \matA \left( \gradf{{x}^{k}}-\gradf{x^*}\right)-s(g^{k+1}-g^*)\right\rangle\\
  &+\left(\yc-\alpha\right)(\yc-1)\hat s \left\|\matA^{\top}\left(y^k-y^*\right)\right\|_2^2 - \left[\left(\yc^2 +(1-\alpha) \yc\right)\hat s-\frac1{\yconv s_{\min}^2}\right]\left\|\matA^{\top}\left(w^k-y^*\right)\right\|_2^2,
\end{aligned}
\end{equation*}
which concludes the proof for Proposition~\ref{prop:y-accelerate-prox-3}.
\end{proof}

\begin{proof}[Proof of \Cref{thm:y-accelerate-prox}]
Because \(y^{k+1}=\mathrm{prox}_{s\phi}\left[w^k+s\left(\matA{x} ^{k}-b\right)-\hat s{\matA}\left({\matA}^{\top}w^k+\gradf{{x}^{k}}\right)\right]\), \(g^{k+1}\in\partial \phi\left(y^{k+1}\right)\). Hence, \(g^{k+1}-g^*\in\partial \Dphi\left(y^{k+1},y^*\right)\), and by the convexity of \(\Dphi\left(y,y^*\right)\),
\begin{equation*}
\begin{aligned}
&\Dphi\left(y^{k+1},y^*\right)-\left\langle g^{k+1}-g^*, y^{k+1}-\left(1-\frac{1}{\yc}\right)y^{k}-\frac{1}{\yc}y^*\right\rangle\\  \le\ &\left(1-\frac{1}{\yc}\right)\Dphi\left(y^k,y^*\right)\le \left(1-\frac{1}{\yconv}\right)\Dphi\left(y^k,y^*\right).
\end{aligned}
\end{equation*}
Since \(2\Xi_u\yc^2\hat s = \Xi_h=1\), and \(s = \hat s / t\),
\begin{equation*}
\begin{aligned}
&2 \Xi_u \yc \left\langle 
\yc w^k - (\yc - 1) y^k - y^*,\,
s \matA \left( x^k - x^* \right) 
- {\hat s} \matA \left( \gradf{x^k} - \gradf{x^*} \right)
- s \left( g^{k+1} - g^* \right)
\right\rangle \\
&+ \Xi_h \left\langle 
\frac{s}{{\hat s}} \matA \left( x^k - x^* \right) 
- \matA \left( \gradf{x^k} - \gradf{x^*} \right)
- \frac{s}{{\hat s}} \left( g^{k+1} - g^* \right),\,
y^{k+1} - w^k
\right\rangle \\
=\ 
&\frac{1}{\yc t} \left\langle 
\matA \left( x^k - x^* \right) 
- t \matA \left( \gradf{x^k} - \gradf{x^*} \right),\,
u^{k+1} - y^* 
\right\rangle \\
&\quad - \frac{1}{t} \left\langle 
g^{k+1} - g^*,\,
y^{k+1} - \left( 1 - \frac{1}{\yc} \right) y^k 
- \frac{1}{\yc} y^*
\right\rangle.
\end{aligned}
\end{equation*}
Combine \eqref{eq:y-acce-prox_x}, \eqref{eq:y-acce-prox_H}, and \eqref{eq:y-acce-prox_u_3}, and by the settings of the weights \({\Xi_x}\), \({\Xi_h}\), \({\Xi_u}\), and other parameters in \eqref{eq:y-accelerate-prox para}, we see that on the right hand side: \begin{itemize}
\item{the term \(\left\|\matA^{\top}\left(u^{k+1}-y^*\right)\right\|_2^2\) has a coefficient of \(\Xi_x \tilde t^2=\Xi_u\alpha{{\hat s}}\);}
\item{the term \(\left\langle x^{k}-x^*-t\left(\gradf{{x}^{k}}-\gradf{x^*}\right),\matA^{\top}\left(u^{k+1}-y^*\right)\right\rangle  \) has a coefficient of \(-{\Xi_x} 2  {\tilde t} + \frac{1}{\yc t}=0\);}
\item{the term \(\left\|y^{k+1}-w^k\right\|_2^2\) has a coefficient of \({\Xi_h} \left(-\frac1{\hat s} + \frac{s_{\max}^2}{2}\right) + {\Xi_u} \yc^2 = 0\);}
\item the coefficient in front of \(\|\matA^\top(w^k-y^*)\|_2^2\) equals
\begin{equation*}
\frac{1}{2}
-\Xi_u\left[\big(\yc^2+(1-\alpha)\yc\big)\hat s-\frac{1}{\yconv s_{\min}^2}\right]
= -\frac{1-\alpha}{2\yc}+\frac{s_{\max}^2/s_{\min}^2}{2\yc^2\yconv}\le 0,
\end{equation*}
where we used \(\hat s=1/s_{\max}^2\) and \(\yconv\ge \frac{2}{\yc}\frac{s_{\max}^2}{s_{\min}^2}\).
\end{itemize}
Therefore, \begin{equation*}\begin{aligned}
\Psi^{k+1}=\ &\Xi_x \cdot \left[\left\| x^{k+1} - x^*\right\|_2^2-2\left(t-\tilde t\right)\D{x^{k+1}}\right]\\
&+ H(y^{k+1}) + \frac{1}{t}\Dphi\left(y^{k+1},y^*\right) + {\Xi_u}\left\|{u}^{k+1}-y^*\right\|_{(I-\alpha {\hat s}\matA \matA^{\top})}^2\\ 
\le\ &\Xi_x (1-1/{\yconv})\left[\left\| x^k - x^*\right\|_2^2-2\left(t-\tilde t\right)\D{x^k}\right]+ (1-1/{\yconv})H(y^k)\\ 
&+  (1-1/{\yconv})\frac{1}{t}\Dphi\left(y^k,y^*\right) + {\Xi_u}(1-1/{\yconv})\left\|{u}^k-y^*\right\|_{(I-\alpha{\hat s} \matA \matA^{\top})}^2\\=\ &(1-1/{\yconv})\Psi^k,
\end{aligned}\end{equation*}
which concludes the proof.
\end{proof}

\section{Proof of Theorem~\ref{thm:x-accelerate-stoc-short}}
\label{appensec:proof x stoc alg}
We begin by stating the full version of \Cref{thm:x-accelerate-stoc-short}.

\begin{theorem}\label{thm:x-accelerate-stoc}
Consider applying Algorithm~\ref{alg:x-accelerate-stoc} to solve \eqref{eq:upper bound prob sto}. Let \(\xgt, \beta, t,{\hat s}>0\) satisfy
\begin{equation}\label{eq:x-accelerate-stoc para-0}
t = \frac{1 - 4\xgt-2(1+\frac{1}{\beta}){\hat s} \bar s_{\max}^2}{\bar{L} + 4\bar{L}\xgt}>0,\quad 1 - {\hat s} \bar s_{\max}^2 - 2(1+\beta){\hat s}^2 \bar s_{\max}^4\ge 0,
\end{equation}
and the remaining parameters be
\begin{equation}\label{eq:x-accelerate-stoc para-1}
\begin{aligned}
s &= \frac{\hat s}{t},
& \quad \Xi_f &= 1, \\
\xconv &\ge N\max\left(\frac{1}{s_{\min}^2 {\hat s}} \cdot \frac{1}{2\xgt},\ \sqrt{\frac{1}{\mu t}}+\frac{\bar L}{\mu}\cdot 4\xgt \right),
& \quad \xe &= \frac{1 + 4\bar L\xgt t}{N/\xconv + 4\bar L\xgt t}, \\
\xtau &= \frac{\xe - 1}{1 - N/\xconv},
& \quad \xga &= \frac{\xe - 1}{\xtau + 1}, \\
\Xi_y &= \frac{1}{2s} = \frac{t}{2\hat s},
& \quad \Xi_v &= \frac{1 + 4\bar L\xgt t}{2\xe^2 t}, \\
\xh &= 2\Xi_v \xe t = \frac{\Xi_v \xe t}{\Xi_y s}.
\end{aligned}
\end{equation}
Then, let \begin{equation*}
\Psi^k = {\Xi_y}\left\|y^k-y^*\right\|^2_{\left(I-(1-2\xgt)\frac{\hat s}{N} \matA \matA^{\top}\right)}+\D{x^k}+{\Xi_v}\left\|{v}^k-x^*\right\|_2^2,
\end{equation*} 
we have \begin{equation*}\bbE\Psi^{k+1}\le (1-1/\xconv)\bbE\Psi^k.\end{equation*}
\end{theorem}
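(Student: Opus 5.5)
We mirror the deterministic proof of \Cref{thm:x-accelerate-prox}: derive three one-step inequalities, one per block of $\Psi^{k+1}$, this time in conditional expectation given the past. We take $\bbE_i$ over the dual index and $\bbE_j$ over the primal index. Because $i$ and $j$ are independent, we condition on $i$ first (which fixes $y^{k+1}$) and average over $j$ for the primal terms, then average over $i$.

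\textbf{Primal Bregman term.} Only block $j$ moves, so block $L$-smoothness gives
\[
D_{f_j}(x_j^{k+1},x_j^*)\le D_{f_j}(z_j^k,x_j^*)-t\bigl(1-\tfrac{\bar L}{2}t\bigr)\|g_j\|^2+t\langle \matA_j^\top(y^{k+1}-y^*),g_j\rangle ,
\]
where $g_j=\nabla f_j(z_j^k)+\matA_j^\top y^{k+1}$. The other blocks of $x^{k+1}$ equal those of $x^k$. Averaging over $j$ yields $(1-\frac1N)\D{x^k}+\frac1N[\cdots]$ with full-vector quantities. We then repeat the splitting of Proposition~\ref{prop:x-accelerate-prox-1}, producing $(1+4\bar L\xgt t)$ and $-\xgt t\|\matA^\top(y^{k+1}-y^*)\|^2$, all scaled by $\frac1N$.

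\textbf{Momentum term.} The identity $v_j^{k+1}=\xe x_j^{k+1}-(\xe-1)x_j^k$ is blockwise. Hence $\bbE_j\|v^{k+1}-x^*\|^2=(1-\frac1N)\|v^k-x^*\|^2+\frac1N\|\xe x^{+}-(\xe-1)x^k-x^*\|^2$, where $x^+$ is the full deterministic step from $z^k$. We then run Proposition~\ref{prop:x-accelerate-prox-2} verbatim with contraction $1-N/\xconv$, which is exactly why $\xe,\xtau$ use $N/\xconv$. Overall this gives $(1-1/\xconv)\|v^k-x^*\|^2$.

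\textbf{Dual term.} We write the $y$-update as
\[
y^{k+1}=y^k+\tfrac{\xh s}{N}\matA(\hat x^k-x^*)-\tfrac{\hat s}{N}\matA(\matA^\top y^{k+1}+\nabla f(z^k))+\tfrac{\hat s}{N}\matA\matA^\top(y^{k+1}-y^k)+e_i .
\]
Here $e_i=\frac{\hat s}{N}\matA(\matA^\top y^k+\nabla f(z^k))-\hat s\matA_i(\matA_i^\top y^k+\nabla f_i(z_i^k))$ is a zero-mean noise term. We expand $\|y^{k+1}-y^*\|^2_{(I-\frac{\hat s}{N}\matA\matA^\top)}$ as in Proposition~\ref{prop:x-accelerate-prox-3}. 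The cross terms $\langle \matA^\top(y^{k+1}-y^*),\cdot\rangle$ then come with coefficients that cancel, by the choice of $\Xi_y$ and $\xh$, against $\frac1N\times$ the primal cross terms, exactly as in the deterministic bookkeeping.

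\textbf{Main obstacle: the noise.} The new difficulty is the variance of $e_i$, together with the fact that $y^{k+1}$ depends on $i$, so the cross terms are not linear in zero-mean noise. I would bound it via $\bbE_i\|\hat s\matA_i v_i\|^2\le \hat s^2\bar s_{\max}^2\cdot\frac1N\sum_i\|v_i\|^2$. The relevant vectors $v_i$ split, using $\nabla f_i(x_i^*)+\matA_i^\top y^*=0$, into a part $\matA_i^\top(y^{k+1}-y^*)$ and a part $g_i$. Applying Young's inequality with parameter $\beta$ gives $(1+\beta)$ and $(1+\frac1\beta)$ weights. The $\|g\|^2$ part is absorbed by the reduced step $t$, which is the extra $-2(1+\frac1\beta)\hat s\bar s_{\max}^2$ in \eqref{eq:x-accelerate-stoc para-0}. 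The $\|\matA^\top(y^{k+1}-y^*)\|^2$ part is absorbed by the condition $1-\hat s\bar s_{\max}^2-2(1+\beta)\hat s^2\bar s_{\max}^4\ge0$, which keeps the metric positive and leaves the $-2\xgt\frac{\hat s}{N}\matA\matA^\top$ slack. This mirrors Remark~\ref{remark:stoc_x_ij}: independence of $j$ from $i$ makes the $j$-average act on a fixed $y^{k+1}$.

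\textbf{Combining.} The leftover $-\frac{2\xgt\hat s}{N}\|\matA^\top(y^k-y^*)\|^2$ (after converting $y^{k+1}$ to $y^k$ through the metric difference) dominates $\frac{1}{\xconv}\Xi_y\|y^k-y^*\|^2$, since $\xconv\ge \frac{N}{2\xgt s_{\min}^2\hat s}$. Taking total expectation then gives $\bbE\Psi^{k+1}\le(1-1/\xconv)\bbE\Psi^k$.
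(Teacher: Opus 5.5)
Your primal Bregman and momentum recursions match the paper's proof: the average over $j$, the $N/\xconv$ contraction, the use of independence of $j$ from $i$, the cancellation of the cross terms via $\Xi_y$ and $\xh$, and the final step through $\xconv\ge N/(2\xgt s_{\min}^2\hat s)$. The gap is in how you control the noise $e_i$.

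First, the split you describe is not an identity. With $v_i=\matA_i^\top y^k+\nabla f_i(z_i^k)$ and $g_i=\nabla f_i(z_i^k)+\matA_i^\top y^{k+1}$, one has exactly $v_i=g_i-\matA_i^\top(y^{k+1}-y^k)$, and no $\matA_i^\top(y^{k+1}-y^*)$ piece appears.

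Second, your absorption could not work even if such a piece arose. Your variance bound would produce $2\Xi_y(1+\beta)\frac{\hat s^2\bar s_{\max}^2}{N}\|\matA^\top(y^{k+1}-y^*)\|_2^2$. The only negative term of that form is $-\xgt\frac{t}{N}\|\matA^\top(y^{k+1}-y^*)\|_2^2=-2\Xi_y\xgt\frac{\hat s}{N}\|\matA^\top(y^{k+1}-y^*)\|_2^2$, which comes from the primal step. Absorbing it would require $(1+\beta)\hat s\bar s_{\max}^2\le\xgt$. This fails for the values of Corollary~\ref{cor:x-accelerate-stoc}: $\beta=7$ and $\hat s\bar s_{\max}^2=7/32$ give $7/4$, while $\xgt\le 1/10$. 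It would also consume the slack you need for the $y$-contraction.

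The condition $1-\hat s\bar s_{\max}^2-2(1+\beta)\hat s^2\bar s_{\max}^4\ge0$ compares against the identity part of the metric, i.e.\ against a $-\|y^{k+1}-y^k\|_2^2$ term. That is exactly the term you discard by expanding ``as in Proposition~\ref{prop:x-accelerate-prox-3}''.

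The missing idea is to keep that negative term. Set $G=I-\frac{\hat s}{N}\matA\matA^\top$ and $d=y^{k+1}-y^k$. Use the exact identity $\|y^{k+1}-y^*\|_G^2=\|y^k-y^*\|_G^2+2\langle y^{k+1}-y^*,Gd\rangle-\|d\|_G^2$, where $Gd$ equals your deterministic terms plus $e_i$.

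Since $y^{k+1}-y^*$ is an $i$-independent vector plus $e_i$, you get $2\bbE_i\langle y^{k+1}-y^*,e_i\rangle=2\bbE_i\|e_i\|_2^2\le\frac{2\hat s^2}{N}\sum_l\|\matA_l v_l\|_2^2$. Now apply Young with parameter $\beta$ to $\matA_l v_l=\matA_l g_l-\matA_l\matA_l^\top d$.

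The $(1+\frac{1}{\beta})$ piece is absorbed by the reduced step $t$, as you say. For the $(1+\beta)$ piece, use $\frac{1}{N}\sum_l\|\matA_l\matA_l^\top d\|_2^2\le\bar s_{\max}^4\|d\|_2^2$. Cancel it against $-\Xi_y\|d\|_G^2\le-\Xi_y(1-\hat s\bar s_{\max}^2)\|d\|_2^2$, which uses $\|\matA\matA^\top\|\le N\bar s_{\max}^2$. This is precisely where the second condition in \eqref{eq:x-accelerate-stoc para-0} enters, and it is what the paper does in Proposition~\ref{prop:x-accelerate-stoc-3}. The $\xgt$-slack is then left untouched and serves only the final contraction.
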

(Notice that with our choices of \(\xgt\) and \(\hat s\), the matrix
\(I-(1-2\xgt)\frac{\hat s}{N}\matA\matA^{\top}\) is positive definite.)

\begin{corollary}\label{cor:x-accelerate-stoc}
Consider applying Algorithm~\ref{alg:x-accelerate-stoc} to solve \eqref{eq:upper bound prob sto}. Let
\begin{equation}\label{eq:x-accelerate-stoc cor para}
\begin{aligned}
\beta &= 7, 
& \quad {\hat s} &= \frac{7}{32} \frac{1}{\bar s_{\max}^2}, \\
\xgt &= \min\left(\frac{1}{10},\ \sqrt{\frac{4}{7}}\frac{\bar s_{\max}}{s_{\min}}\sqrt{\frac{\mu}{\bar L}}\right),
&\quad t &=\frac{1-8\xgt}{2+8\xgt}\frac{1}{\bar L},
\\\quad \xconv &= N\max\left(\frac{\bar s_{\max}^2}{s_{\min}^2 } \cdot \frac{16}{7\xgt},\ \sqrt{\frac{14\bar L}{\mu}}+\frac{\bar L}{\mu}\cdot 4\xgt \right).
\end{aligned}
\end{equation}
Define \(s,\xe, \xtau,\xga,\Xi_f,\Xi_y,\Xi_v,\xh\) as in \eqref{eq:x-accelerate-stoc para-1}, then
\begin{equation*}
\bbE\Psi^{k+1}\le (1-1/\xconv)\bbE\Psi^k.
\end{equation*}
\end{corollary}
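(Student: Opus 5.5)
The corollary is a parameter-checking consequence of \Cref{thm:x-accelerate-stoc}. We plug the specific choices of \eqref{eq:x-accelerate-stoc cor para} into the hypotheses \eqref{eq:x-accelerate-stoc para-0} and the inequality on $\xconv$ in \eqref{eq:x-accelerate-stoc para-1}, verify each, and then invoke the theorem directly. No new Lyapunov analysis is needed.

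\textbf{Step 1: the step-size condition.} With $\beta=7$ and $\hat s\bar s_{\max}^2=\frac{7}{32}$ we get
\[
2\left(1+\tfrac1\beta\right)\hat s\bar s_{\max}^2 = 2\cdot\tfrac87\cdot\tfrac{7}{32}=\tfrac12 .
\]
Hence the prescribed $t$ in \eqref{eq:x-accelerate-stoc para-0} equals
\[
\frac{\tfrac12-4\xgt}{\bar L(1+4\xgt)}=\frac{1-8\xgt}{(2+8\xgt)\bar L},
\]
which matches \eqref{eq:x-accelerate-stoc cor para}. It is positive because $\xgt\le\frac1{10}<\frac18$.

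\textbf{Step 2: the second constraint.} We need
\[
1-\hat s\bar s_{\max}^2-2(1+\beta)\hat s^2\bar s_{\max}^4 = 1-\tfrac{7}{32}-16\cdot\tfrac{49}{1024} = 1-\tfrac{7}{32}-\tfrac{49}{64}=\tfrac{1}{64}\ge 0,
\]
which holds.

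\textbf{Step 3: the lower bound on $\xconv$.} Both entries of the maximum must dominate their counterparts in the theorem.
\begin{itemize}
\item[(a)] $\frac{1}{s_{\min}^2\hat s}\cdot\frac{1}{2\xgt}=\frac{32\bar s_{\max}^2}{7 s_{\min}^2}\cdot\frac1{2\xgt}=\frac{\bar s_{\max}^2}{s_{\min}^2}\cdot\frac{16}{7\xgt}$. This is exactly the first entry.
\item[(b)] We need $\sqrt{1/(\mu t)}\le\sqrt{14\bar L/\mu}$, i.e.\ $t\ge \frac{1}{14\bar L}$. Since $(1-8\xgt)/(2+8\xgt)$ is decreasing in $\xgt$, at $\xgt=\frac1{10}$ it equals $\frac{0.2}{2.8}=\frac1{14}$. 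So $t\ge\frac1{14\bar L}$ for all admissible $\xgt$.
\end{itemize}
Thus $\xconv$ as defined satisfies the required inequality, with the same $N$ factor.

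\textbf{Step 4: remaining consistency checks.} The derived quantities must be well defined.
\begin{itemize}
\item[(i)] $N/\xconv<1$, so that $\xe>1$ and $\xtau>0$. This follows since the first entry of the maximum is at least $\frac{16}{7\cdot(1/10)}>1$. Alternatively, the second entry exceeds $\sqrt{14}>1$.
\item[(ii)] $I-(1-2\xgt)\frac{\hat s}{N}\matA\matA^{\top}\succ0$. This holds since $\|\matA\|^2\le N\bar s_{\max}^2$, so the subtracted term has norm at most $\hat s\bar s_{\max}^2=\frac{7}{32}<1$.
\end{itemize}
The choice of $\xgt$ involving $\sqrt{4/7}$ only balances the two entries of the maximum and imposes no additional constraint. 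With all hypotheses verified, \Cref{thm:x-accelerate-stoc} gives $\bbE\Psi^{k+1}\le(1-1/\xconv)\bbE\Psi^k$.

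The only mildly delicate point is Step 3(b): it relies on the monotonicity of $t$ in $\xgt$ together with the cap $\xgt\le\frac1{10}$, and the constant $14$ is tight there.
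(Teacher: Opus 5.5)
Your proposal is correct and is exactly the argument the paper leaves implicit (it states the corollary without a separate proof). Substituting $\beta=7$ and $\hat s=\frac{7}{32}\bar s_{\max}^{-2}$ into the hypotheses of \Cref{thm:x-accelerate-stoc} and checking the step-size identity, the $\frac{1}{64}\ge 0$ condition, and the bound $t\ge\frac{1}{14\bar L}$ under the cap $\xgt\le\frac1{10}$ is all that is needed.

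One small slip: in Step 4(i), your first justification tacitly assumes $\bar s_{\max}\ge s_{\min}$. For $N>1$ this need not hold; only $s_{\min}^2\le N\bar s_{\max}^2$ is guaranteed, e.g.\ $\matA=(I,\dots,I)$. Your alternative argument, $\sqrt{14\bar L/\mu}\ge\sqrt{14}>1$, is valid and suffices.
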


\begin{remark}\label{remark:stoc_x_oraclecomplexity}
Apart from the one-time cost of computing and storing $\matA\hat x^{0}$ at initialization, each iteration of Algorithm~\ref{alg:x-accelerate-stoc} uses $O(1)$ oracle calls and block matrix multiplications (block matrix--vector products). In particular, it requires at most four block matrix multiplications (e.g., forming $\matA_i^\top y^k$ and evaluating $\matA_i(\matA_i^\top y^k+\gradfp{i}{z^k})$) and two oracle accesses per iteration.

Moreover, $\matA\hat x^{k+1}$ can be updated incrementally since only the $j$-th block of $\hat x$ changes:
\begin{equation*}
\matA\hat x^{k+1} = \matA\hat x^{k} + \matA_j\left(\hat x^{k+1}_{j}-\hat x^{k}_{j}\right).    
\end{equation*}

Similarly, if $\gradf{z^k}$ is maintained in memory, then only one oracle access is needed per iteration, since only $\gradfp{j}{z^{k+1}}$ must be updated when $j$ is sampled.
\end{remark}

We divide the proof into several parts. Specifically, we analyze the individual components of \(\Psi^{k+1}\) in Propositions~\ref{prop:x-accelerate-stoc-1}, \ref{prop:x-accelerate-stoc-2}, and \ref{prop:x-accelerate-stoc-3}, and then combine them to complete the full argument.

\begin{proposition}\label{prop:x-accelerate-stoc-1}
Consider applying Algorithm~\ref{alg:x-accelerate-stoc} to solve \eqref{eq:upper bound prob sto}. Using the parameters in \Cref{thm:x-accelerate-stoc}, we have 
\begin{equation}\label{eq:x-acce-stoc_df}
\begin{aligned}
\Dp{j}{x^{k+1}} 
&\le \Dp{j}{z^k}+2\xgt t\left\|\gradfp{j}{z^k}-\gradfp{j}{x^*}\right\|_2^2 \\
&\quad- t \left(1 - \frac{\bar L}{2} t - 2\xgt \right) 
  \left\| \gradfp{j}{z^k} + \matA_j^{\top} y^{k+1} \right\|_2^2 \\
&\quad - \xgt t \left\| \matA_j^{\top} \left(y^{k+1} - y^*\right) \right\|_2^2 
+ t \left\langle \matA_j^{\top} \left(y^{k+1} - y^*\right),\, 
  \gradfp{j}{z^k} + \matA_j^{\top} y^{k+1} \right\rangle.
\end{aligned}
\end{equation}
\end{proposition}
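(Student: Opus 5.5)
This is the block-wise analogue of \Cref{prop:x-accelerate-prox-1}, so I will follow that argument step by step, restricted to the sampled block $j$. The only difference is that the final conversion of $\|\gradfp{j}{z^k}-\gradfp{j}{x^*}\|_2^2$ into a Bregman term is postponed; this term is kept explicit in \eqref{eq:x-acce-stoc_df}. Throughout I condition on $j$ and on $y^{k+1}$.

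The update is $x_j^{k+1}=z_j^k-t\,d_j$ with $d_j:=\gradfp{j}{z^k}+\matA_j^\top y^{k+1}$. Since $f_j$ is $\bar L$-smooth, so is $D_{f_j}(\cdot,x_j^*)$, and its gradient at $z_j^k$ is $\gradfp{j}{z^k}-\gradfp{j}{x^*}$. The descent lemma therefore gives
\begin{equation*}
\Dp{j}{x^{k+1}}\le \Dp{j}{z^k}-t\left\langle \gradfp{j}{z^k}-\gradfp{j}{x^*},d_j\right\rangle+\frac{\bar L}{2}t^2\|d_j\|_2^2 .
\end{equation*}
Next I use the blockwise optimality condition $\gradfp{j}{x^*}=-\matA_j^\top y^*$ from \eqref{eq:saddle_cond_sto}. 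It lets me write
\begin{equation*}
\gradfp{j}{z^k}-\gradfp{j}{x^*}=d_j-\matA_j^\top(y^{k+1}-y^*).
\end{equation*}
Substituting, the inner product becomes $-t\|d_j\|_2^2+t\langle \matA_j^\top(y^{k+1}-y^*),d_j\rangle$. This produces $-t(1-\tfrac{\bar L}{2}t)\|d_j\|_2^2$ together with the cross term that appears in the statement.

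Then I apply $\|a\|_2^2\le 2\|b\|_2^2+2\|a+b\|_2^2$ with $a=\matA_j^\top(y^{k+1}-y^*)$ and $b=\gradfp{j}{z^k}-\gradfp{j}{x^*}$, so that $a+b=d_j$. Multiplying by $\xgt t$ gives
\begin{equation*}
0\le -\xgt t\|a\|_2^2+2\xgt t\|b\|_2^2+2\xgt t\|d_j\|_2^2 .
\end{equation*}
Adding this to the previous bound yields exactly the coefficients $-t(1-\tfrac{\bar L}{2}t-2\xgt)$, $-\xgt t$, and $+2\xgt t\|\gradfp{j}{z^k}-\gradfp{j}{x^*}\|_2^2$, which proves \eqref{eq:x-acce-stoc_df}.

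None of these steps is a real obstacle. The only point needing care is that each step uses only the $j$-th block's smoothness and optimality condition, so that the inequality holds pathwise for every realization of $i$ and $j$. The term $\|\gradfp{j}{z^k}-\gradfp{j}{x^*}\|_2^2$ is deliberately left unbounded here, presumably so that its bound by $2\bar L\,\Dp{j}{z^k}$, and the expectation over $j$, can be taken later in the combined argument.
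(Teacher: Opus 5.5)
Your proof is correct and takes essentially the same approach as the paper, which proves this by repeating the argument of \Cref{prop:x-accelerate-prox-1} on block $j$: the descent lemma for the $\bar L$-smooth $D_{f_j}(\cdot,x_j^*)$, the optimality condition $\nabla f_j(x_j^*)=-\matA_j^\top y^*$, and $\|a\|_2^2\le 2\|b\|_2^2+2\|a+b\|_2^2$ with $a+b=\nabla f_j(z_j^k)+\matA_j^\top y^{k+1}$. As you anticipated, the bound $\|\nabla f_j(z_j^k)-\nabla f_j(x_j^*)\|_2^2\le 2\bar L\, D_{f_j}(z_j^k,x_j^*)$ is applied only later, together with the expectation over $j$, in the proof of \Cref{thm:x-accelerate-stoc}.
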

\begin{proof}
Similar to the proof of \Cref{prop:x-accelerate-prox-1}, the conclusion comes from the \(\bar L\)-smoothness of \(f_j\).
\end{proof}

\begin{proposition}\label{prop:x-accelerate-stoc-2}
Consider applying Algorithm~\ref{alg:x-accelerate-stoc} to solve \eqref{eq:upper bound prob sto}. Using the parameters in \Cref{thm:x-accelerate-stoc}, we have 
\begin{equation}\label{eq:x-acce-stoc_v_3}
\begin{aligned}
\bbE\left\|v^{k+1} - x^*\right\|_2^2
&\le \left(1 - \frac{1}{\xconv}\right) \bbE\left\|v^k - x^*\right\|_2^2
- 2\xe^2 \frac{t}{N}\, \bbE\D{z^k}
+ 2\xe(\xe - 1)\frac{t}{N}\, \bbE\D{x^k} \\
&\quad - 2\xe \frac{t}{N} \bbE\left\langle \matA^{\top} \left(y^{k+1} - y^*\right),\,
\hat{x}^k - x^* \right\rangle + \xe^2 \frac{t^2}{N} \bbE\left\| \nabla f(z^k) + \matA^{\top} y^{k+1} \right\|_2^2.
\end{aligned}
\end{equation}
\end{proposition}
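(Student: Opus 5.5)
The plan is to mimic the proof of \Cref{prop:x-accelerate-prox-2}, working conditionally on the history up to iteration $k$ and on the dual sample $i$, so that $y^{k+1}$ is fixed and only the primal block index $j$ is random. Since $v^{k+1}$ differs from $v^k$ only in block $j$, and $\xe=1+\xga+\xga\xtau$, we have $v_j^{k+1}=\xe x_j^{k+1}-(\xe-1)x_j^k$. Using the primal step, this equals
\[
\xe z_j^k-(\xe-1)x_j^k-\xe t\left(\gradfp{j}{z^k}+\matA_j^\top y^{k+1}\right).
\]
Hence
\begin{equation*}
\left\|v^{k+1}-x^*\right\|_2^2=\left\|v^k-x^*\right\|_2^2-\left\|v_j^k-x_j^*\right\|_2^2+\left\|v_j^{k+1}-x_j^*\right\|_2^2 .
\end{equation*}
Taking expectation over $j\sim U(\{1,\dots,N\})$ (independent of $i$), the last two terms average to $\frac1N$ times the full-vector quantities. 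This gives
\begin{equation*}
\bbE_j\left\|v^{k+1}-x^*\right\|_2^2=\left(1-\tfrac1N\right)\left\|v^k-x^*\right\|_2^2+\tfrac1N\left\|\xe z^k-(\xe-1)x^k-x^*-\xe t\left(\nabla f(z^k)+\matA^\top y^{k+1}\right)\right\|_2^2 .
\end{equation*}

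Next I expand the second norm exactly as in \eqref{eq:x-acce-prox-2-init}. I use $\gradf{x^*}+\matA^\top y^*=0$ and recognize $\xe z^k-(\xe-1)x^k=\hat x^k$ in the cross term with $\matA^\top(y^{k+1}-y^*)$. I then apply the two strong convexity inequalities \eqref{eq:x-acce-prox-2-mu-strong-convexity} to the full separable $f$; these hold since each $f_i$ is $\mu$-strongly convex. This bounds the full-vector quantity by
\[
\left\|\xe z^k-(\xe-1)x^k-x^*\right\|_2^2-\xe\mu t\|z^k-x^*\|^2-2\xe^2t\D{z^k}+2\xe(\xe-1)t\D{x^k}
\]
plus the inner-product and gradient-norm terms. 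Multiplied by $1/N$, these produce exactly the $t/N$ and $t^2/N$ coefficients claimed in \eqref{eq:x-acce-stoc_v_3}.

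The main obstacle is the contraction step, which must combine the $(1-\frac1N)\|v^k-x^*\|^2$ part with the Jensen step. I write $\xe-1=(1-N/\xconv)\xtau$, which follows from the definition of $\xtau$ in \eqref{eq:x-accelerate-stoc para-1}. Jensen then gives
\[
\left\|\xe z^k-(\xe-1)x^k-x^*\right\|^2\le(1-N/\xconv)\|v^k-x^*\|^2+\tfrac{N}{\xconv}\|z^k-x^*\|^2 .
\]
I need $\frac{N}{\xconv}\le \xe\mu t$ so that the $\|z^k-x^*\|^2$ term is absorbed. This is verified as in the deterministic case: $\xe\mu t\ge \mu t/(4\bar L\xgt t+N/\xconv)$, and setting $\xconv'=\xconv/N$ reduces the condition to $\xconv'\ge\sqrt{1/(\mu t)}+4\bar L\xgt/\mu$, which is guaranteed by \eqref{eq:x-accelerate-stoc para-1}. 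The total coefficient of $\|v^k-x^*\|^2$ is then $(1-\frac1N)+\frac1N(1-\frac N{\xconv})=1-\frac1{\xconv}$, as required.

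Finally, I take total expectation (over $i$ and the history) to obtain \eqref{eq:x-acce-stoc_v_3}. The remaining terms involving $y^{k+1}$ are left inside the expectations, since $j$ is independent of $y^{k+1}$.
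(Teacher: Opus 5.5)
Your proposal is correct and follows essentially the same route as the paper. Both arguments expand block $j$ using $\xe=1+\xga+\xga\xtau$, and both average over the independent index $j$ to split into $(1-\frac1N)\|v^k-x^*\|_2^2$ plus $\frac1N$ times the full-vector expansion. Both then apply the two strong-convexity inequalities, and finish with the Jensen step based on $\xe-1=(1-N/\xconv)\xtau$, absorbing the $\|z^k-x^*\|_2^2$ term via $\xconv\xe\mu t\ge N$. One small wording point: $\xconv'\ge\sqrt{1/(\mu t)}+4\bar L\xgt/\mu$ is a sufficient condition for the required quadratic inequality, not an equivalent reduction of it, exactly as in the paper.
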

\begin{proof}
Similar to \eqref{eq:x-acce-prox-2-init}, we have the following equality for the term \(\left\|v_j^{k+1}-x_j^*\right\|_2^2\)
\begin{equation}\label{eq:x-acce-stoc-2-init}
\begin{aligned}
&\left\|v_j^{k+1} - x_j^*\right\|_2^2 
=\ \left\| \xe z_j^k - (\xe - 1) x_j^k - x_j^* \right\|_2^2 
- 2 \xe t \left\langle 
  \gradfp{j}{z^k} - \gradfp{j}{x^*},\, 
  \xe z_j^k - (\xe - 1) x_j^k - x_j^* 
\right\rangle \\
&\quad - 2 \xe t \left\langle 
  \matA^{\top}_j \left( y^{k+1} - y^* \right),\, 
  \hat{x}_j^k - x_j^* 
\right\rangle 
+ \xe^2 t^2 \left\| \gradfp{j}{z^k} + \matA_j^{\top} y^{k+1} \right\|_2^2.
\end{aligned}
\end{equation}
Notice that \(j\) is independent of \(y^{k+1}\), \(z^k\), and \(x^k\), we could take the expectation on \(j\) and get:
\begin{equation*}
\begin{aligned}
\bbE\left\|v^{k+1} - x^*\right\|_2^2 
=\ &\frac{1}{N}\bbE\left\| \xe z^k - (\xe - 1) x^k - x^* \right\|_2^2 
- 2 \xe \frac{t}{N} \bbE\left\langle 
  \matA^{\top} \left( y^{k+1} - y^* \right),\, 
  \hat{x}^k - x^* 
\right\rangle\\
&\quad - 2 \xe \frac{t}{N} \bbE\left\langle 
  \gradf{z^k} - \gradf{x^*},\, 
  \xe z^k - (\xe - 1) x^k - x^* 
\right\rangle \\
&\quad + \xe^2 \frac{t^2}{N} \bbE\left\| \gradf{z^k}
+ \matA^{\top} y^{k+1} \right\|_2^2
 +\left(1-\frac{1}{N}\right)\bbE\left\|v^{k} - x^*\right\|_2^2.
\end{aligned}
\end{equation*}
Again, by \eqref{eq:x-acce-prox-2-mu-strong-convexity}, we have 
\begin{equation*}
\begin{aligned}
\bbE\left\|v^{k+1} - x^*\right\|_2^2 
\le \ &\frac{1}{N}\bbE\left\| \xe z^k - (\xe - 1) x^k - x^* \right\|_2^2 
- 2 \xe \frac{t}{N} \bbE\left\langle 
  \matA^{\top} \left( y^{k+1} - y^* \right),\, 
  \hat{x}^k - x^* 
\right\rangle\\
&\quad - 2\xe^2 \frac{t}{N}\, \bbE\D{z^k}
+ 2\xe(\xe - 1)\frac{t}{N}\, \bbE\D{x^k} -\mu\xe\frac{t}{N}\bbE\left\|z^k-x^*\right\|_2^2 \\
&\quad + \xe^2 \frac{t^2}{N} \bbE\left\| \gradf{z^k}
+ \matA^{\top} y^{k+1} \right\|_2^2
 +\left(1-\frac{1}{N}\right)\bbE\left\|v^{k} - x^*\right\|_2^2.    
\end{aligned}
\end{equation*}
Notice that \(\xconv \xe \mu t \ge N\) by \eqref{eq:x-accelerate-stoc para-1}. (\(\xconv \xe \mu t =\xconv \mu t \frac{1+4\bar L\xgt t}{4\bar L \xgt t + N/\xconv}\ge\xconv \mu t \frac{1}{4\bar L \xgt t + N/\xconv}\). The positive solution to \(\xconv \mu t \frac{1}{4\bar L \xgt t + N/\xconv}=N\) is \(\xconv^{+}=\frac{4N\bar L \xgt t+\sqrt{(4N\bar L\xgt t)^2+4N^2\mu t}}{2\mu t}\). By \eqref{eq:x-accelerate-stoc para-1}, \(\xconv\ge N\sqrt{\frac{1}{\mu t}}+4N\frac{\bar L}{\mu}\xgt\ge \xconv^{+}\).) Since \(\xe - 1 = (1-N/\xconv)\xtau\), by Jensen's inequality
\begin{equation*}\left\|\xe z^{k}-(\xe-1)x^k-x^*\right\|_2^2\le \left(1-\frac{N}{\xconv}\right)\left\|(1+\xtau)z^{k}-\xtau x^{k}-x^*\right\|_2^2 +\frac{N}{\xconv} \left\|z^k-x^*\right\|_2^2.\end{equation*}
Thus, combining all these, we conclude the proof for \Cref{prop:x-accelerate-stoc-2}. 
\end{proof}

\begin{proposition}\label{prop:x-accelerate-stoc-3}
Consider applying Algorithm~\ref{alg:x-accelerate-stoc} to solve \eqref{eq:upper bound prob sto}. Using the parameters in \Cref{thm:x-accelerate-stoc}, we have 
\begin{equation}\label{eq:x-acce-stoc_y}
\begin{aligned}
\bbE\left\|y^{k+1}-y^*\right\|_{\left(I-\frac{\hat s}{N}\matA \matA^{\top}\right)}^2&\le \bbE\left\|y^{k}-y^*\right\|_{\left(I-\frac{\hat s}{N} \matA \matA^{\top}\right)}^2+2\xh \frac{s}N\bbE\left\langle \matA^{\top}\left(y^{k+1}-y^*\right),{\hat x}^k - x^*\right\rangle   \\ &\quad -2\frac{{\hat s}} N \bbE\left\langle \matA^{\top}\left(y^{k+1}-y^*\right), \matA^{\top}y^{k+1}+\gradf{z^k}\right\rangle\\
&\quad - \bbE\left\|y^{k+1}-y^k\right\|_{\left(I-\frac{\hat s}{N}\matA \matA^{\top}\right)}^2 + 2\left(1+\beta\right)\frac{{\hat s}^2}{N}\sum_{l=1}^N\bbE\left\|\matA_l\matA_l^{\top} \left(y^{k+1}-y^k\right)\right\|_2^2\\
&\quad +2\left(1+\frac{1}{\beta}\right) \frac{{\hat s}^2}{N}\sum_{l=1}^N \bbE\left\|\matA_l\left( \matA_l^{\top} y^{k+1} +\gradfp{l}{z^k}\right)\right\|_2^2
\end{aligned}
\end{equation}
\end{proposition}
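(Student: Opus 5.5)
The argument mirrors the proof of \Cref{prop:x-accelerate-prox-3}. The new ingredient is that the $y$-step uses the block matrix $\hat s\matA_i\matA_i^\top$ instead of $\frac{\hat s}{N}\matA\matA^\top$, and this mismatch produces variance terms.

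\textbf{Step 1: rewrite the update around $y^*$.} Since $\phi\equiv0$, we have $b=\matA x^*$. Writing $B_i=\hat s\matA_i\matA_i^\top$, the $y$-update reads
\begin{equation*}
y^{k+1}-y^k=\frac{\xh s}{N}\matA(\hat x^k-x^*)-\hat s\matA_i\left(\matA_i^\top y^{k+1}+\gradfp{i}{z^k}\right)+B_i(y^{k+1}-y^k).
\end{equation*}
Add and subtract the mean $\frac{\hat s}{N}\matA(\matA^\top y^{k+1}+\gradf{z^k})=\frac{\hat s}{N}\sum_l\matA_l(\matA_l^\top y^{k+1}+\gradfp{l}{z^k})$. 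Call $\Delta_i$ the resulting error. It is built from $e_i=\hat s\matA_i(\matA_i^\top y^{k+1}+\gradfp{i}{z^k})-\frac{\hat s}{N}\matA(\matA^\top y^{k+1}+\gradf{z^k})$ and the term $B_i(y^{k+1}-y^k)$. This gives
\begin{equation*}
\left(I-\tfrac{\hat s}{N}\matA\matA^\top\right)(y^{k+1}-y^k)=\tfrac{\xh s}{N}\matA(\hat x^k-x^*)-\tfrac{\hat s}{N}\matA(\matA^\top y^{k+1}+\gradf{z^k})+\Delta_i .
\end{equation*}
Writing $Q=I-\frac{\hat s}{N}\matA\matA^\top$, the Q-weighted three-point identity is
\begin{equation*}
\|y^{k+1}-y^*\|_Q^2=\|y^k-y^*\|_Q^2-\|y^{k+1}-y^k\|_Q^2+2\langle Q(y^{k+1}-y^k),y^{k+1}-y^*\rangle .
\end{equation*}
Inserting the display into the inner product produces exactly the two deterministic inner products of \eqref{eq:x-acce-stoc_y}, plus the cross term $2\langle\Delta_i,y^{k+1}-y^*\rangle$.

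\textbf{Step 2: control the cross term.} This is the main obstacle. The difficulty is that $y^{k+1}$ depends on $i$, so $\bbE_i\Delta_i$ does not vanish against $y^{k+1}-y^*$. I would handle it as follows.

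First, introduce the $i$-independent reference point $\bar y^{k+1}$, defined by the same implicit update averaged over $i$. Equivalently, use the fact that $\bbE_i e_i=0$ when evaluated at any $i$-independent point. Then split $y^{k+1}-y^*=(y^{k+1}-\bar y)+(\bar y-y^*)$.

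Next, bound the $i$-dependent part with Young's inequality, applied to the two components of $\Delta_i$ separately:
\begin{itemize}
\item $B_i(y^{k+1}-y^k)$ gets weight $(1+\beta)$;
\item the gradient-mismatch part gets weight $(1+1/\beta)$.
\end{itemize}
Finally, use $\bbE_i\|\hat s\matA_iv_i\|^2=\frac{\hat s^2}{N}\sum_l\|\matA_lv_l\|^2$ and bound the variance by the second moment. This yields the two sums
\begin{equation*}
2(1+\beta)\frac{\hat s^2}{N}\sum_l\|\matA_l\matA_l^\top(y^{k+1}-y^k)\|^2
\quad\text{and}\quad
2(1+\tfrac1\beta)\frac{\hat s^2}{N}\sum_l\|\matA_l(\matA_l^\top y^{k+1}+\gradfp{l}{z^k})\|^2 .
\end{equation*}
Here the factor $2$ comes from $2\langle a,b\rangle\le\|a\|^2+\|b\|^2$ applied against the displacement $y^{k+1}-y^k$, which is what the $-\|y^{k+1}-y^k\|_Q^2$ term absorbs.

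\textbf{Step 3: take expectations and collect terms.} The index $j$ is independent of everything in this step. Taking total expectation, I expect the fiddly point to be making the cross term pair with $y^{k+1}-y^k$ rather than with $y^{k+1}-y^*$. My first attempt would be to rewrite $2\langle\Delta_i,y^{k+1}-y^*\rangle=2\langle\Delta_i,y^{k+1}-y^k\rangle+2\langle\Delta_i,y^k-y^*\rangle$. The second piece is handled by noting that $\Delta_i$, re-expressed via the implicit equation, has zero conditional mean up to terms already accounted for. I would then apply Young's inequality only to $2\langle\Delta_i,y^{k+1}-y^k\rangle\le\|\Delta_i\|^2+\|y^{k+1}-y^k\|^2$. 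The remaining norm is compared with the $Q$-norm, which is admissible under condition \eqref{eq:x-accelerate-stoc para-0}.
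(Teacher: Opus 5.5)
Your Step~1 is correct. With $Q=I-\frac{\hat s}{N}\matA\matA^\top$, the $Q$-weighted three-point identity and your rewriting of the update give exactly three things: $\bbE\|y^k-y^*\|_Q^2-\bbE\|y^{k+1}-y^k\|_Q^2$, the two inner products of the statement, and the cross term $2\bbE\langle\Delta_i,y^{k+1}-y^*\rangle$. The gap is in how Steps~2--3 handle that cross term.

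(i) You Young-split $\Delta_i$ into $e_i$ and $(\hat s\matA_i\matA_i^\top-\frac{\hat s}{N}\matA\matA^\top)(y^{k+1}-y^k)$. Both pieces are evaluated at the $i$-dependent $y^{k+1}$. You then apply ``variance $\le$ second moment'' and $\bbE_i\|\hat s\matA_iv_i\|^2=\frac{\hat s^2}{N}\sum_l\|\matA_lv_l\|^2$. The variance bound fails here, because the subtracted vector is not the mean of the random one. The identity only yields the ``diagonal'' sum, in which block $l$ is evaluated at the iterate obtained with $i=l$. The statement instead needs $\sum_l\bbE\|\matA_l(\cdot)\|^2$ with every block evaluated at the same realized $y^{k+1}$. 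The theorem uses exactly this form, via $\sum_l\|\matA_l\matA_l^\top v\|^2\le\bar s_{\max}^2\|\matA^\top v\|^2$, and the diagonal sum is not comparable to it.

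(ii) The Young step $2\langle\Delta_i,y^{k+1}-y^k\rangle\le\|\Delta_i\|^2+\|y^{k+1}-y^k\|^2$ is lossy and replaces $-\bbE\|y^{k+1}-y^k\|_Q^2$ by $+\frac{\hat s}{N}\bbE\|\matA^\top(y^{k+1}-y^k)\|^2$. The result does not imply the stated inequality, which keeps that negative term; the theorem needs it to absorb the $(1+\beta)$ sum. Appealing to \eqref{eq:x-accelerate-stoc para-0} cannot repair this, since the proposition uses no parameter condition. Your account of the factor $2$ is also off: this Young step puts coefficient $1$ on $\|\Delta_i\|^2$.

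The missing observation is that $\Delta_i$ does not depend on $y^{k+1}$ at all. The update is explicit in $y^k$, and computing your $\Delta_i$ gives
\[
\Delta_i=\bar g-g_i=y^{k+1}-\bar y,\qquad g_i:=\hat s\matA_i\left(\matA_i^\top y^k+\nabla f_i(z^k)\right),\quad \bar g:=\bbE_i g_i,\quad \bar y:=\bbE_i y^{k+1}.
\]
So your reference-point split is the right move, and it closes exactly: $2\bbE\langle\Delta_i,y^{k+1}-y^*\rangle=2\bbE\|\Delta_i\|^2$. The other piece vanishes because $\bbE_i\Delta_i=0$ and $\bar y-y^*$ does not depend on $i$. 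Then $2\bbE\|g_i-\bar g\|^2\le\frac{2\hat s^2}{N}\sum_l\bbE\|\matA_l(\matA_l^\top y^k+\nabla f_l(z^k))\|^2$. The variance bound is now legitimate because everything is evaluated at the $i$-independent point $y^k$, and this identity is where the factor $2$ comes from.

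Only afterwards apply Young blockwise: write $\matA_l(\matA_l^\top y^k+\nabla f_l(z^k))=\matA_l(\matA_l^\top y^{k+1}+\nabla f_l(z^k))-\matA_l\matA_l^\top(y^{k+1}-y^k)$ with weights $(1+\frac1\beta)$ and $(1+\beta)$, for every $l$ and every realization of $i$, then take expectations. This is exactly the paper's proof, organized around $y^{k+1}$ rather than $y^k$. The paper expands around $y^k$ and computes the $i$-expectation exactly, obtaining the variance $\frac{2\hat s^2}{N}\sum_l\|\cdot\|^2-\frac{2\hat s^2}{N^2}\|\matA(\matA^\top y^k+\nabla f(z^k))\|^2$. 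It then drops the negative part and applies the same Young step.
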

\begin{proof}
We have the following equality for \(\bbE\left\|y^{k+1}-y^*\right\|_2^2\):
\begin{equation*}
\begin{aligned}
\bbE\left\|y^{k+1}-y^*\right\|_2^2 &= \bbE\left\|y^k - y^*\right\|_2^2 - 2{\hat s} \bbE\left\|\matA_i^{\top}\left(y^k-y^*\right)\right\|_2^2 +\bbE\left\|y^{k+1}-y^k\right\|_2^2\\
&\quad + 2 \bbE\left\langle\xh \frac{s}{N}\matA \left(\hat x^k -x^*\right)-{\hat s} \matA_i\left(\gradfp{i}{z^k}-\gradfp{i}{x^*}\right), y^k-y^*\right\rangle\\
&=\bbE\left\|y^k - y^*\right\|_2^2 - 2\frac{{\hat s}}{N} \bbE\left\|\matA^{\top}\left(y^k-y^*\right)\right\|_2^2 +\bbE\left\|y^{k+1}-y^k\right\|_2^2\\
&\quad + 2 \bbE\left\langle\xh \frac{s}{N}\matA \left(\hat x^k -x^*\right)-\frac{{\hat s}}N \matA\left(\gradf{z^k}-\gradf{x^*}\right), y^k-y^*\right\rangle\\
&= \bbE\left\|y^k - y^*\right\|_2^2 - 2\frac{{\hat s}}{N} \bbE\left\|\matA^{\top}\left(y^k-y^*\right)\right\|_2^2 +\bbE\left\|y^{k+1}-y^k\right\|_2^2\\
&\quad + 2 \bbE\left\langle\xh \frac{s}{N}\matA \left(\hat x^k -x^*\right)-\frac{{\hat s}}N \matA\left(\gradf{z^k}+\matA^{\top}y^{k+1}\right), y^{k+1}-y^*\right\rangle\\
&\quad - 2  \bbE\left\langle\xh \frac{s}{N}\matA \left(\hat x^k -x^*\right)-\frac{{\hat s}}N \matA\left(\gradf{z^k}-\gradf{x^*}\right), y^{k+1}-y^k\right\rangle\\
&\quad + 2\frac{{\hat s}}{N} \bbE\left\|\matA^{\top}\left(y^{k+1}-y^*\right)\right\|_2^2.
\end{aligned}   
\end{equation*}
By applying \(\bbE\left\|\matA^{\top}\left(y^{k+1}-y^*\right)\right\|_2^2 = \bbE\left\|\matA^{\top}\left(y^{k}-y^*\right)\right\|_2^2 + 2\bbE\left\langle\matA^{\top}\left(y^k-y^*\right), \matA^{\top}\left(y^{k+1}-y^k\right)\right\rangle + \bbE\left\|\matA^{\top}\left(y^{k+1}-y^k\right)\right\|_2^2\) on the right hand side, we get 
\begin{equation*}
\begin{aligned}
&\bbE\left\|y^{k+1}-y^*\right\|_{\left(I-\frac{\hat s}{N}\matA \matA^{\top}\right)}^2\\ =&\ \bbE\left\|y^k - y^*\right\|_{\left(I-\frac{\hat s}{N}\matA \matA^{\top}\right)}^2 + \bbE\left\|y^{k+1}-y^k\right\|_{\left(I+\frac{\hat s}{N}\matA \matA^{\top}\right)}^2\\
&+2 \bbE\left\langle\xh \frac{s}{N}\matA \left(\hat x^k -x^*\right)-\frac{{\hat s}}N \matA\left(\gradf{z^k}+\matA^{\top}y^{k+1}\right), y^{k+1}-y^*\right\rangle\\
&- 2  \bbE\left\langle\xh \frac{s}{N}\matA \left(\hat x^k -x^*\right)-\frac{{\hat s}}N \matA\left(\gradf{z^k}+\matA^{\top}y^k\right), y^{k+1}-y^k\right\rangle.
\end{aligned}   
\end{equation*}
Here, by taking the expectation on \(i\),
\begin{equation*}
\begin{aligned}
&\bbE\left\langle\xh \frac{s}{N}\matA \left(\hat x^k -x^*\right)-\frac{{\hat s}}N \matA\left(\gradf{z^k}+\matA^{\top}y^k\right), y^{k+1}-y^k\right\rangle \\
=\ &\xh^2\frac{s^2}{N^2}\bbE\left\|\matA\left(\hat x^k -x^*\right)\right\|_2^2 + \frac{{\hat s}^2}{N^2}\bbE\left\|\matA\left(\gradf{z^k}+\matA^{\top}y^k\right)\right\|_2^2\\
&-2\xh\frac{s{\hat s}}{N^2}\bbE\left\langle\matA \left(\hat x^k -x^*\right), \matA\left(\gradf{z^k}+\matA^{\top}y^k\right)\right\rangle.
\end{aligned}
\end{equation*}
And, 
\begin{equation*}
\begin{aligned}
&\bbE\left\|y^{k+1}-y^k\right\|_2^2 \\
=\ &\xh^2\frac{s^2}{N^2}\bbE\left\|\matA\left(\hat x^k -x^*\right)\right\|_2^2 + \frac{{\hat s}^2}{N}\bbE\sum_{l=1}^N\left\|\matA_l\left(\gradfp{l}{z^k}+\matA_l^{\top}y^k\right)\right\|_2^2\\
&-2\xh\frac{s{\hat s}}{N^2}\bbE\left\langle\matA \left(\hat x^k -x^*\right), \matA\left(\gradf{z^k}+\matA^{\top}y^k\right)\right\rangle\\
\end{aligned}
\end{equation*}
Hence, 
\begin{equation*}
\begin{aligned}
&\bbE\left\|y^{k+1}-y^*\right\|_{\left(I-\frac{\hat s}{N}\matA \matA^{\top}\right)}^2\\ =\ &\bbE\left\|y^k - y^*\right\|_{\left(I-\frac{\hat s}{N}\matA \matA^{\top}\right)}^2 - \bbE\left\|y^{k+1}-y^k\right\|_{\left(I-\frac{\hat s}{N}\matA \matA^{\top}\right)}^2\\
&+2 \bbE\left\langle\xh \frac{s}{N}\matA \left(\hat x^k -x^*\right)-\frac{{\hat s}}N \matA\left(\gradf{z^k}+\matA^{\top}y^{k+1}\right), y^{k+1}-y^*\right\rangle\\
&+2\frac{{\hat s}^2}{N}\bbE\sum_{l=1}^N\left\|\matA_l\left(\gradfp{l}{z^k}+\matA_l^{\top}y^k\right)\right\|_2^2 - 2\frac{{\hat s}^2}{N^2}\bbE\left\|\matA\left(\gradf{z^k}+\matA^{\top}y^k\right)\right\|_2^2\\
\le\ & \bbE\left\|y^k - y^*\right\|_{\left(I-\frac{\hat s}{N}\matA \matA^{\top}\right)}^2 - \bbE\left\|y^{k+1}-y^k\right\|_{\left(I-\frac{\hat s}{N}\matA \matA^{\top}\right)}^2\\
&+2 \bbE\left\langle\xh \frac{s}{N}\matA \left(\hat x^k -x^*\right)-\frac{{\hat s}}N \matA\left(\gradf{z^k}+\matA^{\top}y^{k+1}\right), y^{k+1}-y^*\right\rangle\\
&+2\left(1+\frac{1}{\beta}\right) \frac{{\hat s}^2}{N}\sum_{l=1}^N \bbE\left\|\matA_l\left( \matA_l^{\top} y^{k+1} +\gradfp{l}{z^k}\right)\right\|_2^2\\
&+2\left(1+\beta\right)\frac{{\hat s}^2}{N}\sum_{l=1}^N\bbE\|\matA_l\matA_l^{\top} \left(y^{k+1}-y^k\right)\|_2^2,
\end{aligned}   
\end{equation*}
where the last inequality comes from omitting the last term and Young's inequality on \(\left\langle \matA_l\matA_l^{\top} \left(y^{k+1}-y^k\right), \matA_l\left(\matA_l^{\top} y^{k+1} +\gradfp{l}{z^k}\right)\right\rangle\).
\end{proof}

\begin{proof}[Proof of \Cref{thm:x-accelerate-stoc}]
Since only block \(j\) is updated, by taking the expectation of \eqref{eq:x-acce-stoc_df} over \(j\sim U(\{1,\dots,N\})\), we have
\begin{equation}\label{eq:x-acce-stoc_df-exp}
\begin{aligned}
\bbE\D{x^{k+1}} 
&\le \left(1-\frac{1}{N}\right)\bbE\D{x^k} + \frac{1}{N}(1 + 4\bar L \xgt t)\bbE \D{z^k}\\ 
& \quad - \left(1 - \frac{\bar L}{2} t - 2\xgt \right) 
  \frac{t}{N}\bbE\left\| \gradf{z^k} + \matA^{\top} y^{k+1} \right\|_2^2 \\
&\quad - \xgt \frac{t}{N} \bbE\left\| \matA^{\top} \left(y^{k+1} - y^*\right) \right\|_2^2 
+ \frac{t}N \bbE\left\langle \matA^{\top} \left(y^{k+1} - y^*\right),\, 
  \gradf{z^k} + \matA^{\top} y^{k+1} \right\rangle.
\end{aligned}
\end{equation}
Combine \eqref{eq:x-acce-stoc_df-exp}, \eqref{eq:x-acce-stoc_v_3}, and \eqref{eq:x-acce-stoc_y}, and by the settings of the weights \({\Xi_f}\), \({\Xi_y}\), \({\Xi_v}\), and other parameters in \eqref{eq:x-accelerate-stoc para-0} and \eqref{eq:x-accelerate-stoc para-1}, we see that on the right hand side: \begin{itemize}
\item{the term \(\bbE\D{x^k}\) has a coefficient of \({\Xi_f} \left(1-\frac{1}{N}\right)+{\Xi_v}2\xe(\xe-1)\frac{t}{N} ={\Xi_f}\left(1-\frac{1}{\xconv}\right)\);}
\item{the term \(\bbE\D{z^k}\) has a coefficient of \({\Xi_f} \frac{1+4\bar L \xgt t}{N}-{\Xi_v}2\xe^2\frac{t}{N} =0\);}
\item{the term \(\bbE\left\langle \matA^{\top} \left(y^{k+1} - y^*\right),\, 
  \gradf{z^k} + \matA^{\top} y^{k+1} \right\rangle\) has a coefficient of \(\Xi_f \cdot\frac{t}N-\Xi_y\cdot 2\frac{{\hat s}}{N}=0\);}
\item{the term \(\bbE\left\langle \matA^{\top} \left(y^{k+1} - y^*\right),\,
\hat{x}^k - x^* \right\rangle\) has a coefficient of \(-2\Xi_v \cdot \xe \frac{t}{N}+\Xi_y\cdot 2\xh \frac{s}{N}=0;\)}
\item{
on the right hand side, the sum of the squared norm terms related to  \(\gradf{z^k} + \matA^{\top} y^{k+1}\) is 
\begin{equation*}
\begin{aligned}
    &\left[-{\Xi_f}\left(1 - \frac{\bar L}{2} t - 2\xgt \right) 
  \frac{t}{N}+{\Xi_v}\xe^2\frac{t^2}{N}\right]\bbE\left\|\gradf{z^k} + \matA^{\top} y^{k+1}\right\|_2^2\\
  + &\quad 2{\Xi_y}\left(1+\frac{1}{\beta}\right) \frac{{\hat s}^2}{N}\sum_{l=1}^N \bbE\left\|\matA_l\left( \matA_l^{\top} y^{k+1} +\gradfp{l}{z^k}\right)\right\|_2^2\\
\le\  &{\Xi_f}\left[-\left(1 - \frac{\bar L}{2} t - 2\xgt \right) 
  +\frac{1+4\bar L \xgt t}{2}+\left(1+\frac{1}{\beta}\right)\bar s_{\max}^2{\hat s}\right]\frac{t}{N}\bbE\left\|\gradf{z^k} + \matA^{\top} y^{k+1}\right\|_2^2\\
=\ & \frac{\Xi_f}{2}\left[{\bar L (1+4\xgt) t -(1-4\xgt)+2\left(1+\frac{1}{\beta}\right)\bar s_{\max}^2{\hat s}}\right]\bbE\left\|\gradf{z^k} + \matA^{\top} y^{k+1}\right\|_2^2 \le 0;
\end{aligned}
\end{equation*}}
\item{
on the right hand side, the sum of the squared norm terms related to  \( y^{k+1}-y^k\) is
\begin{equation*}
\begin{aligned}
- \Xi_y\bbE\left\|y^{k+1}-y^k\right\|_{\left(I-\frac{\hat s}{N}\matA \matA^{\top}\right)}^2+\Xi_y 2\left(1+\beta\right)\frac{{\hat s}^2}{N}\sum_{l=1}^N\bbE\|\matA_l\matA_l^{\top} \left(y^{k+1}-y^k\right)\|_2^2\le0
\end{aligned}
\end{equation*}
as \(1 - {\hat s} \bar s_{\max}^2 - 2(1+\beta){\hat s}^2 \bar s_{\max}^4\ge 0\).
}
\end{itemize}
Hence, 
\begin{equation*}
\begin{aligned}
&{\Xi_y}\bbE\left\|y^{k+1}-y^*\right\|^2_{\left(I-\frac{\hat s}{N} \matA \matA^{\top}\right)}+\bbE\D{x^{k+1}}+{\Xi_v}\bbE\left\|{v}^{k+1}-x^*\right\|_2^2\\
\le\ &{\Xi_y}\bbE\left\|y^{k}-y^*\right\|^2_{\left(I-\frac{\hat s}{N} \matA \matA^{\top}\right)}+\left(1-\frac{1}{\xconv}\right)\bbE\D{x^{k}}\\
&+\left(1-\frac{1}{\xconv}\right){\Xi_v}\bbE\left\|{v}^{k}-x^*\right\|_2^2 - {\Xi_f}\xgt \frac{t}{N}\bbE\left\|\matA^{\top}\left(y^{k+1}-y^*\right)\right\|_2^2\\
\le\ &{\Xi_y}\bbE\left\|y^{k}-y^*\right\|^2_{\left(I-(1-2\xgt)\frac{\hat s}{N} \matA \matA^{\top}\right)}+\left(1-\frac{1}{\xconv}\right)\bbE\D{x^{k}}\\
&+\left(1-\frac{1}{\xconv}\right){\Xi_v}\bbE\left\|{v}^{k}-x^*\right\|_2^2 - {\Xi_y}2\xgt \frac{{\hat s}}{N}\bbE\left\|\matA^{\top}\left(y^{k+1}-y^*\right)\right\|_2^2\\
&- {\Xi_y}2\xgt s_{\min}^2\frac{{\hat s}}{N}\bbE\left\|y^{k}-y^*\right\|^2.
\end{aligned}  
\end{equation*}
Finally, since \(I-(1-2\xgt)\frac{\hat s}{N}\matA\matA^\top \preceq I\) and \(\xconv \ge \frac{N}{2\xgt s_{\min}^2\hat s}\), we have
\begin{equation*}
\Xi_y\|y^{k}-y^*\|^2_{\left(I-(1-2\xgt)\frac{\hat s}{N}\matA\matA^\top\right)}
-2\Xi_y\xgt s_{\min}^2\frac{\hat s}{N}\|y^{k}-y^*\|_2^2
\le
\left(1-\frac{1}{\xconv}\right)\Xi_y\|y^{k}-y^*\|^2_{\left(I-(1-2\xgt)\frac{\hat s}{N}\matA\matA^\top\right)},
\end{equation*}
which together with the previous inequality yields
\(\bbE\Psi^{k+1}\le \left(1-\frac{1}{\xconv}\right)\bbE\Psi^k\).
\end{proof}

\section{Proof of Theorem~\ref{thm:y-accelerate-stoc-short}}
\label{appensec:proof y stoc alg}
We begin by stating the full version of \Cref{thm:y-accelerate-stoc-short}.
\begin{theorem}\label{thm:y-accelerate-stoc}
Consider applying Algorithm~\ref{alg:y-accelerate-stoc} to solve \eqref{eq:upper bound prob sto}. Let 
\begin{equation}\label{eq:y-accelerate-stoc para}
\begin{aligned}
\hat s & = \frac{1}{1+1/\beta}\frac{1}{\bar s_{\max}^2}, 
& \quad t & = \frac{1}{2\bar L}, \\
\alpha &\in(0,1), &\beta &\in(0,1),\\
\yeta & = \frac{\yc - 1}{1 - 1/\yconv}, & \yc&>1,\\
\tilde t & = \frac{\alpha t}{\yc}, & \quad \yconv &= N\max\left(\frac{1+1/\beta}{\yc(1-\alpha)}\cdot\frac{\bar s_{\max}^2}{s_{\min}^2},\ 2\frac{\yc}{\alpha}\cdot\frac{\bar L}{\mu}\right),\\
\quad s & = \frac{\hat s}{t} & \quad \Xi_h & = 1, \\
\quad \Xi_u & = \frac{1}{2\yc^2 \hat s},
& \Xi_x & = \Xi_u \cdot \frac{\yc s}{\tilde t} = \frac{1}{2\alpha t^2}.
\end{aligned}
\end{equation}
with \(\alpha, \beta,\yeta, \yc\) satisfying \(\yc(\yc-1)\ge\alpha(1+\yeta)(\yc-1)+ \beta \yc^2\). Then, let 
\begin{equation*}
\begin{aligned}
\Psi^k =\ &\Xi_x \cdot \left[\left\| x^k - x^*\right\|_2^2-2\left(t-\tilde t\right)\D{x^k}\right] \\
&+ H(y^k) + \Xi_u \left\| u^k - y^* \right\|^2_{ \left(I - \alpha\frac{{\hat s}}N \matA \matA^{\top} \right)},
\end{aligned}
\end{equation*}
we have 
\begin{equation*}\bbE\Psi^{k+1}\le (1-1/\yconv)\bbE\Psi^k.\end{equation*}
\end{theorem}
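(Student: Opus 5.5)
The plan mirrors the proof of \Cref{thm:y-accelerate-prox}, with $\phi\equiv0$, block sampling and the loopless refresh handled in expectation. I would establish three one-step inequalities, one per component of $\Psi^{k+1}$, and then add them with weights $\Xi_x,\Xi_h,\Xi_u$ so that all cross terms cancel.

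\textbf{Step 1: the $x$-part.} This is the analogue of \Cref{prop:y-accelerate-prox-1}. Only block $j$ moves, and $j$ is independent of $u^{k+1}$, $x^k$ and $y^k$. So I expand $\|x^{k+1}-x^*\|_2^2-2(t-\tilde t)D_f(x^{k+1},x^*)$ blockwise and take expectation over $j$. This gives a $(1-\frac1N)$ multiple of the old quantity plus $\frac1N$ times the deterministic bound, which uses $\bar L$-smoothness and $\mu$-strong convexity of each $f_j$. Since $\mu\tilde t=\frac{\alpha\mu}{2\yc\bar L}\ge \frac{N}{\yconv}$ by the second term of $\yconv$, the contraction factor is $1-1/\yconv$. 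What remains is
\[
-\tfrac{2\tilde t}{N}\langle x^k-x^*-t(\nabla f(x^k)-\nabla f(x^*)),\matA^\top(u^{k+1}-y^*)\rangle+\tfrac{\tilde t^2}{N}\|\matA^\top(u^{k+1}-y^*)\|_2^2 .
\]

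\textbf{Step 2: the $H$-part.} I view $\tilde y^{k+1}$ as $w^k-\frac{\hat s}{N}\matA\matA^\top(w^k-y^*)+\frac{\hat s}{N}e^k+\text{(signal)}$. Here $e^k=\matA\matA^\top(w^k-y^k)-N\matA_i\matA_i^\top(w^k-y^k)$ is a zero-mean variance-reduction error, and the signal is $\frac{s}{N}\matA(x^k-x^*)-\frac{\hat s}{N}\matA(\nabla f(x^k)-\nabla f(x^*))$, obtained using $\matA x^*=b$ and $\nabla f(x^*)=-\matA^\top y^*$. Since $y^{k+1}=\tilde y^{k+1}$ with probability $\frac1N$, I get
\[
\bbE H(y^{k+1})=(1-\tfrac1N)H(y^k)+\tfrac1N\bbE H(\tilde y^{k+1}).
\]
I then bound $H(\tilde y^{k+1})$ by smoothness around $w^k$, as in \Cref{prop:y-accelerate-prox-2}.

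\textbf{Step 3: the $u$-part.} The identities are $u^{k+1}-y^*=\yc(\tilde y^{k+1}-w^k)+(\yc w^k-(\yc-1)y^k-y^*)$ and $w^{k+1}=\frac{\yeta}{1+\yeta}y^{k+1}+\frac1{1+\yeta}u^{k+1}$, i.e.\ $u^{k+1}=(1+\yeta)w^{k+1}-\yeta y^{k+1}$. With these I redo \Cref{prop:y-accelerate-prox-3}. That means the Jensen step with $\yc-1=(1-1/\yconv)\yeta$, the polarization identity for $\langle\matA\matA^\top(w^k-y^k),w^k-y^*\rangle$, and $\|w^k-y^*\|^2\le s_{\min}^{-2}\|\matA^\top(w^k-y^*)\|^2$.

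\textbf{Combining.} The weights $\Xi_u=\frac1{2\yc^2\hat s}$ and $\Xi_x=\frac1{2\alpha t^2}$ are chosen so that the inner products with $\matA^\top(u^{k+1}-y^*)$ cancel, the $\|\tilde y^{k+1}-w^k\|^2$ terms cancel, and $\Xi_x\tilde t^2/N$ is absorbed by the $-\alpha\frac{\hat s}{N}\|\matA^\top u\|^2$ in the $u$-norm. The first term of $\yconv$ makes the coefficient of $\|\matA^\top(w^k-y^*)\|^2$ nonpositive, with the $(1-\alpha)$ and $(1+1/\beta)$ factors coming from the stepsize choice $\hat s=\frac{1}{(1+1/\beta)\bar s_{\max}^2}$.

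\textbf{Main obstacle.} The hard step is controlling the variance term $\frac{\hat s^2}{N^2}\bbE\|e^k\|^2\le\frac{\hat s^2}{N}\sum_l\|\matA_l\matA_l^\top(w^k-y^k)\|^2\le\frac{\hat s^2\bar s_{\max}^2}{N}\|\matA^\top(w^k-y^k)\|^2$, which appears with coefficient $\yc^2$ via the $u$-expansion and via $H$. My plan is to absorb it with Young's inequality, at a cost of a factor $\beta$, into the nonnegative slack
\[
\bigl[(\yc-\alpha)(\yc-1)-\alpha\yeta(\yc-1)\bigr]\hat s\|\matA^\top(w^k-y^k)\|^2
\]
that appeared in \eqref{eq:y-acce-prox_long_ineq}. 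This is exactly where the hypothesis $\yc(\yc-1)\ge\alpha(1+\yeta)(\yc-1)+\beta\yc^2$ enters. I would verify this bookkeeping first, since a mismatched factor of $N$ there would break the rate.
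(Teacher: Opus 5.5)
Your proposal is correct and follows the paper's proof essentially step for step. It uses the same three component inequalities: blockwise expectation over $j$ for the $x$-part, the refresh identity $\bbE H(y^{k+1})=(1-\frac1N)H(y^k)+\frac1N\bbE H(\tilde y^{k+1})$ followed by smoothness of $H$ around $w^k$, and the deterministic $u$-expansion using $\bbE_i e^k=0$. These are combined with the same weights $\Xi_x,\Xi_h,\Xi_u$ and the same checks. One check you did not list explicitly is that the coefficient of $H(y^k)$ is at most $1-\frac{1}{N\yc}\le 1-\frac1{\yconv}$; this holds because the second term of $\yconv$ gives $\yconv\ge N\yc$. One point of bookkeeping should be settled. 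In the paper, the $\yc^2\|\tilde y^{k+1}-w^k\|_2^2$ term from the $u$-expansion is kept whole. After weighting by $\Xi_u$ it cancels exactly against $-\bigl(\frac1{\hat s}-\frac{\bar s_{\max}^2}{2}-\frac{\bar s_{\max}^2}{2\beta}\bigr)\|\tilde y^{k+1}-w^k\|_2^2$ from $H$. The only variance contribution then comes from Young's inequality on $\langle e^k,\tilde y^{k+1}-w^k\rangle$ in the expansion of $H(\tilde y^{k+1})$. That contribution is $\frac{\beta}{2N}\|\matA^\top(w^k-y^k)\|_2^2$, and the slack absorbs it exactly under $\yc(\yc-1)\ge\alpha(1+\yeta)(\yc-1)+\beta\yc^2$. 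So the variance should not also be counted a second time through the $u$-expansion.
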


(Notice that with our choices of \(\alpha\), \(\hat s\), \(t\), and \(\tilde t\), the matrix
\(I-\alpha\frac{\hat s}{N}\matA\matA^{\top}\) is positive definite and \(\left\| x^k - x^*\right\|_2^2-2\left(t-\tilde t\right)\D{x^k}\ge \left(1-\bar L\left(t-\tilde t\right)\right)\left\| x^k - x^*\right\|_2^2\ge 0\).)

\begin{corollary}\label{cor:y-accelerate-stoc}
Consider applying Algorithm~\ref{alg:y-accelerate-stoc} to solve \eqref{eq:upper bound prob sto}. Let
\begin{equation}\label{eq:y-accelerate-stoc cor para}
\begin{aligned}
\alpha &= \frac12, 
& \quad \beta &= \frac13, \\
\yc &= \max\left(\frac{1}{1-\sqrt{\frac23}},\ \sqrt{2}\frac{\bar s_{\max}}{s_{\min}}\sqrt{\frac{\mu}{\bar L}}\right),
& \quad \yconv &= N\max\left(\frac{8}{\yc}\cdot\frac{\bar s_{\max}^2}{s_{\min}^2},\ 4\yc\cdot\frac{\bar L}{\mu}\right).
\end{aligned}
\end{equation}
Define \(\hat s,t,\tilde t,s,\yeta,\Xi_h,\Xi_u,\Xi_x\) as in \eqref{eq:y-accelerate-stoc para}, then
\begin{equation*}
\bbE\Psi^{k+1}\le (1-1/\yconv)\bbE\Psi^k.
\end{equation*}
\end{corollary}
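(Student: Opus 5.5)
The corollary follows from \Cref{thm:y-accelerate-stoc} once we check that the concrete choices $\alpha=\frac12$, $\beta=\frac13$ and the stated $\yc$, $\yconv$ satisfy its hypotheses. Three things need checking: the side constraints $\alpha,\beta\in(0,1)$ and $\yc>1$; that the stated $\yconv$ is at least the value prescribed in \eqref{eq:y-accelerate-stoc para}; and the coupling inequality $\yc(\yc-1)\ge\alpha(1+\yeta)(\yc-1)+\beta\yc^2$.

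\emph{Parameter substitution.} With $\beta=\frac13$ we get $1+1/\beta=4$, so $\hat s=\frac{1}{4\bar s_{\max}^2}$, which agrees with \Cref{thm:y-accelerate-stoc-short}. The first entry of the max defining $\yconv$ in \Cref{thm:y-accelerate-stoc} becomes $\frac{4}{\yc\cdot\frac12}\frac{\bar s_{\max}^2}{s_{\min}^2}=\frac{8}{\yc}\frac{\bar s_{\max}^2}{s_{\min}^2}$. The second becomes $2\cdot\frac{\yc}{1/2}\frac{\bar L}{\mu}=4\yc\frac{\bar L}{\mu}$. Hence the two expressions for $\yconv$ coincide exactly. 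Moreover $\yc\ge 1/(1-\sqrt{2/3})>1$.

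\emph{Coupling inequality.} This is the only nontrivial step. Since $\yconv\ge 4\yc\bar L/\mu\ge 4\yc$, we have $1-1/\yconv\ge 1-\frac{1}{4\yc}$. This gives the bound $1+\yeta\le 1+\frac{\yc-1}{1-1/(4\yc)}$. Rather than chase this ratio exactly, I would use the cruder bound $\yeta\le\yc$, which the paper already derives for the deterministic case from $\yconv\ge\yc$. Then it suffices to show
\[
\yc(\yc-1)\ \ge\ \tfrac12(1+\yc)(\yc-1)+\tfrac13\yc^2 .
\]
This rearranges to $\tfrac12(\yc-1)^2\ge\tfrac13\yc^2$, i.e.\ $(\yc-1)\ge\sqrt{2/3}\,\yc$, i.e.\ $\yc\ge 1/(1-\sqrt{2/3})$. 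This is exactly the first term in the definition of $\yc$, which explains where that constant comes from.

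\emph{The bound $\yeta\le\yc$.} This is equivalent to $\yc-1\le\yc(1-1/\yconv)$, i.e.\ $\yconv\ge\yc$. That holds because $\yconv\ge 4N\yc\bar L/\mu\ge\yc$.

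With all hypotheses verified, \Cref{thm:y-accelerate-stoc} applies and yields $\bbE\Psi^{k+1}\le(1-1/\yconv)\bbE\Psi^k$. The main care needed is the coupling inequality. I expect the choice of the crude bound $\yeta\le\yc$ to be precisely tight against the threshold on $\yc$, so no slack is lost.
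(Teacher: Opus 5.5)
Your proof is correct and follows the intended route: the paper gives no separate argument, and the corollary is simply an instantiation of \Cref{thm:y-accelerate-stoc}. With $\beta=\tfrac13$ and $\alpha=\tfrac12$ you recover $\hat s=\frac{1}{4\bar s_{\max}^2}$ and the stated $\yconv$ exactly, and the coupling condition $\yc(\yc-1)\ge\alpha(1+\yeta)(\yc-1)+\beta\yc^2$ reduces, via $\yeta\le\yc$ (equivalently $\yconv\ge\yc$, the same bound the paper uses in the deterministic proof), to $\tfrac12(\yc-1)^2\ge\tfrac13\yc^2$, i.e.\ $\yc\ge 1/(1-\sqrt{2/3})$, which is the first entry in the definition of $\yc$.

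One small correction to your closing remark: $\yeta<\yc$ strictly, so some slack does remain, though this does not affect the argument.
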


We divide the proof into several parts. Specifically, we analyze the individual components of \(\Psi^{k+1}\) in Propositions~\ref{prop:y-accelerate-stoc-1}, \ref{prop:y-accelerate-stoc-2}, and \ref{prop:y-accelerate-stoc-3}, and then combine them to complete the full argument.

\begin{proposition}\label{prop:y-accelerate-stoc-1}
Consider applying Algorithm~\ref{alg:y-accelerate-stoc} to solve \eqref{eq:upper bound prob sto}. Using the parameters in \Cref{thm:y-accelerate-stoc}, we have 
\begin{equation}\label{eq:y-acce-stoc_x}
\begin{aligned}
  &\left\|x_j^{k+1} - x_j^*\right\|_2^2
    - 2 \left(t - \tilde t\right)\, \Dp{j}{x^{k+1}}\\
  &\qquad \le
    \left[
      \left\|x_j^{k} - x_j^*\right\|_2^2
      - 2 \left(t - \tilde t\right)\, \Dp{j}{x^{k}}
    \right]
    \left(1 - 2\mu {\tilde t}\left(1-\bar L t\right)\right)\\
  &\qquad\quad
    - 2 \tilde t \left\langle
        x_j^{k} - x_j^* - t\left(\gradfp{j}{x^{k}} - \gradfp{j}{x^*}\right),
        \matA_j^\top \left(u^{k+1} - y^*\right)
      \right\rangle
    + \tilde t^2 \left\| \matA_j^\top \left(u^{k+1} - y^*\right) \right\|_2^2.
\end{aligned}
\end{equation}
\end{proposition}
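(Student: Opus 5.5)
This is the blockwise analogue of \Cref{prop:y-accelerate-prox-1}, and the plan is to repeat that argument on block $j$ alone. The only difference is that the final step stops before converting the contraction factor into $1-1/\yconv$. Throughout, $j$ is fixed, and we use the update $x_j^{k+1}=x_j^k-\tilde t\left(\gradfp{j}{x^k}+\matA_j^\top u^{k+1}\right)$ together with the blockwise optimality condition $\gradfp{j}{x^*}+\matA_j^\top y^*=0$ from \eqref{eq:saddle_cond_sto}. Separability of $f$ means $\Dp{j}{x}$ depends only on $x_j$, and $f_j$ is $\mu$-strongly convex and $\bar L$-smooth.

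\textbf{Step 1: expand the squared distance.} Subtracting $x_j^*$ and inserting the optimality condition gives
\[
x_j^{k+1}-x_j^*=\left(x_j^k-x_j^*-\tilde t\,\delta\right)-\tilde t\,\matA_j^\top(u^{k+1}-y^*),
\]
where $\delta=\gradfp{j}{x^k}-\gradfp{j}{x^*}$. Expanding the square produces three pieces: $\|x_j^k-x_j^*-\tilde t\delta\|^2$, the cross term $-2\tilde t\langle x_j^k-x_j^*-\tilde t\delta,\matA_j^\top(u^{k+1}-y^*)\rangle$, and $\tilde t^2\|\matA_j^\top(u^{k+1}-y^*)\|^2$.

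\textbf{Step 2: lower-bound the Bregman term.} By convexity of $f_j$,
\[
\Dp{j}{x^{k+1}}\ge \Dp{j}{x^k}-\tilde t\left\langle \delta,\ \delta+\matA_j^\top(u^{k+1}-y^*)\right\rangle.
\]
Multiply this by $-2(t-\tilde t)$ and add it to Step 1. The part $2(t-\tilde t)\tilde t\langle\delta,\matA_j^\top(u^{k+1}-y^*)\rangle$ combines with the Step 1 cross term, turning the $\tilde t\delta$ inside it into $t\delta$. This matches the cross term in \eqref{eq:y-acce-stoc_x}. What remains is $2(t-\tilde t)\tilde t\|\delta\|^2-2(t-\tilde t)\Dp{j}{x^k}$.

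\textbf{Step 3: contract the remaining terms.} Bound
\[
\|x_j^k-x_j^*-\tilde t\delta\|^2+2(t-\tilde t)\tilde t\|\delta\|^2\le \|x_j^k-x_j^*\|^2-2\tilde t\langle\delta,x_j^k-x_j^*\rangle+2t\tilde t\|\delta\|^2,
\]
which holds because $\tilde t^2\le 2t\tilde t-2\tilde t^2$ when $\tilde t\le t/3$; this is true since $\tilde t=\alpha t/\yc$ with $\alpha<1<\yc$. Cocoercivity, $\|\delta\|^2\le \bar L\langle\delta,x_j^k-x_j^*\rangle$, then yields $\|x_j^k-x_j^*\|^2-2\tilde t(1-\bar L t)\langle\delta,x_j^k-x_j^*\rangle$. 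Note $1-\bar L t=\tfrac12>0$, so strong convexity gives the bound $(1-2\mu\tilde t(1-\bar L t))\|x_j^k-x_j^*\|^2$.

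\textbf{Step 4: absorb the Bregman term.} Since $\Dp{j}{x^k}\ge0$ and $1-2\mu\tilde t(1-\bar L t)\le1$, we have
\[
-2(t-\tilde t)\Dp{j}{x^k}\le -2(t-\tilde t)\left(1-2\mu\tilde t(1-\bar L t)\right)\Dp{j}{x^k}.
\]
This produces the bracketed factor in \eqref{eq:y-acce-stoc_x}.

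The main care point is the sign bookkeeping in Step 2, so that the $\tilde t\delta\to t\delta$ replacement in the cross term comes out exactly. Step 3 also quietly relies on $\tilde t\le t/3$, which must be checked against the parameter choices above.
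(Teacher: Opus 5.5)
Your proposal is correct and takes the same route as the paper. The paper's proof reruns the argument of \Cref{prop:y-accelerate-prox-1} on block $j$, using the $\bar L$-smoothness and $\mu$-strong convexity of $f_j$, and stops before converting the contraction factor to $1-1/\yconv$.

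One fix is needed in Step 3. That inequality holds with no condition on $\tilde t$, because the left side minus the right side is exactly $-\tilde t^2\|\delta\|_2^2\le 0$. The condition you derive is miscomputed. Moreover, $\alpha<1<\yc$ does not imply $\tilde t\le t/3$; it only gives $\tilde t<t$. That weaker fact is precisely what you need to multiply Step 2 by $-2(t-\tilde t)$ and to carry out Step 4.
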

\begin{proof}
Similar to the proof of \Cref{prop:y-accelerate-prox-1}, the conclusion comes from the \(\bar L\)-smoothness and the \(\mu\)-strong convexity of \(f_j\).
\end{proof}

\begin{proposition}\label{prop:y-accelerate-stoc-2}
Consider applying Algorithm~\ref{alg:y-accelerate-stoc} to solve \eqref{eq:upper bound prob sto}. Using the parameters in \Cref{thm:y-accelerate-stoc}, we have 
\begin{equation}\label{eq:y-acce-stoc_H}
\begin{aligned}
H\left( \tilde y^{k+1} \right)
\le\ 
& H\left( w^k \right) 
- \left( \frac{N}{{\hat s}} - \frac{N \bar s_{\max}^2}{2} - \frac{N \bar s_{\max}^2}{2\beta}\right)
\left\| \tilde y^{k+1} - w^k \right\|_2^2 \\
\ &+ \left\langle 
\frac{s}{{\hat s}} \matA \left( x^k - x^* \right) 
- \matA \left( \gradf{{x}^k} - \gradf{x^*} \right) 
,\ \tilde y^{k+1} - w^k\right\rangle \\
\ & +\frac{\beta}{2N\bar s_{\max}^2}\left\|\matA \matA^{\top}(w^k-y^k) - N\matA_i\matA_i^{\top}(w^k-y^k)\right\|_2^2.
\end{aligned}
\end{equation}
\end{proposition}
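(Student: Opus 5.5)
We mirror the proof of \Cref{prop:y-accelerate-prox-2}: apply the descent lemma for $H$ at $w^k$, substitute the algebraic identity for $\nabla H(w^k)$ coming from the $\tilde y$-update, and control the extra stochastic term with Young's inequality.

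\textbf{Step 1 (descent lemma).} The function $H(y)=\frac12\|\matA^\top(y-y^*)\|_2^2$ is quadratic with Hessian $\matA\matA^\top$ and $\nabla H(w^k)=\matA\matA^\top(w^k-y^*)$. Hence
\begin{equation*}
H(\tilde y^{k+1})= H(w^k)+\left\langle \matA\matA^\top(w^k-y^*),\tilde y^{k+1}-w^k\right\rangle+\tfrac12\left\|\matA^\top(\tilde y^{k+1}-w^k)\right\|_2^2 .
\end{equation*}
The last term is at most $\frac{s_{\max}^2}{2}\|\tilde y^{k+1}-w^k\|^2$. Since $\matA=(\matA_1,\dots,\matA_N)$, we have $\|\matA\matA^\top\|\le\sum_l\|\matA_l\|^2\le N\bar s_{\max}^2$, so $s_{\max}^2\le N\bar s_{\max}^2$. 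This yields the $\frac{N\bar s_{\max}^2}{2}$ part of the coefficient.

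\textbf{Step 2 (rewrite the gradient via the update).} Multiplying the $\tilde y$-update by $N/\hat s$ and using $s=\hat s/t$ gives
\begin{equation*}
\tfrac{N}{\hat s}(\tilde y^{k+1}-w^k)=\tfrac{s}{\hat s}(\matA x^k-b)-\matA(\matA^\top y^k+\gradf{x^k})-N\matA_i\matA_i^\top(w^k-y^k).
\end{equation*}
With $\phi\equiv0$, the optimality conditions \eqref{eq:saddle_cond_sto} give $\matA x^*=b$ and $\gradf{x^*}=-\matA^\top y^*$. Subtracting these and adding and subtracting $\matA\matA^\top w^k$, we obtain
\begin{equation*}
\matA\matA^\top(w^k-y^*)=-\tfrac{N}{\hat s}(\tilde y^{k+1}-w^k)+\tfrac{s}{\hat s}\matA(x^k-x^*)-\matA(\gradf{x^k}-\gradf{x^*})+e^k,
\end{equation*}
where $e^k:=\matA\matA^\top(w^k-y^k)-N\matA_i\matA_i^\top(w^k-y^k)$. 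Note that $\mathbb{E}_i e^k=0$; this is the variance-reduction structure, although the present statement is pathwise and does not need the expectation.

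\textbf{Step 3 (combine and absorb the error).} Substituting Step 2 into Step 1 produces $-\frac{N}{\hat s}\|\tilde y^{k+1}-w^k\|^2$, the desired inner product with $\frac{s}{\hat s}\matA(x^k-x^*)-\matA(\gradf{x^k}-\gradf{x^*})$, and the error term $\langle e^k,\tilde y^{k+1}-w^k\rangle$. Young's inequality with weight $\frac{\beta}{N\bar s_{\max}^2}$ gives
\begin{equation*}
\langle e^k,\tilde y^{k+1}-w^k\rangle\le \tfrac{\beta}{2N\bar s_{\max}^2}\|e^k\|^2+\tfrac{N\bar s_{\max}^2}{2\beta}\|\tilde y^{k+1}-w^k\|^2,
\end{equation*}
which supplies exactly the $-\frac{N\bar s_{\max}^2}{2\beta}$ correction in the coefficient. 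Collecting terms yields \eqref{eq:y-acce-stoc_H}.

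The only delicate points are the sign bookkeeping in Step 2 and the bound $s_{\max}^2\le N\bar s_{\max}^2$. Assumption~\ref{assum:Assumption Linear Stoc} gives only the blockwise bound $\bar s_{\max}$, so the full-matrix smoothness constant of $H$ must be obtained from the block decomposition as above. If the stated coefficients did not match, I would recheck the Young weight. Otherwise the argument is routine.
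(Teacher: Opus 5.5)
Your proof is correct and follows essentially the same route as the paper. Both arguments apply the descent inequality for the $N\bar s_{\max}^2$-smooth quadratic $H$ at $w^k$, substitute $\nabla H(w^k)=\matA\matA^\top(w^k-y^*)$ via the $\tilde y$-update and the optimality conditions, and bound the cross term $\langle \matA\matA^\top(w^k-y^k)-N\matA_i\matA_i^\top(w^k-y^k),\tilde y^{k+1}-w^k\rangle$ by Young's inequality with weight $\beta/(N\bar s_{\max}^2)$; your justification $\|\matA\matA^\top\|\le\sum_l\|\matA_l\|^2\le N\bar s_{\max}^2$ supplies the smoothness constant that the paper only asserts.
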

\begin{proof}
For the term \(H\left(\tilde y^{k+1}\right)\), since \(H\) is \(N\bar s_{\max}^2\)-smooth, and \(\nabla H\left(w^k\right)=\matA\matA^{\top}\left(w^k-y^*\right),\)
\begin{equation*}
\begin{aligned}
H\left( \tilde y^{k+1} \right)
\le\ 
&H\left( w^k \right) 
+ \left\langle 
\matA \matA^{\top} \left( w^k - y^* \right),\,
\tilde y^{k+1} - w^k 
\right\rangle 
+ \frac{N \bar s_{\max}^2}{2} 
\left\| \tilde y^{k+1} - w^k \right\|_2^2 \\
=\ &H\left( w^k \right) 
- \left( \frac{N}{{\hat s}} - \frac{N \bar s_{\max}^2}{2} \right)
\left\| \tilde y^{k+1} - w^k \right\|_2^2 \\
\ &+ \left\langle 
\frac{s}{{\hat s}} \matA \left( x^k - x^* \right) 
- \matA \left( \gradf{{x}^k} - \gradf{x^*} \right) 
,\ \tilde y^{k+1} - w^k\right\rangle. \\
\ & +\left\langle\matA \matA^{\top}(w^k-y^k) - N\matA_i\matA_i^{\top}(w^k-y^k),\ \tilde y^{k+1}-w^k\right\rangle\\
\le\ & H\left( w^k \right) 
- \left( \frac{N}{{\hat s}} - \frac{N \bar s_{\max}^2}{2} - \frac{N \bar s_{\max}^2}{2\beta}\right)
\left\| \tilde y^{k+1} - w^k \right\|_2^2 \\
\ &+ \left\langle 
\frac{s}{{\hat s}} \matA \left( x^k - x^* \right) 
- \matA \left( \gradf{{x}^k} - \gradf{x^*} \right) 
,\ \tilde y^{k+1} - w^k\right\rangle. \\
\ & +\frac{\beta}{2N\bar s_{\max}^2}\left\|\matA \matA^{\top}(w^k-y^k) - N\matA_i\matA_i^{\top}(w^k-y^k)\right\|_2^2,
\end{aligned}
\end{equation*}
where the second inequality comes from the inequality: \begin{equation*}
\begin{aligned}
&\left\langle\matA \matA^{\top}(w^k-y^k) - N\matA_i\matA_i^{\top}(w^k-y^k),\ \tilde y^{k+1}-w^k\right\rangle \\
\le\  &\frac{N \bar s_{\max}^2}{2\beta}\left\| \tilde y^{k+1} - w^k \right\|_2^2+\frac{\beta}{2N\bar s_{\max}^2}\left\|\matA \matA^{\top}(w^k-y^k) - N\matA_i\matA_i^{\top}(w^k-y^k)\right\|_2^2.    
\end{aligned}
\end{equation*}
\end{proof}

\begin{proposition}\label{prop:y-accelerate-stoc-3}
Consider applying Algorithm~\ref{alg:y-accelerate-stoc} to solve \eqref{eq:upper bound prob sto}. Using the parameters in \Cref{thm:y-accelerate-stoc}, we have 
\begin{equation}\label{eq:y-acce-stoc_u_3}
\begin{aligned}
&\bbE\left\| u^{k+1} - y^* \right\|_2^2 \\
\le\ 
&\left(1-\frac{1}{\yconv}\right)\bbE\left\| u^k - y^* \right\|_{\left(I-\alpha\frac{{\hat s}}{N}\matA\matA^{\top}\right)}^2 
+ \yc^2 \bbE\left\| \tilde y^{k+1} - w^k \right\|_2^2 \\
&+2\frac{\yc}{N}\bbE\left\langle \yc {w}^{k} - (\yc-1) y^{k}-y^*,s\matA \left(x^{k}-x^*\right)-{\hat s} \matA \left( \gradf{x^{k}}-\gradf{x^*}\right)\right\rangle\\
&+ \left[\yc(\yc-1)-\alpha(1+\yeta)(\yc-1)\right]\frac{{\hat s}}{N}\bbE\left\|\matA^{\top}\left(w^k-y^k\right)\right\|_2^2\\
&+ 2\left( \yc - \alpha \right) (\yc - 1) \frac{{\hat s}}N \bbE H\left(y^k\right) - \left[\left( \yc^2 + (1-\alpha) \yc \right) \frac{{\hat s}}{N} - \frac{1}{\yconv s_{\min}^2} \right]
\bbE\left\| \matA^{\top} \left( w^k - y^* \right) \right\|_2^2.
\end{aligned}
\end{equation}
\end{proposition}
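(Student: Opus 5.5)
The plan is to mirror the proof of \Cref{prop:y-accelerate-prox-3}, with $\hat s$ replaced by $\hat s/N$, after first computing the conditional expectation of the dual step over the sampled index $i$. Two features of Algorithm~\ref{alg:y-accelerate-stoc} matter here. First, $u^{k+1}=\yc\tilde y^{k+1}-(\yc-1)y^k$ uses $\tilde y^{k+1}$ regardless of the coin flip that defines $y^{k+1}$, so only the randomness in $i$ enters $u^{k+1}$. Second, $\phi\equiv 0$, so no $g^{k+1}$ terms appear.

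\textbf{Step 1: expected dual step.} Write
\begin{equation*}
u^{k+1}-y^*=\bigl(\yc w^k-(\yc-1)y^k-y^*\bigr)+\yc\bigl(\tilde y^{k+1}-w^k\bigr),
\end{equation*}
and expand the square. This gives $\|\yc w^k-(\yc-1)y^k-y^*\|_2^2+\yc^2\|\tilde y^{k+1}-w^k\|_2^2$ plus the cross term $2\yc\langle \yc w^k-(\yc-1)y^k-y^*,\tilde y^{k+1}-w^k\rangle$. The first vector in the cross term is measurable before $i$ is drawn. Using $\bbE_i[\matA_i\matA_i^\top]=\frac1N\matA\matA^\top$, $\matA x^*=b$ and $\gradf{x^*}=-\matA^\top y^*$, I compute
\begin{equation*}
\bbE_i\bigl[\tilde y^{k+1}-w^k\bigr]=\frac1N\Bigl[s\matA(x^k-x^*)-\hat s\matA\bigl(\gradf{x^k}-\gradf{x^*}\bigr)-\hat s\matA\matA^\top(w^k-y^*)\Bigr].
\end{equation*}
The key cancellation is that $-\frac{\hat s}{N}\matA\matA^\top y^k$ combines with the mean of the correction $-\hat s\matA_i\matA_i^\top(w^k-y^k)$ to give $-\frac{\hat s}{N}\matA\matA^\top w^k$. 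The cross term therefore produces exactly the inner product with $s\matA(x^k-x^*)-\hat s\matA(\nabla f(x^k)-\nabla f(x^*))$ stated in \eqref{eq:y-acce-stoc_u_3}. It also produces the extra piece $-\frac{2\yc\hat s}{N}\langle \yc w^k-(\yc-1)y^k-y^*,\matA\matA^\top(w^k-y^*)\rangle$, which is the deterministic term with $\hat s\to\hat s/N$. The term $\yc^2\bbE\|\tilde y^{k+1}-w^k\|_2^2$ is left untouched.

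\textbf{Step 2: algebra from \eqref{eq:y-acce-prox_long_ineq}.} Add $(1-1/\yconv)\alpha\frac{\hat s}{N}\|\matA^\top(u^k-y^*)\|_2^2$ to both sides. Expand it via $u^k=(1+\yeta)w^k-\yeta y^k$ as in \eqref{eq:y-acce-prox_u_start_2}, and rewrite $2\langle \matA\matA^\top(w^k-y^k),w^k-y^*\rangle$ using the polarization identity \eqref{eq:y-acce-prox_u_start_3}. With $\yeta=(\yc-1)/(1-1/\yconv)$ this yields, as before, $(\yc-\alpha)(\yc-1)\frac{\hat s}{N}\|\matA^\top(y^k-y^*)\|_2^2=2(\yc-\alpha)(\yc-1)\frac{\hat s}{N}H(y^k)$. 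It also yields the coefficient $-\bigl[(\yc^2+(1-\alpha)\yc)\frac{\hat s}{N}\bigr]$ on $\|\matA^\top(w^k-y^*)\|_2^2$ (after dropping a nonpositive remainder as in the deterministic case). Finally, it yields the coefficient $-(\yc-1)(\yc-\alpha-\alpha\yeta)\frac{\hat s}{N}$ on $\|\matA^\top(w^k-y^k)\|_2^2$.

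\textbf{Step 3: sign of the $\|\matA^\top(w^k-y^k)\|_2^2$ term.} The assumption $\yc(\yc-1)\ge\alpha(1+\yeta)(\yc-1)+\beta\yc^2$ of \Cref{thm:y-accelerate-stoc} makes $(\yc-1)(\yc-\alpha-\alpha\yeta)\ge0$. I can therefore upper-bound the negative term by its absolute value, i.e.\ replace it by $+[\yc(\yc-1)-\alpha(1+\yeta)(\yc-1)]\frac{\hat s}{N}\|\matA^\top(w^k-y^k)\|_2^2$, which is the form stated. I keep this term positive on purpose: it is the budget that will later absorb the variance term $\frac{\beta}{2N\bar s_{\max}^2}\|\matA\matA^\top(w^k-y^k)-N\matA_i\matA_i^\top(w^k-y^k)\|_2^2$ from \eqref{eq:y-acce-stoc_H} when the propositions are combined.

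\textbf{Step 4: Jensen and the $s_{\min}$ bound.} Exactly as in the deterministic proof, use $\yc-1=(1-1/\yconv)\yeta$ and Jensen to get
\begin{equation*}
\|\yc w^k-(\yc-1)y^k-y^*\|_2^2\le(1-1/\yconv)\|u^k-y^*\|_2^2+\frac{1}{\yconv s_{\min}^2}\|\matA^\top(w^k-y^*)\|_2^2.
\end{equation*}
Moving the added $\alpha\frac{\hat s}{N}$ term to the right then assembles $(1-1/\yconv)\|u^k-y^*\|^2_{(I-\alpha\frac{\hat s}{N}\matA\matA^\top)}$. Taking total expectations finishes the argument.

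The main obstacle is Step 1. The cancellation that leaves $\matA\matA^\top(w^k-y^*)$, rather than a mixture involving $y^k$, holds only in expectation, so I must make sure that every vector paired with $\tilde y^{k+1}-w^k$ in the cross term is independent of $i$. The rest is bookkeeping identical to \eqref{eq:y-acce-prox_long_ineq}.
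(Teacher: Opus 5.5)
Your proof is correct and follows the same route as the paper. You expand $u^{k+1}-y^*$ around $\yc w^k-(\yc-1)y^k-y^*$ and average over $i$, so that $-\frac{\hat s}{N}\matA\matA^\top y^k$ and the correction $-\hat s\matA_i\matA_i^\top(w^k-y^k)$ combine into $-\frac{\hat s}{N}\matA\matA^\top w^k$. You then reuse the algebra of \eqref{eq:y-acce-prox_long_ineq} with $\hat s\to\hat s/N$ and finish with Jensen and $\|w^k-y^*\|_2^2\le s_{\min}^{-2}\|\matA^\top(w^k-y^*)\|_2^2$.

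Your Step 3 rationale is backwards, however. A positive term on the right-hand side cannot absorb the positive variance contribution $\frac{\beta}{2N}\bbE\|\matA^\top(w^k-y^k)\|_2^2$ from \eqref{eq:y-acce-stoc_H_exp}. What absorbs it is the sharper coefficient $-[\yc(\yc-1)-\alpha(1+\yeta)(\yc-1)]\frac{\hat s}{N}$ that you already derived in Step 2, and this minus-sign version is exactly what the proof of \Cref{thm:y-accelerate-stoc} uses. The $+$ in the displayed statement is best read as a sign slip, so keep the minus sign rather than weakening it.
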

\begin{proof}
Notice that \(u^k = (1+\yeta) w^k -\yeta y^k\) and \(u^{k+1}=(1+\yeta) w^{k+1} - \yeta y^{k+1}\). For the term \(\left\|u^{k+1}-y^*\right\|_2^2\), we have the following equality,
\begin{equation}\label{eq:y-acce-stoc_u_start}
\begin{aligned}
&\bbE\left\|{u}^{k+1}-y^*\right\|_2^2= \bbE\left\|\yc \tilde {y}^{k+1} - (\yc-1) y^{k}-y^*\right\|_2^2\\
=\ & \bbE\left\|\yc {w}^{k} - (\yc-1) y^{k}-y^*\right\|_2^2 + 2\yc\bbE\left\langle \yc {w}^{k} - (\yc-1) y^{k}-y^*,\tilde y^{k+1}-w^k\right\rangle  + \yc^2\bbE\left\|\tilde y^{k+1}-w^k\right\|_2^2.\\
=\ & \bbE\left\|\yc {w}^{k} - (\yc-1) y^{k}-y^*\right\|_2^2 +\yc^2\bbE\left\|\tilde y^{k+1}-w^k\right\|_2^2\\
& +2\frac{\yc}{N}\bbE\left\langle \yc {w}^{k} - (\yc-1) y^{k}-y^*,s\matA \left(x^{k}-x^*\right)-{\hat s} \matA \left( \gradf{x^{k}}-\gradf{x^*}\right)\right\rangle\\
&- 2\frac{\yc}{N}\bbE\left\langle \yc {w}^{k} - (\yc-1) y^{k}-y^*,{\hat s} \matA\matA^{\top} \left( {y}^k -y^*\right) +N{{\hat s}} \matA_i\matA_i^{\top} (w^k-y^k)\right\rangle\\
=\ & \bbE \left\|\yc {w}^{k} - (\yc-1) y^{k}-y^*\right\|_2^2 +\yc^2\bbE\left\|\tilde y^{k+1}-w^k\right\|_2^2\\
& +2\frac{\yc}{N}\bbE\left\langle \yc {w}^{k} - (\yc-1) y^{k}-y^*,s\matA \left(x^{k}-x^*\right)-{\hat s} \matA \left( \gradf{x^{k}}-\gradf{x^*}\right)\right\rangle\\
&- 2\frac{\yc}{N}\bbE\left\langle \yc {w}^{k} - (\yc-1) y^{k}-y^*,{\hat s} \matA\matA^{\top} \left( {w}^k -y^*\right)\right\rangle,
\end{aligned}   
\end{equation}
where the last equality comes from the independence of \(y^k,w^k\) with \(i\). Also, notice that \eqref{eq:y-acce-prox_u_start_2} and \eqref{eq:y-acce-prox_u_start_3} still hold. Combining \eqref{eq:y-acce-stoc_u_start}, \eqref{eq:y-acce-prox_u_start_2}, and \eqref{eq:y-acce-prox_u_start_3}, similar as the derivation of \eqref{eq:y-acce-prox_long_ineq}, we have the following inequality:
\begin{equation}\label{eq:y-acce-stoc_long_ineq}
\begin{aligned}
&\bbE\left\| u^{k+1} - y^* \right\|_2^2 
+ \left( 1 - \frac{1}{\yconv} \right) \alpha \frac{{\hat s}}{N} 
\bbE\left \| \matA^{\top} \left( u^k - y^* \right) \right\|_2^2 \\
\le\ 
&\bbE\left\| \yc w^k - (\yc - 1) y^k - y^* \right\|_2^2 
+ \yc^2 \bbE\left\| \tilde y^{k+1} - w^k \right\|_2^2 \\
&+2\frac{\yc}{N}\bbE\left\langle \yc {w}^{k} - (\yc-1) y^{k}-y^*,s\matA \left(x^{k}-x^*\right)-{\hat s} \matA \left( \gradf{x^{k}}-\gradf{x^*}\right)\right\rangle\\
&+ \left[\yc(\yc-1)-\alpha(1+\yeta)(\yc-1)\right]\frac{{\hat s}}{N}\bbE\left\|\matA^{\top}\left(w^k-y^k\right)\right\|_2^2\\
&+ \left( \yc - \alpha \right) (\yc - 1) \frac{{\hat s}}N 
\bbE\left\| \matA^{\top} \left( y^k - y^* \right) \right\|_2^2 - \left( \yc^2 + (1-\alpha) \yc \right) \frac{{\hat s}}{N} 
\bbE\left\| \matA^{\top} \left( w^k - y^* \right) \right\|_2^2.
\end{aligned}
\end{equation}
Since \(\yc - 1 = (1-1/\yconv)\yeta\), by Jensen's inequality
\begin{equation*}\begin{aligned}
\left\|\yc w^{k}-(\yc-1)y^k-y^*\right\|_2^2&\le \left(1-\frac1{\yconv}\right)\left\|(1+\yeta)w^{k}-\yeta y^{k}-y^*\right\|_2^2 +\frac1{\yconv} \left\|{w}^k-y^*\right\|_2^2\\
&\le \left(1-\frac1{\yconv}\right)\left\|{u}^k-y^*\right\|_2^2 +\frac1{\yconv s_{\min}^2} \left\|\matA^{\top}\left({w}^k-y^*\right)\right\|_2^2.
\end{aligned}\end{equation*}
Thus, combining all these, we conclude the proof for Proposition~\ref{prop:y-accelerate-stoc-3}.
\end{proof}

\begin{proof}[Proof of \Cref{thm:y-accelerate-stoc}]
By \Cref{prop:y-accelerate-stoc-1}, when taking the expectation on \(j\), we have
\begin{equation}\label{eq:y-acce-stoc_x_exp}
\begin{aligned}
  &\bbE\left[\left\|x^{k+1} - x^*\right\|_2^2
    - 2 \left(t - \tilde t\right)\, \D{x^{k+1}}\right]\\
  \le\ & 
    \bbE\left[
      \left\|x^{k} - x^*\right\|_2^2
      - 2 \left(t - \tilde t\right)\, \D{x^{k}}
    \right]
    \left(1 - 2\mu {\tilde t}(1-\bar Lt)/N\right)\\
  &
    - 2 \frac{\tilde t}{N}\bbE \left\langle
        x^{k} - x^* - t\left(\gradf{x^{k}} - \gradf{x^*}\right),
        \matA^\top \left(u^{k+1} - y^*\right)
      \right\rangle
    + \frac{\tilde t^2}{N} \bbE\left\| \matA^\top \left(u^{k+1} - y^*\right) \right\|_2^2,
\end{aligned}
\end{equation}
as \(u^{k+1}\) and \(x^k\) are independent with \(j\). Similarly, using \eqref{eq:y-acce-stoc_H}, we have
\begin{equation}\label{eq:y-acce-stoc_H_exp}
\begin{aligned}
&\bbE H\left(y^{k+1}\right) \\
=\ & \left(1-\frac1N\right)\bbE H\left(y^{k}\right) + \frac{1}{N} \bbE H\left(\tilde y^{k+1}\right)\\
\le\ &  \frac{1}{N} \bbE H\left(w^k\right)
- \left( \frac{1}{{\hat s}} - \frac{\bar s_{\max}^2}{2} - \frac{\bar s_{\max}^2}{2\beta}\right)
\bbE\left\| \tilde y^{k+1} - w^k \right\|_2^2 \\
\ &+ \frac{1}{N}\bbE\left\langle 
\frac{s}{{\hat s}} \matA \left( x^k - x^* \right) 
- \matA \left( \gradf{{x}^k} - \gradf{x^*} \right) 
,\ \tilde y^{k+1} - w^k\right\rangle \\
\ & +\frac{\beta}{2N^2\bar s_{\max}^2}\bbE\left\|\matA \matA^{\top}(w^k-y^k) - N\matA_i\matA_i^{\top}(w^k-y^k)\right\|_2^2 +\left(1-\frac1N\right)\bbE H\left(y^{k}\right)\\
\le\ & \frac{1}{2N}\bbE\left\|\matA^{\top}\left(w^k-y^*\right)\right\|_2^2
- \left( \frac{1}{{\hat s}} - \frac{\bar s_{\max}^2}{2} - \frac{\bar s_{\max}^2}{2\beta}\right)
\bbE\left\| \tilde y^{k+1} - w^k \right\|_2^2 \\
\ &+ \frac{1}{N}\bbE\left\langle 
\frac{s}{{\hat s}} \matA \left( x^k - x^* \right) 
- \matA \left( \gradf{{x}^k} - \gradf{x^*} \right) 
,\ \tilde y^{k+1} - w^k\right\rangle \\
\ & +\frac{\beta}{2N}\bbE\left\|\matA^{\top}(w^k-y^k)\right\|_2^2 +\left(1-\frac1N\right)\bbE H\left(y^{k}\right),
\end{aligned}
\end{equation}
where the last inequality comes from 
\(\bbE_i\left[N\matA_i\matA_i^{\top}(w^k-y^k)\right]=\matA\matA^{\top}(w^k-y^k)\), which means 
\begin{equation*}
\begin{aligned}
    &\bbE\left\|\matA \matA^{\top}(w^k-y^k) - N\matA_i\matA_i^{\top}(w^k-y^k)\right\|_2^2 \le \bbE\left\|N\matA_i\matA_i^{\top}(w^k-y^k)\right\|_2^2 \\ 
    \le\ & N^2\bar s_{\max}^2 \bbE\left\|\matA_i^{\top}(w^k-y^k)\right\|_2^2= N\bar s_{\max}^2 \bbE\left\|\matA^{\top}(w^k-y^k)\right\|_2^2.    
\end{aligned}
\end{equation*}
Combine \eqref{eq:y-acce-stoc_u_3}, \eqref{eq:y-acce-stoc_x_exp}, and \eqref{eq:y-acce-stoc_H_exp}, and by the settings of the weights \({\Xi_x}\), \({\Xi_h}\), \({\Xi_u}\), and other parameters in \eqref{eq:y-accelerate-stoc para}, we see that on the right hand side: \begin{itemize}
\item{the term \(\bbE\left\|\matA^{\top}\left(u^{k+1}-y^*\right)\right\|_2^2\) has a coefficient of \(\Xi_x \frac{\tilde t^2}{N}=\Xi_u\alpha\frac{{\hat s}}{N}\);}
\item{notice we have 
\begin{equation*}
\begin{aligned}
    &\Xi_h \cdot \frac{1}{N}\bbE\left\langle 
\frac{s}{{\hat s}} \matA \left( x^k - x^* \right) 
- \matA \left( \gradf{{x}^k} - \gradf{x^*} \right) 
,\ \tilde y^{k+1} - w^k\right\rangle\\ 
+\ &\Xi_u \cdot 2\frac{\yc}{N}\bbE\left\langle \yc {w}^{k} - (\yc-1) y^{k}-y^*,s\matA \left(x^{k}-x^*\right)-{\hat s} \matA \left( \gradf{x^{k}}-\gradf{x^*}\right)\right\rangle\\
=\ & \Xi_u \cdot 2\frac{\yc}{N} \bbE\left\langle u^{k+1}-y^*,s\matA \left(x^{k}-x^*\right)-{\hat s} \matA \left( \gradf{x^{k}}-\gradf{x^*}\right)\right\rangle,
\end{aligned}
\end{equation*}
the term \(\bbE\left\langle \matA^{\top}\left(u^{k+1}-y^*\right), x^{k}-x^*-t\left( \gradf{x^{k}}-\gradf{x^*}\right)\right\rangle\) has a coefficient of \(-{\Xi_x} 2  \frac{\tilde t}{N} + \Xi_u \cdot 2\frac{\yc s}{N}=0\);}
\item{the term \(\bbE\left\|\tilde y^{k+1}-w^k\right\|_2^2\) has a coefficient of \(-{\Xi_h} \left( \frac{1}{{\hat s}} - \frac{\bar s_{\max}^2}{2} - \frac{\bar s_{\max}^2}{2\beta}\right) + {\Xi_u} \yc^2 = -\frac{1}{2{\hat s}} + \frac{\bar s_{\max}^2}{2} + \frac{\bar s_{\max}^2}{2\beta}=0\);}
\item{
the term \(\bbE\left\|\matA^{\top}(w^k-y^k)\right\|_2^2\) has a coefficient \(\Xi_h \frac{\beta}{2N} - \Xi_u\left[\yc(\yc-1)-\alpha(1+\yeta)(\yc-1)\right]\frac{{\hat s}}{N} = -\Xi_u\left[\yc(\yc-1)-\alpha(1+\yeta)(\yc-1)-\beta \yc^2\right]\frac{{\hat s}}{N} \le 0\), when \(\yc(\yc-1)\ge\alpha(1+\yeta)(\yc-1)+ \beta \yc^2\);
}
\item{
the term \(\bbE H\left(y^k\right)\) has a coefficient \(\Xi_h \left(1-\frac1N\right)+ \Xi_u 2\left( \yc - \alpha \right) (\yc - 1) \frac{{\hat s}}N = 1 - \frac1N + \frac{1}{N}\frac{(\yc-\alpha)(\yc-1)}{\yc^2} \le 1-\frac{1}{N\yc} \le 1-\frac{1}{\yconv}\);
}
\item{
the term \(\bbE\left\|\matA^{\top}\left({w}^k-y^*\right)\right\|_2^2\) has a coefficient
\begin{equation*}\begin{aligned}
&-{\Xi_u}\left[\left(\yc^2+(1-\alpha)\yc\right)\frac{\hat s}{N}-\frac1{\yconv s_{\min}^2}\right]+{\Xi_h}\frac1{2N} \\
\le \ &-{\Xi_u}\left((1-\alpha)\yc\frac1{N\bar s_{\max}^2(1+1/\beta)}-\frac1{\yconv s_{\min}^2}\right)\\
\le\ &0,
\end{aligned}\end{equation*}
}
\end{itemize}
which concludes the proof.
\end{proof}

\section{Extension to the \texorpdfstring{$x$-side}{x-side} Stochastic Block-Coordinate Algorithm}
\label{appensubsec:x-stoc-nonsep}
In this section, we consider the extension of Algorithm~\ref{alg:x-accelerate-stoc} on the nonseparable objective:
\begin{equation}\label{eq:upper bound prob sto nonseparable}
\min_{x_i\in\bbR^{m_i}, i=1,\dots,N}\max_{y\in\bbR^n}\quad F(x,y)=f(x_1,\dots,x_N)+y^{\top} \left(\sum_{i=1}^N\matA_i x_i-b\right)= f(x)+y^{\top}\left(\matA x-b\right).
\end{equation}
We consider the similar assumptions in \Cref{assum:Assumption Linear Stoc} for \eqref{eq:upper bound prob sto nonseparable}, except the second item for the separable objective is replaced with the corresponding one for \(f(x)\) in \Cref{assum:Assumption Linear New} 
\begin{assumption}\label{assum:Assumption Linear Stoc nonseparable}
Suppose problem \eqref{eq:upper bound prob sto nonseparable} satisfies the following properties:
\begin{enumerate}
\item For $i=1,\ldots,N$, $x_i \in \bbR^{m_i}$, and $y \in \bbR^n$. Let $\matA \in \bbR^{n\times m}$ be the coupling matrix with full row rank and minimal singular value $s_{\min} > 0$. For each block matrix $\matA_i$, its maximal singular value is no larger than $\bar s_{\max}$.
\item $f:\bbR^{m}\to\bbR$ is globally $\mu$-strongly convex for some $\mu > 0$, and globally $L$-smooth for some $L \ge \mu>0$.
\end{enumerate}
\end{assumption}

\begin{algorithm}[t]
\caption{$x$-side stochastic block-coordinate accelerated primal--dual algorithm (nonseparable primal objective \(f\))}
\label{alg:x-accelerate-stoc-nonseparable}
\begin{algorithmic}[1]
\REQUIRE Parameters $t, s, \hat s, \xh, \omega, \xtau, \xga > 0, \xe > 1.$
\STATE Initialize $x^{0}, z^{0}, v^{0} \in \bbR^{m}$, $y^{0} \in \bbR^{n}$
\REPEAT
  \STATE $\hat x^{k} = \xe z^{k} - (\xe-1)x^{k}$
  \STATE Randomly sample $i \sim U(\{1,\ldots,N\})$
  \STATE $y^{k+1} = y^{k} + \frac{\xh s}{N}(\matA \hat x^{k} - b)- \hat s \matA_i(\matA_i^{\top} y^{k} + \nabla_i f(z^{k}))$
  \STATE Randomly sample $j \sim U(\{1,\ldots,N\})$
  \STATE $x_j^{k+1} = z_j^{k} - t(\nabla_j f(z^{k}) + \matA_j^{\top} y^{k+1})$
  \STATE $x_l^{k+1} = z_l^{k}$ for $l \in \{1,\ldots,N\}\backslash j$
  \STATE $z^{k+1} = (1+\xga)\left(z^{k} + \omega(x^{k+1}-z^{k})\right) - \xga x^{k}$
  \STATE $v^{k+1} = (1+\xtau)z^{k+1} - \xtau x^{k+1}$
\UNTIL{convergence}
\end{algorithmic}
\end{algorithm}

\begin{theorem}\label{thm:x-accelerate-stoc-2}
Suppose problem \eqref{eq:upper bound prob sto nonseparable} satisfies \Cref{assum:Assumption Linear Stoc nonseparable}. Consider applying Algorithm~\ref{alg:x-accelerate-stoc-nonseparable} to solve \eqref{eq:upper bound prob sto nonseparable}. Let \(\xgt, \beta, t,{\hat s}>0\) satisfy
\begin{equation}\label{eq:x-accelerate-stoc-nonseparable para-0}
t = \frac{1 - 4\xgt-2(1+\frac{1}{\beta}){\hat s} \bar s_{\max}^2}{{L} + 4{L}\frac{\xgt}{N}}>0,\quad 1 - {\hat s} \bar s_{\max}^2 - 2(1+\beta){\hat s}^2 \bar s_{\max}^4\ge 0,
\end{equation}
and the remaining parameters be
\begin{equation}\label{eq:x-accelerate-stoc-nonseparable para-1}
\begin{aligned}
s &= \frac{\hat s}{t},
& \quad \Xi_f &= 1, \\
\xconv &\ge N\max\left(\frac{1}{s_{\min}^2 {\hat s}} \cdot \frac{1}{2\xgt},\ \sqrt{\frac{1}{\mu t}}+\frac{L}{\mu}\cdot 4\xgt \right),
& \quad \xe &= \frac{1 + 4L\frac{\xgt}{N} t}{1/\xconv + 4 L\frac{\xgt}{N} t}, \\
\xtau &= \frac{\xe - 1}{1 - 1/\xconv},
& \quad \xga &= \frac{\xe - 1}{\xtau + 1}, \\
\Xi_y &= \frac{1}{2s} = \frac{t}{2\hat s},
& \quad \Xi_v &= N^2\frac{1 + 4 L\frac{\xgt}{N} t}{2\xe^2 t}, \\
\xh & = \frac{\Xi_v \xe t}{N\Xi_y s}, & \quad \omega & = \frac{\xtau + \frac{\xe}{N}}{(1+\xga)(1+\xtau)}. 
\end{aligned}
\end{equation}
Then, let \begin{equation*}
\Psi^k = {\Xi_y}\left\|y^k-y^*\right\|^2_{\left(I-(1-2\xgt)\frac{\hat s}{N} \matA \matA^{\top}\right)}+\D{x^k}+{\Xi_v}\left\|{v}^k-x^*\right\|_2^2,
\end{equation*} 
we have \begin{equation*}\bbE\Psi^{k+1}\le (1-1/\xconv)\bbE\Psi^k.\end{equation*}
\end{theorem}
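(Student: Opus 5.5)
I will follow the template of the proof of \Cref{thm:x-accelerate-stoc}: derive three one-step estimates, one for each component of \(\Psi^{k+1}\), take expectations over the independent indices \(i\) and \(j\), and check that the parameter choices in \eqref{eq:x-accelerate-stoc-nonseparable para-0}--\eqref{eq:x-accelerate-stoc-nonseparable para-1} make every cross term cancel. The new ingredient is the coordinate-descent structure of \(f\). Instead of per-block smoothness, I will use \(L\)-smoothness of \(f\) along a single block. Since \(x^{k+1}=z^k-tU_j(\nabla_j f(z^k)+\matA_j^\top y^{k+1})\), the descent lemma gives
\begin{equation*}
\D{x^{k+1}}\le \D{z^k}-t\left\langle \nabla_j f(z^k)-\nabla_j f(x^*),\, g_j\right\rangle+\tfrac{L}{2}t^2\|g_j\|_2^2,
\end{equation*}
where \(g_j=\nabla_j f(z^k)+\matA_j^\top y^{k+1}\). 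Taking \(\bbE_j\) turns the inner product and the squared norm into \(\frac{1}{N}\) times their full-vector versions. I then repeat the Young step of \Cref{prop:x-accelerate-prox-1}, bound \(\|\nabla f(z^k)-\nabla f(x^*)\|^2\le 2L\D{z^k}\), and get the factor \((1+4L\frac{\xgt}{N}t)\) on \(\D{z^k}\). This matches the denominator of \(t\) and the formula for \(\xe\).

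For the \(y\)-term I will reuse \Cref{prop:x-accelerate-stoc-3} almost verbatim, with \(\nabla_l f(z^k)\) in place of \(\gradfp{l}{z^k}\). That proof used only \(\bbE_i\), the block structure of \(\matA\), and Young's inequality with parameter \(\beta\), and never used separability. The resulting variance term \(2(1+\frac1\beta)\frac{\hat s^2}{N}\sum_l\|\matA_l(\matA_l^\top y^{k+1}+\nabla_l f(z^k))\|^2\) is bounded by \(2(1+\frac1\beta)\hat s\,\bar s_{\max}^2\cdot\frac{\hat s}{N}\|\nabla f(z^k)+\matA^\top y^{k+1}\|^2\). It is absorbed exactly as before, which accounts for the \(-2(1+\frac1\beta)\hat s\bar s_{\max}^2\) in the numerator of \(t\). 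The \(\|y^{k+1}-y^k\|^2\) terms are killed by the second condition in \eqref{eq:x-accelerate-stoc-nonseparable para-0}.

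The main obstacle is the \(v\)-term. Only block \(j\) of \(x\) moves by the gradient step, but \(z^{k+1}\) and \(v^{k+1}\) are updated as full vectors through the relaxation \(\omega\). The plan is to check algebraically that \(\omega=\frac{\xtau+\xe/N}{(1+\xga)(1+\xtau)}\) together with \(\xe=1+\xga+\xga\xtau\) gives
\begin{equation*}
v^{k+1}=\xe z^k-(\xe-1)x^k-\tfrac{\xe}{N}\cdot N\,tU_j g_j,
\end{equation*}
that is, \(v\) takes an accelerated-coordinate-descent step of size \(\xe t\) scaled by \(N\) along block \(j\), in the spirit of \citet{allen2016even}. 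Expanding \(\|v^{k+1}-x^*\|^2\) and taking \(\bbE_j\), the linear term becomes \(-2\xe t\langle \nabla f(z^k)+\matA^\top y^{k+1},\,\xe z^k-(\xe-1)x^k-x^*\rangle\) and the quadratic term becomes \(\xe^2t^2N\|\nabla f(z^k)+\matA^\top y^{k+1}\|^2\). This is why \(\Xi_v\) carries \(N^2\) and \(\xh\) carries \(1/N\). I then apply \eqref{eq:x-acce-prox-2-mu-strong-convexity} and the Jensen step with \(\xe-1=(1-1/\xconv)\xtau\), which requires checking \(\xconv\xe\mu t\ge N\) from the lower bound on \(\xconv\), as in \Cref{prop:x-accelerate-stoc-2}.

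The final step is the coefficient bookkeeping, which I expect to be the delicate part. The coefficients of \(\D{z^k}\), of \(\langle\matA^\top(y^{k+1}-y^*),\hat x^k-x^*\rangle\), and of \(\langle \matA^\top(y^{k+1}-y^*),\nabla f(z^k)+\matA^\top y^{k+1}\rangle\) should vanish. The gradient-residual coefficient should be nonpositive by the choice of \(t\), and the \(\D{x^k}\) coefficient should equal \(1-1/\xconv\). The scalings between \(N\), \(\Xi_v\) and \(\xe\) must be matched carefully here, since the \(\D{x}\) term is no longer averaged with weight \(1-\frac1N\). Finally, the leftover \(-\xgt\frac tN\|\matA^\top(y^{k+1}-y^*)\|^2\) is moved to the left side and combined with \(s_{\min}^2\) and \(\xconv\ge \frac{N}{2\xgt s_{\min}^2\hat s}\), exactly as in the proof of \Cref{thm:x-accelerate-stoc}.
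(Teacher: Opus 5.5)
Your architecture matches the paper's: a block descent lemma giving the factor $1+4L\xgt t/N$ on $\D{z^k}$, reuse of \Cref{prop:x-accelerate-stoc-3} with $\nabla_l f$, absorption of the variance term through the numerator of $t$, and the final $s_{\min}$ step. However, the identity you flag as the main obstacle is wrong, and with your coefficients the stated parameters do not close the argument. Since $x_l^{k+1}=z_l^k$ for $l\neq j$, set $d^k:=x^{k+1}-z^k=-tU_jg_j$. Then
\begin{equation*}
v^{k+1}=\bigl[(1+\xtau)(1+\xga)-\xtau\bigr]z^k-(1+\xtau)\xga\,x^k+\bigl[(1+\xtau)(1+\xga)\omega-\xtau\bigr]d^k=\xe z^k-(\xe-1)x^k+\tfrac{\xe}{N}d^k ,
\end{equation*}
so $v$ moves by $\frac{\xe}{N}t$ along block $j$. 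Your display, read literally, gives a step $\xe t$, and your expectations correspond to a step $N\xe t$; neither is right. After $\bbE_j$, the linear terms carry $\frac{\xe t}{N^2}$ and the quadratic term carries $\frac{\xe^2t^2}{N^3}$, which is exactly \eqref{eq:x-acce-stoc-2_v}.

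Now plug your coefficients $-2\xe t$ and $\xe^2t^2N$ into the given $\Xi_v=N^2\frac{1+4L\xgt t/N}{2\xe^2t}$. The $\D{x^k}$ coefficient becomes $\Xi_v\cdot 2\xe(\xe-1)t=N^2(1-1/\xconv)$. The $\langle\matA^\top(y^{k+1}-y^*),\hat x^k-x^*\rangle$ terms become $-2\Xi_v\xe t$ against $+2\Xi_v\xe t/N^2$ from the $y$-estimate. The residual term is of order $N^3t$ against $-t/N$. So there is no contraction.

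Your explanation of the scalings is therefore backwards: $\Xi_v$ carries $N^2$ precisely because the $v$-step is small. With the correct identity, the bookkeeping closes:
\begin{itemize}
\item $\Xi_v\cdot\frac{2\xe^2t}{N^2}=1+\frac{4L\xgt t}{N}$ cancels $\D{z^k}$;
\item $\Xi_v\cdot\frac{2\xe(\xe-1)t}{N^2}=1-\frac1{\xconv}$;
\item $\xh=\frac{\Xi_v\xe t}{N\Xi_y s}$ gives $\Xi_y\cdot\frac{2\xh s}{N}=\frac{2\Xi_v\xe t}{N^2}$, cancelling the cross term;
\item $\Xi_v\frac{\xe^2t^2}{N^3}=\frac{(1+4L\xgt t/N)\,t}{2N}$ combines with the other residual terms to exactly zero by the choice of $t$.
\end{itemize}

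The Jensen step also needs $\xconv\xe\mu t\ge N^2$, not $N$. The whole vector $v$ is updated, so there is no $(1-\frac1N)\|v^k-x^*\|_2^2$ carry-over, and strong convexity only supplies $-\frac{\xe\mu t}{N^2}\|z^k-x^*\|_2^2$. The condition holds because $\xe=\frac{N+4L\xgt t}{N/\xconv+4L\xgt t}$ and $\xconv\ge N\bigl(\sqrt{1/(\mu t)}+4L\xgt/\mu\bigr)$, via the same quadratic-root computation as in \Cref{prop:x-accelerate-stoc-2}. With these corrections your plan coincides with the paper's proof.
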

\begin{proof}
It suffices to prove the following two inequalities:
\begin{equation}\label{eq:x-acce-stoc-2_df}
\begin{aligned}
\bbE\D{x^{k+1}} 
&\le (1+4L\frac{\xgt}{N} t)\bbE\D{z^k}- \frac{t}{N}\left(1 - \frac{L}{2} t - 2\xgt \right) 
  \bbE \left\| \gradf{z^k} + \matA^{\top} y^{k+1} \right\|_2^2 \\
&\quad - \frac{\xgt}{N} t \bbE \left\| \matA^{\top} \left(y^{k+1} - y^*\right) \right\|_2^2 
+ \frac{t}{N} \bbE \left\langle \matA^{\top} \left(y^{k+1} - y^*\right),\, 
  \gradf{z^k} + \matA^{\top} y^{k+1} \right\rangle,
\end{aligned}    
\end{equation}
and
\begin{equation}\label{eq:x-acce-stoc-2_v}
\begin{aligned}
\bbE\left\|v^{k+1} - x^*\right\|_2^2
&\le \left(1 - \frac{1}{\xconv}\right) \bbE\left\|v^k - x^*\right\|_2^2
- 2\frac{\xe^2}{N} \frac{t}{N}\, \bbE\D{z^k}
+ 2\frac{\xe(\xe - 1)}{N}\frac{t}{N}\, \bbE\D{x^k} \\
&\quad - 2\frac{\xe}{N} \frac{t}{N} \bbE\left\langle \matA^{\top} \left(y^{k+1} - y^*\right),\,
\hat{x}^k - x^* \right\rangle + \frac{\xe^2}{N^2} \frac{t^2}{N} \bbE\left\| \nabla f(z^k) + \matA^{\top} y^{k+1} \right\|_2^2.
\end{aligned}    
\end{equation}
Once \eqref{eq:x-acce-stoc-2_df} and \eqref{eq:x-acce-stoc-2_v} are established, the remainder of the proof follows the same Lyapunov-combination argument as in the proof of Theorem~\ref{thm:x-accelerate-stoc}: we similarly apply \Cref{prop:x-accelerate-stoc-3} (notice \Cref{prop:x-accelerate-stoc-3} still holds with \(\gradfp{l}{z^k}\) replaced by \(\nabla_{l}f\left(z^k\right)\)), then combine the components
\(\left\|y^k-y^*\right\|^2_{\left(I-(1-2\xgt)\frac{\hat s}{N}\matA \matA^{\top}\right)}\), \(\D{x^k}\), and \(\left\|v^k-x^*\right\|_2^2\),
and verify that the cross terms cancel to obtain \(\bbE\Psi^{k+1}\le (1-1/\xconv)\bbE\Psi^k\).

Similar to the proof of \Cref{prop:x-accelerate-stoc-1}, we have 
\begin{equation*}
\begin{aligned}
\D{x^{k+1}} 
&\le \D{z^k}+2\xgt t\left\|\nabla_j f\left(z^k\right)-\nabla_j f\left(x^*\right)\right\|_2^2 \\
&\quad- t \left(1 - \frac{L}{2} t - 2\xgt \right) 
  \left\| \nabla_j f\left(z^k\right) + \matA_j^{\top} y^{k+1} \right\|_2^2 \\
&\quad - \xgt t \left\| \matA_j^{\top} \left(y^{k+1} - y^*\right) \right\|_2^2 
+ t \left\langle \matA_j^{\top} \left(y^{k+1} - y^*\right),\, 
  \nabla_j f\left(z^k\right) + \matA_j^{\top} y^{k+1} \right\rangle.
\end{aligned}
\end{equation*}
By taking the expectation on \(j\), and applying \(\left\|\gradf{z^k}-\gradf{x^*}\right\|_2^2\le 2L \D{z^k}\), we get \eqref{eq:x-acce-stoc-2_df}.

Similar to \eqref{eq:x-acce-stoc-2-init}, we have:
\begin{equation*}
\begin{aligned}
&\left\|v^{k+1} - x^*\right\|_2^2 
= \left\| \xe z^{k} - (\xe - 1) x^k - x^* +\frac{\xe}{N}\left(x^{k+1}-z^k\right)\right\|_2^2 \\
=\  &\left\| \xe z^k - (\xe - 1) x^k - x^* \right\|_2^2 
- 2 \frac{\xe}{N} t \left\langle 
  \nabla_j f\left(z^k\right) - \nabla_j f\left(x^*\right),\, 
  \xe z_j^k - (\xe - 1) x_j^k - x_j^* 
\right\rangle \\
&\quad - 2 \frac{\xe}{N} t \left\langle 
  \matA_j^{\top} \left( y^{k+1} - y^* \right),\, 
  \hat{x}_j^k - x_j^* 
\right\rangle 
+ \frac{\xe^2}{N^2} t^2 \left\| \nabla_j f\left(z^k\right) + \matA_j^{\top} y^{k+1} \right\|_2^2,
\end{aligned}
\end{equation*}
where the first equation comes from the setting of \(\omega\) in \eqref{eq:x-accelerate-stoc-nonseparable para-1}.
By taking the expectation on \(j\), we get
\begin{equation*}
\begin{aligned}
\bbE\left\|v^{k+1} - x^*\right\|_2^2 
=\ &\bbE\left\| \xe z^k - (\xe - 1) x^k - x^* \right\|_2^2 
- 2 \frac{\xe}{N} \frac{t}{N} \bbE\left\langle 
  \matA^{\top} \left( y^{k+1} - y^* \right),\, 
  \hat{x}^k - x^* 
\right\rangle\\
&\quad - 2 \frac{\xe}{N} \frac{t}{N} \bbE\left\langle 
  \gradf{z^k} - \gradf{x^*},\, 
  \xe z^k - (\xe - 1) x^k - x^* 
\right\rangle \\
&\quad + \frac{\xe^2}{N^2} \frac{t^2}{N} \bbE\left\| \gradf{z^k}
+ \matA^{\top} y^{k+1} \right\|_2^2.
\end{aligned}
\end{equation*}
And the remainder proof to get \eqref{eq:x-acce-stoc-2_v} is similar to Proposition~\ref{prop:x-accelerate-stoc-2}.
\end{proof}

\section{Additional Experimental Details and Results}
\label{appensec:supp numerical}
\paragraph{Hardware and runtime.} All experiments were conducted on a MacBook Pro (Model Identifier: Mac16,6) running macOS 15.5, equipped with an Apple M4 Max chip (16 cores: 12 performance and 4 efficiency) and 64 GB of memory. We did not use GPU acceleration.

Wall-clock runtimes were measured as the elapsed time to execute the corresponding Jupyter notebooks end-to-end (including computation and plotting).

Generating the left and right panels of Figure~\ref{Fig_cst_main} takes 15.3 s and 16.0 s, respectively.

For Figure~\ref{Fig_cst_stoc}, the left and right panels take 1,895.5 s and 1,933.9 s, respectively.

For Figure~\ref{Fig_cst_stoc_appen}, the left and right panels take 2,151.2 s and 1,278.7 s, respectively. 

For Figure~\ref{Fig_NSE}, the left and right panels take 87.8 s and 484.8 s, respectively.

Generating the left and right panels of Figure~\ref{Fig_cst_appen} takes 117.9 s and 116.6 s, respectively.

For Figure~\ref{Fig_cst_metric}, the left and right panels take 13.0 s and 13.5 s, respectively.

Tables~\ref{tab:log-ci-errors-cst1} and \ref{tab:log-ci-errors-cst2} take 142.8 s and 144.3 s, respectively.

Tables~\ref{tab:log-ci-errors-nse1} and \ref{tab:log-ci-errors-nse2} take 855.2 s and 4,453.8 s, respectively.

Finally, generating the left and right panels of Figure~\ref{fig_qp} takes 15.0 s and 13.7 s, respectively.

Overall, generating all results referenced above takes 13,748.9 s (about 3.8 hours).

For the stochastic convergence plots (Figures~\ref{Fig_cst_stoc} and \ref{Fig_cst_stoc_appen}), we log the error at a fixed BMM stride of 40,000, which corresponds to logging every 100 iterations for \texttt{PAPC}/\texttt{x-DAPD}/\texttt{y-DAPD}, every 10,000 iterations for \texttt{x-SBC-DAPD} (\texttt{x-SBC-DAPD-i}), and every 6,666 iterations for \texttt{y-SBC-DAPD} (\texttt{y-SBC-DAPD-i}), yielding 2000 plotted points per curve. \texttt{CAPD} logs every outer iteration; due to its Chebyshev inner loop this yields 2001 points on the left panel and 201 points on the right panel.

For Figure~\ref{Fig_cst_stoc}, we match all methods by the total number of block matrix multiplications (BMMs), which is the dominant arithmetic cost in our complexity model. However, wall-clock runtime is not perfectly proportional to BMM counts in our Python/Jupyter implementation: the stochastic variants expend the same BMM budget via many more (lighter) iterations, incurring additional interpreter and function-call overhead (as well as bookkeeping and memory-management costs). Consequently, the stochastic methods can take substantially longer in wall-clock time than the deterministic methods despite having matched BMM complexity. This effect is implementation-dependent and should diminish with a more optimized implementation.

\subsection{Supplementary Discussion and Results to Section~\ref{subsec:cst}}
\label{appensec:supp cst}
Note that the complexity of \texttt{CAPD} is \(O\left(\frac{s_{\max}}{s_{\min}}\sqrt{\frac{L}{\mu}}\log(\frac{1}{\epsilon})\right)\) in terms of the inner loop iterations (or the number of matrix multiplications, i.e. \(\matA\) times \(x\) or \(\matA^{\top}\) times y), and \(O\left(\sqrt{\frac{L}{\mu}}\log(\frac{1}{\epsilon})\right)\) in terms of the outer loop iterations (or the number of gradient computations). Since for this problem, the matrix multiplications take many more calculations compared with the elementwise gradient computations, we denote the number of iterations to be the inner loop iterations of \texttt{CAPD}. It is fair to compare the number of iterations since all these algorithms require two matrix multiplications per iteration (and 1 gradient computation per iteration except \texttt{CAPD}) as shown in Remark~\ref{remark:num M_MT_grad}.

\Cref{Fig_cst_appen} shows the same simulations as \Cref{Fig_cst_main}, extended over a larger number of iterations.
\begin{figure}[t]
\begin{minipage}{0.5\linewidth}
\centering
\includegraphics[width=1\linewidth]{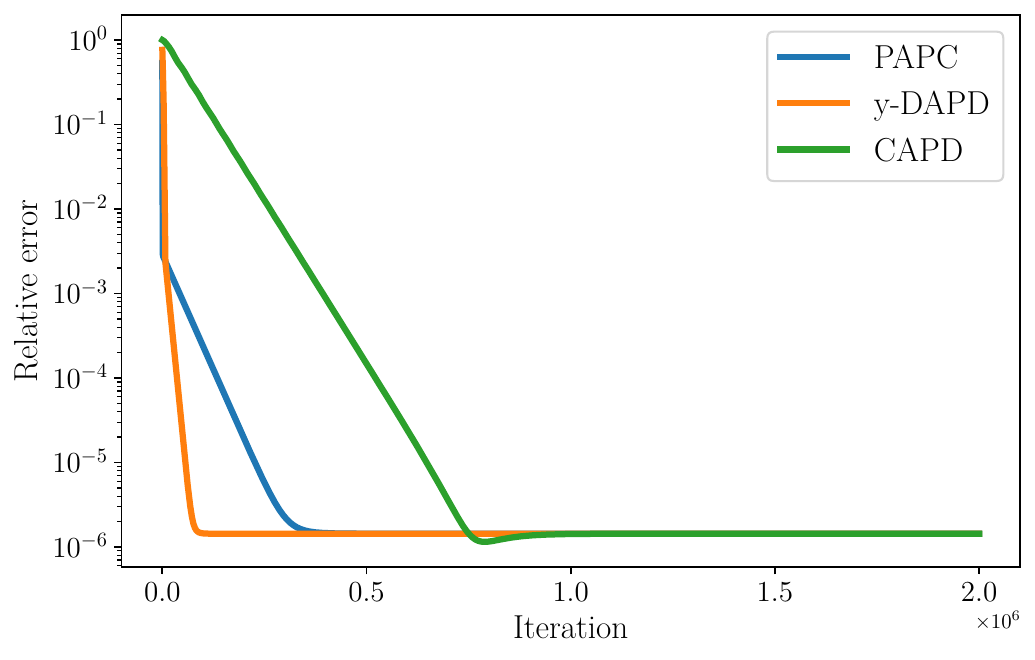}
\end{minipage}%
\begin{minipage}{0.5\linewidth}
\centering
\includegraphics[width=1\linewidth]{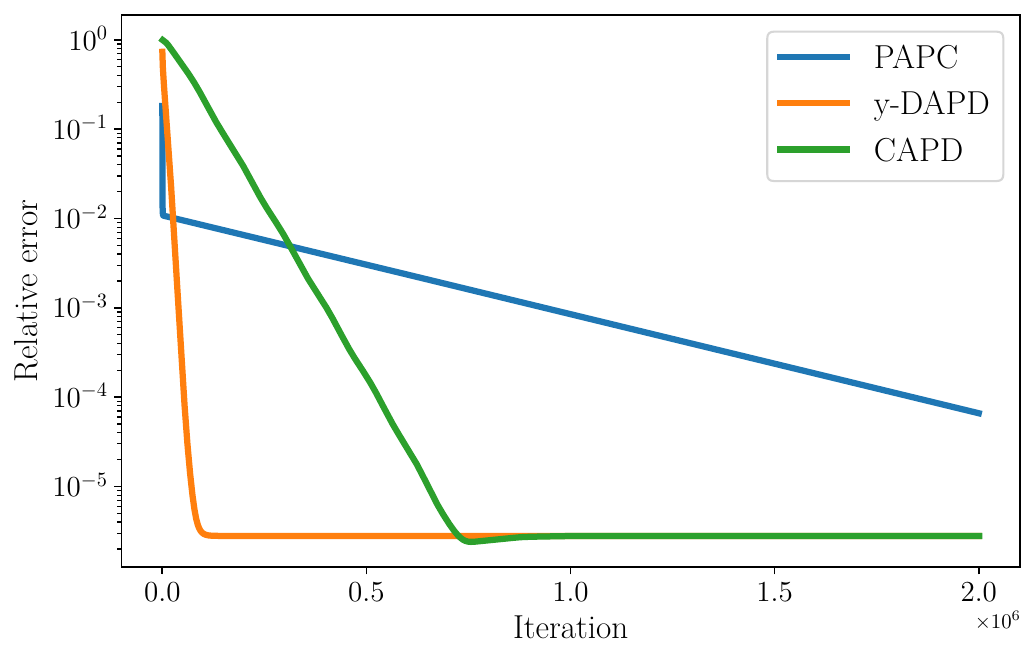}
\end{minipage}
\caption{Results for the Compressed-sensing-type experiment with longer iterations. Left: \(\frac{s_{\max}^2}{s_{\min}^2} = 10^5\), \(\frac{L}{\mu}=10^4\); right: \(\frac{s_{\max}^2}{s_{\min}^2} = 10^6\), \(\frac{L}{\mu}=10^3\).}
\label{Fig_cst_appen}
\end{figure}

We also conducted 20 independent simulations with randomly generated problem instances and algorithm executions with the same settings as Figure~\ref{Fig_cst_main}. The 95\% confidence intervals of the relative errors after \(10^5\) iterations are reported in \Cref{tab:log-ci-errors-cst1,tab:log-ci-errors-cst2}. Since the relative errors vary across several orders of magnitude and are inherently multiplicative, we apply a base-10 logarithmic transformation to stabilize the variance and better capture the distribution. Confidence intervals are computed on the log-transformed errors and exponentiated for presentation.

\begin{table}[tb]
  \caption{95\% confidence intervals of the relative error (log-transformed and exponentiated), \(\frac{s_{\max}^2}{s_{\min}^2} = 10^5\), \(\frac{L}{\mu}=10^4\).}
  \label{tab:log-ci-errors-cst1}
  \centering
  \begin{tabular}{lc}
    \toprule
    Method & 95\% Confidence Interval \\
    \midrule
    \texttt{PAPC} & [2.299e-04, 5.625e-04] \\
    \texttt{CAPD} & [1.808e-01, 1.854e-01] \\
    \texttt{y-DAPD} & [2.005e-07, 5.786e-07] \\
    \bottomrule
  \end{tabular}
\end{table}

\begin{table}[tb]
  \caption{95\% confidence intervals of the relative error (log-transformed and exponentiated), \(\frac{s_{\max}^2}{s_{\min}^2} = 10^6\), \(\frac{L}{\mu}=10^3\).}
  \label{tab:log-ci-errors-cst2}
  \centering
  \begin{tabular}{lc}
    \toprule
    Method & 95\% Confidence Interval \\
    \midrule
    \texttt{PAPC} & [9.188e-03, 2.318e-02] \\
    \texttt{CAPD} & [2.049e-01, 2.131e-01] \\
    \texttt{y-DAPD} & [7.371e-07, 2.769e-06] \\
    \bottomrule
  \end{tabular}
\end{table}

One may wonder why some methods in our convergence plots (e.g., Figure~\ref{Fig_cst_main}) eventually enter a plateau where the residual stops decreasing, even though our theory guarantees linear convergence. This behavior is due to how the residual is evaluated in our numerical experiments. For most synthetic instances, the true optimal solution is not available in closed form, so we use a Gurobi solution as the reference point. Consequently, the residual is only as accurate as this reference solution. Once an algorithm reaches the accuracy level of the reference—especially for faster methods—the measured residual can no longer decrease and appears to plateau.

To confirm that the iterates continue to converge, we also report KKT-based metrics
\(\max(\|\nabla f(x)+\matA^{\top}y\|_2,\ \|\matA x-b\|_2)\).
\cref{Fig_cst_metric} plots these metrics versus iterations for the runs in \cref{Fig_cst_main}. We observe that the metrics keep decreasing linearly to below \(10^{-11}\) even after the residual to the reference solution has plateaued. In contrast, the corresponding KKT metrics of the Gurobi reference solutions are \(\left\|\nabla f\left(x^{\mathrm{ref}}\right)+\matA^{\top}y^{\mathrm{ref}}\right\|_2=4.2\times 10^{-4}\) and \(\left\|\matA x^{\mathrm{ref}}-b\right\|_2=1.7\times 10^{-11}\) for the left panel, and \(\left\|\nabla f\left(x^{\mathrm{ref}}\right)+\matA^{\top}y^{\mathrm{ref}}\right\|_2=8.0\times 10^{-5}\) and \(\left\|\matA x^{\mathrm{ref}}-b\right\|_2=2.4\times 10^{-10}\) for the right panel. (We omit \texttt{CAPD} in this plot because it does not explicitly output the dual variable \(y\) during the iterations.)

\begin{figure}[t]
\begin{minipage}{0.5\linewidth}
\centering
\includegraphics[width=\linewidth]{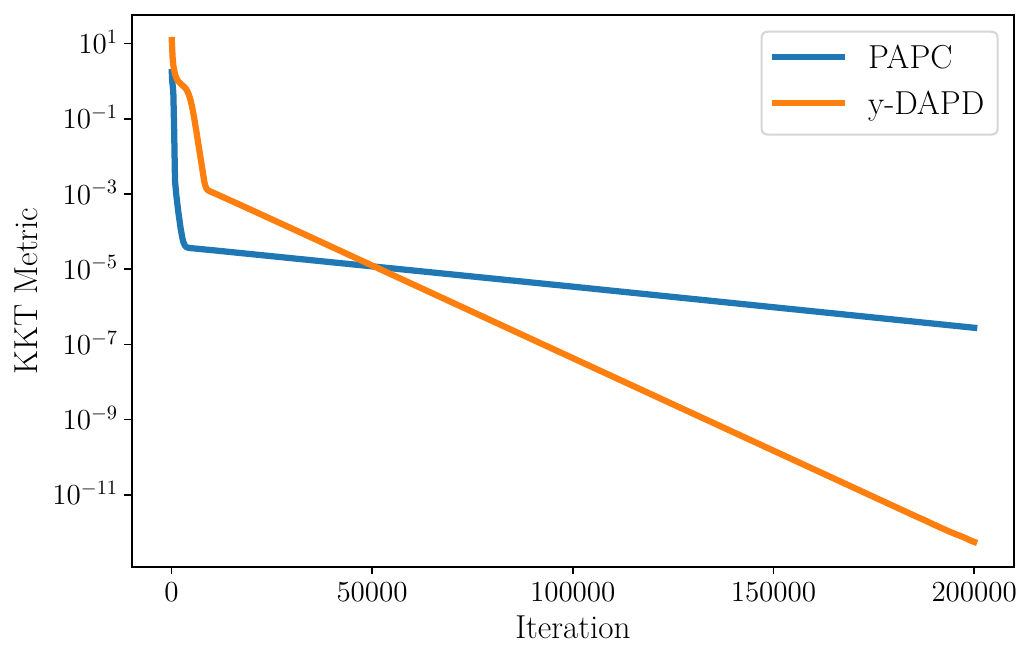}
\end{minipage}%
\begin{minipage}{0.5\linewidth}
\centering
\includegraphics[width=\linewidth]{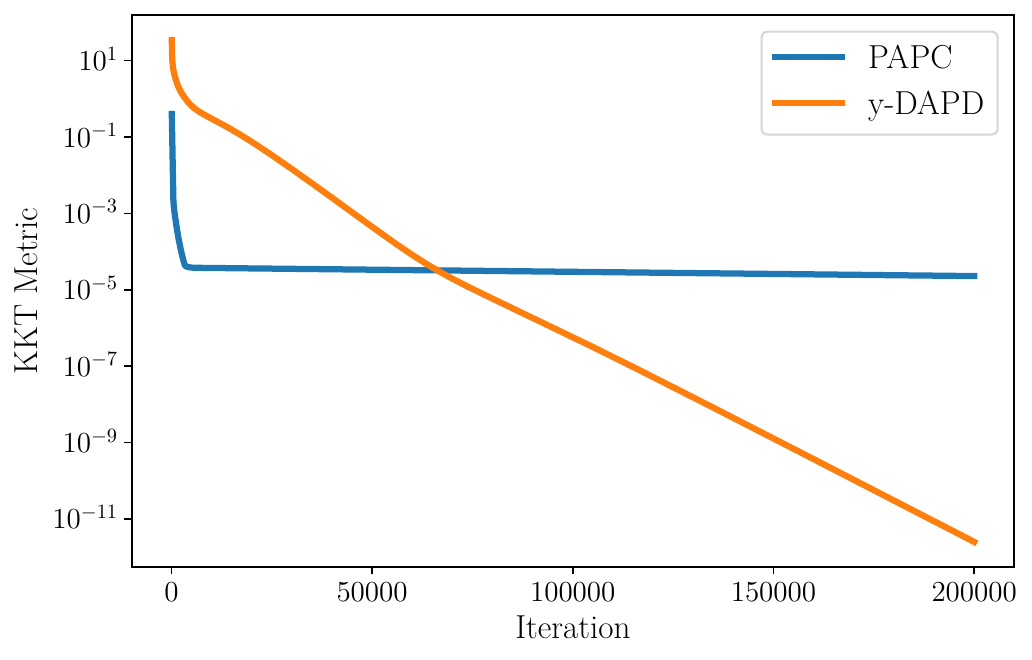}
\end{minipage}
\caption{Convergences of KKT metrics for the compressed-sensing-type experiment, Left: \(\frac{s_{\max}^2}{s_{\min}^2} = 10^5\), \(\frac{L}{\mu}=10^4\); right: \(\frac{s_{\max}^2}{s_{\min}^2} = 10^6\), \(\frac{L}{\mu}=10^3\).}
\label{Fig_cst_metric}
\end{figure}

\Cref{Fig_cst_stoc_appen} shows the same simulations as \Cref{Fig_cst_stoc}, with additional comparisons of Algorithms~\ref{alg:x-accelerate-stoc} and~\ref{alg:y-accelerate-stoc} corresponding to the case \(j=i\). We observe that the performance difference between using two independent samples \((i,j)\) and a single shared sample \((i=j)\) is minor.
\begin{figure}[t]
\begin{minipage}{0.5\linewidth}
\centering
\includegraphics[width=1\linewidth]{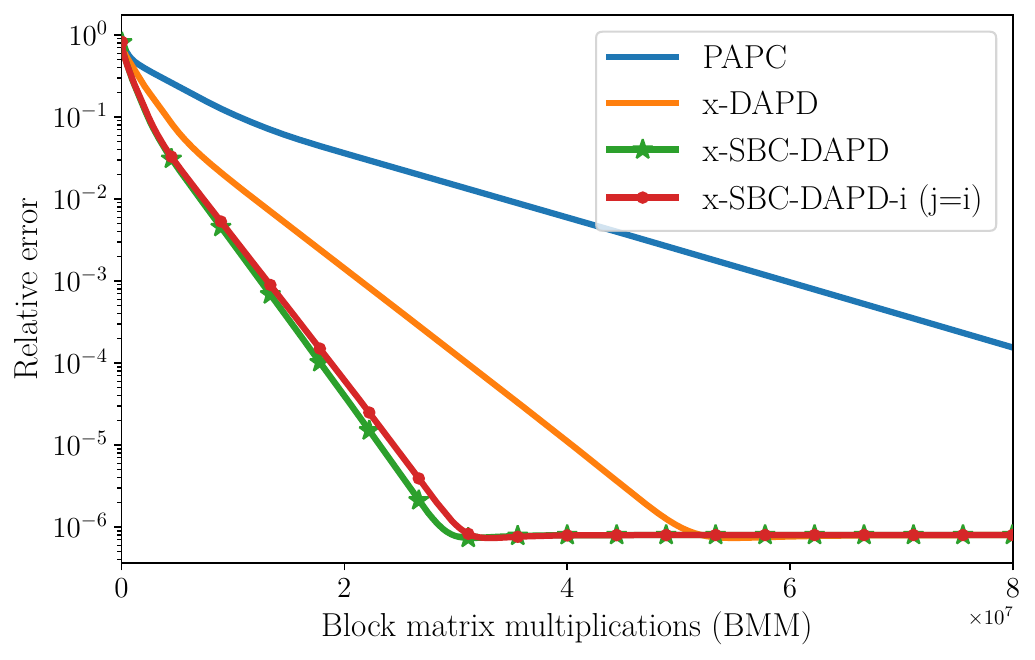}
\end{minipage}%
\begin{minipage}{0.5\linewidth}
\centering
\includegraphics[width=1\linewidth]{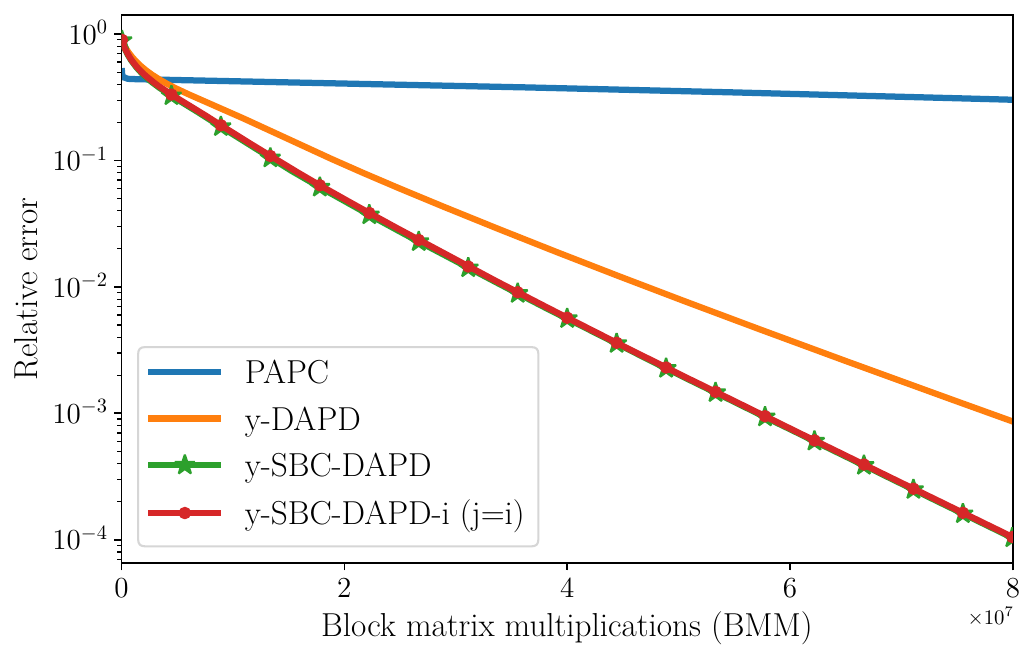}
\end{minipage}
\caption{Results for the compressed-sensing-type experiment with \texttt{SBC-DAPD} with sampling strategy \(j=i\). Left: \(\frac{s_{\max}^2}{s_{\min}^2} = 10^4\), \(\frac{L}{\mu}=10^5\); right: \(\frac{s_{\max}^2}{s_{\min}^2} = 10^6\), \(\frac{L}{\mu}=10^3\).}
\label{Fig_cst_stoc_appen}
\end{figure}
\subsection{Supplementary Discussion and Results to Section~\ref{subsec:nse}}
\label{appensec:supp nse}
The problem \eqref{eq:nse reg short} is equivalent to:
\begin{equation}\label{eq:nse reg}
\begin{aligned}
&\min _{\X \in \mathbb{R}^{\p \times \p}} f\left(\mathrm{vec}(\X)\right)+\lambda\sum_{t=1}^{\T}\left\|\s_{t}-\X \s_{t-1}\right\|_2\\=& \min_{\X \in \mathbb{R}^{\p \times \p}} \max_{Y\in\bbR^{\p\times \T}}f\left(\mathrm{vec}(\X)\right)+\sum_{t=1}^{\T}\left[y_t^{\top}\left(\s_{t}-\X \s_{t-1}\right)-I_{\left\|y\right\|_2\le \lambda}\left(y_t\right)\right]\\=&
\min_{\X \in \mathbb{R}^{\p \times \p}} \max_{Y\in\bbR^{\p\times \T}:\left\|\mathrm{vec}(Y)\right\|_{2,\infty}\le \lambda}f\left(\mathrm{vec}(\X)\right)+\mathrm{vec}(Y)^{\top}\left[\mathrm{vec}(S_{1,\T})-\left(S_{0,\T-1}^{\top}\otimes I_{\p}\right) \mathrm{vec}(\X)\right]
\end{aligned}
\end{equation}
where \(Y=(y_1,\dots,y_{\T})\) is the dual variable, \(S_{1,\T}=\left(\s_1,\dots,\s_{\T}\right)\), \(S_{0,\T-1}=\left(\s_0,\dots,\s_{\T-1}\right)\), \(I_{\p}\) is the \(\p\times\p\) identity matrix, \(\otimes\) is the Kronecker product, and \(\left\|\mathrm{vec}(Y)\right\|_{2,\infty}=\max_{t}\left\|y_t\right\|_2\).

Despite the size of the bilinear coupling matrix \(\left(S_{0,\T-1}^{\top}\otimes I_{\p}\right)\in \bbR^{\p\T\times \p^2}\), its singular values consist of \(\p\) copies of singular values of \(S_{0,\T-1}\in\bbR^{\p\times\T}\), which enables efficient calculation or estimation of the condition number of the bilinear coupling matrix. In addition, the first-order updates of \(X\) and \(Y\) can be done efficiently in Algorithms~\ref{alg:x-accelerate-prox} and \ref{alg:y-accelerate-prox} without doing \(\mathrm{vec}(Y)^{\top}\left(S_{0,\T-1}^{\top}\otimes I_{\p}\right)\) or \(\left(S_{0,\T-1}^{\top}\otimes I_{\p}\right) \mathrm{vec}(X)\) directly. For example, \begin{equation*}
    \left(S_{0,\T-1}^{\top}\otimes I_{\p}\right) \mathrm{vec}(\X)=\begin{pmatrix}
    \X\s_0\\
    \vdots\\
    \X\s_{\T-1}
\end{pmatrix}.
\end{equation*}

For the experiments, we adopt a similar simulation method as in \citep{yalcin2025subgradient}: we generate a \(\p\times\p\) random matrix with approximately 20\% non-zero entries, where the non-zero values follow a standard Gaussian distribution, and normalize it to have a unit \(2\)-norm to get \(\bar {\X}\). The initial vector \(s_0\) is drawn from the multivariate normal distribution \(\mathcal{N}(0, I_p)\). Given the probability $\rho$, disturbance vectors are set to zero with probability $1-\rho$. With probability $\rho$, the disturbance $\bar d_t$ is defined as $\bar d_t:=\ell_t \hat{d}_t$. Then, we sample $\ell_t \sim \mathcal{N}\left(0, \sigma_t^2\right)$, where $\sigma_t^2:=\min \left\{\left\|\s_t\right\|_2^2, 1 / \p\right\}$, and we sample $\hat{d}_t \sim$ uniform $\left(S^{\p-1}\right)$. We set \(\rho=0.7\) and \(\lambda=1000\).

We also conducted 20 independent simulations with randomly generated problem instances and algorithm executions. The 95\% confidence intervals of the relative errors after \(5\times10^5\) iterations are reported in \Cref{tab:log-ci-errors-nse1,tab:log-ci-errors-nse2}. Confidence intervals are computed on the log-transformed errors and exponentiated for presentation.

\begin{table}[tb]
  \caption{95\% confidence intervals of the relative error (log-transformed and exponentiated), \(\p=40\), \(\T=10\), \(\frac{L}{\mu}=10^9\)}
  \label{tab:log-ci-errors-nse1}
  \centering
  \begin{tabular}{lc}
    \toprule
    Method & 95\% Confidence Interval \\
    \midrule
    \texttt{PAPC} & [3.99e-02, 8.36e-02] \\
    \texttt{x-DAPD} & [5.47e-03, 1.58e-02] \\
    \texttt{y-DAPD} & [1.41e-01, 1.89e-01] \\
    \bottomrule
  \end{tabular}
\end{table}

\begin{table}[tb]
  \caption{95\% confidence intervals of the relative error (log-transformed and exponentiated), \(\p=100\), \(\T=20\), \(\frac{L}{\mu}=10^4\)}
  \label{tab:log-ci-errors-nse2}
  \centering
  \begin{tabular}{lc}
    \toprule
    Method & 95\% Confidence Interval \\
    \midrule
    \texttt{PAPC} & [8.28e-03, 4.27e-02] \\
    \texttt{x-DAPD} & [1.60e-02, 6.46e-02] \\
    \texttt{y-DAPD} & [1.54e-05, 3.69e-04] \\
    \bottomrule
  \end{tabular}
\end{table}

\subsection{Additional Experiments}
\label{appensec:exp}

\subsubsection{Quadratic programming with \texorpdfstring{\(\ell_1\)}{l1}-norm on the dual variable}
\label{subsec:l1}

We illustrate the effect of proximal terms on algorithm performance in the following experiments. Specifically, we consider a quadratic programming problem with an \(\ell_1\) penalty on the dual variable:
\begin{equation}\label{eq:l1 numerical}
\min_{x\in\bbR^m}\max_{y\in\bbR^n}\quad f(x) + y^{\top}(\matA x -b) - \nu \left\|y\right\|_1, 
\end{equation}
which is equivalent to 
\begin{equation*}
\begin{aligned}
\min\quad &f(x)\\
\text{s.t.}\quad &\left\|\matA x -b\right\|_{\infty}\le \nu.
\end{aligned}
\end{equation*}

Here, \(f:\bbR^m\to\bbR\) is a strongly convex quadratic function \(f(x)=\frac{1}{2}x^{\top}Hx-c^{\top}x\), where \(H\in \mathcal{S}_{++}^{m}\) is a symmetric positive definite matrix. It is constructed as \(H = P\Lambda P^{\top}\), where \(P\) is an orthogonal matrix obtained from the QR decomposition of a Gaussian random matrix, and \(\Lambda\) is diagonal with entries sampled i.i.d. from \(\mathcal{U}[0,1]\), then rescaled so that \(\min_i \lambda_i = \mu\) and \(\max_i \lambda_i = L\). The matrix \(\matA\in\bbR^{n\times m}\) is generated similarly as \(\matA = USV^{\top}\), where \(U\in\bbR^{n\times n}\) and \(V\in \bbR^{m\times m}\) are orthogonal, and the singular values in \(S\) are scaled to satisfy \(\min_i s_i = s_{\min}\), \(\max_i s_i = s_{\max}\). Vectors \(b\in\bbR^n\) and \(c\in\bbR^m\) are sampled from standard Gaussian distributions.

In the experiment shown in the left panel of Figure~\ref{fig_qp}, we set \(m = 300\), \(n = 100\), \(L = 1000\), \(\mu = 1\), \(s_{\max} = 1000\), \(s_{\min} = 1\), and \(\nu = 0.01\). We observe that \texttt{y-DAPD} converges significantly faster than the other methods. \texttt{x-DAPD} performs comparably to \texttt{PAPC}. We expect similar behavior from the intermediate algorithm in \citep{salim2022optimal} if an appropriate proximal mapping is incorporated, as all three algorithms have an \(O\left(\frac{s_{\max}^2}{s_{\min}^2}\log\left(\frac{1}{\epsilon}\right)\right)\) iteration complexity.

\subsubsection{Quadratic programming with inequality constraints}
\label{subsec:ineq}

We now consider a quadratic programming problem with inequality constraints:
\begin{equation}\label{eq:ineq numerical}
\min_{x\in\bbR^m}\max_{y\in\bbR^n}\quad f(x) + y^{\top}(\matA x -b) - I_{\bbR^n_+}(y),
\end{equation}
where \(f(x) = \frac{1}{2}x^{\top}Hx - c^{\top}x\) and \(H\) is generated as in \Cref{subsec:l1}. We first generate a ground truth solution \(x^*\) from a Gaussian distribution. Then we partition the constraints into active and inactive sets such that:
\begin{itemize}
    \item Active constraints satisfy \(\matA^{\mathrm{active}}x^* = b^{\mathrm{active}}\),
    \item Inactive constraints satisfy \(\matA^{\mathrm{inactive}}x^* < b^{\mathrm{inactive}}\).
\end{itemize}

Both \(\matA^{\mathrm{active}}\) and \(\matA^{\mathrm{inactive}}\) are generated similarly to \(\matA\), with controlled singular values: \([s_{\min}^{\mathrm{active}}, s_{\max}^{\mathrm{active}}]\) and \([s_{\min}^{\mathrm{inactive}}, s_{\max}^{\mathrm{inactive}}]\), respectively. The inactive component is constructed by computing \(\bar b^{\mathrm{inactive}} = \matA^{\mathrm{inactive}}x^*\), and perturbing it as \(b_i^{\mathrm{inactive}} = \bar b_i^{\mathrm{inactive}} + |\varepsilon_i \bar b_i^{\mathrm{inactive}}|\), where \(\varepsilon_i \sim \mathcal{N}(0, 1)\). The dual variable for the active part, \(y^{\mathrm{active}}\), is generated from the absolute values of a Gaussian vector, and the primal vector \(c\) is then computed by \(c = Hx^* + \left(\matA^{\mathrm{active}}\right)^{\top}y^{\mathrm{active}}\).

This construction ensures that the condition number of the full constraint matrix \(\matA\) aligns with that of the active part, avoiding artificially easy or ill-conditioned cases.

In the experiment shown in the right panel of Figure~\ref{fig_qp}, we set \(m = 300\), \(n^{\mathrm{active}} = 50\), \(n^{\mathrm{inactive}} = 50\), \(L = 1000\), \(\mu = 1\), and for both parts \(s_{\min} = 1\), \(s_{\max} = 1000\). The computed minimal and maximal singular values of \(\matA\) are 0.8771 and \(1.0766 \times 10^3\), respectively. We again observe that \texttt{y-DAPD} outperforms other methods, while \texttt{x-DAPD} achieves convergence similar to \texttt{PAPC}. Note that the final accuracy in this experiment is significantly higher because \(x^*\) is known exactly, rather than approximated by a reference solve whose accuracy limits the error floor.
\begin{figure}[t]
\begin{minipage}{0.5\linewidth}
\centering
\includegraphics[width=\linewidth]{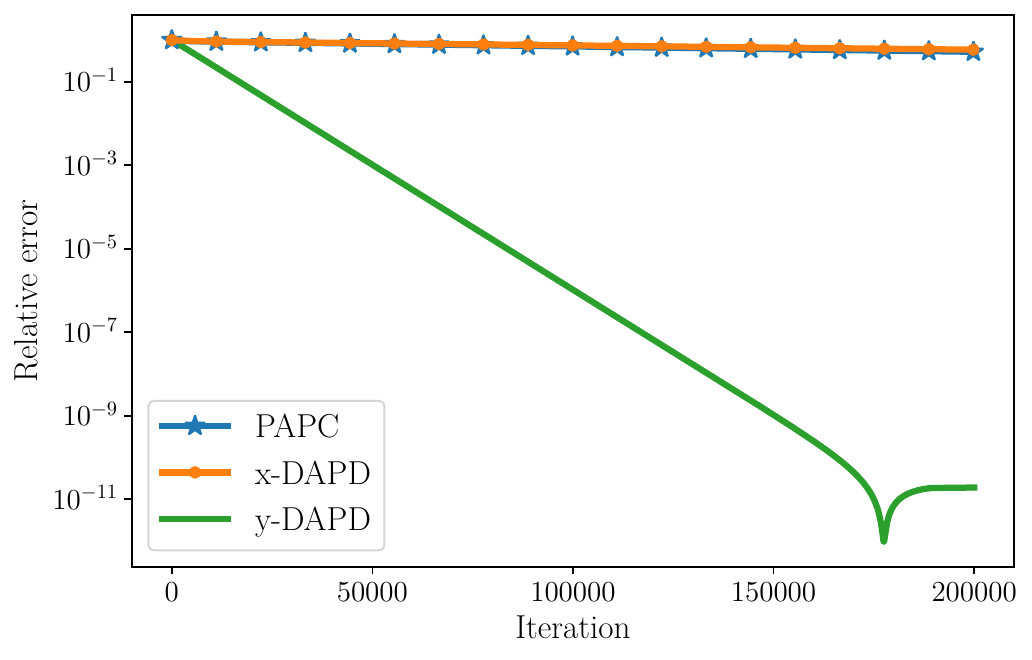}
\end{minipage}%
\begin{minipage}{0.5\linewidth}
\centering
\includegraphics[width=\linewidth]{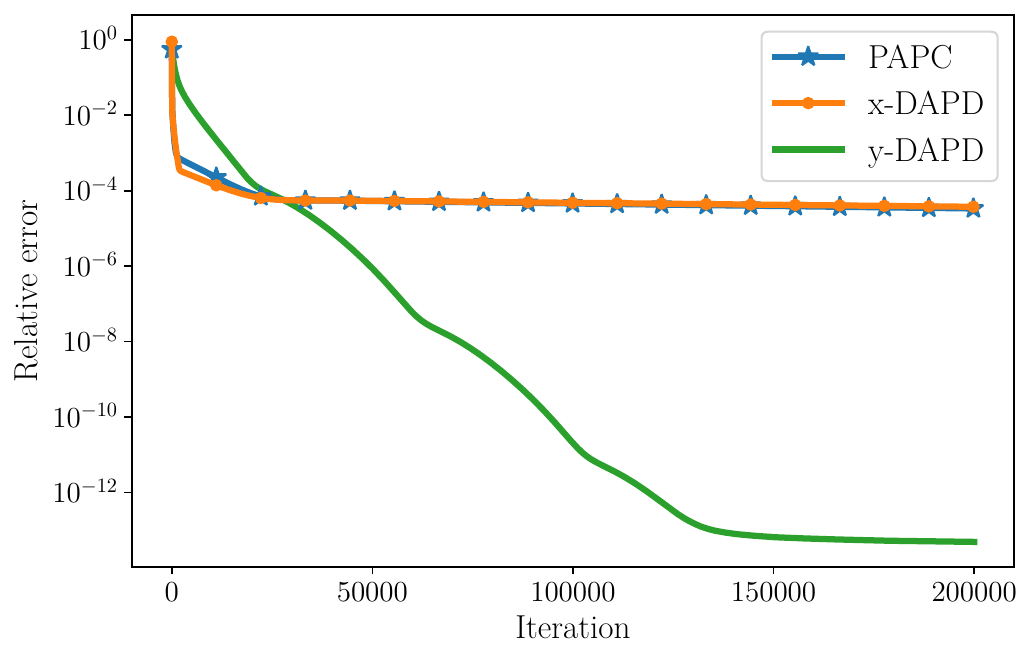}
\end{minipage}
\caption{Left: results for \Cref{subsec:l1}, \eqref{eq:l1 numerical}, \(\frac{s_{\max}^2}{s_{\min}^2} = 10^6\), \(\frac{L}{\mu}=10^3\); right: results for \Cref{subsec:ineq}, \eqref{eq:ineq numerical}, \(\frac{s_{\max}^2}{s_{\min}^2} \approx 10^6\), \(\frac{L}{\mu}=10^3\).}
\label{fig_qp}
\end{figure}

\section{Detailed Constructions, Proofs, and Discussions of the Lower Bound Certificates}
\label{appensec:detail construction}
In this section, we establish lower iteration bounds for deterministic first-order methods applied to problems~\eqref{eq:upper bound prob} and \eqref{eq:problem_lower_2} under Assumptions~\ref{assum:Assumption Linear Lower 1} and~\ref{assum:Assumption Linear Lower 2}, and stochastic block-coordinate methods applied to problem~\eqref{eq:upper bound prob sto} under  Assumptions~\ref{assum:Assumption Linear Stoc}. Theorems~\ref{thm:full c1} and~\ref{thm:full c2} present the lower bounds for the deterministic cases, and Corollary~\ref{cor:full c1 stoc} extends Theorem~\ref{thm:full c1} to the block-wise case. Before stating our results, let us first introduce the definitions of the primal function, dual function, saddle points, and first-order algorithm class.
\begin{definition}\label{def:primal_dual}
We define ${\Phi_x}(\cdot)$ to be the primal function and ${\Phi_y}(\cdot)$ to be the dual function of the saddle point problem \eqref{eq:upper bound prob} (or \eqref{eq:upper bound prob sto}, \eqref{eq:problem_lower_2}), respectively, with the following definitions:
\begin{equation}
{\Phi_x}(x):=\sup_{y\in\bbR^n} F(x, y) \quad \text {and} \quad {\Phi_y}(y):=\inf_{x\in\bbR^m} F(x, y).   
\end{equation}
\end{definition}
\begin{definition}\label{def:optsol}
We call \((x^*,y^*)\) a {saddle point} of \eqref{eq:upper bound prob} (or \eqref{eq:upper bound prob sto}, \eqref{eq:problem_lower_2}) if 
\begin{equation*}\begin{aligned}
x^*\in\arg\min_{x\in\bbR^m}{\Phi_x}(x) \quad\text{and}\quad
y^*\in\arg\max_{y\in\bbR^n}{\Phi_y}(y).
\end{aligned}\end{equation*}
\end{definition}
Another possible definition is based on the conditions $x^*\in\arg\min_{x\in\bbR^m} F(x,y^*)$ and $y^*\in\arg\max_{y\in\bbR^n} F(x^*,y)$,
or the corresponding first order conditions. We state the equivalence of these definitions and existence and uniqueness of saddle points under our assumptions in Proposition~\ref{prop:saddle_cond_1}. Proposition~\ref{prop:saddle_cond_1} may be considered a special case of Lemma 1 of \citet{kovalev2022accelerated}.
\begin{proposition}\label{prop:saddle_cond_1}
For problem \eqref{eq:upper bound prob} under Assumption~\ref{assum:Assumption Linear Lower 1}
(or \eqref{eq:problem_lower_2} under Assumption~\ref{assum:Assumption Linear Lower 2}),
there exists a unique saddle point $(x^*,y^*)$. Furthermore, $(x^*,y^*)$ is a saddle point if and only if
\begin{equation}\label{eq:saddle_cond_1}
\nabla f(x^*)+\matA^{\top} y^*=0,\qquad
\matA x^*=b \quad (\text{resp. } \matA x^*-\nabla g(y^*)=0).
\end{equation}
\end{proposition}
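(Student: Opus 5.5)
The plan is to reduce to a strongly concave / strongly convex dual function and use Danskin/Fenchel duality, exactly as in the proof of Proposition~\ref{prop:saddle_cond_2}, treating the two cases in parallel.

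\emph{Case 1 (Assumption~\ref{assum:Assumption Linear Lower 1}).} This is the special case $\phi\equiv 0$ of Proposition~\ref{prop:saddle_cond_2}. The condition $\matA x^*-b\in\partial\phi(y^*)=\{0\}$ becomes $\matA x^*=b$, so existence, uniqueness and the characterization follow directly. The only extra point is to reconcile Definition~\ref{def:optsol}, which uses the primal and dual functions, with the pointwise saddle definition. Because $\Phi_y(y)=-b^\top y-f^*(-\matA^\top y)$ is $s_{\min}^2/L$-strongly concave and smooth, $y^*$ is its unique maximizer. Since $f$ is strongly convex, $x^*=\arg\min_x F(x,y^*)$ is unique and satisfies $\nabla f(x^*)=-\matA^\top y^*$. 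Danskin then gives $\nabla\Phi_y(y^*)=\matA x^*-b=0$. Finally, $\Phi_x(x)=f(x)+I_{\{\matA x=b\}}(x)$, and $x^*$ minimizes $\Phi_x$ because $\nabla f(x^*)\in\mathrm{range}(\matA^\top)$ is exactly the KKT condition for the constrained problem. Conversely, any pair satisfying \eqref{eq:saddle_cond_1} makes $x^*$ feasible and KKT, hence it is the unique minimizer of the strongly convex $\Phi_x$, and $y^*$ is a stationary point of the strongly concave $\Phi_y$.

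\emph{Case 2 (Assumption~\ref{assum:Assumption Linear Lower 2}).} Here neither $f$ nor $g$ is strongly convex, so the argument changes: we exploit square invertibility of $\matA$. From the first-order conditions $\nabla f(x)+\matA^\top y=0$ and $\matA x=\nabla g(y)$, which define a monotone inclusion, write $F(x,y)=f(x)+y^\top\matA x-g(y)$. Since $\matA$ is invertible and $f,g$ are convex and smooth, $\Phi_x(x)=f(x)+g^*(\matA x)$ and $\Phi_y(y)=-g(y)-f^*(-\matA^\top y)$.

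For existence, the plan is to show that $\Phi_x$ is coercive. Because $g$ is $L_y$-smooth, $g^*$ is $1/L_y$-strongly convex. Since $\matA$ has full rank, $g^*(\matA x)$ is then $s_{\min}^2/L_y$-strongly convex in $x$ (on its domain, and it is proper). Adding the convex function $f$ keeps $\Phi_x$ strongly convex, so it has a unique minimizer $x^*$. Symmetrically, $f^*$ is $1/L_x$-strongly convex, so $\Phi_y$ is strongly concave with a unique maximizer $y^*$. Subdifferential calculus at the optimum gives $0\in\nabla f(x^*)+\matA^\top\partial g^*(\matA x^*)$. Setting $y^*:=$ the element with $\matA x^*=\nabla g(y^*)$, which exists because $\partial g^*(\matA x^*)$ is nonempty at the minimizer, yields \eqref{eq:saddle_cond_1}.

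Strong duality then identifies this $y^*$ with the maximizer of $\Phi_y$. The pair satisfies $x^*\in\arg\min F(\cdot,y^*)$ and $y^*\in\arg\max F(x^*,\cdot)$ by convexity, which gives $\Phi_x(x^*)=F(x^*,y^*)=\Phi_y(y^*)$. Weak duality $\Phi_y\le\Phi_x$ then forces $y^*$ to maximize $\Phi_y$. Conversely, any pair satisfying \eqref{eq:saddle_cond_1} is a pointwise saddle by convexity–concavity, and the same sandwich shows it consists of the unique optimizers.

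The main obstacle is Case 2. Since $f$ and $g$ are merely convex, $g^*$ may be extended-valued, so one must justify nonemptiness of $\partial g^*(\matA x^*)$ and the chain rule for $g^*\circ\matA$. I would handle this by noting that $\matA$ is bijective, so $\mathrm{dom}(g^*\circ\matA)=\matA^{-1}\mathrm{dom}\,g^*$, and that the minimizer of a proper closed strongly convex function lies where $0\in\partial\Phi_x$. With $f$ finite and smooth, the sum rule then applies.
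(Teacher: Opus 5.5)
Your proof is correct. In the affinely constrained case it matches what the paper does. The paper gives no standalone proof; it only notes that the result is a special case of Lemma~1 of \citet{kovalev2022accelerated}, and its model argument is the conjugate/Danskin proof of Proposition~\ref{prop:saddle_cond_2}, which you specialize to $\phi\equiv 0$.

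In the convex--concave case your route genuinely departs from that template, and it has to. The template gets $y^*$ from strong concavity of $\Phi_y$ and then takes $x^*$ as the unique minimizer of $F(\cdot,y^*)$, which relies on strong convexity of $f$. Here $f$ is only convex, so $f^*$ can be nonsmooth and extended-valued, and $\arg\min_x F(x,y^*)$ can be all of $\mathbb{R}^n$ (e.g.\ $f\equiv 0$).

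You instead:
\begin{itemize}
\item obtain $x^*$ as the minimizer of the strongly convex function $\Phi_x=f+g^*\circ M$, whose strong convexity comes from smoothness of $g$ plus invertibility of $M$;
\item apply the sum and chain rules, which are exact because $f$ is finite and $M$ is bijective;
\item recover $y^*$ and close with the weak-duality sandwich.
\end{itemize}
This is self-contained and shows exactly where squareness and full rank of $M$ are used. The citation, by contrast, buys generality over mixed strong-convexity/coupling regimes.

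One wording fix. Since $g$ need not be strictly convex, the set $\{y:\nabla g(y)=Mx^*\}=\partial g^*(Mx^*)$ need not be a singleton. Define $y^*$ as the element of this set satisfying $M^\top y^*=-\nabla f(x^*)$, i.e.\ $y^*=-M^{-\top}\nabla f(x^*)$, which is unique because $M^\top$ is invertible.
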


For our lower bound results of the deterministic methods, we refer to the algorithm class of interest as the first-order algorithm class, defined formally in Definition~\ref{def:first-order-alg}. 
\begin{definition}[First-order algorithm class]\label{def:first-order-alg}
In each iteration, the sequence $\left\{\left(x^k, y^k\right)\right\}_{k=0,1,\dots}$ is generated so that $\left(x^k, y^k\right) \in \mathcal{H}_x^k \times \mathcal{H}_y^k$, with $\mathcal{H}_x^0=\mathrm{span}\left\{x^0\right\}, \mathcal{H}_y^0=\mathrm{span}\left\{y^0\right\}$, and
\begin{equation*}
\left\{\begin{array}{l}
\mathcal{H}_x^{k+1}:=\mathrm{span}\left\{x^i, \nabla_x F\left(\bar{x}^i, \bar{y}^i\right): \forall \bar{x}^i \in \mathcal{H}_x^i, \bar{y}^i \in \mathcal{H}_y^i, 0 \leq i \leq k\right\} \\
\mathcal{H}_y^{k+1}:=\mathrm{span}\left\{y^i, \nabla_y F\left(\bar{x}^i, \bar{y}^i\right): \forall \bar{x}^i \in \mathcal{H}_x^i, \bar{y}^i \in \mathcal{H}_y^i, 0 \leq i \leq k\right\}
\end{array}\right..
\end{equation*}
\end{definition}
\begin{remark}\label{remark:phi_eq_0}
In Assumptions~\ref{assum:Assumption Linear Lower 1} and~\ref{assum:Assumption Linear Lower 2}, we set \(\phi(y)=0\) since this is the standard restriction used in lower-bound constructions (e.g., \citet{nesterov2018lectures}, \citet{ouyang2021lower}), and the resulting lower bounds continue to hold for general \(\phi \ne 0\) because a lower bound proved for a subclass automatically transfers to any superset.
\end{remark}
\subsection{Affinely Constrained Strongly-Convex Case (Theorem~\ref{thm:main_c1_short})}
\label{appensubsec:lower1}

We begin the construction process with the following lemma, which provides a lower bound on the duality gap in terms of \(\left\| y - y^* \right\|_2^2\).
\begin{lemma}\label{lemma:duality gap}
Under Assumption~\ref{assum:Assumption Linear Lower 1}, for any \((x, y)\),
\begin{equation*}
    {\Phi_x}(x) - {\Phi_y}(y) \ge \frac{s_{\min}^2}{2L} \left\| y - y^* \right\|_2^2.
\end{equation*}
Similarly, under Assumption~\ref{assum:Assumption Linear Lower 2}, for any \((x, y)\),
\begin{equation*}
    {\Phi_x}(x) - {\Phi_y}(y) \ge \frac{s_{\min}^2}{2L_x} \left\| y - y^* \right\|_2^2.
\end{equation*}
\end{lemma}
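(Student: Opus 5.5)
The plan is to bound the duality gap below by the dual suboptimality $\Phi_y(y^*)-\Phi_y(y)$, and then to use strong concavity of $\Phi_y$ around its maximizer. First, for any $(x,y)$, weak duality gives $\Phi_x(x)\ge F(x,y^*)\ge \Phi_y(y^*)$. Hence
\[
\Phi_x(x)-\Phi_y(y)\ge \Phi_y(y^*)-\Phi_y(y).
\]
By Proposition~\ref{prop:saddle_cond_1}, $y^*$ maximizes $\Phi_y$, so it suffices to show that $\Phi_y$ is $\frac{s_{\min}^2}{L}$-strongly concave (resp. $\frac{s_{\min}^2}{L_x}$-strongly concave). For a strongly concave function maximized at $y^*$, the standard quadratic growth bound is $\Phi_y(y^*)-\Phi_y(y)\ge \frac{\sigma}{2}\|y-y^*\|_2^2$, where $\sigma$ is the modulus. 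We use $0\in\partial(-\Phi_y)(y^*)$ together with strong convexity of $-\Phi_y$; this holds even without differentiability.

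For the first case (Assumption~\ref{assum:Assumption Linear Lower 1}, $\phi\equiv0$), we argue exactly as in the proof of Proposition~\ref{prop:saddle_cond_2}. There,
\[
\Phi_y(y)=-b^\top y-f^*(-\matA^\top y).
\]
Since $f$ is $L$-smooth, $f^*$ is $\frac1L$-strongly convex. For any $y_1,y_2$, the composition $y\mapsto f^*(-\matA^\top y)$ then satisfies the strong convexity inequality with the quadratic term $\frac1{2L}\|\matA^\top(y_1-y_2)\|_2^2$. By full row rank of $\matA$, this term is at least $\frac{s_{\min}^2}{2L}\|y_1-y_2\|_2^2$. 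Adding the linear term $-b^\top y$ does not change the modulus. Therefore $\Phi_y$ is $\frac{s_{\min}^2}{L}$-strongly concave.

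For the second case (Assumption~\ref{assum:Assumption Linear Lower 2}), we have
\[
\Phi_y(y)=-g(y)-f^*(-\matA^\top y).
\]
Here $g$ is convex, so $-g$ is concave and adds no harm, and $f^*$ is $\frac1{L_x}$-strongly convex because $f$ is $L_x$-smooth. The square matrix $\matA$ has full rank, so $\|\matA^\top v\|_2\ge s_{\min}\|v\|_2$. The same argument as above gives $\frac{s_{\min}^2}{L_x}$-strong concavity.

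The only delicate points are these. The $f^*$ involved may be extended-valued, since $f$ is merely convex in the second case, so $f^*$ could be $+\infty$ off a set. Strong convexity of the conjugate of an $L$-smooth convex function still holds in the extended sense, and at $y^*$ the dual function is finite by Proposition~\ref{prop:saddle_cond_1}. The quadratic growth inequality for strongly concave functions with a maximizer remains valid in this extended-valued setting. If $\Phi_y(y)=-\infty$, the bound is trivial.
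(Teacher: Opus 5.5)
Your proof is correct and follows the paper's argument. You write $\Phi_y(y)=-b^\top y-f^*(-\matA^\top y)$ (resp.\ $-g(y)-f^*(-\matA^\top y)$), combine the $\frac{1}{L}$-strong convexity of $f^*$ with $\|\matA^\top v\|_2\ge s_{\min}\|v\|_2$ to get $\frac{s_{\min}^2}{L}$-strong concavity, and bound the gap below by $\Phi_y(y^*)-\Phi_y(y)$. The differences are minor: you obtain that bound via weak duality $\Phi_x(x)\ge F(x,y^*)\ge \Phi_y(y^*)$ rather than the paper's split through $\Phi_x(x^*)=\Phi_y(y^*)$, and you explicitly handle the possibly extended-valued $f^*$ in the convex--concave case, which the paper passes over.
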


\begin{proof}
Under Assumption~\ref{assum:Assumption Linear Lower 1}, the function \(f\) is \(L\)-smooth. Then,
\begin{equation*}
{\Phi_y}(y) = \inf_{x \in \bbR^m} F(x, y) 
= -b^{\top}y + \inf_x \left[ f(x) + x^\top \matA^\top y \right] 
= -b^{\top}y - f^*\left(-\matA^\top y\right),
\end{equation*}
where \(f^*\) is the Fenchel conjugate of \(f\). By Theorem 6 of \citet{kakade2009duality}, \(f^*\) is \(\frac{1}{L}\)-strongly convex. Since \(s_{\min} > 0\), it follows that \({\Phi_y}(y)\) is \(\frac{s_{\min}^2}{L}\)-strongly concave.

Hence, for any \((x, y)\), the duality gap satisfies
\begin{equation*}
{\Phi_x}(x) - {\Phi_y}(y)
= {\Phi_x}(x) - {\Phi_x}(x^*) + {\Phi_y}(y^*) - {\Phi_y}(y)
\ge {\Phi_y}(y^*) - {\Phi_y}(y)
\ge \frac{s_{\min}^2}{2L} \left\| y - y^* \right\|_2^2.
\end{equation*}
The same reasoning applies with \(L_x\) and general convex \(g(y)\) under Assumption~\ref{assum:Assumption Linear Lower 2}.
\end{proof}

To proceed with the construction of the lower bound certificates, we first introduce the matrix \(A \in \bbR^{n \times n}\), along with its powers \(A^2\) and \(A^4\), which will be used in the derivation of lower bound results (see, e.g., \citet{zhang2022lower,ouyang2021lower,nesterov2018lectures}).

\begin{equation}\label{eq:Matrix A}
\begin{aligned}
&A = \begin{pmatrix}
& & & & 1 \\
& & & 1 & -1 \\ 
& & 1 & -1 &\\
& \iddots & \iddots & & \\
1&-1&&&\\
\end{pmatrix},\quad
A^2 = \begin{pmatrix}
1 & -1 & & & \\
-1 & 2 & -1 & & \\
& \ddots & \ddots & \ddots & \\
& & -1 & 2 & -1 \\
& & & -1 & 2
\end{pmatrix},\\
&A^4=\begin{pmatrix}
2 & -3 & 1 & & & & \\
-3 & 6 & -4 & 1 & & & \\
1 & -4 & 6 & -4 & 1 & & \\
& \ddots & \ddots & \ddots & \ddots & \ddots & \\
& & 1 & -4 & 6 & -4 & 1 \\
& & & 1 & -4 & 6 & -4 \\
& & & & 1 & -4 & 5
\end{pmatrix}.
\end{aligned}
\end{equation}

As stated in \citet{zhang2022lower}, these matrices satisfy the following properties.

\begin{proposition}\label{prop:A properties}
The matrix \(A\) in \eqref{eq:Matrix A} satisfies:
\begin{enumerate}
    \item \(A\) is nonsingular;
    \item \(A = A^\top\);
    \item \(\left\|A\right\|_2 \le 2\);
    \item (Zero-chain property) For any vector \(v \in \bbR^n\), if \(v \in \mathrm{span}\{e_i : i \le k\}\) for some \(1 \le k \le n-1\), then \(A^2 v \in \mathrm{span}\{e_i : i \le k+1\}\).
\end{enumerate}
\end{proposition}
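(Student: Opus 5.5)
The plan is to factor $A$ through a permutation and a bidiagonal matrix, and then read each of the four properties off that factorization. Let $J\in\bbR^{n\times n}$ be the reversal permutation, $J e_i = e_{n+1-i}$, and let $N$ be the nilpotent upper shift, $N e_{j}=e_{j-1}$ for $j\ge 2$ and $N e_1=0$. By \eqref{eq:Matrix A}, row $i$ of $A$ has a $1$ in column $n+1-i$ and, for $i\ge 2$, a $-1$ in column $n+2-i$. Reindexing rows by $r=n+1-i$, row $r$ of $JA$ has a $1$ on the diagonal and a $-1$ in column $r+1$. The first step is therefore to verify the identity
\begin{equation*}
A = J B,\qquad B := I - N,
\end{equation*}
where $B$ is upper bidiagonal with unit diagonal and $-1$ on the superdiagonal.

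From this factorization, items 1 and 3 follow directly.
\begin{itemize}
\item Nonsingularity (item 1): $\det B = 1$ because $B$ is triangular with unit diagonal, and $\det J = \pm 1$, so $\det A \neq 0$.
\item Norm bound (item 3): $J$ is orthogonal, so $\left\|A\right\|_2 = \left\|B\right\|_2 \le \left\|I\right\|_2 + \left\|N\right\|_2 = 2$.
\end{itemize}

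For symmetry (item 2), I would argue directly from the entries rather than from the factorization. The entry $A_{ij}$ depends only on $i+j$: it equals $1$ if $i+j=n+1$, equals $-1$ if $i+j=n+2$, and is $0$ otherwise. Hence $A_{ij}=A_{ji}$ and $A=A^\top$.

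For the zero-chain property (item 4), I would first identify $A^2$. By symmetry and orthogonality of $J$,
\begin{equation*}
A^2 = A^\top A = B^\top J^\top J B = B^\top B = I - N - N^\top + N^\top N .
\end{equation*}
Since $N^\top N = \mathrm{diag}(0,1,\dots,1)$, this is exactly the tridiagonal matrix displayed for $A^2$ in \eqref{eq:Matrix A}, with first diagonal entry $1$, remaining diagonal entries $2$, and off-diagonal entries $-1$. A tridiagonal matrix sends $e_i$ into $\mathrm{span}\{e_{i-1},e_i,e_{i+1}\}$. So if $v\in\mathrm{span}\{e_i: i\le k\}$, then $A^2 v\in\mathrm{span}\{e_i : i\le k+1\}$.

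The expression for $A^4$ is not needed for these four properties; if desired, it can be checked by squaring $B^\top B$. The only step requiring care is the index bookkeeping that justifies $A=JB$, in particular the boundary row $i=1$, which has only the single entry $1$. Once that identity holds, everything else is immediate.
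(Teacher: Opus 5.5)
Your proof is correct. The identity \(A = JB\) with \(B = I - N\) matches \eqref{eq:Matrix A} row by row, including the boundary row \(i=1\). Nonsingularity, the bound \(\left\|A\right\|_2 \le 2\), symmetry via dependence on \(i+j\), and \(A^2 = B^\top B\) all follow as you state. The last of these gives both the displayed tridiagonal form and the zero-chain property.

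The paper itself gives no proof of this proposition. It only attributes these properties to \citet{zhang2022lower}, where the matrix belongs to the standard zero-chain hard-instance construction. Your argument is therefore a self-contained route rather than a reconstruction of the paper's.

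What your route buys is that the form of \(A^2\) in \eqref{eq:Matrix A} is derived rather than asserted. That form is what drives the zero-chain step and the linear system \((I+\alpha A^2)y^* = \hat h\) in \Cref{lemma:approximate c1}. Squaring \(B^\top B\) likewise yields the \(A^4\) display used in \Cref{lemma:approximate c2}. In addition, the spectral facts used in \Cref{lemma:assum1} follow at once from items 1--3: the eigenvalues \(\lambda_i\) of \(A\) are real and satisfy \(\lambda_i^2 \in (0,4]\).

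The citation route is shorter but leaves the entrywise verification to the reference. As a minor aside, symmetry also follows from your factorization: \(JNJ = N^\top\) gives \(JBJ = B^\top\). Your entrywise argument is equally clean.
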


Assume without loss of generality that the initial point is \(x^0 = y^0 = 0\) \(\left(\mathcal{H}_x^0 = \mathcal{H}_y^0 = \{0\}\right)\). We consider the following instance of problem~\eqref{eq:upper bound prob} under Assumption~\ref{assum:Assumption Linear Lower 1}, with \(m = 2n\). Let \(x = \begin{pmatrix} x_1 \\ x_2 \end{pmatrix}\), where \(x_1, x_2 \in \bbR^n\):

\begin{equation}\label{eq:problem 1}
\begin{aligned}
    f(x) &= \frac{1}{2} \begin{pmatrix} x_1^\top & x_2^\top \end{pmatrix}
    \begin{pmatrix}
    L I & 0 \\
    0 & \mu I
    \end{pmatrix}
    \begin{pmatrix} x_1 \\ x_2 \end{pmatrix}
    - h^\top x_2, \\
    \matA &= \begin{pmatrix} -s_{\min} I & \hs A \end{pmatrix}, \\
    b &= 0,
\end{aligned}
\end{equation}
where \(\hs = \frac{\sqrt{s_{\max}^2 - s_{\min}^2}}{2}\).

The following lemma shows that the problem setting in \eqref{eq:problem 1} satisfies Assumption~\ref{assum:Assumption Linear Lower 1}.

\begin{lemma}\label{lemma:assum1}
The matrix \(\begin{pmatrix} -s_{\min} I & \hs A \end{pmatrix}\) has largest singular value at most \(s_{\max}\) and minimal singular value strictly greater than \(s_{\min}\).
\end{lemma}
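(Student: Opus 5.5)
The plan is to compute the Gram matrix \(\matA\matA^\top\) directly. The singular values of \(\matA = \begin{pmatrix} -s_{\min} I & \hs A \end{pmatrix}\in\bbR^{n\times 2n}\) are the square roots of the eigenvalues of
\begin{equation*}
\matA\matA^\top = s_{\min}^2 I + \hs^2 A A^\top = s_{\min}^2 I + \hs^2 A^2,
\end{equation*}
where the last step uses the symmetry \(A=A^\top\) from Proposition~\ref{prop:A properties}. Because \(A\) is symmetric, \(A^2\) is symmetric positive semidefinite. Its eigenvalues are exactly \(\lambda_i(A)^2\), so they lie in \([0,\|A\|_2^2]\). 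The eigenvalues of \(\matA\matA^\top\) are therefore \(s_{\min}^2+\hs^2\lambda_i(A)^2\), and the two bounds come from controlling the range of \(\lambda_i(A)^2\).

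For the upper bound, apply property 3 of Proposition~\ref{prop:A properties}, namely \(\|A\|_2\le 2\). This gives
\begin{equation*}
\lambda_{\max}(\matA\matA^\top)\le s_{\min}^2+4\hs^2 = s_{\min}^2 + (s_{\max}^2-s_{\min}^2) = s_{\max}^2,
\end{equation*}
using \(\hs^2=(s_{\max}^2-s_{\min}^2)/4\). Hence the largest singular value of \(\matA\) is at most \(s_{\max}\).

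For the lower bound, apply property 1, that \(A\) is nonsingular. Then every \(\lambda_i(A)\neq 0\), so \(\lambda_{\min}(A^2)>0\). The hypothesis \(s_{\max}\ge\sqrt5 s_{\min}>s_{\min}\) gives \(\hs>0\). Together these yield \(\lambda_{\min}(\matA\matA^\top)=s_{\min}^2+\hs^2\lambda_{\min}(A^2)>s_{\min}^2\). So \(\matA\) has full row rank and its minimal singular value is strictly greater than \(s_{\min}\).

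No step is genuinely hard. The only delicate point is the strict inequality, which rests on \(A\) being nonsingular and \(\hs>0\). If needed, nonsingularity can be checked directly: \(A\) is anti-triangular with ones on the anti-diagonal, so \(|\det A|=1\).
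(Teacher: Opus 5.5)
Your proof is correct and follows the paper's argument. Both identify the singular values as \(\sqrt{s_{\min}^2+\hs^2\lambda_i^2}\) via \(\matA\matA^\top=s_{\min}^2 I+\hs^2A^2\), use \(0<\lambda_i^2\le 4\) from the nonsingularity and norm bound of \(A\), and you additionally make explicit that strictness requires \(\hs>0\) (i.e.\ \(s_{\max}>s_{\min}\)), which the paper's one-line proof leaves implicit.
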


\begin{proof}
For the matrix \(\begin{pmatrix} -s_{\min} I & \hs A \end{pmatrix}\), the singular values are \(\sqrt{s_{\min}^2 + \hs^2 \lambda_i^2}\), where \(\lambda_i\) are the eigenvalues of \(A\). These lie in the interval \((s_{\min}, s_{\max}]\) since \(\lambda_i^2 \in (0, 4]\) and \(\hs = \frac{\sqrt{s_{\max}^2 - s_{\min}^2}}{2}\).
\end{proof}

The partial derivatives of \(F(x, y)\) are:
\begin{equation*}
\begin{aligned}
\nabla_{x_1} F(x, y) &= L x_1  - s_{\min} y, \\
\nabla_{x_2} F(x, y) &= \mu x_2  + \hs A y - h, \\
\nabla_{y} F(x, y)   &= -s_{\min} x_1 + \hs A x_2.
\end{aligned}
\end{equation*}

Therefore, by Definition~\ref{def:first-order-alg}, the subspaces evolve as follows:
\begin{equation*}
\left\{
\begin{aligned}
\mathcal{H}_{x_1}^{1} &= \mathrm{span}\{0\} \\
\mathcal{H}_{x_2}^{1} &= \mathrm{span}\{h\} \\  
\mathcal{H}_{y}^{1}   &= \mathrm{span}\{0\}
\end{aligned}
\right. , \quad
\left\{
\begin{aligned}
\mathcal{H}_{x_1}^{2} &= \mathrm{span}\{0\} \\
\mathcal{H}_{x_2}^{2} &= \mathrm{span}\{h\} \\  
\mathcal{H}_{y}^{2}   &= \mathrm{span}\{A h\}
\end{aligned}
\right. , \quad
\left\{
\begin{aligned}
\mathcal{H}_{x_1}^{3} &= \mathrm{span}\{A h\} \\
\mathcal{H}_{x_2}^{3} &= \mathrm{span}\{h, A^2 h\} \\  
\mathcal{H}_{y}^{3}   &= \mathrm{span}\{A h\}
\end{aligned}
\right., \quad
\left\{
\begin{aligned}
\mathcal{H}_{x_1}^{4} &= \mathrm{span}\{A h\} \\
\mathcal{H}_{x_2}^{4} &= \mathrm{span}\{h, A^2 h\} \\  
\mathcal{H}_{y}^{4}   &= \mathrm{span}\{A h, A^3h\}
\end{aligned}
\right.,\ \dots
\end{equation*}

By induction, we obtain the following lemma:

\begin{lemma}\label{lemma:subspace c1}
For problem~\eqref{eq:upper bound prob} under \Cref{assum:Assumption Linear Lower 1} with \(f, \matA, b\) as defined in \eqref{eq:problem 1}, if the sequence of iterates satisfies Definition~\ref{def:first-order-alg}, then for any \(k \ge 2\),
\begin{equation*}
\mathcal{H}_{y}^{2k} = \mathcal{H}_{y}^{2k+1} = \mathrm{span}\{A^{2i} A h : i = 0, \ldots, k-1\}.
\end{equation*}
\end{lemma}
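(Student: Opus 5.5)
The plan is to prove a stronger joint statement by induction on $k$, tracking the three subspaces $\mathcal{H}_{x_1}^{\ell}$, $\mathcal{H}_{x_2}^{\ell}$ and $\mathcal{H}_{y}^{\ell}$ together. Looking at the $y$-subspace alone does not close the induction, because the $y$-gradient $-s_{\min}x_1+\hs A x_2$ feeds on the primal subspaces. The claim to establish is that for every $k\ge 1$
\begin{equation*}
\mathcal{H}_{x_1}^{2k+1}=\mathcal{H}_{x_1}^{2k+2}=\mathrm{span}\{A^{2i}Ah: 0\le i\le k-1\},\qquad
\mathcal{H}_{x_2}^{2k+1}=\mathcal{H}_{x_2}^{2k+2}=\mathrm{span}\{A^{2i}h: 0\le i\le k\},
\end{equation*}
and, for every $k\ge 2$, $\mathcal{H}_{y}^{2k}=\mathcal{H}_{y}^{2k+1}=\mathrm{span}\{A^{2i}Ah: 0\le i\le k-1\}$. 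The base cases are read off from the explicit computation of $\mathcal{H}^1,\dots,\mathcal{H}^4$ displayed just before the lemma.

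First I would record that the subspaces are nested, i.e.\ $\mathcal{H}^{\ell}\subseteq\mathcal{H}^{\ell+1}$. This holds because Definition~\ref{def:first-order-alg} takes spans over all $i\le \ell$, and each iterate $x^i$ lies in $\mathcal{H}_x^i$. Consequently $\mathcal{H}^{\ell+1}$ is the span of the previous subspaces together with the three partial gradients evaluated on $\mathcal{H}^{\ell}$. From the gradient formulas, this gives the one-step rule
\begin{equation*}
\mathcal{H}_{x_1}^{\ell+1}=\mathcal{H}_{x_1}^{\ell}+\mathcal{H}_y^{\ell},\quad
\mathcal{H}_{x_2}^{\ell+1}=\mathrm{span}\{h\}+\mathcal{H}_{x_2}^{\ell}+A\mathcal{H}_y^{\ell},\quad
\mathcal{H}_{y}^{\ell+1}=\mathcal{H}_{y}^{\ell}+\mathcal{H}_{x_1}^{\ell}+A\mathcal{H}_{x_2}^{\ell}.
\end{equation*}
Both inclusions of each identity need checking. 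For ``$\subseteq$'' use linearity in $(\bar x,\bar y)$. For ``$\supseteq$'', evaluate the gradient at points with one component set to zero and take differences; for example, the evaluation at $(0,0)$ produces $-h$, which then separates out the constant term.

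Next comes the inductive step, starting from the level-$2k$ description
\begin{equation*}
\mathcal{H}_{x_1}^{2k}=\mathrm{span}\{A^{2i+1}h: i\le k-2\},\quad \mathcal{H}_{x_2}^{2k}=\mathrm{span}\{A^{2i}h: i\le k-1\},\quad \mathcal{H}_{y}^{2k}=\mathrm{span}\{A^{2i+1}h: i\le k-1\}.
\end{equation*}
Applying the rule once gives level $2k+1$. Here $x_1$ absorbs $\mathcal{H}_y^{2k}$ and grows to $i\le k-1$. Also $x_2$ absorbs $A\mathcal{H}_y^{2k}$, which contributes $A^{2k}h$. Finally, $y$ is unchanged, since both $\mathcal{H}_{x_1}^{2k}$ and $A\mathcal{H}_{x_2}^{2k}=\mathrm{span}\{A^{2i+1}h: i\le k-1\}$ already lie in $\mathcal{H}_y^{2k}$. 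Applying the rule again gives level $2k+2$. Now $x_1$ and $x_2$ are unchanged because $\mathcal{H}_y^{2k+1}=\mathcal{H}_y^{2k}$, while $y$ absorbs $A\mathcal{H}_{x_2}^{2k+1}\ni A^{2k+1}h$. This reproduces the level-$2k$ description with $k$ replaced by $k+1$, which closes the induction.

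The bookkeeping above is routine. The step I expect to need the most care is making sure that the span identities hold as equalities, rather than only as inclusions. If $h$ is chosen as $e_1$ (or any vector whose Krylov vectors $A^{j}h$ are linearly independent for $j<n$, which the zero-chain property in Proposition~\ref{prop:A properties} provides), each new generator is genuinely new. Even so, the subsequent lower-bound argument only uses ``$\subseteq$''. If equality turns out to be delicate, I would therefore state and use the inclusion form, with the inclusion $\mathcal{H}_y^{2k}\subseteq\mathrm{span}\{e_i: i\le k\}$ (or its analogue) following from the zero-chain property applied to $A^2$.
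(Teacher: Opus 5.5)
Your plan, a joint induction over the three blocks seeded by the displayed computation of $\mathcal{H}^1,\dots,\mathcal{H}^4$, is exactly what the paper's one-line ``by induction'' refers to. Your ``$\subseteq$'' half is sound once $\mathcal{H}_{x_1}^{\ell},\mathcal{H}_{x_2}^{\ell}$ are read as the block projections of $\mathcal{H}_x^{\ell}\subseteq\bbR^{2n}$.

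The gap is your justification of ``$\supseteq$'' in the $y$-rule $\mathcal{H}_y^{\ell+1}=\mathcal{H}_y^{\ell}+\mathcal{H}_{x_1}^{\ell}+A\mathcal{H}_{x_2}^{\ell}$. Definition~\ref{def:first-order-alg} lets you evaluate $\nabla_yF$ only at points of $\mathcal{H}_x^{\ell}$. But $\mathcal{H}_x^{\ell}$ is in general not the product of its block projections, because the $y$-contribution $\nabla_xF(0,\bar y)-\nabla_xF(0,0)=\matA^{\top}\bar y=(-s_{\min}\bar y,\hs A\bar y)$ enters both blocks jointly. Concretely, $\mathcal{H}_x^{3}=\mathrm{span}\{(0,h),(-s_{\min}Ah,\hs A^2h)\}$ is two-dimensional. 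By contrast, $\mathrm{span}\{Ah\}\times\mathrm{span}\{h,A^2h\}$ is three-dimensional for any $h$ that is not an eigenvector of $A^2$, so neither $(Ah,0)$ nor $(0,A^2h)$ is available. The same problem recurs in your step from level $2k+1$ to $2k+2$: the point $(0,A^{2k}h)$ you need to produce $A^{2k+1}h$ is in general not in $\mathcal{H}_x^{2k+1}$. If $L>\mu$ the blocks do separate, but one iteration too late for your indexing; if $L=\mu$ they never do. What the definition actually gives is $\mathcal{H}_y^{\ell+1}=\mathcal{H}_y^{\ell}+\matA\mathcal{H}_x^{\ell}$, which is only contained in your right-hand side.

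The missing idea is to route the reverse inclusion through $\matA\matA^{\top}=s_{\min}^2I+\hs^2A^2$. Suppose $A^{2k-1}h\in\mathcal{H}_y^{2k}$.
\begin{itemize}
\item Then $\matA^{\top}A^{2k-1}h\in\mathcal{H}_x^{2k+1}$.
\item Hence $\matA\matA^{\top}A^{2k-1}h=s_{\min}^2A^{2k-1}h+\hs^2A^{2k+1}h\in\mathcal{H}_y^{2k+2}$.
\item Since $A^{2k-1}h\in\mathcal{H}_y^{2k}\subseteq\mathcal{H}_y^{2k+2}$ and $\hs>0$ (as $s_{\max}>s_{\min}$), this gives $A^{2k+1}h\in\mathcal{H}_y^{2k+2}$.
\end{itemize}
The base case is $\hs Ah=\nabla_yF((0,h),0)\in\mathcal{H}_y^{2}$. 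Combined with your upper-bound bookkeeping and nestedness, this proves the stated equality for every $h$.

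Linear independence of the $A^{j}h$ plays no role in a span identity, so choosing $h=e_1$ does not address the real issue. Retreating to the inclusion form would prove less than the lemma states, even though the inclusion is all that \eqref{eq:Hy c1} and the downstream lower bound use. The paper's blockwise display and its ``by induction'' leave this same point implicit.
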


We next characterize the saddle point \((x^*, y^*)\) of the problem.

\begin{lemma}\label{lemma:yast c1}
Suppose \(L \ge \mu > 0\) and \(s_{\max} \ge \sqrt{5} s_{\min} > 0\). Then, the saddle point of problem~\eqref{eq:upper bound prob} with the specification in \eqref{eq:problem 1} is given by
\begin{equation}\label{eq:yast c1}
\begin{aligned}
x_1^* &= \frac{\hs}{\mu s_{\min}} (I + \alpha A^2)^{-1} A h, \\
x_2^* &= \mu^{-1} (I + \alpha A^2)^{-1} h, \\
y^{{*}} &= \frac{L}{\mu}\frac{\hs}{s_{\min}^2}(I+\alpha A^2)^{-1}Ah,
\end{aligned}
\end{equation}
where
\begin{equation}\label{eq:a c1}
\alpha=\frac{L}{\mu}\frac{\hs^2}{s_{\min}^2}.
\end{equation}
\end{lemma}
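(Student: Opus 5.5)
We verify the claimed triple against the optimality conditions of Proposition~\ref{prop:saddle_cond_1}. By that proposition, the saddle point exists, is unique, and is characterized by
\begin{equation*}
\nabla f(x^*)+\matA^\top y^*=0,\qquad \matA x^*=b=0.
\end{equation*}
So it suffices to show that the stated $(x_1^*,x_2^*,y^*)$ satisfies these equations. This requires $I+\alpha A^2$ to be invertible. By Proposition~\ref{prop:A properties}, $A$ is symmetric, so $A^2\succeq 0$, and with $\alpha>0$ we get $I+\alpha A^2\succ 0$. Since $A$ is a polynomial-commuting partner of $(I+\alpha A^2)^{-1}$, all the matrices involved commute, and this is used freely below.

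With $\matA^\top=\begin{pmatrix}-s_{\min}I\\ \hs A\end{pmatrix}$ and $A^\top=A$, the stationarity condition splits into two blocks:
\begin{equation*}
Lx_1^*-s_{\min}y^*=0,\qquad \mu x_2^*-h+\hs A y^*=0.
\end{equation*}
The feasibility condition reads $-s_{\min}x_1^*+\hs A x_2^*=0$.

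We solve these by elimination rather than merely checking, which also shows where $\alpha$ comes from. The first block gives $x_1^*=\frac{s_{\min}}{L}y^*$. Feasibility gives $x_1^*=\frac{\hs}{s_{\min}}Ax_2^*$. Combining the two, $y^*=\frac{L\hs}{s_{\min}^2}Ax_2^*$. Substituting this into the second block yields
\begin{equation*}
\mu x_2^*+\frac{L\hs^2}{s_{\min}^2}A^2x_2^*=h,
\end{equation*}
that is, $\mu(I+\alpha A^2)x_2^*=h$ with $\alpha$ as in \eqref{eq:a c1}. Hence $x_2^*=\mu^{-1}(I+\alpha A^2)^{-1}h$. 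Back-substitution then gives
\begin{equation*}
y^*=\frac{L}{\mu}\frac{\hs}{s_{\min}^2}A(I+\alpha A^2)^{-1}h,\qquad x_1^*=\frac{\hs}{\mu s_{\min}}A(I+\alpha A^2)^{-1}h,
\end{equation*}
and commutativity puts these in the stated form. Uniqueness follows from Proposition~\ref{prop:saddle_cond_1}, whose hypotheses hold by Lemma~\ref{lemma:assum1}.

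There is no genuine obstacle. The only points needing care are invertibility and commutation, handled above. The hypothesis $s_{\max}\ge\sqrt5 s_{\min}$ is not needed for the formula itself; it only ensures $\hs\ge s_{\min}$, hence $\alpha\ge L/\mu\ge1$, which matters for the later rate estimates. If Proposition~\ref{prop:saddle_cond_1} is not yet taken as proven, the same argument as Proposition~\ref{prop:saddle_cond_2} with $\phi\equiv0$ supplies it.
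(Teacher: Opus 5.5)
Your proposal is correct and follows the paper's route. Both arguments reduce the claim to the first-order optimality system of Proposition~\ref{prop:saddle_cond_1}, namely stationarity in $x_1,x_2$ together with $\matA x^*=0$. The paper simply plugs the stated triple into the $3\times 3$ block system, while you eliminate $x_1^*$ and $y^*$ to reach $\mu(I+\alpha A^2)x_2^*=h$. Your explicit handling of the invertibility of $I+\alpha A^2$ and of commutation, and your remark that $s_{\max}\ge\sqrt5\, s_{\min}$ is not needed for the formula itself, are accurate additions that the paper leaves implicit.
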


\begin{proof}
The expressions for \(x_1^*, x_2^*, y^*\) in \eqref{eq:yast c1} satisfy the first-order optimality conditions:
\begin{equation*}
\begin{pmatrix}
L I & 0 & -s_{\min} I \\
 0 & \mu I & \hs A \\
s_{\min} I & -\hs A & 0
\end{pmatrix}
\begin{pmatrix}
x_1^* \\
x_2^* \\
y^*
\end{pmatrix}
=
\begin{pmatrix}
0 \\
h \\
0
\end{pmatrix}.    
\end{equation*}
Hence, this verifies that \((x^*, y^*)\) is the saddle point of problem~\eqref{eq:upper bound prob} with \eqref{eq:problem 1}.
\end{proof}

The following lemma illustrates the construction of an “approximate” solution \(\hat{y}^*\), which satisfies \(\left\| \hat{y}^* - y^* \right\|_2 = O(q^n)\) and \(\hat{y}_i^* = q^i\) for \(i = 1, \ldots, n\), and for which obtaining a lower bound on \(\left\| y^k - \hat{y}^* \right\|_2\) is simpler.

\begin{lemma}\label{lemma:approximate c1}
Suppose \(L \ge \mu > 0\) and \(s_{\max} \ge \sqrt{5}s_{\min} > 0\). Let
\begin{equation}\label{eq:q c1}
q = 1 - \frac{\sqrt{1 + 4\alpha} - 1}{2\alpha}
\end{equation}
be one of the roots of the equation \(\alpha q^2 - (1 + 2\alpha) q + \alpha = 0\). Let \(\hat{h} = ((1 + \alpha)q - \alpha q^2, 0, \ldots, 0)^\top \in \bbR^n\), and define
\begin{equation}\label{eq:h c1}
h=\frac{\mu}{L}\frac{s_{\min}^2}{\hs}A^{-1}\hat h.
\end{equation}
Then, an approximate solution \((\hat{x}^*, \hat{y}^*)\) can be constructed such that
\begin{equation}\label{eq:yq construction1}
\hat{y}_i^* = q^i \quad \text{for } i = 1, \ldots, n,
\end{equation}
and the approximation error satisfies
\begin{equation*}
\left\| y^* - \hat{y}^* \right\|_2 \le \alpha q^{n+1}.
\end{equation*}
\end{lemma}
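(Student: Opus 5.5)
The plan is to reduce everything to a tridiagonal linear system for $y^*$ and check that the geometric vector $\hat y^*$ satisfies all of its equations except the last.

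First, I substitute the choice \eqref{eq:h c1} of $h$ into the formula for $y^*$ from \Cref{lemma:yast c1}. The scalar factors $\frac{L}{\mu}\frac{\hs}{s_{\min}^2}$ and $\frac{\mu}{L}\frac{s_{\min}^2}{\hs}$ cancel. Since $A$ commutes with $(I+\alpha A^2)^{-1}$, the factor $A\,A^{-1}$ also cancels. This gives
\begin{equation*}
(I+\alpha A^2)\,y^* = \hat h .
\end{equation*}
Given the explicit tridiagonal form of $A^2$ in \eqref{eq:Matrix A}, this system reads row by row as follows:
\begin{itemize}
\item row $1$: $(1+\alpha)y_1-\alpha y_2=\hat h_1$;
\item rows $2\le i\le n-1$: $-\alpha y_{i-1}+(1+2\alpha)y_i-\alpha y_{i+1}=0$;
\item row $n$: $-\alpha y_{n-1}+(1+2\alpha)y_n=0$.
\end{itemize}

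Next, I record that $q\in(0,1)$. It is the smaller root of $\alpha q^2-(1+2\alpha)q+\alpha=0$. The product of the roots is $1$, and the quadratic is negative at $q=1$ (its value there is $-1$), so the smaller root lies in $(0,1)$. This also matches the closed form \eqref{eq:q c1}.

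I then set $\hat y^*_i=q^i$ and plug it into the system:
\begin{itemize}
\item Row $1$ gives $(1+\alpha)q-\alpha q^2$, which is exactly $\hat h_1$ by the definition of $\hat h$.
\item Each interior row equals $q^{i-1}\bigl(-\alpha+(1+2\alpha)q-\alpha q^2\bigr)=0$ by the quadratic.
\item Only row $n$ fails. Using the quadratic again, its residual is $-\alpha q^{n-1}+(1+2\alpha)q^n=\alpha q^{n+1}$.
\end{itemize}
Hence
\begin{equation*}
(I+\alpha A^2)(\hat y^*-y^*)=\alpha q^{n+1}e_n .
\end{equation*}
Because $I+\alpha A^2\succeq I$, the inverse has spectral norm at most $1$. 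Therefore $\|y^*-\hat y^*\|_2\le \alpha q^{n+1}$.

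For the primal part of the approximate solution, I define $\hat x^*$ by the same formulas as in \Cref{lemma:yast c1}, with $(I+\alpha A^2)^{-1}Ah$ replaced accordingly. That is, $\hat x_1^*=\frac{s_{\min}}{L}\hat y^*$ and $\hat x_2^*$ is obtained from the second optimality row. This is only needed for bookkeeping, since the statement concerns $\hat y^*$.

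The main potential obstacle is getting the boundary rows right. The first diagonal entry of $A^2$ is $1$ rather than $2$, and this is exactly what makes row $1$ match the specific $\hat h_1$. The cancellation of $A^{-1}$ against $A$ relies on the nonsingularity of $A$ from \Cref{prop:A properties}. Everything else is routine verification.
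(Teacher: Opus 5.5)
Your proof is correct and follows the paper's argument. Substituting $h$ reduces the problem to $(I+\alpha A^2)y^*=\hat h$; then $\hat y^*_i=q^i$ satisfies every row except the last, and the single residual $\alpha q^{n+1}e_n$ is bounded using $I+\alpha A^2\succeq I$. Your identity $(I+\alpha A^2)(\hat y^*-y^*)=\alpha q^{n+1}e_n$ has the correct sign (the paper's $y^*-\hat y^*=\alpha q^{n+1}(I+\alpha A^2)^{-1}e_n$ is a harmless sign slip), and your explicit checks that $q\in(0,1)$ and $\|(I+\alpha A^2)^{-1}\|_2\le 1$ fill in steps the paper leaves implicit.
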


\begin{proof}
By Lemma~\ref{lemma:yast c1} and the choice of \(h\) in \eqref{eq:h c1}, the saddle point \(y^*\) satisfies the linear system \((I + \alpha A^2) y^* = \hat{h}\), which expands to:
\begin{equation}\label{eq:linear system c1}
\left\{\begin{aligned}
&(1+\alpha) y_1^*-\alpha y_2^*=(1+\alpha)q-\alpha q^2& \\
&-\alpha y_1^*+(1+2\alpha) y_2^*-\alpha y_3^*=0& \\
&\vdots& \\
&-\alpha y_{n-2}^*+(1+2\alpha) y_{n-1}^*-\alpha y_n^*=0& \\
&-\alpha y_{n-1}^*+(1+2\alpha) y_n^*=0&
\end{aligned}\right.
\end{equation}
By construction \eqref{eq:yq construction1}, the vector \(\hat{y}^*\) satisfies the first \(n-1\) equations exactly. The final equation becomes
\begin{equation*}
-\alpha \hat{y}_{n-1}^* + (1 + 2\alpha) \hat{y}_n^* = \alpha q^{n+1}.
\end{equation*}
Thus, the residual vector is
\begin{equation*}
y^* - \hat{y}^* = \alpha q^{n+1} (I + \alpha A^2)^{-1} e_n,
\end{equation*}
which implies the bound \(\left\| y^* - \hat{y}^* \right\|_2 \le \alpha q^{n+1}\).
\end{proof}

By \Cref{lemma:subspace c1}, the zero-chain property in \Cref{prop:A properties}, and the choice of \(h\) in \eqref{eq:h c1}, we can see that
\begin{equation}\label{eq:Hy c1}
\mathcal{H}_{y}^{2k}   = \mathcal{H}_{y}^{2k+1} = \mathrm{span}\{A^{2i} A h : i = 0, \ldots, k-1\} = \mathrm{span}\left\{ e_1, e_2, \ldots, e_{k} \right\},
\end{equation}
for any \(k \ge 1\). This implies that the only possible nonzero entries of \(y^{2k}, y^{2k+1}\) are within the first \(k\) components. This structure will be useful in lower bounding the errors \(\left\| y^{2k} - y^* \right\|_2\), and similarly for \(\left\| y^{2k+1} - y^* \right\|_2\).

\begin{lemma}\label{lemma:iterate c1}
Suppose \(L \ge \mu > 0\) and \(s_{\max} \ge \sqrt{5}s_{\min} > 0\). Assume \(1 \le k \le \frac{n}{2}\) and \(n \ge 2\log_{q^{-1}}\left( (2 + 2\sqrt{2})\alpha \right)\). Then,
\begin{equation}\label{eq:iterate c1}
\left\|y^{2k} - y^*\right\|_2 \ge \frac{q^{k}}{2\sqrt{2}} \left\|y^0 - y^*\right\|_2,
\end{equation}
where \(y^0 = 0\) is the initialization.
\end{lemma}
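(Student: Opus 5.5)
By \eqref{eq:Hy c1}, the iterate $y^{2k}$ is supported on its first $k$ coordinates. The plan is to compare $y^*$ with the approximate solution $\hat y^*$ of \Cref{lemma:approximate c1}, whose coordinates are exactly $q^i$, and then absorb the approximation error using the assumption on $n$.

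\textbf{Lower bound on the tail.} Since $y^{2k}$ vanishes on coordinates $k+1,\dots,n$,
\[
\|y^{2k}-\hat y^*\|_2 \ge \Big(\sum_{i=k+1}^n q^{2i}\Big)^{1/2} .
\]
We also have $\|y^0-\hat y^*\|_2=\|\hat y^*\|_2=\big(\sum_{i=1}^n q^{2i}\big)^{1/2}$. The ratio of the two sums is
\[
\frac{\sum_{i=k+1}^n q^{2i}}{\sum_{i=1}^n q^{2i}} = \frac{q^{2k}\,(1-q^{2(n-k)})}{1-q^{2n}} \ge \frac{q^{2k}}{2},
\]
because $k\le n/2$ gives $q^{2(n-k)}\le q^{n}$, and $(1-q^n)/(1-q^{2n}) = 1/(1+q^n)\ge 1/2$. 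Hence
\[
\|y^{2k}-\hat y^*\|_2 \ge \frac{q^k}{\sqrt2}\,\|\hat y^*\|_2 .
\]

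\textbf{Transfer to $y^*$.} Write $\delta=\|y^*-\hat y^*\|_2\le \alpha q^{n+1}$. By the triangle inequality,
\[
\|y^{2k}-y^*\|_2 \ge \frac{q^k}{\sqrt2}\|\hat y^*\|_2-\delta,
\qquad
\|y^0-y^*\|_2=\|y^*\|_2\le \|\hat y^*\|_2+\delta .
\]
Note that $\|\hat y^*\|_2\ge q$. It therefore suffices to show
\[
\frac{q^k}{\sqrt2}\|\hat y^*\|_2-\delta \ge \frac{q^k}{2\sqrt2}\big(\|\hat y^*\|_2+\delta\big),
\]
which rearranges to
\[
\delta\Big(1+\frac{q^k}{2\sqrt2}\Big) \le \frac{q^k}{2\sqrt2}\|\hat y^*\|_2 .
\]
Using $\|\hat y^*\|_2\ge q$ and $q^k\le 1$, a sufficient condition is
\[
\alpha q^{n+1}\Big(1+\tfrac{1}{2\sqrt2}\Big)\le \frac{q^{k+1}}{2\sqrt2},
\quad\text{i.e.}\quad
(2\sqrt2+1)\,\alpha\, q^{n-k}\le 1 .
\]
Since $k\le n/2$, we have $q^{n-k}\le q^{n/2}$. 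The hypothesis $n\ge 2\log_{q^{-1}}((2+2\sqrt2)\alpha)$ gives $q^{n/2}\le 1/((2+2\sqrt2)\alpha)$, so
\[
(2\sqrt2+1)\,\alpha\, q^{n-k} \le \frac{2\sqrt2+1}{2+2\sqrt2} < 1,
\]
and the sufficient condition holds.

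\textbf{Expected obstacle.} The main difficulty is the constant bookkeeping in the transfer step, specifically making the slack match the factor $(2+2\sqrt2)$ in the hypothesis; the chain above appears to close with margin. One must also check that $q\in(0,1)$. This follows from \eqref{eq:q c1}: with $\alpha>0$ we have $\sqrt{1+4\alpha}<1+2\alpha$, so $0<\frac{\sqrt{1+4\alpha}-1}{2\alpha}<1$.
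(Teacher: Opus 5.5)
Your proof is correct and has the same skeleton as the paper's. The zero-chain support of $y^{2k}$ gives the tail bound $\|y^{2k}-\hat y^*\|_2\ge \frac{q^k}{\sqrt2}\|\hat y^*\|_2$, and the triangle inequality with $\|y^*-\hat y^*\|_2\le\alpha q^{n+1}$ then transfers the bound to $y^*$.

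Where you differ is in how you turn the absolute error $\alpha q^{n+1}$ into a relative one. The paper writes $\alpha q^{n+1}\le\frac{q^k}{2\sqrt2}\|y^0-y^*\|_2\cdot 2\sqrt2\,\alpha q^{n-k+1}$. That inequality is equivalent to $\|y^*\|_2\ge 1$, which the paper does not justify. It fails at the admissible choice $L=\mu$, $s_{\max}=\sqrt5\, s_{\min}$: there $\alpha=1$, $q=(3-\sqrt5)/2$, and $\|y^*\|_2\approx\|\hat y^*\|_2\approx 0.41$. Separately, the paper's displayed intermediate bound carries $q^{k-1}$ where its final step needs $q^k$.

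You instead normalize by $\|\hat y^*\|_2\ge q$ and bound $\|y^*\|_2\le\|\hat y^*\|_2+\delta$. This reduces everything to $(2\sqrt2+1)\alpha q^{n-k}\le 1$, which is exactly what the hypothesis on $n$ gives. So your transfer step repairs that part of the paper's argument rather than just offering an alternative. Your explicit tail-ratio computation via $1/(1+q^n)\ge\frac12$ is also a cleaner justification than the paper's one-line appeal to $k\le n/2$.
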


\begin{proof}
By \eqref{eq:Hy c1}, we have
\begin{equation*}
\left\|y^{2k }- \hat{y}^*\right\|_2^2 \ge \sum_{i = k+1}^n q^{2i} = q^{2k} \sum_{i = 1}^{n - k } q^{2i} \ge \frac{q^{2k}}{2} \sum_{i = 1}^{n} q^{2i} = \frac{q^{2k}}{2} \left\|\hat{y}^*\right\|_2^2 = \frac{q^{2k}}{2} \left\|y^0 - \hat{y}^*\right\|_2^2,
\end{equation*}
where the inequality uses \(k \le \frac{n}{2} \) and \(q < 1\). Then, using \(n \ge 2\log_{q^{-1}}\left( (2 + 2\sqrt{2})\alpha \right)\),
\begin{equation*}
\begin{aligned}
\left\|\hat{y}^* - y^*\right\|_2 &\le \alpha q^{n+1} \le \frac{q^{k}}{2\sqrt{2}} \left\|y^0 - y^*\right\|_2 \cdot 2\sqrt{2} \alpha q^{n - k + 1} \\
&\le \frac{q^{k-1}}{2\sqrt{2}} \left\|y^0 - y^*\right\|_2 \left/ \left(1 + \frac{1}{\sqrt{2}}\right)\right..
\end{aligned}
\end{equation*}

Therefore,
\begin{equation*}
\begin{aligned}
\left\|y^{2k} - y^*\right\|_2 &\ge \left\|y^{2k} - \hat{y}^*\right\|_2 - \left\|\hat{y}^* - y^*\right\|_2 \\
&\ge \frac{q^{k}}{\sqrt{2}} \left\|y^0 - \hat{y}^*\right\|_2 - \left\|\hat{y}^* - y^*\right\|_2 \\
&\ge \frac{q^{k}}{\sqrt{2}} \left\|y^0 - y^*\right\|_2 - \left(1 + \frac{q^{k}}{\sqrt{2}}\right) \left\|\hat{y}^* - y^*\right\|_2 \\
&\ge \frac{q^{k}}{\sqrt{2}} \left\|y^0 - y^*\right\|_2 - \left(1 + \frac{1}{\sqrt{2}}\right) \left\|\hat{y}^* - y^*\right\|_2 \\
&\ge \frac{q^{k}}{2\sqrt{2}} \left\|y^0 - y^*\right\|_2.
\end{aligned}
\end{equation*}
\end{proof}

Combining Lemmas~\ref{lemma:duality gap}, \ref{lemma:approximate c1}, and \ref{lemma:iterate c1}, we can express the lower bound for first-order methods on problems under Assumption~\ref{assum:Assumption Linear Lower 1} in the following theorem.

\begin{theorem}\label{thm:full c1}
Let positive parameters \(L \ge \mu > 0\) and \(s_{\max} \ge \sqrt{5}s_{\min} > 0\) be given. Let \(k \ge 1\) be an integer. Then there exists a problem instance of the form \eqref{eq:upper bound prob}, satisfying Assumption~\ref{assum:Assumption Linear Lower 1}, with \(f, \matA, b\) specified in \eqref{eq:problem 1} and \(h\) defined in \eqref{eq:h c1}, such that
\begin{equation*}
n \ge \max \left\{ 2 \log_{q^{-1}}\left( (2 + 2\sqrt{2}) \alpha \right),\ k\right\},
\end{equation*}
where \(\alpha\) is defined in \eqref{eq:a c1} and \(q\) in \eqref{eq:q c1}.

Then, for this problem, any approximate solution \((x^k, y^k) \in \mathcal{H}_x^k \times \mathcal{H}_y^k\) generated by a first-order method as described in Definition~\ref{def:first-order-alg} satisfies the following lower bounds:
\begin{equation}\label{eq:thm:full c1}
{\Phi_x}\left({x}^{k}\right)-{\Phi_y} \left({y}^{k}\right) \geq q^{k}\frac{s_{\min}^2 \left\| y^0 - y^* \right\|_2^2}{16 L} \quad \text {and} \quad\left\|y^{k}-y^*\right\|_2 \ge   \frac{q^{\frac{k}{2}}}{2\sqrt{2}} \left\|y^0-y^*\right\|_2.
\end{equation}
\end{theorem}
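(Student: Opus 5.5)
We assemble Theorem~\ref{thm:full c1} from the pieces already established for the instance \eqref{eq:problem 1} with $h$ given by \eqref{eq:h c1}. The instance satisfies Assumption~\ref{assum:Assumption Linear Lower 1}: $f$ is a diagonal quadratic with eigenvalues in $\{\mu, L\}$, so it is $\mu$-strongly convex and $L$-smooth. Lemma~\ref{lemma:assum1} gives the singular-value bounds on $\matA$, and $b=0$, $\phi\equiv 0$. The dimension $n$ is chosen as large as stated, so that both $n\ge k$ and the hypothesis of Lemma~\ref{lemma:iterate c1} hold. Strictly, we need $n \ge 2\lceil k/2\rceil$, which we take as part of ``$n$ large enough''; this is a harmless enlargement.

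\textbf{Step 1: bound on $y^k$ for even and odd $k$.} For even $k=2j$, apply Lemma~\ref{lemma:iterate c1} with index $j$ (valid since $1\le j\le n/2$). This gives
\[
\|y^{k}-y^*\|_2\ge \tfrac{q^{j}}{2\sqrt2}\|y^0-y^*\|_2=\tfrac{q^{k/2}}{2\sqrt2}\|y^0-y^*\|_2 .
\]
For odd $k=2j+1$, use \eqref{eq:Hy c1}: $\mathcal H_y^{2j+1}=\mathcal H_y^{2j}$, so $y^{2j+1}$ is supported on $e_1,\dots,e_j$. The proof of Lemma~\ref{lemma:iterate c1} used only this support property, so it yields the bound $\frac{q^{j}}{2\sqrt2}\|y^0-y^*\|_2$. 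Since $q<1$, we have $q^j\ge q^{k/2}$, which gives the second inequality of \eqref{eq:thm:full c1}. The case $k=1$ is covered by the same argument with $j=0$: then $y^1=0=y^0$ and the bound is trivial.

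\textbf{Step 2: duality gap.} Combine Step 1 with Lemma~\ref{lemma:duality gap}:
\[
\Phi_x(x^k)-\Phi_y(y^k)\ge \tfrac{s_{\min}^2}{2L}\|y^k-y^*\|_2^2\ge \tfrac{s_{\min}^2}{2L}\cdot\tfrac{q^{k}}{8}\|y^0-y^*\|_2^2 .
\]
The right-hand side equals $q^k\frac{s_{\min}^2\|y^0-y^*\|_2^2}{16L}$, as claimed.

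\textbf{Main obstacle.} The delicate part is the bookkeeping in Step 1. We must confirm that the support statement \eqref{eq:Hy c1} holds for every $j\ge1$, including small $j$: Lemma~\ref{lemma:subspace c1} is stated for $k\ge2$, but the explicit subspace computation above covers $\mathcal H_y^2$ and $\mathcal H_y^3$. We must also check that the odd-index case reuses the even-index estimate without loss. Everything else is direct substitution into Lemmas~\ref{lemma:duality gap} and~\ref{lemma:iterate c1}.

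Finally, the iteration complexity of Theorem~\ref{thm:main_c1_short} follows from $\log(1/q)=\Theta(1/\sqrt{\alpha})$. This holds because $\alpha\ge 1$ when $s_{\max}\ge\sqrt5 s_{\min}$ (then $\hs\ge s_{\min}$), and $\sqrt\alpha=\Theta\big(\tfrac{s_{\max}}{s_{\min}}\sqrt{L/\mu}\big)$.
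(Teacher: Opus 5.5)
Your proposal is correct and follows the paper's argument, which simply combines Lemma~\ref{lemma:iterate c1} (via the support structure \eqref{eq:Hy c1}) with Lemma~\ref{lemma:duality gap}. Your explicit treatment of odd $k$ through $\mathcal{H}_y^{2j+1}=\mathcal{H}_y^{2j}$, and of the trivial case $k=1$, fills in bookkeeping the paper leaves implicit; note only that the enlargement $n\ge 2\lceil k/2\rceil$ is unnecessary, since for odd $k=2j+1$ you need just $j\le n/2$, which already follows from $n\ge k$.
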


\begin{remark}[Lower bound and comparison]\label{remark:comparison c1}
If we require the duality gap to be at most \(\epsilon>0\), then any method in our class needs at least
\begin{equation}\label{eq:lower c1}
k \ge 
\frac{\log\left(\frac{s_{\min}^2\|y^{*}-y^0\|_2^2}{16L\epsilon}\right)}{\log\left(q^{-1}\right)}
= \Omega\left(\frac{s_{\max}}{s_{\min}}\sqrt{\frac{L}{\mu}}\log\left(\frac{1}{\epsilon}\right)\right).
\end{equation}
This follows from
\begin{equation*}
\log\left(q^{-1}\right)
= \Theta(1-q)
= \Theta\left(\frac{1}{\sqrt{\alpha}}\right)
= \Theta\left(\frac{s_{\min}}{s_{\max}}\sqrt{\frac{\mu}{L}}\right),
\end{equation*}
for sufficiently large \(\frac{L}{\mu}\) and \(\frac{s_{\max}}{s_{\min}}\).
The problem is equivalent to the problem with linear equalities studied in \citet{salim2022optimal}:
\begin{equation*}
\min_x f(x) \quad \text{s.t.} \quad Mx=b.
\end{equation*}
Let \(\kappa := \frac{L}{\mu}\) and \(\chi := \frac{s_{\max}^2}{s_{\min}^2}\) (notations from \cite{salim2022optimal}). Then
\begin{equation*}
\Omega\left(\frac{s_{\max}}{s_{\min}}\sqrt{\frac{L}{\mu}}\log\left(\frac{1}{\epsilon}\right)\right)
=
\Omega\left(\sqrt{\kappa\chi}\log\left(\frac{1}{\epsilon}\right)\right),
\end{equation*}
which also appears in Theorem~1 of \citet{salim2022optimal} and is covered by the unified analysis in Theorem~2 of \citet{kovalev2024linear}.
While \citet{kovalev2024linear} establishes more general lower bounds, we opt for a simpler, more direct construction. In particular, it allows us to explicitly specify the size and dimensionality of the hard instances, quantities that are infinite-dimensional in \citet{scaman2017optimal} and not explicitly parameterized in \citet{kovalev2024linear}. This concreteness provides complementary value in terms of interpretability and accessibility.
\end{remark}

\begin{remark}[First-order algorithm class]\label{remark:first-order-alg}
Definition~\ref{def:first-order-alg} follows the first-order oracle model of \citet{zhang2022lower}. In view of the bilinear coupling in \eqref{eq:upper bound prob}, it is also natural to consider the following slightly more general class of first-order algorithms:
\begin{equation}\label{eq:def2:first-order-alg}
\left\{
\begin{aligned}
\mathcal{H}_x^{k+1} &:= \mathrm{span}\left\{x^i,\ \nabla f\left(\bar{x}^i\right),\ \matA^{\top}\bar{y}^i : x^i, \bar{x}^i \in \mathcal{H}_x^i,\ \bar{y}^i \in \mathcal{H}_y^i,\ 0 \le i \le k \right\}, \\
\mathcal{H}_y^{k+1} &:= \mathrm{span}\left\{y^i,\ b,\ \matA\bar{x}^i : \bar{x}^i \in \mathcal{H}_x^i,\ y^i\in \mathcal{H}_y^i,\ 0 \le i \le k \right\}.
\end{aligned}
\right.
\end{equation}
Because the optimality certificate in \eqref{eq:problem 1} depends on the iterates only through the subspaces \(\mathcal{H}_x^k\) and \(\mathcal{H}_y^k\), \Cref{lemma:subspace c1} applies to any iterate sequence satisfying \eqref{eq:def2:first-order-alg}, and therefore Theorem~\ref{thm:full c1} remains valid.
Moreover, in this setting the oracle terms in \eqref{eq:def2:first-order-alg} do not provide additional directions that would enlarge the relevant subspaces for our hard instance, so our analysis also recovers the matrix--vector multiplication lower bound
\(\Omega\left(\frac{s_{\max}}{s_{\min}}\sqrt{\frac{L}{\mu}}\log\left(\frac{1}{\epsilon}\right)\right)\) (cf.~\citet{salim2022optimal,kovalev2024linear}); the lower bound
\(\Omega\left(\sqrt{\frac{L}{\mu}}\log\left(\frac{1}{\epsilon}\right)\right)\) for oracle access to \(\nabla f\) is classical.
An analogous argument applies to the second certificate and analysis for Theorem~\ref{thm:full c2} (with \(b\) replaced by \(\nabla g\left(\bar{y}^i\right)\), \(\bar{y}^i \in \mathcal{H}_y^i\), \(0 \le i \le k\)).
\end{remark}

\subsection{Block-wise Affinely Constrained Strongly-Convex Case (\cref{cor:main_stoc_short})}
\label{appensubsec:lower stoc}
This subsection establishes the block-wise lower bound in \Cref{cor:main_stoc_short}. We first define the
block-coordinate linear-span oracle model, and then show how the deterministic lower-bound construction
extends to this setting.
\begin{definition}[Block-coordinate first-order method class]
\label{def:block-first-order-alg}
A block-coordinate first-order method generates iterates
$\{(x_1^k,\ldots,x_N^k,y^k)\}_{k\ge 0}$ such that
$x_i^k \in \mathcal{H}_{x,i}^k$ for all $i$ and $y^k \in \mathcal{H}_y^k$, where
$\mathcal{H}_{x,i}^0=\mathrm{span}\{x_i^0\}$ and $\mathcal{H}_y^0=\mathrm{span}\{y^0\}$, and
there exists an index sequence $\{i_k\}_{k\ge 0}$, with $i_k\in\{1,\ldots,N\}$ such that
\begin{equation}\label{eq:def:block-first-order-alg}
\left\{
\begin{aligned}
\mathcal{H}_{x,i_k}^{k+1}
&:= \mathrm{span}\left\{
x_{i_k}^t,\ \nabla f_{i_k}(\bar x_{i_k}^t),\ \matA_{i_k}^{\top}\bar y^t :
x_{i_k}^t, \bar x_{i_k}^t \in \mathcal{H}_{x,i_k}^{t},\ \bar y^t \in \mathcal{H}_{y}^{t},\ 0 \le t \le k
\right\}, \\
\mathcal{H}_{x,\ell}^{k+1} &:= \mathcal{H}_{x,\ell}^{k}\quad \text{for all }\ell\ne i_k,\\
\mathcal{H}_{y}^{k+1}
&:= \mathrm{span}\left\{
y^t,\ b,\ \matA_{i_k}\bar x_{i_k}^t :
\bar x_{i_k}^t \in \mathcal{H}_{x,i_k}^{t},\ y^t \in \mathcal{H}_{y}^{t},\ 0 \le t \le k
\right\}.
\end{aligned}
\right.
\end{equation}
\end{definition}

\begin{corollary}\label{cor:full c1 stoc}
Let positive parameters $\bar L \ge \mu > 0$ and $\bar s_{\max} \ge \sqrt{5}s_{\min} > 0$ be given.
Let $k \ge 1$ be an integer. Then there exists a problem instance of the form
\eqref{eq:upper bound prob sto}, satisfying Assumption~\ref{assum:Assumption Linear Stoc}, with
$f_i,\matA_i,b$ specified in \eqref{eq:problem 1 stoc} and $h$ defined as in \eqref{eq:h c1}, such that
\begin{equation*}
n \ge N \max\left\{2\log_{q^{-1}}\left((2+2\sqrt{2})\alpha\right),\ \left\lfloor \frac{k}{N}\right\rfloor\right\},
\end{equation*}
where $\alpha =  \frac{\bar L}{\mu}\frac{\hs^2}{s_{\min}^2}$, $q=1-\frac{\sqrt{1+4\alpha}-1}{2\alpha}$,
and $\hs = \frac{\sqrt{\bar s_{\max}^2-s_{\min}^2}}{2}$.
For this problem, any approximate solution $(x_1^k,\ldots,x_N^k,y^k)$ generated by a block-coordinate
first-order method as in Definition~\ref{def:block-first-order-alg} satisfies
\begin{equation}\label{eq:cor c1 stoc y}
\left\|y^{k}-y^*\right\|_2
\ge \frac{q^{\left\lfloor k/N\right\rfloor/2}}{2\sqrt{2}} \left\|y^0-y^*\right\|_2.
\end{equation}
In particular, to achieve $\|y^{k}-y^*\|_2 \le \epsilon$, any such method needs at least
\begin{equation}\label{eq:cor c1 extension}
k \ge \frac{2N}{\log(q^{-1})}\log\left(\frac{\|y^0-y^*\|_2}{2\sqrt{2}\epsilon}\right)
= \Omega\left(N\frac{\bar s_{\max}}{s_{\min}}\sqrt{\frac{\bar L}{\mu}}\log\left(\frac{1}{\epsilon}\right)\right)
\end{equation}
block-coordinate iterations.
\end{corollary}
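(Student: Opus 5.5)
We take the instance \eqref{eq:problem 1 stoc} to be a block-diagonal replication of the deterministic hard instance \eqref{eq:problem 1}. Concretely, split $y=(y_{(1)},\dots,y_{(N)})$ into $N$ blocks of dimension $n'=n/N$. For block $i$, let $f_i$ be the quadratic of \eqref{eq:problem 1} in its own variable $x_i=(x_{i,1},x_{i,2})\in\bbR^{2n'}$, with $L$ replaced by $\bar L$. Let $\matA_i$ have the nonzero block $\begin{pmatrix}-s_{\min}I & \hs A\end{pmatrix}$ in the $i$-th row block and zeros elsewhere, where $\hs=\frac{\sqrt{\bar s_{\max}^2-s_{\min}^2}}{2}$. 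Set $b=0$ and use the same $h$ from \eqref{eq:h c1} in every block. Then $\matA\matA^\top$ is block diagonal, and each diagonal block is $\begin{pmatrix}-s_{\min}I & \hs A\end{pmatrix}\begin{pmatrix}-s_{\min}I & \hs A\end{pmatrix}^\top$. So Lemma~\ref{lemma:assum1} gives that $\matA$ has full row rank with minimal singular value greater than $s_{\min}$, and that each $\matA_i$ has largest singular value at most $\bar s_{\max}$. Each $f_i$ is $\mu$-strongly convex and $\bar L$-smooth, so Assumption~\ref{assum:Assumption Linear Stoc} holds. The saddle problem decouples into $N$ identical copies of the deterministic problem. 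By Lemma~\ref{lemma:yast c1}, $y^*_{(i)}$ is the same vector for every $i$, with $\alpha=\frac{\bar L}{\mu}\frac{\hs^2}{s_{\min}^2}$. Hence $\|y^0_{(i)}-y^*_{(i)}\|_2^2=\|y^0-y^*\|_2^2/N$ when $y^0=0$.

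The key structural step is a block version of Lemma~\ref{lemma:subspace c1}. In Definition~\ref{def:block-first-order-alg}, at step $k$ only the quantities $\nabla f_{i_k}$, $\matA_{i_k}^\top\bar y$ and $\matA_{i_k}\bar x_{i_k}$ are revealed. Because of the block-diagonal structure, $\matA_{i_k}^\top\bar y$ depends only on $\bar y_{(i_k)}$, and $\matA_{i_k}\bar x_{i_k}$ is supported in the $i_k$-th $y$-block. Therefore the subspaces for copy $i$, namely $\mathcal{H}_{x,i}$ and the $i$-th component of $\mathcal{H}_y$, evolve exactly as in the deterministic recursion, but they advance only at the steps where $i_k=i$. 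Let $t_i$ be the number of steps before $k$ with $i_k=i$, so that $\sum_i t_i= k$. The zero-chain argument giving \eqref{eq:Hy c1} then shows that $y^k_{(i)}$ is supported on at most its first $\lceil t_i/2\rceil$ coordinates. The proof of Lemma~\ref{lemma:iterate c1} then yields, for each copy,
$$\|y^k_{(i)}-y^*_{(i)}\|_2\ge \tfrac{q^{t_i/2}}{2\sqrt2}\|y^0_{(i)}-y^*_{(i)}\|_2 .$$
This step needs $n'\ge 2\log_{q^{-1}}((2+2\sqrt2)\alpha)$ and $n'\ge t_i$ (or $t_i\le n'$ after truncation), which is where the dimension condition on $n$ comes from.

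It remains to aggregate over copies. Summing the squared per-copy bounds gives
$$\|y^k-y^*\|_2^2\ge \frac{\|y^0-y^*\|_2^2}{8N}\sum_{i=1}^N q^{t_i}.$$
Since $t\mapsto q^{t}$ is convex, Jensen's inequality gives $\frac1N\sum_i q^{t_i}\ge q^{k/N}\ge q^{\lceil k/N\rceil}$. This yields a bound of the form \eqref{eq:cor c1 stoc y} with exponent $k/(2N)$ and no $\sqrt N$ loss; the stated $\lfloor k/N\rfloor$ form follows up to rounding. Inverting this bound and using $\log(q^{-1})=\Theta(\alpha^{-1/2})=\Theta\bigl(\frac{s_{\min}}{\bar s_{\max}}\sqrt{\mu/\bar L}\bigr)$, as in Remark~\ref{remark:comparison c1}, gives \eqref{eq:cor c1 extension}.

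The main obstacle is the per-copy truncation bookkeeping. The condition $k\le n/2$ in Lemma~\ref{lemma:iterate c1} must hold for each copy individually, but a single copy might receive many more than $k/N$ updates. I would handle this by capping: once $t_i$ exceeds $n'$, the bound for that copy is only weakened to $q^{n'/2}$. With $n'\ge\lfloor k/N\rfloor$ suitably large, Jensen's inequality applied to the capped exponents $\min(t_i,n')$, whose sum is still at most $k$, still gives the $q^{k/N}$ average bound. Checking that the sharing of the single $y$-iterate across blocks never leaks directions between copies is the other point I would verify carefully. The leak is blocked because every oracle output is block-supported.
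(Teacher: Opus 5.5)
Your construction, the block-wise decoupling of the Krylov subspaces, and the per-copy reduction to \Cref{thm:full c1} are the paper's argument; the paper's proof is a one-line appeal to \Cref{thm:full c1} on each copy. Your Jensen aggregation over copies is a sound way to fill in the step the paper leaves implicit. However, you resolve the step you call the main obstacle incorrectly. It is not true that once $t_i>n'$ the bound for copy $i$ is ``only weakened to $q^{n'/2}$''. After $t_i\ge 2n'$ updates of copy $i$, its $y$-subspace is $\mathrm{span}\{e_1,\dots,e_{n'}\}=\bbR^{n'}$ by \eqref{eq:Hy c1}. A method in the class may then output $y^k_{(i)}=y^*_{(i)}$ exactly, so that copy's error is $0$. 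Jensen over the capped exponents $\min(t_i,n')$ therefore rests on a per-copy bound that fails. The repair is to use the freedom in the dimension instead. The statement only lower-bounds $n$, so take $n=Nn'$ with $n'\ge\max\{2\log_{q^{-1}}((2+2\sqrt2)\alpha),\,k\}$, just as \Cref{thm:full c1} takes $n\ge k$. Then $t_i\le k\le n'$ for every copy and every index sequence, Lemma~\ref{lemma:iterate c1} applies to all copies, and no capping is needed. Separately, the support bound is $\lfloor t_i/2\rfloor$, not $\lceil t_i/2\rceil$; the ceiling would give only $q^{\lceil t_i/2\rceil}$, not the $q^{t_i/2}$ you then use.

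Second, the rounding runs the other way from what you say. Since $q<1$ and $k/N\ge\lfloor k/N\rfloor$, your bound $\frac{q^{k/(2N)}}{2\sqrt2}\|y^0-y^*\|_2$ is \emph{weaker} than \eqref{eq:cor c1 stoc y}, so the floor form does not follow from it. It does give \eqref{eq:cor c1 extension} verbatim, which is what the complexity claim needs. To get the floor exponent itself you must absorb a factor $q^{(k/N-\lfloor k/N\rfloor)/2}\ge q^{1/2}$ into the constant. Black-box use of the per-copy constant $\frac{1}{2\sqrt2}$ cannot do this. Take $N\ge 3$ and $k=2N+2$, with one copy updated four times and the rest twice. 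The per-copy bounds then sum only to $\frac{q^4+(N-1)q^2}{8N}\|y^0-y^*\|_2^2<\frac{q^2}{8}\|y^0-y^*\|_2^2$. One route to a sharper per-copy constant is to take $n'\gg k$. The tail sum in Lemma~\ref{lemma:iterate c1} then captures almost all of $\|\hat y^*_{(i)}\|_2^2$ and the approximation error is negligible. Combine this with $q\ge\frac{3-\sqrt5}{2}$, which holds since $\alpha\ge1$. Otherwise, state the bound with exponent $k/(2N)$.
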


\begin{proof}
Assume \(n\) is divisible by \(N\); otherwise, truncate to the nearest multiple of \(N\). We construct the instance by taking multiple copies of the problem in \eqref{eq:problem 1}. Namely let \(n_i=\frac{n}{N}\), 
$m_i=2n_i=\frac{2n}{N}$ and write $x_i^{\top}=(x_{i,1}^{\top},x_{i,2}^{\top})$. Define
\begin{equation}\label{eq:problem 1 stoc}
\begin{aligned}
f_i(x_i)
&= \frac{1}{2}
\begin{pmatrix} x_{i,1}^\top & x_{i,2}^\top \end{pmatrix}
\begin{pmatrix}
\bar{L} I & 0 \\
0 & \mu I
\end{pmatrix}
\begin{pmatrix} x_{i,1} \\ x_{i,2} \end{pmatrix}
- h^\top x_{i,2}, \\
\matA_i
&= \begin{pmatrix}
\vdots & \vdots \\
0 & 0 \\
-s_{\min} I & \hs A \\
0 & 0 \\
\vdots & \vdots
\end{pmatrix}, \\
b &= 0,
\end{aligned}
\end{equation}
where the nonzero block row of $\matA_i$ is the $i$-th block row.

For the instance \eqref{eq:problem 1 stoc}, which is a separable problem in the form of \eqref{eq:upper bound prob sto}, satisfying \Cref{assum:Assumption Linear Stoc}, each oracle call indexed by $i$ only accesses
$\nabla f_i(\cdot)$, $\matA_i^{\top}(\cdot)$, and $\matA_i(\cdot)$ for that same index $i$, as captured by
\eqref{eq:def:block-first-order-alg}. The claim then follows directly from Theorem~\ref{thm:full c1} on each copy.
\end{proof}

Corollary~\ref{cor:full c1 stoc} is a direct extension of \Cref{thm:full c1} to the block-wise setting, where the lower-bound certificate is obtained by stacking copies of \eqref{eq:problem 1}.
Moreover, since the index sequence \(\{i_k\}_{k\ge 0}\) in \eqref{eq:def:block-first-order-alg} is arbitrary, the class includes randomized sampling rules where \(i_k\) is drawn each iteration. The variant that samples two independent indices \((i,j)\) per iteration can be accommodated by interpreting one iteration as two consecutive updates in \eqref{eq:def:block-first-order-alg}.

\subsection{Convex--Concave Case (\Cref{thm:main_c2_short})}
\label{appensubsec:lower2}
In this subsection we prove Theorem~\ref{thm:main_c2_short} under Assumption~\ref{assum:Assumption Linear Lower 2}. Our construction follows the classical linear span hard instance framework and produces an explicit finite dimensional smooth convex--concave saddle point problem with $m=n$ and a full rank coupling matrix whose singular values lie between $s_{\min}$ and $s_{\max}$. 

Assume without loss of generality that the initial point is \(x^0 = y^0 = 0\) \(\left(\mathcal{H}_x^0 = \mathcal{H}_y^0 = \{0\}\right)\). We consider the following instance of problem~\eqref{eq:problem_lower_2} with \(m = n = 2\ell\). Let \(x = \begin{pmatrix} x_1 \\ x_2 \end{pmatrix}\), \(y = \begin{pmatrix} y_1 \\ y_2 \end{pmatrix}\), where \(x_1, x_2, y_1, y_2 \in \bbR^\ell\), and let \(A \in \bbR^{\ell \times \ell}\). Define:
\begin{equation}\label{eq:problem 2}
\begin{aligned}
    f(x) &= \frac{L_x}{2} \, x_1^\top x_1, \\
    \matA &= \begin{pmatrix}
    \hs A & s_{\min} I \\
    -s_{\min} I & \hs A
    \end{pmatrix}, \\
    g(y) &= \frac{L_y}{2} \, y_1^\top y_1 - h^\top y_2,
\end{aligned}
\end{equation}
where \(\hs = \frac{\sqrt{s_{\max}^2 - s_{\min}^2}}{2}\).

The following lemma shows that the setting in \eqref{eq:problem 2} satisfies Assumption~\ref{assum:Assumption Linear Lower 2}.

\begin{lemma}\label{lemma:assum2}
The matrix
\begin{equation*}
\begin{pmatrix}
\hs A & s_{\min} I \\
- s_{\min} I & \hs A
\end{pmatrix}
\end{equation*}
has largest singular value at most \(s_{\max}\), and minimal singular value strictly greater than \(s_{\min}\).
\end{lemma}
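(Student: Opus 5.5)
We compute the singular values of $\matA=\begin{pmatrix}\hs A & s_{\min} I\\ -s_{\min} I & \hs A\end{pmatrix}$ through the eigenvalues of $\matA\matA^{\top}$. The approach mirrors the proof of Lemma~\ref{lemma:assum1}: use that $A$ is symmetric (Proposition~\ref{prop:A properties}) to make the cross terms cancel, and then read off the spectrum from that of $A^2$.

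First, write out the product blockwise using $A=A^{\top}$:
\begin{equation*}
\matA\matA^{\top}=\begin{pmatrix}\hs A & s_{\min} I\\ -s_{\min} I & \hs A\end{pmatrix}\begin{pmatrix}\hs A & -s_{\min} I\\ s_{\min} I & \hs A\end{pmatrix}=\begin{pmatrix}\hs^2A^2+s_{\min}^2 I & -\hs s_{\min}A+\hs s_{\min}A\\ -\hs s_{\min}A+\hs s_{\min}A & s_{\min}^2I+\hs^2A^2\end{pmatrix}.
\end{equation*}
The off-diagonal blocks vanish, so $\matA\matA^{\top}=\mathrm{diag}\left(s_{\min}^2I+\hs^2A^2,\ s_{\min}^2I+\hs^2A^2\right)$. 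Hence the singular values of $\matA$ are $\sqrt{s_{\min}^2+\hs^2\lambda_i^2}$, each appearing twice, where the $\lambda_i$ are the eigenvalues of the symmetric matrix $A$.

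Second, bound the $\lambda_i^2$. Since $A$ is nonsingular, each $\lambda_i^2>0$. Since $\|A\|_2\le 2$, each $\lambda_i^2\le 4$. Both facts come from Proposition~\ref{prop:A properties}.

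Third, plug in $\hs^2=\frac{s_{\max}^2-s_{\min}^2}{4}$. This gives
\begin{equation*}
s_{\min}^2<s_{\min}^2+\hs^2\lambda_i^2\le s_{\min}^2+4\hs^2=s_{\max}^2,
\end{equation*}
so every singular value lies in $(s_{\min},s_{\max}]$. In particular $\matA$ has full rank, as Assumption~\ref{assum:Assumption Linear Lower 2}(i) requires.

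The only point needing care is the cancellation of the cross terms, which depends on $A$ being symmetric; given Proposition~\ref{prop:A properties}, I expect no real obstacle. The argument uses the $\ell\times\ell$ version of $A$, and Proposition~\ref{prop:A properties} applies to it for any dimension.
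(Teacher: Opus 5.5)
Your proposal is correct and follows the same route as the paper, which asserts that the singular values are $\sqrt{s_{\min}^2+\hat s^2\lambda_i^2}$ and uses $\lambda_i^2\in(0,4]$; you additionally write out $\matA\matA^{\top}$ blockwise and use the symmetry of $A$ to justify that formula. One small point both arguments leave implicit is that the strict inequality $s_{\min}^2<s_{\min}^2+\hat s^2\lambda_i^2$ needs $\hat s>0$, which holds because $s_{\max}\ge\sqrt{5}\,s_{\min}>s_{\min}$.
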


\begin{proof}
The singular values of the matrix
\begin{equation*}
\begin{pmatrix}
\hs A & s_{\min} I \\
- s_{\min} I & \hs A
\end{pmatrix}
\end{equation*}
are given by \(\sqrt{s_{\min}^2 + \hs^2 \lambda_i^2}\), where \(\lambda_i\) are the eigenvalues of \(A\). Since \(\lambda_i^2 \in (0, 4]\) and \(\hs = \frac{\sqrt{s_{\max}^2 - s_{\min}^2}}{2}\), these singular values lie in the interval \((s_{\min}, s_{\max}]\), as claimed.
\end{proof}

The partial derivatives of \(F(x, y)\) are:
\begin{equation*}
\begin{aligned}
\nabla_{x_1} F(x, y) &= L_x x_1 + \hs A y_1 - s_{\min} y_2, \\
\nabla_{x_2} F(x, y) &= s_{\min} y_1 + \hs A y_2, \\
\nabla_{y_1} F(x, y) &= \hs A x_1 + s_{\min} x_2 - L_y y_1, \\
\nabla_{y_2} F(x, y) &= -s_{\min} x_1 + \hs A x_2 + h.
\end{aligned}
\end{equation*}

Therefore, by Definition~\ref{def:first-order-alg}, the subspaces evolve as follows:
\begin{equation*}\begin{aligned}\left\{\begin{aligned}
\mathcal{H}_{x_1}^{1}&=\mathrm{span}\{0\}\\
\mathcal{H}_{x_2}^{1}&=\mathrm{span}\{0\}\\  
\mathcal{H}_{y_1}^{1}&=\mathrm{span}\{0\}\\
\mathcal{H}_{y_2}^{1}&=\mathrm{span}\{h\}  
\end{aligned}\right.,\quad \left\{\begin{aligned}
\mathcal{H}_{x_1}^{2}&=\mathrm{span}\{h\}\\
\mathcal{H}_{x_2}^{2}&=\mathrm{span}\{Ah\}\\  
\mathcal{H}_{y_1}^{2}&=\mathrm{span}\{0\}\\ 
\mathcal{H}_{y_2}^{2}&=\mathrm{span}\{h\} 
\end{aligned}\right. ,\quad \left\{\begin{aligned}
\mathcal{H}_{x_1}^{3}&=\mathrm{span}\{h\}\\
\mathcal{H}_{x_2}^{3}&=\mathrm{span}\{Ah\}\\  
\mathcal{H}_{y_1}^{3}&=\mathrm{span}\{Ah\}\\ 
\mathcal{H}_{y_2}^{3}&=\mathrm{span}\{h,A^2h\}
\end{aligned}\right.,\\
\left\{\begin{aligned}
\mathcal{H}_{x_1}^{4}&=\mathrm{span}\{h,A^2h\}\\
\mathcal{H}_{x_2}^{4}&=\mathrm{span}\{Ah,A^3h\}\\  
\mathcal{H}_{y_1}^{4}&=\mathrm{span}\{Ah\}\\
\mathcal{H}_{y_2}^{4}&=\mathrm{span}\{h,A^2h\}  
\end{aligned}\right.,\quad \left\{\begin{aligned}
\mathcal{H}_{x_1}^{5}&=\mathrm{span}\{h,A^2h\}\\
\mathcal{H}_{x_2}^{5}&=\mathrm{span}\{Ah,A^3h\}\\  
\mathcal{H}_{y_1}^{5}&=\mathrm{span}\{Ah,A^3h\}\\ 
\mathcal{H}_{y_2}^{5}&=\mathrm{span}\{h,A^2h,A^4h\} 
\end{aligned}\right. ,\dots
\end{aligned}
\end{equation*}

By induction, we have the following lemma:

\begin{lemma}\label{lemma:subspace c2}
For problem~\eqref{eq:problem_lower_2} with \(f, \matA, g\) specified in \eqref{eq:problem 2}, if the sequence of iterates satisfies Definition~\ref{def:first-order-alg}, then for any \(k \ge 2\),
\begin{equation*}
\left\{
\begin{aligned}
\mathcal{H}_{y_1}^{2k}   &= \mathrm{span}\{A^{2i} A h : i = 0, \ldots, k - 2\}, \\
\mathcal{H}_{y_2}^{2k}   &= \mathrm{span}\{A^{2i} h : i = 0, \ldots, k - 1\}
\end{aligned}
\right., \quad
\left\{
\begin{aligned}
\mathcal{H}_{y_1}^{2k+1} &= \mathrm{span}\{A^{2i} A h : i = 0, \ldots, k - 1\}, \\
\mathcal{H}_{y_2}^{2k+1} &= \mathrm{span}\{A^{2i} h : i = 0, \ldots, k\}
\end{aligned}
\right..
\end{equation*}
\end{lemma}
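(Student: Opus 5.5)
The plan is to prove, by induction on \(k\), a strengthened statement that also tracks the primal subspaces. For every \(k\ge 2\) the induction hypothesis will be
\begin{equation*}
\mathcal{H}_{x_1}^{2k}=\mathcal{H}_{x_1}^{2k+1}=\mathrm{span}\{A^{2i}h: i=0,\ldots,k-1\},\qquad
\mathcal{H}_{x_2}^{2k}=\mathcal{H}_{x_2}^{2k+1}=\mathrm{span}\{A^{2i}Ah: i=0,\ldots,k-1\},
\end{equation*}
together with the formulas for \(\mathcal{H}_{y_1}^{2k},\mathcal{H}_{y_2}^{2k},\mathcal{H}_{y_1}^{2k+1},\mathcal{H}_{y_2}^{2k+1}\) in the lemma. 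The base case \(k=2\) is read off from the explicit list of \(\mathcal{H}^1,\dots,\mathcal{H}^5\) computed just above. Throughout I will use two facts. First, by Definition~\ref{def:first-order-alg} the subspaces are nested. Second, \(\mathcal{H}^{j+1}\) is spanned by \(\mathcal{H}^{j}\) together with the partial gradients evaluated at arbitrary points of \(\mathcal{H}_x^{j}\times\mathcal{H}_y^{j}\); gradients at earlier points add nothing new because of nestedness.

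\emph{Step \(2k\to 2k+1\) (dual expands, primal frozen).} I substitute the even-index spans into the four partial derivatives listed before the lemma.
\begin{itemize}
\item In \(\nabla_{y_1}F=\hs Ax_1+s_{\min}x_2-L_yy_1\), the term \(Ax_1\) lies in \(\mathrm{span}\{A^{2i+1}h:i\le k-1\}\), and so do \(x_2\) and \(y_1\). Hence \(\mathcal{H}_{y_1}^{2k+1}=\mathrm{span}\{A^{2i}Ah:i\le k-1\}\).
\item In \(\nabla_{y_2}F=-s_{\min}x_1+\hs Ax_2+h\), the term \(x_1\) contributes \(A^{2i}h\) for \(i\le k-1\) and \(Ax_2\) contributes \(A^{2i+2}h\) for \(i\le k-1\). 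This yields \(\mathrm{span}\{A^{2i}h:i\le k\}\).
\item In \(\nabla_{x_1}F=L_xx_1+\hs Ay_1-s_{\min}y_2\), the term \(Ay_1\) contributes \(A^{2i+2}h\) for \(i\le k-2\), and \(y_2\) contributes \(A^{2i}h\) for \(i\le k-1\). Both already lie in \(\mathcal{H}_{x_1}^{2k}\).
\item In \(\nabla_{x_2}F=s_{\min}y_1+\hs Ay_2\), all contributions are \(A^{2i+1}h\) with \(i\le k-1\). These already lie in \(\mathcal{H}_{x_2}^{2k}\).
\end{itemize}
So the primal spaces do not grow at this step.

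\emph{Step \(2k+1\to 2k+2\) (primal expands, dual frozen).} The same substitution with the odd-index spans works as follows.
\begin{itemize}
\item \(Ay_1\) and \(y_2\) now contribute \(A^{2i}h\) for \(i\le k\), so \(\mathcal{H}_{x_1}^{2k+2}=\mathrm{span}\{A^{2i}h:i\le k\}\).
\item \(y_1\) and \(Ay_2\) give \(A^{2i+1}h\) for \(i\le k\), so \(\mathcal{H}_{x_2}^{2k+2}=\mathrm{span}\{A^{2i}Ah:i\le k\}\).
\item The \(y\)-gradients only produce \(A^{2i+1}h\) with \(i\le k-1\) and \(A^{2i}h\) with \(i\le k\), because the primal spaces were frozen at step \(2k+1\).
\end{itemize}
Therefore \(\mathcal{H}_{y_1}^{2k+2}=\mathcal{H}_{y_1}^{2k+1}\) and \(\mathcal{H}_{y_2}^{2k+2}=\mathcal{H}_{y_2}^{2k+1}\). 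Re-indexing \(k+1\) in place of \(k\), this is exactly the claimed form of \(\mathcal{H}^{2(k+1)}\), which closes the induction.

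The main obstacle is bookkeeping rather than an idea: the \(\subseteq\) direction is automatic from the substitutions above. For \(\supseteq\), I would show that each new generator is actually attained by a gradient at a suitable point. For example, take \(x_1\in\mathcal{H}_{x_1}^{2k}\) and set all other blocks to zero, giving \(-s_{\min}x_1+h\); subtract the gradient at \(0\), which is \(h\); this isolates each \(A^{2i}h\), and the pair \((x_1,x_2)=(0,A^{2k-1}h)\) gives the top power \(A^{2k}h\) in the same way. The analogous choices isolate each new generator in the other blocks. Only the inclusion is needed later for the lower bound, so if this exactness argument becomes delicate, I would state the inclusion, which suffices.
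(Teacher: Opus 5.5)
Your induction is the argument the paper compresses into ``by induction'' after listing $\mathcal{H}^1,\dots,\mathcal{H}^5$. Adding the primal spans to the hypothesis is exactly what makes it close, and your $\subseteq$ bookkeeping in both half-steps is correct.

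The problem is the $\supseteq$ paragraph. Choosing evaluation points such as $(x_1,0)$ or $(0,A^{2k-1}h)$ assumes the points range over the product $\mathcal{H}_{x_1}^{2k}\times\mathcal{H}_{x_2}^{2k}$. Definition~\ref{def:first-order-alg} only allows points of the joint subspace $\mathcal{H}_x^{2k}\subseteq\mathbb{R}^{2\ell}$, and that subspace is not a product.

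Concretely, $\mathcal{H}_x^1=\{0\}$ and $\mathcal{H}_y^1=\mathcal{H}_y^2=\mathrm{span}\{(0,h)\}$. Hence $\mathcal{H}_x^2=\mathrm{span}\{(-s_{\min}h,\hs Ah)\}$. At such a point the $y_1$-part of $\nabla_yF$ is $\hs A(-s_{\min}h)+s_{\min}\hs Ah=0$; this is just the block-diagonality of $\matA\matA^\top$.

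Carrying this forward gives $\mathcal{H}_x^4=\mathrm{span}\{(h,0),(0,Ah),(-s_{\min}A^2h,\hs A^3h)\}$. The $y_1$-block of $\mathcal{H}_y^5$ is then $\mathrm{span}\{Ah\}\subsetneq\mathrm{span}\{Ah,A^3h\}$. In general, the top powers of $\mathcal{H}_x^{2k}$ occur only through the coupled vector $(-s_{\min}A^{2k-2}h,\hs A^{2k-1}h)=\matA^\top(0,A^{2k-2}h)$, and its $y_1$-contribution cancels. So the claimed equality for $\mathcal{H}_{y_1}^{2k+1}$ is false for the block projections of the subspaces in the definition.

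The equalities, and the paper's explicit list (which already has $\mathcal{H}_{y_1}^3=\mathrm{span}\{Ah\}$), hold only under blockwise bookkeeping. There each block is generated from the product of the block spans, and under that reading your isolation argument is fine.

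So your fallback is not optional if you work with Definition~\ref{def:first-order-alg} as stated. Prove the inclusions: the projection of $\mathcal{H}_y^{j}$ onto each $y$-block lies in the claimed span, and likewise for the primal blocks. Your substitution argument gives this verbatim, because a joint subspace is always contained in the product of its block projections. That inclusion is all that \eqref{eq:Hy c2} and Lemma~\ref{lemma:iterate c2} use, so the lower bound is unaffected.
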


Before presenting our results, we first establish the following useful lemma, which provides a closed-form expression for the inverse of a structured block matrix.

\begin{lemma}\label{lemma:inverse}
Let \( S \in \mathbb{R}^{N \times N} \) be a non-singular matrix, and let \( H, G \succeq 0 \) be symmetric matrices of the same dimension. Suppose that the matrix \( S + H S^{-\top} G \) is also non-singular. Then, the inverse of the block matrix
\begin{equation*}
\begin{pmatrix}
H & S \\
- S^{\top} & G
\end{pmatrix}
\end{equation*}
is given by
\begin{equation*}
\begin{pmatrix}
H & S \\
- S^{\top} & G
\end{pmatrix}^{-1}
=
\begin{pmatrix}
S^{-\top} G (S + H S^{-\top} G)^{-1} & -S^{-\top} + S^{-\top} G (S + H S^{-\top} G)^{-1} H S^{-\top} \\
(S + H S^{-\top} G)^{-1} & (S + H S^{-\top} G)^{-1} H S^{-\top}
\end{pmatrix}.
\end{equation*}
\end{lemma}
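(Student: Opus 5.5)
The plan is a direct verification. I will propose the candidate inverse $X$ given in the statement and check that $\begin{pmatrix} H & S\\ -S^\top & G\end{pmatrix} X = I$. Since the matrices are square, a one-sided inverse is automatically two-sided. As a side remark, the assumption $H,G\succeq 0$ is not actually used in the algebra; it only serves to motivate the nonsingularity hypothesis on $T := S + HS^{-\top}G$. Throughout, write $X=\begin{pmatrix} X_{11} & X_{12}\\ X_{21} & X_{22}\end{pmatrix}$ with
\begin{equation*}
X_{11}=S^{-\top}GT^{-1},\quad X_{12}=-S^{-\top}+S^{-\top}GT^{-1}HS^{-\top},\quad X_{21}=T^{-1},\quad X_{22}=T^{-1}HS^{-\top}.
\end{equation*}

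I will check the four blocks in turn. The derivation I have in mind is block elimination. From the second block row, $-S^\top X_{1j}+GX_{2j}=\delta_{2j}I$, which gives $X_{1j}=S^{-\top}(GX_{2j}-\delta_{2j}I)$. Substituting into the first block row gives $HS^{-\top}GX_{2j}-\delta_{2j}HS^{-\top}+SX_{2j}=\delta_{1j}I$, that is, $TX_{2j}=\delta_{1j}I+\delta_{2j}HS^{-\top}$. Solving this yields $X_{21}=T^{-1}$ and $X_{22}=T^{-1}HS^{-\top}$, and back-substitution gives exactly $X_{11}=S^{-\top}GT^{-1}$ and $X_{12}=S^{-\top}GT^{-1}HS^{-\top}-S^{-\top}$.

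To present this as a verification rather than a derivation, I will compute each block product explicitly:
\begin{itemize}
\item Block $(1,1)$: $HS^{-\top}GT^{-1}+ST^{-1}=TT^{-1}=I$.
\item Block $(2,1)$: $-GT^{-1}+GT^{-1}=0$.
\item Block $(1,2)$: $-HS^{-\top}+HS^{-\top}GT^{-1}HS^{-\top}+ST^{-1}HS^{-\top}=-HS^{-\top}+TT^{-1}HS^{-\top}=0$.
\item Block $(2,2)$: $I-GT^{-1}HS^{-\top}+GT^{-1}HS^{-\top}=I$.
\end{itemize}

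There is no real obstacle here. The only point needing care is to keep the factor order correct in the $(1,2)$ block, grouping $(HS^{-\top}G+S)T^{-1}=I$ on the left of $HS^{-\top}$, and to note that invertibility of $S$ and $T$ is exactly what makes each expression well defined.
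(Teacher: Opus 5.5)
Your verification is correct. Writing $K$ for the block matrix, all four blocks of $KX$ come out as you compute them, and for square matrices $KX=I$ already gives $X=K^{-1}$. The paper states this lemma without proof, so direct block multiplication, with your block elimination as the derivation, is exactly the argument needed.

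Your side remark that $H,G\succeq 0$ plays no role in the algebra is worth keeping, because the paper relies on it implicitly. In the proof of \Cref{lemma:yast c2}, the lemma is applied a second time, to $S+HS^{-\top}G$. There the roles of $H$ and $G$ are played by $\hat s A+\hat s L_xL_yA(s_{\min}^2I+\hat s^2A^2)^{-1}$ and $\hat s A$. These are symmetric but indefinite, since the matrix $A$ of \eqref{eq:Matrix A} has eigenvalues of both signs. That step is therefore justified only by the purely algebraic identity you verify, which needs nothing beyond invertibility of $S$ and $T$.

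Conversely, when $H,G\succeq 0$ do hold, nonsingularity of $T$ is automatic rather than an extra hypothesis:
\begin{itemize}
\item If $K\begin{pmatrix}u\\ v\end{pmatrix}=0$, then $0=u^\top Hu+v^\top Gv$, which forces $Hu=0$ and $Gv=0$. Hence $Sv=0$ and $S^\top u=0$, so $u=v=0$ and $K$ is nonsingular.
\item Since $K\begin{pmatrix}S^{-\top}Gw\\ w\end{pmatrix}=\begin{pmatrix}Tw\\ 0\end{pmatrix}$, the condition $Tw=0$ implies $w=0$.
\end{itemize}
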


Using Lemma~\ref{lemma:inverse}, we obtain the following result for the \(y^*\) component of the saddle point.

\begin{lemma}\label{lemma:yast c2}
Suppose \(L_x, L_y > 0\) and \(s_{\max} \ge \sqrt{5}s_{\min} > 0\). Then, the \(y^*\) component in the saddle point of problem~\eqref{eq:problem_lower_2} with the specification in \eqref{eq:problem 2} is given by
\begin{equation}\label{eq:yast c2}
\begin{aligned}
&y_1^* = -\frac{\hs}{s_{\min}} A y_2^*,\\
&(A^4+\alpha A^2 + \beta I)y_2^* = \frac{L_x s_{\min}^2}{\hs ^4} h,
\end{aligned}
\end{equation}
where
\begin{equation}\label{eq:a c2}
\alpha = 2 \cdot \frac{s_{\min}^2}{\hs^2} + \frac{L_x L_y}{\hs^2}, 
\quad 
\beta = \frac{s_{\min}^4}{\hs^4}.
\end{equation}
\end{lemma}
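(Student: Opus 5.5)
The plan is to use Proposition~\ref{prop:saddle_cond_1}: under Assumption~\ref{assum:Assumption Linear Lower 2}, which holds for \eqref{eq:problem 2} by Lemma~\ref{lemma:assum2}, the saddle point is unique and is characterized by $\nabla f(x^*)+\matA^\top y^*=0$ and $\matA x^*-\nabla g(y^*)=0$. So it suffices to write these linear equations blockwise for the instance \eqref{eq:problem 2} and eliminate $x^*$. This is the Schur-complement computation that Lemma~\ref{lemma:inverse} packages. Here, however, I would simply do the elimination by hand, because every block is a polynomial in the symmetric matrix $A$, so all blocks commute.

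Since $A=A^\top$, we have $\matA^\top=\begin{pmatrix}\hs A & -s_{\min}I\\ s_{\min}I & \hs A\end{pmatrix}$. The four block equations are then
\begin{equation*}
\begin{aligned}
&L_x x_1^*+\hs A y_1^*-s_{\min}y_2^*=0, \qquad s_{\min}y_1^*+\hs A y_2^*=0,\\
&\hs A x_1^*+s_{\min}x_2^*-L_y y_1^*=0, \qquad -s_{\min}x_1^*+\hs A x_2^*+h=0.
\end{aligned}
\end{equation*}
The second equation immediately gives $y_1^*=-\frac{\hs}{s_{\min}}Ay_2^*$, which is the first claim.

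Substituting this into the first equation yields
\begin{equation*}
x_1^*=\frac{1}{s_{\min}L_x}\left(s_{\min}^2 I+\hs^2A^2\right)y_2^*.
\end{equation*}
The third equation gives $x_2^*=(L_y y_1^*-\hs A x_1^*)/s_{\min}$. Plugging $x_2^*$ into the fourth equation and multiplying by $s_{\min}$ gives
\begin{equation*}
\left(s_{\min}^2 I+\hs^2A^2\right)x_1^*-\hs L_y A y_1^*=s_{\min}h.
\end{equation*}
Inserting the expressions for $x_1^*$ and $y_1^*$ and multiplying by $s_{\min}L_x$, I expect
\begin{equation*}
\left(s_{\min}^2 I+\hs^2A^2\right)^2y_2^*+\hs^2L_xL_yA^2y_2^*=s_{\min}^2L_x h.
\end{equation*}
Expanding the square and dividing by $\hs^4$ yields exactly $(A^4+\alpha A^2+\beta I)y_2^*=\frac{L_xs_{\min}^2}{\hs^4}h$, with $\alpha,\beta$ as in \eqref{eq:a c2}. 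Note that $\hs>0$ because $s_{\max}\ge\sqrt5 s_{\min}$.

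The only point needing care is solvability and uniqueness. The matrix $A^4+\alpha A^2+\beta I$ is positive definite since $\beta>0$, so $y_2^*$ is uniquely determined. Back-substitution then produces a solution of the full system, and uniqueness of the saddle point follows from Proposition~\ref{prop:saddle_cond_1}. I expect no real obstacle beyond sign bookkeeping in $\matA^\top$ and keeping track of the commuting polynomial factors in $A$.
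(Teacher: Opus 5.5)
Your proposal is correct. It reaches \eqref{eq:yast c2} by a more elementary route than the paper.

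The paper writes the first-order conditions as $\begin{pmatrix} H & S\\ -S^\top & G\end{pmatrix}\begin{pmatrix}x^*\\ y^*\end{pmatrix}=\begin{pmatrix}0\\ \bar h\end{pmatrix}$ with $S=\matA^\top$. It then proceeds in four steps:
\begin{enumerate}
\item it invokes the block-inverse formula of Lemma~\ref{lemma:inverse} to get $y^*=(S+HS^{-\top}G)^{-1}HS^{-\top}\bar h$;
\item it computes $\matA^{-1}$ explicitly;
\item it applies Lemma~\ref{lemma:inverse} a second time, to $S+HS^{-\top}G$, to extract the one block it needs;
\item it multiplies out the result.
\end{enumerate}
You instead eliminate $y_1^*$, $x_1^*$, $x_2^*$ in turn through the scalar pivots $s_{\min}I$, $L_xI$, $s_{\min}I$. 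Substituting into the fourth equation does give exactly $(s_{\min}^2 I+\hs^2A^2)^2y_2^*+\hs^2L_xL_yA^2y_2^*=s_{\min}^2L_xh$, and dividing by $\hs^4>0$ yields the claim.

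What your route buys:
\begin{itemize}
\item No matrix inverses are needed, and every step is easy to check.
\item Uniqueness of the KKT solution follows directly from positive definiteness of $A^4+\alpha A^2+\beta I$.
\item It sidesteps the hypotheses of Lemma~\ref{lemma:inverse}, which the paper does not check. Nonsingularity of $S+HS^{-\top}G$ is never verified. In the second application the diagonal blocks are built from the indefinite matrix $\hs A$, so the stated condition $H,G\succeq 0$ does not literally hold. This is harmless, since the block-inverse formula is a purely algebraic identity valid whenever the inverses exist.
\end{itemize}

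What the paper's route buys is systematicity. Lemma~\ref{lemma:inverse} delivers all blocks of the inverse at once, so $x^*$ is also available in closed form. The formula itself does not depend on the scalar-pivot structure that makes your hand elimination so short.
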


\begin{proof}
The saddle point \((x_1^*, x_2^*, y_1^*, y_2^*)\) of problem~\eqref{eq:problem_lower_2} with \eqref{eq:problem 2} satisfies the first-order optimality condition:
\begin{equation*}
\begin{pmatrix}
L_x I & 0 & \hs A & -s_{\min} I \\
0 & 0 & s_{\min} I & \hs A \\
-\hs A & -s_{\min} I & L_y I & 0 \\
s_{\min} I & -\hs A & 0 & 0 \\
\end{pmatrix}
\begin{pmatrix}
x_1^* \\ x_2^* \\ y_1^* \\ y_2^*
\end{pmatrix}
=
\begin{pmatrix}
0 \\ 0 \\ 0 \\ h
\end{pmatrix}.
\end{equation*}

We rewrite the system in block form:
\begin{equation*}
\begin{pmatrix}
H & S \\
- S^\top & G
\end{pmatrix}
\begin{pmatrix}
x^* \\
y^*
\end{pmatrix}
=
\begin{pmatrix}
0 \\
\bar{h}
\end{pmatrix},
\end{equation*}
where
\begin{equation*}
H = 
\begin{pmatrix}
L_x I & 0 \\
0 & 0
\end{pmatrix}, \quad
S = 
\begin{pmatrix}
\hs A & -s_{\min} I \\
s_{\min} I & \hs A
\end{pmatrix} = \matA^{\top}, \quad
G = 
\begin{pmatrix}
L_y I & 0 \\
0 & 0
\end{pmatrix}, \quad
\bar{h} = 
\begin{pmatrix}
0 \\
h
\end{pmatrix}.
\end{equation*}

By Lemma~\ref{lemma:inverse}, the solution for \(y^*\) is
\begin{equation*}
y^* = (S + H S^{-\top} G)^{-1} H S^{-\top} \bar{h}.
\end{equation*}

Now compute \(S^{-\top} = \matA^{-1}\). Since \(A\) is symmetric, we have
\begin{equation*}
\matA^{-1} = 
\begin{pmatrix}
\hs A (s_{\min}^2 I + \hs^2 A^2)^{-1} 
& -\left( s_{\min} I + \frac{\hs^2}{s_{\min}} A^2 \right)^{-1} \\
\left( s_{\min} I + \frac{\hs^2}{s_{\min}} A^2 \right)^{-1} 
& \hs A (s_{\min}^2 I + \hs^2 A^2)^{-1}
\end{pmatrix}.
\end{equation*}

Then,
\begin{equation*}
S + H S^{-\top} G = 
\begin{pmatrix}
\hs A + \hs L_x L_y A (s_{\min}^2 I + \hs^2 A^2)^{-1} 
& -s_{\min} I \\
s_{\min} I 
& \hs A
\end{pmatrix}.
\end{equation*}

Again, by applying Lemma~\ref{lemma:inverse} to \(S+H S^{-T} G\),
\begin{equation*}
(S + H S^{-T} G)^{-1} =
\begin{pmatrix}
 * & *\\
-\left(s_{\min}^3I+\left(2s_{\min}\hs^2+\frac{\hs^2 L_xL_y}{s_{\min}}\right)A^2+\frac{\hs^4}{s_{\min}}A^4\right)^{-1}\left(s_{\min}^2 I +\hs^2 A^2\right) & *
\end{pmatrix},
\end{equation*}
where \(*\) denotes irrelevant blocks.

Hence, combining all parts, we obtain
\begin{equation}\label{eq:y2 in c2}
\begin{aligned}
y_2^* 
&= 
\left[
- \left(
s_{\min}^3 I 
+ \left( 2 s_{\min} \hs^2 
+ \frac{\hs^2 L_x L_y}{s_{\min}} \right) A^2 
+ \frac{\hs^4}{s_{\min}} A^4
\right)^{-1}
\left( s_{\min}^2 I + \hs^2 A^2 \right)
\right] \\
&\quad \cdot L_x 
\left[
- \left( s_{\min} I 
+ \frac{\hs^2}{s_{\min}} A^2 \right)^{-1}
\right] h \\
&= \left( A^4 + \alpha A^2 + \beta I \right)^{-1} 
\cdot \frac{L_x s_{\min}^2}{\hs^4} h,
\end{aligned}
\end{equation}
where \(\alpha\) and \(\beta\) are defined in \eqref{eq:a c2}. Finally, \(y_1^* = -\frac{\hs}{s_{\min}} A y_2^*\) follows from the second row of the first-order condition.
\end{proof}

The following lemma shows how to construct an “approximate” solution \(\hat{y}^*\), where \(\hat{y}_2^*\) satisfies \(\left\| \hat{y}_2^* - y_2^* \right\|_2 = O(q^\ell)\) and \(\hat{y}_{2,i}^* = q^i\) for \(i = 1, \ldots, \ell\).

\begin{lemma}\label{lemma:approximate c2}
Suppose \(L_x, L_y > 0\) and \(s_{\max} \ge \sqrt{5}s_{\min} > 0\). Let \(\alpha, \beta\) be defined as in \eqref{eq:a c2}. Then, there exists a real root \(q \in (0,1)\) of the quartic equation
\begin{equation}\label{eq:quartic c2}
1 - (4 + \alpha) q + (6 + 2\alpha + \beta) q^2 - (4 + \alpha) q^3 + q^4 = 0,
\end{equation}
satisfying
\begin{equation}\label{eq:q bounds c2}
1 - \left( \frac{1}{2} + \sqrt{\frac{\alpha}{2\beta} + \frac{1}{4}} \right)^{-1}
< q <
1 - \left( \frac{1}{2} + \sqrt{\frac{\alpha}{\beta} + \frac{1}{4}} \right)^{-1}.
\end{equation}

Let
\begin{equation*}
\hat{h} = \left( (2 + \alpha + \beta) q - (3 + \alpha) q^2 + q^3,\, q - 1,\, 0, \ldots, 0 \right)^\top \in \mathbb{R}^\ell,
\end{equation*}
and define
\begin{equation}\label{eq:h c2}
h = \frac{\hs^4}{L_x s_{\min}^2} \hat{h}.
\end{equation}

Then, an approximate solution \((\hat{x}^*, \hat{y}^*)\) can be constructed with \(\hat{y}_2^*\) given by
\begin{equation}\label{eq:yq construction2}
\hat{y}_{2,i}^* = q^i \quad \text{for } i = 1, \ldots, \ell.
\end{equation}

Moreover, the approximation error satisfies
\begin{equation*}
\left\| y_2^* - \hat{y}_2^* \right\|_2 \le \frac{7 + \alpha}{\beta} q^\ell.
\end{equation*}
\end{lemma}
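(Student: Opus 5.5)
The plan is to reduce everything to the banded linear system from Lemma~\ref{lemma:yast c2}. With the choice \eqref{eq:h c2}, that lemma gives
\begin{equation*}
(A^4+\alpha A^2+\beta I)\,y_2^* = \hat h .
\end{equation*}
I would show three things in turn: the quartic \eqref{eq:quartic c2} has a root $q\in(0,1)$ obeying \eqref{eq:q bounds c2}; the geometric vector $\hat y^*_{2,i}=q^i$ satisfies every row of this system except the last two; and the residual in those two rows is $O(q^\ell)$. The error bound then follows by inverting a well-conditioned matrix.

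\emph{Root location.} The quartic \eqref{eq:quartic c2} is palindromic, so I divide by $q^2$ and substitute $w=q+1/q$. Setting $z=w-2=(1-q)^2/q$, the equation collapses to the quadratic $z^2-\alpha z+\beta=0$. By \eqref{eq:a c2}, $\alpha=2\sqrt\beta+L_xL_y/\hs^2>2\sqrt\beta$, so $\alpha^2>4\beta$ and the discriminant is strictly positive. I take the smaller root
\begin{equation*}
z=\frac{2\beta}{\alpha+\sqrt{\alpha^2-4\beta}},
\end{equation*}
which lies strictly in $(\beta/\alpha,\,2\beta/\alpha)$. Writing $p=1-q$, the relation $p^2/(1-p)=z$ gives $1/p=\tfrac12+\sqrt{1/z+\tfrac14}$, which has a solution $p\in(0,1)$ and hence $q\in(0,1)$. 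Since $1/z\in(\alpha/(2\beta),\,\alpha/\beta)$ and $1/p$ is increasing in $1/z$, this yields exactly the two-sided bound \eqref{eq:q bounds c2}.

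\emph{Row-by-row check.} I read off the rows of $A^4+\alpha A^2+\beta I$ from \eqref{eq:Matrix A}.
\begin{itemize}
\item Row 1 has entries $(2+\alpha+\beta,\,-(3+\alpha),\,1)$, so applied to $(q^i)$ it gives $\hat h_1$ by definition.
\item Row 2 has entries $(-(3+\alpha),\,6+2\alpha+\beta,\,-(4+\alpha),\,1)$. Applied to $(q^i)$ it gives $q\cdot[\text{quartic}(q)-1]=-q$ plus a leading correction; done carefully, it equals $q-1=\hat h_2$ because $q$ satisfies \eqref{eq:quartic c2}.
\item Interior rows $3,\dots,\ell-2$ are $(1,\,-(4+\alpha),\,6+2\alpha+\beta,\,-(4+\alpha),\,1)$, and they vanish on $(q^i)$ precisely because of the quartic.
\item Row $\ell-1$ lacks the trailing $+1$, so its residual is $-q^{\ell+1}$.
\item Row $\ell$ has last diagonal entry $5+2\alpha+\beta$ instead of $6+2\alpha+\beta$, and lacks the two trailing entries. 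Its residual is therefore $-q^\ell+(4+\alpha)q^{\ell+1}-q^{\ell+2}$.
\end{itemize}

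\emph{Error bound.} From the row check, $y_2^*-\hat y_2^*=(A^4+\alpha A^2+\beta I)^{-1}r$, where $r$ is supported on the last two coordinates. Its norm satisfies
\begin{equation*}
\|r\|_2\le \|r\|_1\le q^{\ell+1}+q^\ell+(4+\alpha)q^{\ell+1}+q^{\ell+2}\le (7+\alpha)q^\ell .
\end{equation*}
Because $A$ is symmetric, $A^2$ and $A^4$ are positive semidefinite, so $A^4+\alpha A^2+\beta I\succeq\beta I$ and the inverse has operator norm at most $1/\beta$. This gives $\|y_2^*-\hat y_2^*\|_2\le (7+\alpha)q^\ell/\beta$. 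The remaining components $\hat y_1^*$ and $\hat x^*$ are defined from $\hat y_2^*$ by the same linear formulas that express $y_1^*$ and $x^*$ in terms of $y_2^*$ in Lemma~\ref{lemma:yast c2} (for example $\hat y_1^*=-\frac{\hs}{s_{\min}}A\hat y_2^*$).

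\emph{Main obstacle.} The step I expect to need the most care is the bookkeeping of the first two and last two rows. The boundary entries of $A^4$ are irregular (diagonal entries $2$ and $5$, off-diagonal $-3$), so the first two rows must reproduce $\hat h$ exactly, and the last-row residual must be computed correctly to land within the constant $7+\alpha$.
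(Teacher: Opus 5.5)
Your proposal is correct and follows the paper's proof essentially step for step: reduce to $(A^4+\alpha A^2+\beta I)y_2^*=\hat h$, check that $(q^i)$ satisfies all but the last two rows, bound the residual by $(7+\alpha)q^\ell$, and invert using $A^4+\alpha A^2+\beta I\succeq\beta I$. Your palindromic substitution $z=(1-q)^2/q$ is the explicit form of the paper's sign-change argument for $P(r)$ with $r=1/(1-q)$, since $P(r)=1-\alpha v+\beta v^2$ where $v=r(r-1)=1/z$; it is also how the paper's parenthetical closed form for $q$ arises.

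There are two cosmetic points. First, your row-2 intermediate expression is garbled; the clean identity is that row 2 applied to $(q^i)$ equals $Q(q)-1+q=q-1$, where $Q$ denotes the quartic polynomial. Second, your sign $-q^{\ell+1}$ for the row-$(\ell-1)$ residual is the correct one (the paper writes $+q^{\ell+1}$), and the sign does not affect the bound.
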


\begin{proof}
Let \(r = \frac{1}{1 - q}\). The quartic equation \eqref{eq:quartic c2} transforms to the polynomial equation
\begin{equation*}
P(r) := 1 + \alpha r + (\beta - \alpha) r^2 - 2 \beta r^3 + \beta r^4 = 0.
\end{equation*}

Let
\begin{equation*}
\bar{r} = \frac{1}{2} + \sqrt{ \frac{\alpha}{\beta} + \frac{1}{4} },
\quad
\underline{r} = \frac{1}{2} + \sqrt{ \frac{\alpha}{2\beta} + \frac{1}{4} }.
\end{equation*}
Then \(P(\bar{r}) = 1 > 0\), and
\begin{equation*}
P(\underline{r}) = 1 - \frac{\alpha^2}{4\beta}
= 1 - \frac{ \left( 2\frac{s_{\min}^2}{\hs^2} + \frac{L_x L_y}{\hs^2} \right)^2 }{4 \cdot \frac{s_{\min}^4}{\hs^4}} < 0.
\end{equation*}
Therefore, there exists a root \(r \in (\underline{r}, \bar{r})\), which implies the existence of a root \(q \in (0,1)\) satisfying \eqref{eq:q bounds c2}. (Actually, the root has a closed-form expression: \(
q
= \frac{4+\alpha-\sqrt{\alpha^2-4\beta}-\sqrt{\left(4+\alpha-\sqrt{\alpha^2-4\beta}\right)^2-16}}{4}
\).)

By Lemma~\ref{lemma:yast c2} and the definition of \(h\) in \eqref{eq:h c2}, the vector \(y_2^*\) satisfies
\begin{equation*}
(A^4 + \alpha A^2 + \beta I) y_2^* = \hat{h},
\end{equation*}
which corresponds to the linear system
\begin{equation}\label{eq:linear system c2}
\left\{
\begin{aligned}
&(2 + \alpha + \beta) y_{2,1}^* - (3 + \alpha) y_{2,2}^* + y_{2,3}^* = (2 + \alpha + \beta)q - (3 + \alpha)q^2 + q^3, \\
&-(3 + \alpha) y_{2,1}^* + (6 + 2\alpha + \beta) y_{2,2}^* - (4 + \alpha) y_{2,3}^* + y_{2,4}^* = q - 1, \\
&y_{2,1}^* - (4 + \alpha) y_{2,2}^* + (6 + 2\alpha + \beta) y_{2,3}^* - (4 + \alpha) y_{2,4}^* + y_{2,5}^* = 0, \\
&\vdots \\
&y_{2,\ell-4}^* - (4 + \alpha) y_{2,\ell-3}^* + (6 + 2\alpha + \beta) y_{2,\ell-2}^* - (4 + \alpha) y_{2,\ell-1}^* + y_{2,\ell}^* = 0, \\
&y_{2,\ell-3}^* - (4 + \alpha) y_{2,\ell-2}^* + (6 + 2\alpha + \beta) y_{2,\ell-1}^* - (4 + \alpha) y_{2,\ell}^* = 0, \\
&y_{2,\ell-2}^* - (4 + \alpha) y_{2,\ell-1}^* + (5 + 2\alpha + \beta) y_{2,\ell}^* = 0.
\end{aligned}
\right.
\end{equation}

By construction \eqref{eq:yq construction2}, the approximate solution \(\hat{y}_2^*\) satisfies the first \(\ell - 2\) equations exactly. The last two equations yield residuals:
\begin{equation*}
\begin{aligned}
\hat{y}_{2,\ell-3}^* - (4 + \alpha) \hat{y}_{2,\ell-2}^* + (6 + 2\alpha + \beta) \hat{y}_{2,\ell-1}^* - (4 + \alpha) \hat{y}_{2,\ell}^* &= q^{\ell + 1}, \\
\hat{y}_{2,\ell-2}^* - (4 + \alpha) \hat{y}_{2,\ell-1}^* + (5 + 2\alpha + \beta) \hat{y}_{2,\ell}^* &= -q^\ell + (4 + \alpha) q^{\ell + 1} - q^{\ell + 2}.
\end{aligned}
\end{equation*}

Therefore, the residual error satisfies
\begin{equation*}
\beta \left\| \hat{y}_2^* - y_2^* \right\|_2
\le \left\| (A^4 + \alpha A^2 + \beta I)(\hat{y}_2^* - y_2^*) \right\|_2
\le (7 + \alpha) q^\ell,
\end{equation*}
which implies
\begin{equation*}
\left\| \hat{y}_2^* - y_2^* \right\|_2 \le \frac{7 + \alpha}{\beta} q^\ell.
\end{equation*}
\end{proof}

By \Cref{lemma:subspace c2}, the zero-chain property in \Cref{prop:A properties}, and the choice of \(h\) in \eqref{eq:h c2}, we can see that
\begin{equation}\label{eq:Hy c2}
\mathcal{H}_{y_2}^{2k-1}, \mathcal{H}_{y_2}^{2k} \subseteq \mathrm{span}\left\{ h, A^2 h, \ldots, A^{2k-2} h \right\} = \mathrm{span}\left\{ e_1, e_2, \ldots, e_{k+1} \right\},
\end{equation}
for any \(k \ge 1\). This implies that the only possible nonzero entries of \(y_2^{2k-1}, y_2^{2k}\) are within the first \(k+1\) components. This structure will be useful in lower bounding the errors \(\left\| y_2^{2k} - y_2^* \right\|_2\), and similarly for \(\left\| y_2^{2k-1} - y_2^* \right\|_2\).

\begin{lemma}\label{lemma:iterate c2}
Suppose \(L_x, L_y > 0\) and \(s_{\max} \ge \sqrt{5} s_{\min} > 0\). Assume \(1 \le k \le \frac{\ell}{2} - 1\) and \(\ell \ge 2 \log_{q^{-1}} \left( \left( 2 + 2\sqrt{2} \right) \frac{7 + \alpha}{\beta} \right)\). Then,
\begin{equation}\label{eq:iterate c2}
\left\| y^{2k} - y^* \right\|_2 
\ge \left\| y_2^{2k} - y_2^* \right\|_2 
\ge \frac{q^{k+1}}{2\sqrt{2}} \left\| y_2^0 - y_2^* \right\|_2 
\ge \frac{q^{k+1}}{2\sqrt{2}} \cdot \frac{s_{\min}}{s_{\max}} \left\| y^0 - y^* \right\|_2,
\end{equation}
where \(y^0 = \begin{pmatrix} y_1^0 \\ y_2^0 \end{pmatrix} = 0\) is the initialization.
\end{lemma}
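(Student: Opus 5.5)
We follow the template of Lemma~\ref{lemma:iterate c1}, applied to the $y_2$ block, and then transfer the bound to the full dual vector. The first inequality in \eqref{eq:iterate c2} is immediate, since $\|y^{2k}-y^*\|_2^2=\|y_1^{2k}-y_1^*\|_2^2+\|y_2^{2k}-y_2^*\|_2^2$.

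For the middle inequality, we use \eqref{eq:Hy c2}: the vector $y_2^{2k}$ is supported on $\{e_1,\dots,e_{k+1}\}$. With $\hat y^*_{2,i}=q^i$ this gives
\begin{equation*}
\|y_2^{2k}-\hat y_2^*\|_2^2\ \ge\ \sum_{i=k+2}^{\ell} q^{2i}\ =\ q^{2(k+1)}\sum_{i=1}^{\ell-k-1}q^{2i}\ \ge\ \frac{q^{2(k+1)}}{2}\,\|\hat y_2^*\|_2^2 .
\end{equation*}
The last step holds because $k\le \ell/2-1$ implies $\ell-k-1\ge \ell/2$, so the tail sum keeps at least half of the geometric mass. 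This mass comparison should be checked, as in Lemma~\ref{lemma:iterate c1}. Next, Lemma~\ref{lemma:approximate c2} gives $\|\hat y_2^*-y_2^*\|_2\le \frac{7+\alpha}{\beta}q^\ell$. The assumption $\ell\ge 2\log_{q^{-1}}\big((2+2\sqrt2)\frac{7+\alpha}{\beta}\big)$ then gives $\frac{7+\alpha}{\beta}q^{\ell}\le q^{\ell/2}/(2+2\sqrt2)\le q^{k+1}q\,/(2+2\sqrt2)$. Since $\|\hat y_2^*\|_2\ge q$, this error is at most $\frac{q^{k+1}}{2+2\sqrt2}\|\hat y_2^*\|_2$. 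With $y_2^0=0$, we combine the two bounds via the triangle inequality, exactly as in the chain of Lemma~\ref{lemma:iterate c1}:
\begin{equation*}
\|y_2^{2k}-y_2^*\|_2\ \ge\ \tfrac{q^{k+1}}{\sqrt2}\|y_2^*\|_2-\big(1+\tfrac{q^{k+1}}{\sqrt2}\big)\|\hat y_2^*-y_2^*\|_2 .
\end{equation*}
The constants then yield $\frac{q^{k+1}}{2\sqrt2}\|y_2^0-y_2^*\|_2$. This step needs the error bounded by a multiple of $\|y_2^*\|_2$, not of $\|\hat y_2^*\|_2$. Since the error is tiny, $\|\hat y_2^*\|_2\le \|y_2^*\|_2+\text{err}$, and we absorb the difference.

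For the final inequality, Lemma~\ref{lemma:yast c2} gives $y_1^*=-\frac{\hs}{s_{\min}}Ay_2^*$. Because $\|A\|_2\le 2$, we get $\|y^*\|_2^2\le \big(1+\frac{4\hs^2}{s_{\min}^2}\big)\|y_2^*\|_2^2=\frac{s_{\max}^2}{s_{\min}^2}\|y_2^*\|_2^2$, using $4\hs^2=s_{\max}^2-s_{\min}^2$. Hence $\|y_2^0-y_2^*\|_2\ge \frac{s_{\min}}{s_{\max}}\|y^0-y^*\|_2$.

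The main obstacle is the constant bookkeeping in the triangle-inequality step. The index shift to $k+1$ together with the $q^{\ell}$ (rather than $q^{\ell+1}$) error bound must still close with the factor $2+2\sqrt2$. If the constants are slightly off, we will use the slack $q^{\ell/2}\le q^{k+1}$ coming from $k\le \ell/2-1$.
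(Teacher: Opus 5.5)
Your outline is the paper's proof: support of $y_2^{2k}$ in $\mathrm{span}\{e_1,\dots,e_{k+1}\}$, the half-mass tail bound, Lemma~\ref{lemma:approximate c2}, the triangle inequality, and $\|y^*\|_2\le \frac{s_{\max}}{s_{\min}}\|y_2^*\|_2$. The first and last inequalities and the tail-sum step are correct.

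The step that fails is $q^{\ell/2}\le q^{k+1}\cdot q$. The hypothesis $k\le \ell/2-1$ only gives $q^{\ell/2}\le q^{k+1}$; the extra factor $q$ would need $k\le \ell/2-2$. So the hypotheses actually give $\|\hat y_2^*-y_2^*\|_2\le \frac{q^{\ell/2}}{2+2\sqrt2}\le\frac{q^{k+1}}{2+2\sqrt2}$. At the boundary $k=\ell/2-1$, with $\ell$ near its minimal admissible value, the available estimates cannot do better. Relative to $\|\hat y_2^*\|_2\ge q$, this is only $\frac{q^{k}}{2+2\sqrt2}\|\hat y_2^*\|_2$, a factor $q^{-1}$ too weak. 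The fallback slack you mention, $q^{\ell/2}\le q^{k+1}$, is exactly the inequality already spent, so it cannot supply the missing $q$. You also cannot rescue it with $\|\hat y_2^*\|_2\ge 1$. Take $s_{\max}=\sqrt5\,s_{\min}$ (so $\hs=s_{\min}$) and $L_xL_y\ll s_{\min}^2$; then $\alpha/\beta$ is close to $2$, and \eqref{eq:q bounds c2} puts $q$ roughly in $(0.38,0.5)$. In that case $\|\hat y_2^*\|_2^2<q^2/(1-q^2)<1$.

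The paper's proof has the same soft spot in disguise. Its step $\frac{7+\alpha}{\beta}q^{\ell}\le \frac{q^{k+1}}{2\sqrt2}\|y_2^0-y_2^*\|_2\cdot 2\sqrt2\,\frac{7+\alpha}{\beta}q^{\ell-k-1}$ holds only if $\|y_2^*\|_2\ge 1$, which is never justified.

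A clean repair, harmless for the $\Omega(\cdot)$ conclusion, is to assume either $k\le \ell/2-2$ or $\ell\ge 2\log_{q^{-1}}\big((2+2\sqrt2)\frac{7+\alpha}{\beta}\big)+2$. Either gives $\|\hat y_2^*-y_2^*\|_2\le \frac{q^{k+2}}{c}\le \frac{q^{k+1}}{c}\|\hat y_2^*\|_2$ with $c=2+2\sqrt2$.

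Then do the absorption relative to $\|\hat y_2^*\|_2$. You get $\|y_2^{2k}-y_2^*\|_2\ge (\frac{1}{\sqrt2}-\frac1c)q^{k+1}\|\hat y_2^*\|_2=\frac{q^{k+1}}{2}\|\hat y_2^*\|_2$ and $\|y_2^*\|_2\le (1+\frac1c)\|\hat y_2^*\|_2$. The ratio is therefore at least $(\sqrt2-1)q^{k+1}\ge \frac{q^{k+1}}{2\sqrt2}$. The order matters here. If you first convert the error bound into one relative to $\|y_2^*\|_2$ and then rerun the paper's chain with the factor $1+\frac{q^{k+1}}{\sqrt2}$, the constants close only when $q^{k+1}\le 2/3$.
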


\begin{proof}
By \eqref{eq:Hy c2}, we have
\begin{equation*}
\begin{aligned}
\left\| y_2^{2k} - \hat{y}_2^* \right\|_2^2 
&\ge \sum_{i = k+2}^\ell q^{2i} 
= q^{2k+2} \sum_{i = 1}^{\ell - k - 1} q^{2i} \\
&\ge \frac{q^{2k+2}}{2} \sum_{i = 1}^\ell q^{2i} 
= \frac{q^{2k+2}}{2} \left\| \hat{y}_2^* \right\|_2^2 
= \frac{q^{2k+2}}{2} \left\| y_2^0 - \hat{y}_2^* \right\|_2^2,
\end{aligned}
\end{equation*}
where the last inequality uses the fact that \(k \le \frac{\ell}{2} - 1\) and \(q < 1\).

Since \(\ell \ge 2 \log_{q^{-1}} \left( \left( 2 + 2\sqrt{2} \right) \frac{7 + \alpha}{\beta} \right)\), we have
\begin{equation*}
\begin{aligned}
\left\| \hat{y}_2^* - y_2^* \right\|_2 
&\le \frac{7 + \alpha}{\beta} q^\ell 
\le \frac{q^{k+1}}{2\sqrt{2}} \left\| y_2^0 - y_2^* \right\|_2 
\cdot 2\sqrt{2} \cdot \frac{7 + \alpha}{\beta} q^{\ell - k - 1} \\
&\le \frac{q^{k+1}}{2\sqrt{2}} \left\| y_2^0 - y_2^* \right\|_2 \left/ \left( 1 + \frac{1}{\sqrt{2}} \right) \right..
\end{aligned}
\end{equation*}

Then, we conclude
\begin{equation*}
\begin{aligned}
\left\| y_2^{2k} - y_2^* \right\|_2 
&\ge \left\| y_2^{2k} - \hat{y}_2^* \right\|_2 - \left\| \hat{y}_2^* - y_2^* \right\|_2 \\
&\ge \frac{q^{k+1}}{\sqrt{2}} \left\| y_2^0 - \hat{y}_2^* \right\|_2 - \left( 1 + \frac{q^{k+1}}{\sqrt{2}} \right) \left\| \hat{y}_2^* - y_2^* \right\|_2 \\
&\ge \frac{q^{k+1}}{\sqrt{2}} \left\| y_2^0 - y_2^* \right\|_2 - \left( 1 + \frac{1}{\sqrt{2}} \right) \left\| \hat{y}_2^* - y_2^* \right\|_2 \\
&\ge \frac{q^{k+1}}{2\sqrt{2}} \left\| y_2^0 - y_2^* \right\|_2.
\end{aligned}
\end{equation*}

Finally, since \(y_1^* = -\frac{\hs}{s_{\min}} A y_2^*\), we have
\begin{equation*}
\left\| y_1^* \right\|_2 
\le \frac{2\hs}{s_{\min}} \left\| y_2^* \right\|_2 
\end{equation*}
so
\begin{equation*}
\left\| y^* \right\|_2^2=\left\|y_1^*\right\|_2^2 + \left\|y_2^*\right\|_2^2\le \left(1+\frac{4 \hs^2}{s_{\min}^2}\right) \left\|y_2^*\right\|_2^2=\frac{s_{\max}^2}{s_{\min}^2}\left\|y_2^*\right\|_2^2
\end{equation*}
This completes the proof.
\end{proof}

Combining Lemmas~\ref{lemma:duality gap}, \ref{lemma:approximate c2}, and \ref{lemma:iterate c2}, we obtain the following lower bound for first-order methods on problems under Assumption~\ref{assum:Assumption Linear Lower 2}.

\begin{theorem}\label{thm:full c2}
Let positive parameters \(L_x, L_y > 0\) and \(s_{\max} \ge \sqrt{5} s_{\min} > 0\) be given. Let \(k \ge 1\) be an integer. Then there exists a problem instance of the form \eqref{eq:problem_lower_2}, satisfying Assumption~\ref{assum:Assumption Linear Lower 2}, with \(f, \matA, g\) specified in \eqref{eq:problem 2} and \(h\) defined in \eqref{eq:h c2}, such that
\begin{equation*}
\ell \ge \max \left\{ 2 \log_{q^{-1}} \left( (2 + 2\sqrt{2}) \cdot \frac{7 + \alpha}{\beta} \right),\ k + 1 \right\},
\end{equation*}
where \(\alpha, \beta\) are defined in \eqref{eq:a c2} and \(q\) is the solution to \eqref{eq:quartic c2} satisfying the bounds in \eqref{eq:q bounds c2}.

Then, for this problem, any approximate solution \((x^k, y^k) \in \mathcal{H}_x^k \times \mathcal{H}_y^k\) generated by a first-order method as described in Definition~\ref{def:first-order-alg} satisfies the following lower bounds:
\begin{equation}\label{eq:thm:full c2}
{\Phi_x}\left({x}^k\right)-{\Phi_y} \left({y}^k\right) \geq q^{k+3}\frac{s_{\min}^4 \left\| y^0 - y^* \right\|_2^2}{16 L_xs_{\max}^2} \quad \text {and} \quad\left\|y^{k}-y^*\right\|_2\ge \frac{q^{\frac{k+3}{2}}}{2\sqrt{2}}\frac{s_{\min}}{s_{\max}}\left\|y^0-y^*\right\|_2.
\end{equation}
\end{theorem}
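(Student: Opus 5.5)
The proof follows the template of Theorem~\ref{thm:full c1}: combine the iterate bound of Lemma~\ref{lemma:iterate c2} with the duality-gap bound of Lemma~\ref{lemma:duality gap}, and handle the parity of $k$ by monotonicity of the Krylov subspaces. Lemma~\ref{lemma:assum2} shows that \eqref{eq:problem 2} satisfies Assumption~\ref{assum:Assumption Linear Lower 2}. Lemma~\ref{lemma:approximate c2} supplies $q$, the vector $h$ of \eqref{eq:h c2}, and the approximation $\hat y_2^*$. We take $\ell$ as in the statement, so that the dimension requirement of Lemma~\ref{lemma:iterate c2} holds.

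\textbf{Step 1 (parity reduction).} Given an arbitrary iteration count $k$, set $k'=\lceil k/2\rceil$, so that $2k'\ge k$. By Lemma~\ref{lemma:subspace c2} and \eqref{eq:Hy c2}, the subspaces $\mathcal{H}_{y_2}^{j}$ are nested and increasing in $j$. Hence $y_2^k\in\mathcal{H}_{y_2}^{2k'}\subseteq\mathrm{span}\{e_1,\dots,e_{k'+1}\}$. The argument in the proof of Lemma~\ref{lemma:iterate c2} uses only this support property, so it applies to $y^k$ with $k'$ in place of $k$. This gives
\[
\|y^k-y^*\|_2\ \ge\ \frac{q^{k'+1}}{2\sqrt2}\,\frac{s_{\min}}{s_{\max}}\,\|y^0-y^*\|_2 .
\]
Since $k'+1\le (k+3)/2$ and $q<1$, this yields the stated bound with exponent $q^{(k+3)/2}$. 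The constraint $k'\le \ell/2-1$ from Lemma~\ref{lemma:iterate c2} is what forces $\ell\ge k+1$ (up to rounding) in the statement. For larger $k$, if needed, we simply enlarge $\ell$; the statement permits this because $\ell$ is only bounded below.

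\textbf{Step 2 (gap bound).} Lemma~\ref{lemma:duality gap} gives $\Phi_x(x^k)-\Phi_y(y^k)\ge \frac{s_{\min}^2}{2L_x}\|y^k-y^*\|_2^2$. Squaring the bound from Step 1 gives the factor $\frac{q^{k+3}}{8}\frac{s_{\min}^2}{s_{\max}^2}$, and multiplying the two yields exactly $q^{k+3}\frac{s_{\min}^4\|y^0-y^*\|_2^2}{16L_xs_{\max}^2}$.

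\textbf{Step 3 (complexity).} For the complexity corollary, Theorem~\ref{thm:main_c2_short}, we estimate $\log(q^{-1})\asymp 1-q$ from \eqref{eq:q bounds c2}. With $\beta=s_{\min}^4/\hs^4$ and $\alpha/\beta\asymp \hs^2/s_{\min}^2+L_xL_y\hs^2/s_{\min}^4$, the bound gives $1-q\asymp (\alpha/\beta)^{-1/2}$. This reproduces the rate $\sqrt{s_{\max}^2/s_{\min}^2+L_xL_ys_{\max}^2/s_{\min}^4}$.

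The main obstacle is the bookkeeping in Step 1, namely checking that the odd-index subspaces are covered by the even-index bound with the right shift. In Lemma~\ref{lemma:subspace c2}, $\mathcal{H}_{y_2}^{2k-1}$ already has $k$ generators, and the shift by $+1$ in the support must be tracked carefully so that the exponent comes out as $(k+3)/2$ rather than something smaller. This is a routine index check, and we carry it out using \eqref{eq:Hy c2}.
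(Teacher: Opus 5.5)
Your proposal is correct and follows the paper's route: the paper's proof is exactly the combination of Lemma~\ref{lemma:iterate c2}, applied with index $\lceil k/2\rceil$ via the support inclusion \eqref{eq:Hy c2} for both parities, with the duality-gap bound of Lemma~\ref{lemma:duality gap}. This gives the stated constants $\frac{q^{(k+3)/2}}{2\sqrt2}\frac{s_{\min}}{s_{\max}}$ and $q^{k+3}\frac{s_{\min}^4}{16L_xs_{\max}^2}$. One small correction: the lemma actually needs $\ell\ge 2\lceil k/2\rceil+2$ (that is, $k+3$ for odd $k$), not $k+1$ ``up to rounding''; your fix of enlarging $\ell$ is still the right one, since $\ell$ is only bounded from below in this existence statement.
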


\begin{remark}[Lower bound and comparison]\label{remark:comparison c2}
As a result, if we require \(\|y^k-y^{*}\|_2 \le \epsilon\), then the number of iterations needed is at least
\begin{equation}\label{eq:lower c2}
k \ge 2
\frac{\log\left(\frac{s_{\min}\|y^0-y^{*}\|_2}{2\sqrt{2}s_{\max}\epsilon}\right)}{\log\left(q^{-1}\right)}
=
\Omega\left(\sqrt{\frac{s_{\max}^2}{s_{\min}^2}+\frac{L_xL_y s_{\max}^2}{s_{\min}^4}}\log\left(\frac{1}{\epsilon}\right)\right),
\end{equation}
where the equality follows from
\begin{equation*}
\log\left(q^{-1}\right)
= \Theta(1-q)
= \Theta\left(\sqrt{\frac{\beta}{\alpha}}\right)
= \Theta\left(\sqrt{\frac{s_{\min}^4}{s_{\max}^2s_{\min}^2 + L_xL_y s_{\max}^2}}\right),
\end{equation*}
for sufficiently large \(\frac{s_{\max}}{s_{\min}}\).
This lower bound is also implied by Theorem~2 of \citet{kovalev2024linear} in the convex--concave case; compared with their framework, our construction is more direct and yields an explicit finite-dimensional hard instance with fully specified parameters, which makes the dependence on \(s_{\min}, s_{\max}, L_x,\) and \(L_y\) transparent.
Finally, note that a dependence on \(\frac{s_{\max}}{s_{\min}}\) (or \(\frac{L_{xy}}{\mu_{xy}}\) in the notation of \citet{kovalev2024linear}) is also needed in addition to \(\sqrt{\frac{L_xL_y s_{\max}^2}{s_{\min}^4}}\), since it is possible that \(L_xL_y \le s_{\min}^2\).
\end{remark}


\end{document}